\documentclass[a4paper]{article}
\usepackage{amsmath,amsthm,amssymb}
\usepackage[all]{xy}

\usepackage{version}
\newenvironment{NB}{
\color{red}{\bf NB}. \footnotesize
}{}

\excludeversion{NB}
\excludeversion{NB2}

\newtheorem{thm}{Theorem}[section]
\newtheorem{defn}[thm]{Definition}
\newtheorem{ex}[thm]{Example}
\newtheorem{prop}[thm]{Proposition}
\newtheorem{cor}[thm]{Corollary}
\newtheorem{lem}[thm]{Lemma}

\newtheorem{rem}[thm]{Remark}

\newcommand{\mf}[1]{{\mathfrak{#1}}}
\newcommand{\mr}[1]{{\mathrm{#1}}}
\newcommand{\mb}[1]{{\mathbf{#1}}}
\newcommand{\bb}[1]{{\mathbb{#1}}}
\newcommand{\mca}[1]{{\mathcal{#1}}}

\newcommand{\Hom}{\mr{Hom}}
\newcommand{\Ext}{\mr{Ext}}
\newcommand{\End}{\mr{End}}

\newcommand{\vEnd}{\mca{E}\mr{nd}}
\newcommand{\ext}{\mr{ext}}

\newcommand{\im}{\mr{im}}

\newcommand{\dimv}{\underline{\dim}}

\newcommand{\Z}{\bb{Z}}
\newcommand{\tZ}{\tilde{\bb{Z}}}
\newcommand{\C}{\bb{C}}
\newcommand{\CP}{\bb{P}}
\newcommand{\OO}{\mca{O}}

\newcommand{\R}{\bb{R}}
\newcommand{\LL}{\bb{L}}
\newcommand{\QQ}{\bb{Q}}

\newcommand{\Q}{\tilde{Q}}
\newcommand{\A}{\tilde{A}}
\newcommand{\V}{\tilde{V}}
\newcommand{\tz}{\tilde{\zeta}}
\newcommand{\vv}{\mb{v}}
\newcommand{\q}{\mb{q}}

\newcommand{\ys}{\mca{Y}_\sigma}

\newcommand{\pervs}{\mr{Per}(\ys/\mca{X})}

\newcommand{\coh}{\mr{Coh}(\mca{Y})}
\newcommand{\cohs}{\mr{Coh}(\ys)}

\newcommand{\amod}{A\text{-}\mr{mod}}
\newcommand{\afmod}{A\text{-}\mr{fmod}}

\newcommand{\hafmod}{\tilde{A}\text{-}\mr{fmod}}

\newcommand{\M}[2]{\mf{M}_{#1}(#2)}
\newcommand{\sM}[2]{\mf{M}^{\sigma}_{#1}(#2)}

\newcommand{\HN}{the Harder-Narasimhan filtration }
\newcommand{\JH}{a Jordan-H\"older filtration }

\newcommand{\del}{|\Delta|}

\newcommand{\h}{\frac{1}{2}}

\newcommand{\hind}{\tilde{I}}

\newcommand{\hhhind}{\left\{\frac{3}{2},\ldots,N-\frac{3}{2}\right\}}

\newcommand{\hI}{\hat{I}}
\newcommand{\tI}{\tilde{I}}

\newcommand{\bal}{\bar{\alpha}}

\newcommand{\uj}{\underline{j}}
\newcommand{\ul}{\underline{l}}

\newcommand{\rs}{\Lambda}

\title{Derived categories of small toric Calabi-Yau $3$-folds and curve counting invariants}
\author{Kentaro Nagao}
\begin{document}

\maketitle

\begin{abstract}
We first construct a derived equivalence between a small crepant resolution of an affine toric Calabi-Yau $3$-fold and a certain quiver with a superpotential.
Under this derived equivalence we establish a wall-crossing formula for the generating function of the counting invariants of perverse coherent sheaves. 
As an application we provide some equations on Donaldson-Thomas, Pandharipande-Thomas and Szendroi's invariants. 
\end{abstract}

\section*{Introduction}
This is a subsequent paper of \cite{nagao-nakajima}. 
We study {\it variants\/} of {\it Donaldson-Thomas} (DT in short) {\it invariants} on {\it small} crepant resolutions of affine toric Calabi-Yau $3$-folds. 

The original Donaldson-Thomas invariants of a Calabi-Yau $3$-fold $\mca{Y}$ are defined by virtual counting of moduli spaces of ideal sheaves $\mca{I}_Z$ of $1$-dimensional closed subschemes $Z\subset \mca{Y}$ (\cite{thomas-dt}, \cite{behrend-dt}).
These are conjecturally equivalent to {\it Gromov-Witten invariants} after normalizing the contribution of 0-dimensional sheaves (\cite{mnop}).

A variant has been introduced Pandharipande and Thomas (PT in short) as virtual counting of moduli spaces of stable {\it coherent systems} (\cite{pt1}). 
They conjectured these invariants also coincide with DT invariants after suitable normalization and mentioned that the coincidence should be recognized as a wall-crossing phenomenon.
Here, a coherent system is a pair of a coherent sheaf and a morphism to it from the structure sheaf, which is first introduced by Le Potier in his study on moduli problems (\cite{lepotier}).  
Note that an ideal sheaf $\mca{I}_Z$ is the kernel of the canonical surjections from the structure sheaf $\OO_{\mca{Y}}$ to the structure sheaf $\OO_Z$. 
So in this sense DT invariants also count coherent systems.



On the other hand, 
a variety sometimes has a derived equivalence with a noncommutative algebra.
A typical example is a {\it noncommutative crepant resolution} of a Calabi-Yau $3$-fold introduced by Michel Van den Bergh (\cite{vandenbergh-3d}, \cite{vandenbergh-nccr}). 
In the case of \cite{vandenbergh-3d}, the Abelian category of modules of the noncommutative crepant resolution corresponds to the Abelian category of {\it perverse coherent sheaves} in the sense of Tom Bridgeland (\cite{bridgeland-flop}).
Recently, Balazs Szendroi proposed to study counting invariants of ideals of such noncommutative algebras (\cite{szendroi-ncdt}). He called these invariants {\it noncommutative Donaldson-Thomas} (NCDT in short) {\it invariants}.
He originally studied on the conifold, but his definition works in more general settings (\cite{young-mckay}, \cite{ncdt-brane}).

Inspired by his work, Hiraku Nakajima and the author introduced {\it perverse coherent systems} (pairs of a perverse coherent sheaf and a morphism to it from the structure sheaf) and study their moduli spaces and counting invariants (\cite{nagao-nakajima}). 
This attempt seems successful since 
\begin{itemize}
\item we can describe explicitly a space of stability parameters with a chamber structure, and 
\item at certain chambers, the moduli spaces in DT, PT and NCDT theory are recovered.
\end{itemize}
Moreover, in the conifold case, we established the wall-crossing formula for the generating functions of counting invariants of perverse coherent systems and provide some equations on DT, PT and NCDT invariants.  
The chamber structure and the wall-crossing formula formally look very
similar to the counter parts for moduli spaces of perverse coherent
sheaves on the blow-up of a complex surface studied earlier by
Nakajima and Yoshioka \cite{ny-perv1,ny-perv2}.

The purpose of this paper is to show the wall-crossing formula (Theorem \ref{thm3.16.}) for general small crepant resolutions of toric Calabi-Yau $3$-folds. 
Here we say a crepant resolutions of affine toric Calabi-Yau $3$-fold is {\it small} when the dimensions of the fibers are less than $2$. 
In such cases, the lattice polygon in $\R^2$ corresponding to the affine toric Calabi-Yau $3$-fold does not have any lattice points in its interior.
Such lattice polygons are classified up to equivalence into the following two cases:
\begin{itemize}
\item trapezoids with heights $1$, or
\item the right isosceles triangle with length $2$ isosceles edges. 
\end{itemize}
In this paper we study the first case. 
The arguments in \S \ref{derived-equiv} and \S \ref{counting} work for the second case as well\footnote{Theorem \ref{thm_mr} does not hold in the second case. We can not apply \cite[Theorem 7.1]{ncdt-brane} since mutation of a quiver associated to a dimer model is not associated to any dimer model in general.}. 



In \S \ref{derived-equiv}, we construct derived equivalences between small crepant resolutions of affine toric Calabi-Yau $3$-folds and certain quivers with superpotentials\footnote{Moreover, the module category corresponds to the category of perverse sheaves under this derived equivalence. This is stronger than what we can get from the general results such as \cite{ncdt-brane, Davison, Broomhead, Ishii_Ueda2}.}.
In \S \ref{tilting-generator}, using toric geometry, we construct tilting vector bundles given by Van den Bergh (\cite{vandenbergh-3d})  explicitly.
Then, we review Ishii and Ueda's construction of crepant resolutions as moduli spaces of representations of certain quivers with superpotentials (\cite{ishii-ueda}) in \S \ref{as-moduli}.
In \S \ref{specific-parameter} we show the tautological vector bundles on the moduli spaces coincide with the tilting bundles given in \S \ref{tilting-generator}. Using such moduli theoretic description, we calculate the endomorphism algebras of the tilting bundles in \S \ref{end-alg}. 
 
The argument in \S \ref{counting} is basically parallel to \cite{nagao-nakajima}.
In our case, the fiber on the origin of the affine toric variety is the type $A$ configuration of $(-1,-1)$- or $(0,-2)$-curves. 
A wall in the space of stability parameters is a hypersurface which is perpendicular to a root vector of the root system of type $\hat{A}$.    
Stability parameters in the chambers adjacent to the wall corresponding to the imaginary root realize DT theory and PT theory (\cite[\S 2]{nagao-nakajima}).
Note that the story is completely parallel to that of type $\hat{A}$ quiver varieties (of rank $1$), which are the moduli spaces of framed representations of type $\hat{A}$ preprojective algebras (\cite{quiver1}, \cite{quiver2}, \cite{quiver3}). 
Quiver varieties associated with a stability parameter in a chamber adjacent to the imaginary wall realize Hilbert schemes of points on the minimal resolution of the Kleinian singularities of type $A$, whose exceptional fiber is the type $A$ configuration of $(-2)$-curves (\cite{nakajima-lec-note}, \cite{kuznetsov-stability}). 

Our main result is the wall-crossing formula for the generating functions of the Euler characteristics of the moduli spaces (Theorem \ref{thm3.16.}). 
The contribution of a wall depends on the information of self-extensions of stable objects on the wall. Note that in the conifold case (\cite{nagao-nakajima}) every wall has a single stable object on it and every stable object has a trivial self-extension. Computations of self-extensions are done in \S \ref{appendix}. 

\begin{NB}
Since the sets of torus fixed points on the moduli spaces in DT, PT and NCDT theory are isolated, and we can show that DT, PT and NCDT invariants coincide up to signs with the Euler characteristics of the moduli spaces. 
In particular, the wall-crossing formula provides a product expansion formula of the generating functions of PT invariants. 
The indices in this formula are nothing but the BPS state counts $n_{g,\beta}$ (\cite{gopakumar-vafa}, \cite{hosono-saito-takahashi}, \cite{toda-gv}) in the sense of Pandharipande-Thomas (\cite[\S 3]{pt1})\footnote{An algorithm to extract Gopakumar-Vafa invariants of the toric Calabi-Yau $3$-folds from the topological vertex expression is given in \cite{strip}. The author was informed on this reference by Yukiko Konishi.}. 

\end{NB}

The DT, PT and NCDT invariants are defined as weighted Euler characteristics of the moduli spaces weighted by Behrend functions.
It is the purpose of \S \ref{mutation} to compare the weighted Euler characteristics and the Euler characteristics of the moduli spaces for a generic stability parameter.
First, we provide alternative descriptions of the moduli spaces.
Given a quiver with a superpotential  $A=(Q,\omega)$, we can {\it mutate} it at a vertex $k$ to provide a new quiver with a superpotential  $\mu_k(A)=(\mu_k(Q),\mu_k(\omega))$.
For a generic stability parameter $\zeta$, we can associate a sequence $k_1,\ldots k_r$ of vertices and the moduli space of $\zeta$-stable $A$-modules is isomorphic to the moduli space of finite dimensional quotients of a module over the quiver with the superpotential $\mu_{k_r}\circ\cdots\circ\mu_{k_1}(A)$.
As a consequence, we can apply Behrend-Fantechi's torus localization theorem (\cite{behrend-fantechi}) to show that
the weighted Euler characteristics coincide with the Euler characteristics up to signs.

As in \cite{nagao-nakajima}, our formula does not cover the wall corresponding to the DT-PT conjecture. 
We can provide the wall-crossing formula for this wall applying Joyce's formula (\cite{joyce-4})\footnote{Kontsevich-Soibelman's (partly conjectural) formula (\cite{ks}) also covers the setting in this paper. }\footnote{After this paper submitted \begin{itemize}\item Euler characteristic version of the DT-PT conjecture was proved by Toda, Thomas-Stoppa, Bridgeland (\cite{toda-dtpt, thomas_stoppa, bridgeland-dtpt}) using Joyce's arguments, and \item Joyce-Song provided an extension of the wall-crossing formula in \cite{joyce-4} to weighted Euler characteristics and an application for non-commutative Donaldson-Thomas invariants (\cite{joyce-song}).\end{itemize}}.


\subsection*{Acknowledgement}

The author is grateful to Hiraku Nakajima for collaborating in the paper \cite{nagao-nakajima} and for many valuable discussion. 

He thanks Kazushi Ueda for patiently teaching his work on brane tilings, 
Tom Bridgeland for explaining his work, 
Yan Soibelman for helpful comments.

He also thanks Yoshiyuki Kimura and Michael Wemyss for useful discussions, Akira Ishii, Yukari Ito, Osamu Iyama, Yukiko Konishi, Sergey Mozgovoy and Yukinobu Toda for helpful comments. 
The author is supported by JSPS Fellowships for Young Scientists No.19-2672.

\section{Derived equivalences}\label{derived-equiv}
\subsection{tilting generators}\label{tilting-generator}
Let $N_0>0$ and $N_1\geq 0$ be integers such that $N_0\geq N_1$ and set $N=N_0+N_1$.
We set
\begin{align*}
I&=\left\{1,\ldots,N-1\right\},\\
\hat{I}&=\left\{0,1,\ldots,N-1\right\},\\
\tilde{I}&=\left\{\h,\frac{3}{2},\ldots,N-\h\right\},\\
\tilde{\Z}&=\left\{n+\h\,\Big|\, n\in\Z\right\}.
\end{align*}
For $l\in\Z$ and $j\in\tZ$, let $\underline{l}\in\hI$ and $\underline{j}\in\tI$ be the elements 
such that $l-\underline{l}\equiv j-\underline{j}\equiv 0$ modulo $N$. 

We denote by $\Gamma$ the quadrilateral (or the triangle in the cases $N_1=0$) in $(\R^2)_{x,y}=\{(x,y)\}$ with vertices $(0,0)$, $(0,1)$, $(N_0,0)$ and $(N_1,1)$.
Let $M^*\simeq \Z^3$ be the lattice with basis $\{x^*,y^*,z^*\}$ and
$M:=\mr{Hom}_{\Z}(M^*,\Z)$ be the dual lattice of $M^*$. 
Let $\Delta$ denote the cone of the image of $\Gamma$ under the inclusion
\[
(\R^2)_{x,y}\hookrightarrow \{(x,y,z)\}=M^*_\R:=M^*\otimes \R\colon (x,y)\mapsto (x,y,1)
\]
and consider the semigroup
\[
S_\Delta=\Delta^\vee\cap M:=\{u\in M\mid \langle u,v\rangle\geq 0\  (\forall v\in\Delta)\}.
\]
Let $R=R_\Gamma:=\C[S_\Delta]$ be the semi-group algebra and $\mca{X}=\mca{X}_\Gamma:=\mr{Spec}(R_\Gamma)$ the $3$-dimensional affine toric Calabi-Yau variety corresponding to $\Delta$. 

Let $\{x,y,z\}\subset M$ be the dual basis. 
The semigroup is generated by 
\begin{align}
X&:=x,\label{eq1}\\
Y&:=-x-(N_0-N_1)y+N_0z,\label{eq2}\\
Z&:=y,\\
W&:=-y+z,
\end{align}
and they have a unique relation $X+Y=N_1Z+N_0W$.
So we have
\[
R\simeq \C[X,Y,Z,W]/(XY-Z^{N_1}W^{N_0}).
\]

A partition $\sigma$ of $\Gamma$ is a pair of functions $\sigma_x\colon\tI\to\tZ$ and $\sigma_y\colon\tI\to\{0,1\}$ such that 
\begin{itemize}
\item $\sigma(i):=(\sigma_x(i),\sigma_y(i))$ gives a bijection between $\tI$ and the following set:
\[
\left\{\left(\h,0\right),\left(\frac{3}{2},0\right),\ldots,\left(N_0-\h,0\right),\left(\h,1\right),\left(\frac{3}{2},1\right),\ldots,\left(N_1-\h,1\right)\right\},
\]  
\item if $i<j$ and $\sigma_y(i)=\sigma_y(j)$ then $\sigma_x(i)>\sigma_x(j)$.
\end{itemize}
Giving a partition $\sigma$ of $\Gamma$ is equivalent dividing $\Gamma$ into $N$-tuples of triangles $\{T_i\}_{i\in\tI}$ with area $1/2$ so that $T_i$ has $(\sigma_x(i)\pm 1/2, \sigma_y(i))$ as its vertices.
Let $\Gamma_\sigma$ be the corresponding diagram, $\Delta_\sigma$ be the fan and $f_\sigma\colon \mca{Y}_\sigma\to \mca{X}$ be the crepant resolution of $\mca{X}$. 

We denote by $D_{\varepsilon,x}$ ($\varepsilon=0,1$ and $0\leq x\leq N_\varepsilon$) the divisor of $\mca{Y}_\sigma$ corresponding to the lattice point $(x,\varepsilon)$ in the diagram $\Gamma_\sigma$. 
Note that any torus equivariant divisor is described as a linear combination of $D_{\varepsilon,x}$'s. For a torus equivariant divisor $D$ let $D(\varepsilon,x)$ denote its coefficient of $D_{\varepsilon,x}$.
The support function $\psi_D$ of $D$ is the piecewise linear function on $|\Delta_\sigma|$ such that $\psi_D((x,\varepsilon,1))=-D(\varepsilon,x)$ and such that $\psi_D$ is linear on each cone of $\Delta_\sigma$. 
We sometimes denote the restriction of $\psi_D$ on the plane $\{z=1\}$ by $\psi_D$ as well. 

\begin{defn}\label{def1}
For $i\in\tI$ and $k\in I$ we define effective divisors $E^\pm_i$ and $F^\pm_k$ by
\begin{align*}
&E^+_i=\sum_{j=\sigma_x(i)+\h}^{N_{\sigma_y(i)}}D_{\sigma_y(i),j},
&F^+_k=\sum_{i=\h}^{k-\h}E^+_i,\\
&E^-_i=\sum_{j=0}^{\sigma_x(i)-\h}D_{\sigma_y(i),j},
&F^-_k=\sum_{i=k+\h}^{N-\h}E^-_i.
\end{align*}
\end{defn}

\begin{ex}\label{example}
Let us consider as an example the case $N_0=4$, $N_1=2$ and 
\[
(\sigma(i))_{i\in \tI}=\left(\left(\frac{7}{2},0\right),\left(\frac{3}{2},1\right),\left(\frac{5}{2},0\right),\left(\frac{3}{2},0\right),\left(\frac{1}{2},1\right), \left(\frac{1}{2},0\right)\right).
\]
We show the corresponding diagram $\Gamma_\sigma$ in Figure \ref{fig:Q}.
The divisors are given as follows:
\[
\begin{array}{cc}
E_\h^+:=\left[\begin{array}{cccccc}
0&0&0& & \\
0&0&0&0&1
\end{array}\right],&
F_1^+:=\left[\begin{array}{cccccc}
0&0&0& & \\
0&0&0&0&1
\end{array}\right],\vspace{5pt}\\
E_\frac{3}{2}^+:=\left[\begin{array}{cccccc}
0&0&1& & \\
0&0&0&0&0
\end{array}\right],&
F_2^+:=\left[\begin{array}{cccccc}
0&0&1& & \\
0&0&0&0&1
\end{array}\right],\vspace{5pt}\\
E_\frac{5}{2}^+:=\left[\begin{array}{cccccc}
0&0&0& & \\
0&0&0&1&1
\end{array}\right],&
F_3^+:=\left[\begin{array}{cccccc}
0&0&1& & \\
0&0&0&1&2
\end{array}\right],\vspace{5pt}\\
E_\frac{7}{2}^+:=\left[\begin{array}{cccccc}
0&0&0& & \\
0&0&1&1&1
\end{array}\right],&
F_4^+:=\left[\begin{array}{cccccc}
0&0&1& & \\
0&0&1&2&3
\end{array}\right],\vspace{5pt}\\
E_\frac{9}{2}^+:=\left[\begin{array}{cccccc}
0&1&1& & \\
0&0&0&0&0
\end{array}\right],&
F_5^+:=\left[\begin{array}{cccccc}
0&1&2& & \\
0&0&1&2&3
\end{array}\right],\vspace{5pt}\\
E_\frac{11}{2}^+:=\left[\begin{array}{cccccc}
0&0&0& & \\
0&1&1&1&1
\end{array}\right],&
F_6^+:=\left[\begin{array}{cccccc}
0&1&2& & \\
0&1&2&3&4
\end{array}\right].\vspace{5pt}\\
\end{array}
\]
Here the $(\varepsilon,x)$-th matrix element represent the coefficient of the divisor $D_{\varepsilon,x}$. 
\end{ex}
\begin{figure}[htbp]
  \centering
  \input{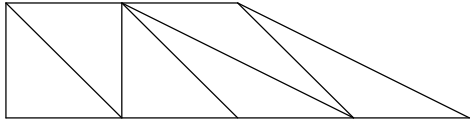}
  \caption{$\Gamma_\sigma$}
  \label{fig:Q}
\end{figure}

\begin{lem}\label{lem2}
\begin{enumerate}
\item[(1)] $\OO_{\ys}(E^+_i+E^-_i)\simeq \OO_{\ys}$,
\item[(2)] $\OO_{\ys}(F^+_k)\simeq \OO_{\ys}(F^-_k)$.
\end{enumerate}
\end{lem}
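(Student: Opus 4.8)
The plan is to work entirely with support functions of torus-equivariant divisors, using the standard toric dictionary: a line bundle $\OO_{\ys}(D)$ is trivial if and only if its support function $\psi_D$ is (the restriction of) a global linear function on $M^*_\R$, i.e.\ $\psi_D$ is linear across every wall of the fan $\Delta_\sigma$. Since every equivariant divisor is a linear combination of the $D_{\varepsilon,x}$ and the support function is determined by $\psi_D((x,\varepsilon,1)) = -D(\varepsilon,x)$, it suffices to identify the piecewise-linear function attached to $E^+_i + E^-_i$, resp.\ to $F^+_k - F^-_k$, and check it extends linearly. For (1), I would first unwind Definition \ref{def1}: $E^+_i$ places the value $1$ on the lattice points $(j,\sigma_y(i))$ with $j > \sigma_x(i)$ on the edge $\{y=\sigma_y(i)\}$, and $E^-_i$ places $1$ on those with $j < \sigma_x(i)$ on the same edge; adding them gives coefficient $1$ on every lattice point of that horizontal edge except the single point $(\sigma_x(i), \sigma_y(i))$, where it is $0$. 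So $\psi_{E^+_i+E^-_i}$ agrees on the relevant vertices with (minus) a function that is constant $=-1$ in the $x$-direction along that edge and drops by $1$ at one vertex; the point is that this is exactly the support function of the linear function cutting out the triangle $T_i$, hence linear across the two internal walls bounding $T_i$ and trivially linear elsewhere. Concretely I would exhibit the linear functional $\ell_i \in M$ with $\langle \ell_i, (x,\sigma_y(i),1)\rangle$ matching these coefficients and $\langle \ell_i, (x, 1-\sigma_y(i),1)\rangle = 0$, and verify $\psi_{E^+_i+E^-_i} = -\psi$ of the divisor $\mathrm{div}(\chi^{\ell_i})$ of a character, which is principal.

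For (2), the cleanest route is to deduce it from (1) by summation rather than redo the geometry. Using the defining sums $F^+_k = \sum_{i=1/2}^{k-1/2} E^+_i$ and $F^-_k = \sum_{i=k+1/2}^{N-1/2} E^-_i$, I want to compare $F^+_k$ with $F^-_k$ plus a principal divisor. Note $\sum_{i\in\tI}(E^+_i + E^-_i)$ is principal by (1), and it equals $\sum_{i=1/2}^{k-1/2}E^+_i + \sum_{i=1/2}^{k-1/2}E^-_i + \sum_{i=k+1/2}^{N-1/2}E^+_i + \sum_{i=k+1/2}^{N-1/2}E^-_i = F^+_k + F^-_k + \big(\sum_{i=1/2}^{k-1/2}E^-_i + \sum_{i=k+1/2}^{N-1/2}E^+_i\big)$. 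So $\OO_{\ys}(F^+_k) \simeq \OO_{\ys}(F^-_k)$ would follow if $\sum_{i=1/2}^{k-1/2}E^-_i + \sum_{i=k+1/2}^{N-1/2}E^+_i$ is itself principal together with a matching $2F^-_k$; since that is not literally symmetric, I would instead argue directly that $F^+_k - F^-_k$ has a support function that is globally linear. Its coefficient at $(x,\varepsilon)$ is $\sum_{i:\,\sigma_y(i)=\varepsilon,\ \sigma_x(i)<x,\ i<k} 1 \;-\; \sum_{i:\,\sigma_y(i)=\varepsilon,\ \sigma_x(i)>x,\ i>k} 1$, and using the partition bijection these two counts differ by an affine function of $x$ independent of which points have index $<k$ vs $>k$ — the "staircase'' contributions telescope. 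I would make this precise by induction on $k$: passing from $F^\pm_k$ to $F^\pm_{k+1/2}$ changes $F^+$ by adding $E^+_k$ and $F^-$ by subtracting $E^-_k$, and $E^+_k - (-E^-_k) = E^+_k + E^-_k$ is principal by (1), so $F^+_{k+1} - F^-_{k+1}$ and $F^+_k - F^-_k$ differ by a principal divisor; the base case is $k$ minimal, where one of the two sums is empty and the other is directly seen to be principal (or zero).

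The main obstacle I expect is bookkeeping, not conceptual: correctly translating Definition \ref{def1}'s index ranges into coefficients $D_{\varepsilon,x}(\varepsilon',x')$, handling the two edges $\varepsilon=0,1$ (of different lengths $N_0 \ge N_1$) uniformly, and making sure the claimed linear functionals are integral elements of $M$ so the divisors are genuinely principal rather than merely $\QQ$-principal. A secondary subtlety is checking linearity of $\psi_D$ precisely across the \emph{internal} walls of $\Delta_\sigma$ coming from the triangulation $\{T_i\}$, since those are the only walls where a piecewise-linear $\psi_D$ can fail to be linear; the walls on $\partial\Gamma_\sigma$ are automatic. I would organize the verification wall-by-wall: an internal wall separates $T_i$ from $T_{i'}$, and both $E^+_i+E^-_i$ and $F^+_k - F^-_k$ restrict to something linear there by the explicit formula, which finishes both parts.
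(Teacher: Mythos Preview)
Your approach is essentially the paper's: use support functions and check linearity. But there is a bookkeeping slip in (1) and an understated base case in (2) that you should fix.

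For (1): you write that $E^+_i + E^-_i$ has coefficient $0$ at the ``point $(\sigma_x(i),\sigma_y(i))$''. But $\sigma_x(i)\in\tZ$ is a half-integer, so there is no lattice point there. In fact $E^+_i$ hits the integers $j>\sigma_x(i)$ and $E^-_i$ the integers $j<\sigma_x(i)$ on the edge $y=\sigma_y(i)$, so every lattice point on that edge gets coefficient $1$, and the other edge gets $0$. This is what makes $\psi_{E^+_i+E^-_i}$ genuinely linear; the paper simply writes it down: it is $y-z$ if $\sigma_y(i)=0$ and $-y$ if $\sigma_y(i)=1$. Your ``drops by $1$ at one vertex'' description would not be linear, so this is not just cosmetic.

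For (2): your induction $F^+_{k+1}-F^-_{k+1}=(F^+_k-F^-_k)+(E^+_{k+1/2}+E^-_{k+1/2})$ is fine and is exactly the paper's reduction, phrased stepwise. But the base case is not ``(or zero)'': at the endpoint one side is empty and the other is $F^+_N=\sum_{i\in\tI}E^+_i$, which is nonzero and must be shown principal. The paper does this in one line by observing $\psi_{F^+_N}=-x$, i.e.\ the total $E^+$-divisor counts, at each vertex $(j,\varepsilon)$, the number of $i$ with $\sigma_y(i)=\varepsilon$ and $\sigma_x(i)<j$, which by the partition bijection is exactly $j$. You should state and verify this; without it your induction has no anchor.

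So: same route as the paper, but you should replace the mistaken vertex-drop picture in (1) by the explicit linear functionals, and in (2) replace the hand-wave at the base case by the computation $\psi_{F^+_N}=-x$.
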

\begin{proof}
We have
\[
\psi_{E^+_i+E^-_i}=
\begin{cases}
y-z & (\sigma_y(i)=0),\\
-y & (\sigma_y(i)=1),
\end{cases}
\]
so the equation (1) follows. 
Now, for the equation (2) it is enough to show that the divisor
\[
F^+_N:=\sum_{i=\h}^{N-\h}E^+_i
\]
gives the trivial bundle.  
In fact, we have $\psi_{F^+_N}=-x$.
\end{proof}
We denote the line bundle $\OO_{\mca{Y}_\sigma}(F^+_k)\simeq \OO_{\mca{Y}_\sigma}(F^-_k)$ on $\mca{Y}_\sigma$ by $L_k$.
We set 
\[
F^\pm=\sum_{k=1}^{N-1}F^\pm_k,\quad L=\bigotimes_{k=1}^{N-1}L_k.
\]
\begin{ex}
In the case Example \ref{example},
\[
F^+=\left[\begin{array}{cccccc}
0&1&5& & \\
0&0&2&5&10
\end{array}\right].
\]
\end{ex}

\begin{lem}\label{localconvex}
For $i\in \tI$ we have
\begin{align*}
&F^+\left(\sigma_y(i),\sigma_x(i)+\h\right)-F^+\left(\sigma_y(i),\sigma_x(i)-\h\right)\\
&\ =F^+\left(\sigma_y(i+1),\sigma_x(i+1)+\h\right)-F^+\left(\sigma_y(i+1),\sigma_x(i+1)-\h\right)+1.
\end{align*}
\end{lem}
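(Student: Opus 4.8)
The plan is to compute both sides of the claimed identity directly from the support functions, reducing everything to the piecewise-linear function $\psi_{F^+}$ on $|\Delta_\sigma|$. Recall that $F^+(\varepsilon,x) = -\psi_{F^+}((x,\varepsilon,1))$, so the left-hand side of the asserted equation is $\psi_{F^+}((\sigma_x(i)-\h,\sigma_y(i),1)) - \psi_{F^+}((\sigma_x(i)+\h,\sigma_y(i),1))$, i.e. the drop of $\psi_{F^+}$ across the edge of the triangle $T_i$ at height $\sigma_y(i)$. First I would reinterpret $F^+ = \sum_{k=1}^{N-1} F^+_k$ in terms of the $E^+_j$'s: by Definition \ref{def1}, $F^+_k = \sum_{j=\h}^{k-\h} E^+_j$, so $F^+ = \sum_{k=1}^{N-1}\sum_{j=\h}^{k-\h} E^+_j = \sum_{j\in\tI}(N - \lceil j\rceil)\, E^+_j$, where $N-\lceil j \rceil$ counts the indices $k\in I$ with $k > j$. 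Then $\psi_{F^+} = \sum_{j\in\tI}(N-\lceil j\rceil)\,\psi_{E^+_j}$, and I only need to understand, for a fixed triangle $T_i$, how each $\psi_{E^+_j}$ behaves across the relevant edge.

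The key geometric observation is local: $E^+_j = \sum_{x = \sigma_x(j)+\h}^{N_{\sigma_y(j)}} D_{\sigma_y(j),x}$ is supported on the row $\sigma_y(j)$, to the right of the triangle $T_j$. Its support function $\psi_{E^+_j}$, restricted to the plane $\{z=1\}$, is the piecewise-linear function that is $0$ on the left part of $\Gamma_\sigma$ and picks up slope $-1$ in the $x$-direction precisely as one crosses the vertical segment at $x = \sigma_x(j)$ in row $\sigma_y(j)$. Consequently, crossing the edge of $T_i$ (the segment from $(\sigma_x(i)-\h,\sigma_y(i))$ to $(\sigma_x(i)+\h,\sigma_y(i))$), the function $\psi_{E^+_j}$ drops by $1$ if $j = i$ and $\sigma_y(j)=\sigma_y(i)$ — wait, more carefully, one must track the triangulation: the two triangles of $\Gamma_\sigma$ meeting along the edge dual to $(\sigma_x(i),\sigma_y(i))$ are $T_i$ and $T_{i+1}$, and the quantity I want is the difference of the linear pieces of $\psi_{F^+}$ on these two adjacent simplices, evaluated along their common wall. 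So I would set up coordinates for the two maximal cones of $\Delta_\sigma$ containing $T_i$ and $T_{i+1}$, write down the primitive ray generators (three lattice points each, forming the triangles $T_i$ resp. $T_{i+1}$ lifted to height $z=1$), and express the "bend" of $\psi_{F^+}$ across the wall as $\sum_{j}(N-\lceil j\rceil)\cdot(\text{bend of }\psi_{E^+_j})$. The bends of the individual $\psi_{E^+_j}$ across this particular wall are nonzero only for $j = i$ and $j = i+1$ (these are the only two divisors $D_{\varepsilon,x}$ whose boundary passes through that wall), and each contributes $\pm 1$; assembling the coefficients $N-\lceil i\rceil$ and $N-\lceil i+1\rceil = N - \lceil i\rceil - 1$ yields the $+1$ discrepancy.

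The main obstacle I anticipate is bookkeeping the combinatorics of which divisors $D_{\varepsilon,x}$ border a given internal wall of the triangulation, and keeping the sign conventions for $\psi_D$ consistent with $\psi_D((x,\varepsilon,1)) = -D(\varepsilon,x)$ throughout — it is easy to flip a sign and get $-1$ instead of $+1$. A clean way to avoid this is to verify the identity directly on the matrix expressions: from $F^+ = \sum_{j\in\tI}(N-\lceil j\rceil)E^+_j$ one gets an explicit formula for each entry $F^+(\varepsilon,x)$, and then the claimed relation becomes an elementary identity among these entries that can be checked by inspecting the two rows of the matrix near columns $\sigma_x(i)\pm\h$, using only the defining property that $\sigma$ is a bijection onto the prescribed set of lattice points. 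Concretely, the case analysis splits according to whether $\sigma_y(i)$ and $\sigma_y(i+1)$ are equal or not; in the equal case the two relevant edges are horizontally adjacent in the same row and the computation is immediate, while in the unequal case one uses the second axiom on $\sigma$ (the decreasing condition on each row) to locate the columns. In all cases the extra $+1$ comes from the single index $j$ with $\lceil j\rceil$ jumping by one between $i$ and $i+1$.
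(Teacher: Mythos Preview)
Your fallback plan --- compute each side directly from $F^+=\sum_{j\in\tI}(N-\lceil j\rceil)E^+_j$ --- is correct and is exactly what the paper does, only with the two sums in the other order. The paper's proof is three lines: from the definition of $E^+_j$ one has
\[
E^+_j\bigl(\sigma_y(i),\sigma_x(i)+\tfrac12\bigr)-E^+_j\bigl(\sigma_y(i),\sigma_x(i)-\tfrac12\bigr)=\delta_{i,j},
\]
so summing over $j\le k-\tfrac12$ gives the $F^+_k$-contribution ($0$ for $k<i$, $1$ for $k>i$), and summing over $k\in I$ gives $F^+(\sigma_y(i),\sigma_x(i)+\tfrac12)-F^+(\sigma_y(i),\sigma_x(i)-\tfrac12)=N-i-\tfrac12$; the lemma follows by subtracting the same formula at $i+1$. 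No case split on $\sigma_y(i)$ versus $\sigma_y(i+1)$ is needed.

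Your primary plan, phrased in terms of the bend of $\psi_{F^+}$ across the wall between $T_i$ and $T_{i+1}$, is off target: the two sides of the identity are not the values of one linear piece on either side of a common wall, but the increments of $F^+$ along the \emph{horizontal} edges of $T_i$ and of $T_{i+1}$, which may lie in different rows. So the ``only $j=i$ and $j=i+1$ contribute to the bend'' picture does not match the quantity being computed; you would have to unwind it back to the coefficient calculation anyway. Drop the support-function detour and just run the $\delta_{i,j}$ computation --- it is immediate and avoids all the sign and case bookkeeping you were worried about.
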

\begin{proof}
First, note that 
\[
E^+_j\left(\sigma_y(i),\sigma_x(i)+\h\right)-E^+_j\left(\sigma_y(i),\sigma_x(i)-\h\right)=\delta_{i,j}.
\]
So we have
\[
F^+_k\left(\sigma_y(i),\sigma_x(i)+\h\right)-F^+_k\left(\sigma_y(i),\sigma_x(i)-\h\right)=\begin{cases}
0 & (k<i),\\
1 & (k>i),
\end{cases}
\]
and so 
\[
F^+\left(\sigma_y(i),\sigma_x(i)+\h\right)-F^+\left(\sigma_y(i),\sigma_x(i)-\h\right)=N-i-\h.
\]
Thus the claim follows.
\end{proof}

\begin{prop}
The line bundles $L$ is generated by its global sections. 
\end{prop}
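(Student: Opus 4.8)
The plan is to apply the standard criterion from toric geometry: if $X$ is a toric variety whose fan has convex support and $D$ is a torus-invariant Cartier divisor, then $\OO_X(D)$ is generated by its global sections exactly when the support function $\psi_D$ is upper convex, and this convexity can be checked one wall (codimension-one cone) at a time. Here $\ys$ is smooth, so $F^+$ is Cartier, and $|\Delta_\sigma|=\Delta$ is a convex cone; hence it suffices to show that $\psi_{F^+}$ is convex across every internal wall of $\Delta_\sigma$. In fact Lemma~\ref{localconvex} will give strict convexity, so $L$ will even turn out to be $f_\sigma$-ample.

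The maximal cones of $\Delta_\sigma$ are the cones over the triangles $T_i$, $i\in\tI$. Since each $T_i$ has exactly one edge lying on one of the horizontal lines $\{y=0\}$ or $\{y=1\}$, at most two of its edges are interior; as the dual graph of a triangulation of a polygon with no interior lattice points is a tree, it is therefore a path. One checks from the definition of a partition that this path is traversed precisely by $T_{1/2},T_{3/2},\ldots,T_{N-1/2}$, so the internal walls are exactly the common facets $\tau_i$ of $\mathrm{Cone}(T_i)$ and $\mathrm{Cone}(T_{i+1})$ for $i=1/2,\ldots,N-3/2$.

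Fix such an $i$, and let $v$, $v'$ be the vertices of $T_i$, $T_{i+1}$ not lying on $\tau_i$; each of these is an endpoint of the horizontal edge of the corresponding triangle, hence of the form $(\sigma_x(\bullet)\pm\h,\sigma_y(\bullet))$. Because $T_i$ and $T_{i+1}$ have area $\h$, the points $v$ and $v'$ lie at equal distance from the line through $\tau_i$, on opposite sides of it; lifting everything to the plane $\{z=1\}$, this means $v'=-v+w$, where $w$ is an affine combination, with coefficient sum $2$, of the two lattice points of $\tau_i$. Evaluating the linear extension of $\psi_{F^+}|_{\mathrm{Cone}(T_i)}$ at $v'$ and comparing with $\psi_{F^+}(v')=-F^+(v')$, upper convexity across $\tau_i$ unwinds to exactly the inequality
\begin{align*}
&F^+\left(\sigma_y(i),\sigma_x(i)+\h\right)-F^+\left(\sigma_y(i),\sigma_x(i)-\h\right)\\
&\qquad\geq\ F^+\left(\sigma_y(i+1),\sigma_x(i+1)+\h\right)-F^+\left(\sigma_y(i+1),\sigma_x(i+1)-\h\right),
\end{align*}
which holds in the sharper form with a $+1$ on the right-hand side by Lemma~\ref{localconvex}. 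Hence $\psi_{F^+}$ is convex and $L=\OO_{\ys}(F^+)$ is generated by its global sections.

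The only delicate point is the translation carried out above: identifying each instance of Lemma~\ref{localconvex} with the right internal wall (i.e.\ that consecutive $T_i$ are indeed adjacent and that these exhaust the walls), and keeping track of signs in $v'=-v+w$ — in particular of which endpoint of $\tau_i$ occurs in the horizontal edge of $T_i$ and which in that of $T_{i+1}$. An alternative that avoids inspecting the convexity condition directly is to write $L=\bigotimes_{k=1}^{N-1}L_k$ and prove each $L_k=\OO_{\ys}(F^+_k)$ globally generated by the analogous but simpler local computation — the relevant slope difference is now weakly monotone along the path, by the first displayed formulas in the proof of Lemma~\ref{localconvex} — and then use that a tensor product of globally generated line bundles is globally generated.
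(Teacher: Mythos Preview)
Your proof is correct and follows essentially the same approach as the paper: reduce to upper convexity of $\psi_{F^+}$, check it across each internal wall between $T_{k-\h}$ and $T_{k+\h}$, and invoke Lemma~\ref{localconvex}. The paper phrases the wall check via an explicit case split (parallelogram versus triangle union), whereas you handle both cases uniformly with the relation $v'=-v+w$; your dual-graph argument that consecutive $T_i$'s are exactly the adjacent pairs is a nice justification of something the paper leaves implicit.
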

\begin{proof}
It is enough to prove that the support function $\psi_{F^+}$ is upper convex (\cite[\S 3.4]{fulton-toric}).
It is enough to prove that $\psi_{F^+}$ is upper convex on the interior of $T_{k-\h}\cup T_{k+\h}$ for any $k\in I$.
We denote 
the edge which is the intersection of $T_{k-\h}$ and $T_{k+\h}$ by $l_{k}$. 
The configurations of $l_k$, $T_{k-\h}$ and $T_{k+\h}$ are classified into the following two cases:
\begin{enumerate}
\item[(1)] The union of $T_{k-\h}$ and $T_{k+\h}$ is a parallelogram and $l_k$ is its diagonal. In this case, the point $(\sigma_x(k+\h)+\h,\sigma_y(k+\h))$ is the intersection of $l_k$ and $l_{k-1}$, the point $(\sigma_x(k-\h)+\h,\sigma_y(k-\h))$ is the other end of $l_{k-1}$.
\item[(2)] The union of $T_{k-\h}$ and $T_{k+\h}$ is a triangle and $l_k$ is its median line. In this case, the point $(\sigma_x(k+\h)+\h,\sigma_y(k+\h))$ is the middle point and $\sigma_x(k-\h)=\sigma_x(k+\h)+1$, $\sigma_y(k-\h)=\sigma_y(k+\h)$.
\end{enumerate}
In both cases it follows from Lemma \ref{localconvex} that $\psi_{F^+}$ is upper convex on $T_{k-\h}\cap T_{k+\h}$. 
\end{proof}

Given a divisor $D$ the space of global sections of the line bundle $\OO_{\ys}(D)$ is described as follows:
\[
H^0(\ys,\OO_{\ys}(D))\simeq \bigoplus_{u\in S_\Delta^0(D)}\C\cdot e_u,
\]
where
\[
S_\Delta^0(D):=\{u\in M\mid \langle u,v\rangle\geq \psi_D(v)\ (\forall v\in \del)\}.
\]
For $u\in M$ we define
\[
Z_D(u):=\{v\in|\Delta|\mid \langle u,v\rangle\geq \psi_D(v)\}.
\]
Then the cohomology of the line bundle $\OO_{\mca{Y}_\sigma}(D)$ is given as follows (\cite[\S 3.5]{fulton-toric}):
\[
H^k(\mca{Y}_\sigma,\OO_{\mca{Y}_\sigma}(D))\simeq\bigoplus_{u\in M}H^k(\del,\del\backslash Z_D(u);\C)
\]
(here the notation $H^k$ represents the sheaf cohomology in the left hand side but represents the relative singular cohomology in the right hand side).
We have the exact sequence of relative cohomologies 
\[
\begin{array}{cccccccc}
0 & \to & H^0(\del,\del\backslash Z_D(u)) & \to & H^0(\del) & \overset{i_*}{\longrightarrow} & H^0(\del\backslash Z_D(u))  \\
 & \to & H^1(\del,\del\backslash Z_D(u)) & \to & H^1(\del) & \longrightarrow & \cdots 
\end{array}
\]
Note that $H^0(\del)=\C$, $H^1(\del)=0$ and if $\del\backslash Z_D(u)$ is not empty then $i_*$ does not vanish. 
We define
\[
Z^\circ_D(u):=\{v\in|\Delta|\cap \{z=1\}\mid \langle u,v\rangle< \psi_D(v)\},
\]
then $\del\backslash Z_D(u)$ is homeomorphic to $Z^\circ_D(u)\times \R$. 

Now, in our situation it follows from the convexity of $\psi_{-F^+}$ that the number of connected components of $Z^\circ_{-F^+}(u)$ is at most $2$. 
Let us denote
\[
S_\Delta^1(-F^+):=\{u\in M\mid \text{$Z^\circ_{-F^+}(u)$ has two connected components}\}.
\]
Then we have 
\begin{equation}\label{eq-h1}
H^1(\mca{Y}_\sigma,\OO_{\mca{Y}_\sigma}(-F^+))\simeq \bigoplus_{u\in S_\Delta^1(-F^+)}\C\cdot f_u
\end{equation}
and the $R$-module structure is given by 
\[
e_{u'}\cdot f_{u}=
\begin{cases}
\,f_{u+u'}& (u+u'\in S_\Delta^1(-F^+)),\\
\,0 &(u+u'\notin S_\Delta^1(-F^+))
\end{cases}
\]
for $u\in S_\Delta^1(-F^+)$ and $u'\in S_\Delta$.

For $i\in\tI$, let $t^i_{F^+}\in M$ the element such that $\langle t^i_{F^+},*\rangle\equiv \psi_{-F^+}$ on the triangle $T_i$. 
Note that $t^i_{F^+}\in S_\Delta^1(-F^+)$ for $i\in\hhhind$.
\begin{prop}
The set $\left\{f_{t^i_{F^+}}\,\big|\, i\in\hhhind\right\}$ is a set of generators of $H^1(\mca{Y}_\sigma,\OO_{\mca{Y}_\sigma}(-F^+))$ as an $R$-module.
\end{prop}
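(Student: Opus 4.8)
\emph{Reduction.} By \eqref{eq-h1} and the formula for the $R$-action, multiplication by $e_{u'}$ ($u'\in S_\Delta$) sends each basis element $f_u$ either to $f_{u+u'}$ or to $0$; hence the $R$-submodule generated by $\bigl\{f_{t^i_{F^+}}\mid i\in\hhhind\bigr\}$ has $\C$-basis $\bigl\{f_u\mid u\in S_\Delta^1(-F^+),\ u-t^i_{F^+}\in S_\Delta\text{ for some }i\in\hhhind\bigr\}$. So the statement is equivalent to: every $u\in S_\Delta^1(-F^+)$ satisfies $u-t^i_{F^+}\in S_\Delta=\Delta^\vee\cap M$ for some $i\in\hhhind$.

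\emph{Translation to convex geometry.} Put $g_u:=\psi_{-F^+}-\langle u,\cdot\rangle$ on $\del\cap\{z=1\}=\Gamma_\sigma$; it is affine on each triangle $T_i$ (there equal to $\langle t^i_{F^+}-u,\cdot\rangle$), is lower convex since $\psi_{F^+}$ is upper convex, and bends by exactly $1$ across every internal edge by Lemma \ref{localconvex}. Then $Z^\circ_{-F^+}(u)=\{g_u>0\}$, so $u\in S_\Delta^1(-F^+)$ exactly when $B_u:=\{g_u\le 0\}$ is a convex region cutting $\Gamma_\sigma$ into exactly two pieces. Moreover $g_{t^i_{F^+}}:=\psi_{-F^+}-\langle t^i_{F^+},\cdot\rangle\ge 0$ and vanishes precisely on $T_i$, and since $g_u-g_{t^i_{F^+}}=\langle t^i_{F^+}-u,\cdot\rangle$ is affine, $u-t^i_{F^+}\in\Delta^\vee$ is equivalent to $g_u\le g_{t^i_{F^+}}$ at the (at most four) vertices of $\Gamma$. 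Order the triangles along the strip as $T^{(1)},\dots,T^{(N)}$, with internal edges $l^{(p)}=T^{(p)}\cap T^{(p+1)}$; the line carrying $l^{(p)}$ splits $\Gamma$ into the convex polygons $\bigcup_{q\le p}T^{(q)}\supseteq T^{(1)}$ and $\bigcup_{q>p}T^{(q)}\supseteq T^{(N)}$, so writing $n^{(p)}:=t^{(p)}_{F^+}-t^{(p+1)}_{F^+}$ (the primitive normal of $l^{(p)}$, which by Lemma \ref{localconvex} is exactly primitive), one has $\langle n^{(p)},\cdot\rangle\ge 0$ on $\bigcup_{q\le p}T^{(q)}$, $\le 0$ on $\bigcup_{q>p}T^{(q)}$, and the telescopings $g_{t^{(p)}_{F^+}}(v)=\sum_{q=1}^{p-1}\langle n^{(q)},v\rangle$ for $v$ a vertex of $T^{(1)}$, and $g_{t^{(p)}_{F^+}}(v)=\sum_{q=p}^{N-1}\bigl(-\langle n^{(q)},v\rangle\bigr)$ for $v$ a vertex of $T^{(N)}$.

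\emph{Conclusion.} Fix $u\in S_\Delta^1(-F^+)$, and say $B_u$ separates the vertices of $\Gamma$ lying in $T^{(1)}$ from those in $T^{(N)}$. For a vertex $v$ with $g_u(v)\le 0$ one has $g_u(v)\le 0\le g_{t^i_{F^+}}(v)$ for every $i$, so only the vertices $v$ with $g_u(v)>0$ impose conditions. For a vertex $v$ of $T^{(1)}$ the requirement $g_u(v)\le g_{t^{(p)}_{F^+}}(v)=\sum_{q<p}\langle n^{(q)},v\rangle$ is a lower bound on $p$ (the right side is nondecreasing in $p$), and for a vertex of $T^{(N)}$ it is an upper bound on $p$; one must produce $p\in\hhhind$ meeting both. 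This is where the separation hypothesis and the ``$+1$'' of Lemma \ref{localconvex} are used: since $g_u$ drops by a full lattice unit across each internal edge that $B_u$ crosses and its sublevel set $B_u$ is a \emph{separating} convex band, comparing the positions where $g_u$ changes sign with the partial sums of the $\langle n^{(q)},v\rangle$ shows that the lower bounds from $T^{(1)}$ and the upper bounds from $T^{(N)}$ leave a nonempty block of interior indices $p$; any such $p$ gives $u-t^{(p)}_{F^+}\in\Delta^\vee$, which proves the claim.

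\emph{Main obstacle.} The only non-formal point is the last step: converting ``$B_u$ separates $\Gamma$'' into simultaneous solvability, over $i\in\hhhind$, of the corner inequalities. The danger is a separating band $B_u$ too thin to contain (or dominate) a full interior triangle, or one that contains an interior triangle but still forces some corner value of $g_u$ above the corresponding value of $g_{t^i_{F^+}}$; both are excluded by the integrality of $\psi_{-F^+}$ (integer values at lattice points) together with the exact ``$+1$'' of Lemma \ref{localconvex}, which pins $S_\Delta^1(-F^+)$ down tightly enough for the estimate to close — a convexity bookkeeping entirely parallel to the proof that $\psi_{F^+}$ is upper convex above.
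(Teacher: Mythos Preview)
Your setup is correct and clean: the reduction to ``for each $u\in S_\Delta^1(-F^+)$ there exists $i$ with $u-t^i_{F^+}\in S_\Delta$'', the identification of $g_u-g_{t^i_{F^+}}$ with the affine function $\langle t^i_{F^+}-u,\cdot\rangle$, and the observation that $u-t^i_{F^+}\in\Delta^\vee$ reduces to the four corner inequalities are all fine. But the proof has a genuine gap at the only nontrivial step, and you say so yourself. In the ``Conclusion'' paragraph the sentence ``comparing the positions where $g_u$ changes sign with the partial sums \dots\ shows that the lower bounds from $T^{(1)}$ and the upper bounds from $T^{(N)}$ leave a nonempty block of interior indices $p$'' is an assertion, not an argument; and the ``Main obstacle'' paragraph does not fill it---invoking ``integrality'' and the ``$+1$'' without an explicit inequality chain is exactly the missing work. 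You also need, and have not established, that the two components of $\{g_u>0\}$ actually contain the corners of $T^{(1)}$ and of $T^{(N)}$ respectively, rather than some other configuration; without this the whole lower-bound/upper-bound dichotomy does not get off the ground.

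The paper avoids this bookkeeping by a different and much shorter maneuver: instead of searching over $i$, it first \emph{normalizes} $u$. Set $m_\varepsilon=\max_j\bigl(\langle u,(j,\varepsilon,1)\rangle-F^+(\varepsilon,j)\bigr)$ for $\varepsilon=0,1$, and let $u'\in S_\Delta$ be the unique element with $\langle u',(x,\varepsilon,1)\rangle=m_\varepsilon$. Then $Z^\circ_{-F^+}(u)\subset Z^\circ_{-F^+}(u-u')$ and $g_{u-u'}$ attains the value $0$ on each horizontal edge of $\Gamma$, so $u-u'$ is still in $S_\Delta^1(-F^+)$ and $g_{u-u'}\ge 0$ on $\Gamma$. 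The two zeros $(x_0,0),(x_1,1)$ must then be the endpoints of an interior edge $l_k$ (the boundary edges are ruled out by two-componentness), and Lemma~\ref{localconvex} forces $g_{u-u'}\equiv 0$ on one of the two adjacent triangles---i.e.\ $u-u'$ is literally one of the $t^i_{F^+}$. This ``subtract the row maxima, then identify'' argument replaces your compatibility-of-bounds estimate by a two-line rigidity statement; I'd recommend switching to it.
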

\begin{proof}
It is enough to check that for any $u\in S^1_\Delta(-F^+)$ there exist $i\in\hhhind$ and $u'\in S_\Delta$ and such that $u=t^i_{F^+}+u'$. 
For $\varepsilon\in\{0,1\}$ we put 
\[
m_\varepsilon:=\max_{0\leq j\leq N_\varepsilon}\left(\langle u,(j,\varepsilon,1)\rangle-F^+(\varepsilon,j)\right)\geq 0.
\]
Let $u'\in S_\Delta$ be the element such that
$\langle u',(x,\varepsilon,1)\rangle=m_\varepsilon$ for any $x$ and $\varepsilon$. 
Note that $Z^\circ_{-F^+}(u)$ has two connected components, $Z^\circ_{-F^+}(u)\subset Z^\circ_{-F^+}(u-u')$ and there exist $x_\varepsilon$'s such that $\langle u-u',(x_\varepsilon,\varepsilon,1)\rangle-F^+(\varepsilon,j)=0$. Thus $Z^\circ_{-F^+}(u-u')$ also has two connected components, that is, $u-u'\in S_\Delta^1(-F^+)$.
The function $\psi_{-F^+}-\langle u-u',*\rangle$ is upper convex and does not take negative values on $\Gamma$. 
So if for some $x_\varepsilon$'s we have $F^+(\varepsilon,j)-\langle u-u',(x_\varepsilon,\varepsilon,1)\rangle=0$, then $(x_0,0)$ and $(x_1,1)$ should be the end points of some edge in $\Gamma_\sigma$. 
Since $u-u'\in S_\Delta^1(-F^+)$, $(x_0,x_1)$ can be neither $(0,0)$ nor $(N_0,N_1)$. 
So $(x_0,0)$ and $(x_1,1)$ is the end points of an edge $l_k$ for some $k\in I$. 
By Lemma \ref{localconvex}, $\psi_{-F^+}$ coincides with $\langle u-u',*\rangle$ on either $T_k$ or $T_{k+1}$ since otherwise $\psi_{-F^+}-\langle u-u',*\rangle$ takes negative values on $\Gamma$. 
Thus the claim follows.
\end{proof}

For a divisor $D$ and a effective divisor $E$, let $1_{D,E}$ be the canonical inclusion $\OO_{\mca{Y}_\sigma}(D)\hookrightarrow\OO_{\mca{Y}_\sigma}(D+E)$.

Let us denote effective divisors
\[
G_i^+=\sum_{k=1}^{i-\h}F^+_k,\quad G_i^-=\sum_{k=i+\h}^{N-1}F^-_k.
\]
Note that $G_i^++G_i^-$ is linearly equivalent to $F^+$ by lemma \ref{lem2} and 
\[
\left(\psi_{G_i^+}+\psi_{G_i^-}\right)\Big|_{T_i} = 0.
\]
Hence we have
\[
\psi_{-F^+}-\langle t^i_{F^+},*\rangle = \psi_{-G_i^+}+\psi_{-G_i^-}.
\]
Let us consider the following sequence:
\[
0\to \OO_{\mca{Y}_\sigma}
\longrightarrow \OO_{\mca{Y}_\sigma}(G_i^+)\oplus \OO_{\mca{Y}_\sigma}(G_i^-)
\longrightarrow \OO_{\mca{Y}_\sigma}(G_i^++G_i^-)\to 0.
\]
Here the first map is given by $1_{0,G_i^+}\oplus (-1_{0,G_i^-})$ and the second map is given by $\left(1_{G_i^+,G_i^-}\right)+\left(1_{G_i^-,G_i^+}\right)$.
\begin{prop}\label{smallseq}
The above sequence is exact and corresponds to the element $f_{t_{F^+}^i}\in H^1({\mca{Y}_\sigma},\OO_{\mca{Y}_\sigma}(-F^+))$. 
\end{prop}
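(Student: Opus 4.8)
The plan is to verify exactness at each of the three spots and then identify the extension class. For exactness, the statement is local on $\mca{X}$, so I would check it on each torus-invariant affine chart $U_\tau$ corresponding to a maximal cone $\tau$ of $\Delta_\sigma$, i.e.\ a triangle $T_j$. On such a chart each $\OO_{\mca{Y}_\sigma}(D)$ is free of rank one, with a preferred generator $e_{u^\tau_D}$ where $u^\tau_D\in M$ is the linear functional agreeing with $\psi_{-D}$ on $T_j$; the inclusion maps $1_{D,E}$ become multiplication by the monomial $e_{u^\tau_D-u^\tau_{D+E}}\in R$. So on $U_\tau$ the sequence reduces to a Koszul-type sequence
\[
0\to R\xrightarrow{\;(a,\,-b)\;} R\oplus R\xrightarrow{\;(b,\,a)\;} R\to 0
\]
for suitable monomials $a,b$ (the local equations of $G_i^+$ and $G_i^-$ on that chart). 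This is exact precisely when at least one of $a,b$ is a unit, i.e.\ when $T_j$ lies on the $G_i^+$-side or the $G_i^-$-side and the corresponding support function vanishes there. The key input is the computation just above the proposition: $\psi_{-G_i^+}+\psi_{-G_i^-}=\psi_{-F^+}-\langle t^i_{F^+},*\rangle$, together with the convexity of $\psi_{-F^+}$ established earlier, which forces, for every triangle $T_j$ with $j\ne i$, exactly one of $\psi_{-G_i^+}$, $\psi_{-G_i^-}$ to vanish on $T_j$ (and on $T_i$ both vanish). Hence on every chart at least one of $a,b$ is a unit and the sequence is exact there; injectivity of the first map and surjectivity of the second are then immediate, and middle exactness is the Koszul relation.

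Next I would identify the class of the extension in $\mr{Ext}^1_{\OO_{\mca{Y}_\sigma}}(\OO_{\mca{Y}_\sigma}(G_i^++G_i^-),\OO_{\mca{Y}_\sigma})\simeq H^1(\mca{Y}_\sigma,\OO_{\mca{Y}_\sigma}(-F^+))$, using the isomorphism $G_i^++G_i^-\sim F^+$ from Lemma \ref{lem2}. I would compute the connecting map via \v{C}ech cohomology for the affine cover $\{U_j=U_{T_j}\}$: restricting the sequence to each $U_j$, the middle term splits, and the difference of the two splittings on overlaps $U_j\cap U_{j'}$ gives the \v{C}ech $1$-cocycle representing the class. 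Using the monomial descriptions above, this cocycle is supported exactly on the overlaps separating the ``$G_i^+$-region'' of the cover from the ``$G_i^-$-region'', which (by the convexity argument) is precisely the locus where $\psi_{-F^+}$ agrees with the linear functional $\langle t^i_{F^+},*\rangle$ on the two components of $Z^\circ_{-F^+}(t^i_{F^+})$ — matching the combinatorial description of $f_{t^i_{F^+}}$ under the identification \eqref{eq-h1}. So one checks the cocycle lands in the $u=t^i_{F^+}$ graded piece and is nonzero there, hence equals $f_{t^i_{F^+}}$ up to a nonzero scalar; one may normalize the generator to make it exactly $f_{t^i_{F^+}}$.

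The main obstacle I expect is the bookkeeping in the second part: translating the geometric splittings on the affine charts into the explicit \v{C}ech cocycle and matching it against the combinatorial generator $f_{t^i_{F^+}}$ of $H^1$. The exactness in the first part is essentially formal once the convexity statement is in hand — it is a chart-by-chart Koszul computation — but pinning down the extension class requires care about which overlaps contribute and about the signs built into the maps $1_{0,G_i^+}\oplus(-1_{0,G_i^-})$ and $(1_{G_i^+,G_i^-})+(1_{G_i^-,G_i^+})$. A clean way to organize this is to note that the two ``regions'' of the cover are exactly the two connected components of $\mca{Y}_\sigma\setminus Z_D(t^i_{F^+})$-type loci, so the cocycle is the image of $1$ under the boundary map $H^0(\text{overlap})\to H^1$ for the Mayer–Vietoris-type decomposition, which is by construction the generator coming from the two-component locus. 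I would present this last identification as the crux and keep the chart computations terse.
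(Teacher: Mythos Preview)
Your approach is essentially the paper's: restrict to each affine chart $U_j$ corresponding to a triangle $T_j$, reduce the sequence to a Koszul-type sequence of free rank-one modules in which one of the two monomials is a unit, and then identify the extension class by the \v{C}ech argument of \cite[\S 3.5]{fulton-toric}.

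One point to sharpen: the vanishing of one of $\psi_{G_i^+}$, $\psi_{G_i^-}$ on each $T_j$ does not follow from the convexity of $\psi_{-F^+}$ as you suggest --- convexity of the sum $\psi_{-G_i^+}+\psi_{-G_i^-}$ tells you nothing about the individual summands. The paper instead observes directly from the definitions of $G_i^\pm$ (as sums of $F_k^+$ for $k<i$ and $F_k^-$ for $k>i$ respectively) that $\psi_{G_i^-}$ vanishes at every vertex of $T_j$ for $j\le i$ and $\psi_{G_i^+}$ vanishes at every vertex of $T_j$ for $j\ge i$. This is the cleaner justification: it immediately makes one of your monomials $a,b$ equal to $1$ on each chart, so the local sequence is split exact. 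With that replacement your argument goes through and matches the paper's.
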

\begin{proof}
Let $U_j\simeq \C^3=\mr{Spec}(\C[x_j,y_j.z_j])$ be the affine chart corresponding to the triangle $T_j$. 
It is enough to show that the sequence is exact on each $U_j$. 
Let $P_a$ ($a=x,y,z$) be the vertices of $T_j$. Then we have
\begin{itemize}
\item for $j<i$, $\psi_{G_i^+}(P_a)=\psi_{G_i^++G_i^-}(P_a)(=:d_{j,a})$ and $\psi_{G_i^-}(P_a)=0$,
\item for $j=i$, $\psi_{G_i^+}(P_a)=\psi_{G_i^++G_i^-}(P_a)=\psi_{G_i^-}(P_a)=0$,
\item for $j<i$, $\psi_{G_i^-}(P_a)=\psi_{G_i^++G_i^-}(P_a)(=:d_{j,a})$ and $\psi_{G_i^+}(P_a)=0$.
\end{itemize}
The sequence in the claim is restricted on $U_j$ to the following sequence of $\C[x_j,y_j.z_j]$-modules:
\[
0\to (0,0,0)\overset{1\oplus (-1)}{\longrightarrow} (0,0,0)\oplus (d_{j,x},d_{j,y},d_{j,z})\overset{1+1}{\longrightarrow}(d_{j,x},d_{j,y},d_{j,z})\to0.
\]
Here $(0,0,0)$ (resp. $(d_{j,x},d_{j,y},d_{j,z})$) is spanned by 
\[
\{x^ay^bz^c\mid a,b,c\geq 0\}\quad (\text{resp.}\ \{x^ay^bz^c\mid a\geq -d_{j,x},b\geq -d_{j,y},c\geq -d_{j,z} \}
)
\]
as a vector space and $1$ is the map which maps $x^ay^bz^c$ to $x^ay^bz^c$.
We can verify this is exact.
The corresponding element in $H^1(\mca{Y}_\sigma,\OO_{\mca{Y}_\sigma}(-F^+))$ can be checked by the \v{C}ech argument in \cite[\S 3.5]{fulton-toric}.
\end{proof}
For $k\in I$ and $i\in\left\{\frac{3}{2},\ldots,N-\frac{3}{2}\right\}$ we define divisors
\[
F^\Delta_k:=F^+_k-F^-_k,\quad H_i:=\sum_{k=1}^{i-\h}F^\Delta_k,\quad I_k:=H_{k-\h}+F^+_{k},
\]
and the exact sequence
\[
0\to\bigoplus_{i\in\hhhind}\OO_{\mca{Y}_\sigma}(H_i)\to \bigoplus_{k\in I}\OO_{\mca{Y}_\sigma}(I_k)\to \OO_{\mca{Y}_\sigma}(F^+)\to 0. 
\]
The first map the sum of compositions of the maps
\[
\left(1_{H_i,F^-_{i-\h}}\oplus-1_{H_i,F^+_{i+\h}}\right)\colon\OO_{\mca{Y}_\sigma}(H_i)\longrightarrow\OO_{\mca{Y}_\sigma}(I_{i-\h})\oplus\OO_{\mca{Y}_\sigma}(I_{i+\h})
\]
and the canonical inclusions
\[
\OO_{\mca{Y}_\sigma}(I_{i-\h})\oplus\OO_{\mca{Y}_\sigma}(I_{i+\h})\hookrightarrow\bigoplus_{k\in I}\OO_{\mca{Y}_\sigma}(I_k).
\]
The second map is the sum of $1_{I_k,F^+-I_k}$'s. 
\begin{prop}\label{univseq}
The above sequence gives the universal extension corresponding to the set $\{f_{t^i_{F^+}}\mid i\in\hhhind\}$ of generators of $H^1({\mca{Y}_\sigma},\OO_{\mca{Y}_\sigma}(-F^+))$ as $H^0({\mca{Y}_\sigma},\OO_{\mca{Y}_\sigma})$-module.  
\end{prop}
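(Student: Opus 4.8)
Throughout write $(\star)$ for the displayed sequence. Since each $F^\Delta_k=F^+_k-F^-_k$ is linearly trivial by Lemma \ref{lem2}(2), so is every $H_i$ and every $I_k-F^+_k$; hence $\OO_{\ys}(H_i)\simeq\OO_{\ys}$ and $\OO_{\ys}(I_k)\simeq L_k=\OO_{\ys}(F^+_k)$, so $(\star)$ is, up to isomorphism, an extension $0\to\OO_{\ys}^{\oplus(N-2)}\to\bigoplus_{k\in I}L_k\to\OO_{\ys}(F^+)\to 0$. The plan is to check (i) that $(\star)$ is exact, and (ii) that, under the resulting identification
\[
\Ext^1\!\Big(\OO_{\ys}(F^+),\ \bigoplus_{i\in\hhhind}\OO_{\ys}(H_i)\Big)\ \simeq\ \bigoplus_{i\in\hhhind}H^1\big(\ys,\OO_{\ys}(-F^+)\big),
\]
the class of $(\star)$ has $i$-th component $f_{t^i_{F^+}}$ for every $i\in\hhhind$. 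Together with the previous proposition (that the $f_{t^i_{F^+}}$, $i\in\hhhind$, generate $H^1(\ys,\OO_{\ys}(-F^+))$ as an $H^0(\ys,\OO_{\ys})$-module), this is precisely the assertion that $(\star)$ is the associated universal extension. Conceptually, $(\star)$ is the Baer sum of the $N-2$ sequences of Proposition \ref{smallseq}, one per $i\in\hhhind$: its middle term is the iterated fibre product over $\OO_{\ys}(F^+)$ of the bundles $\OO_{\ys}(G_i^+)\oplus\OO_{\ys}(G_i^-)$, and the content of the proposition is that, after using the linear equivalences above, this fibre product of line bundles is isomorphic---compatibly with all the maps---to the direct sum $\bigoplus_{k\in I}\OO_{\ys}(I_k)$.

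For (i) I would restrict $(\star)$ to each affine chart $U_j\simeq\C^3$ attached to a triangle $T_j$ ($j\in\tI$) and argue exactly as in the proof of Proposition \ref{smallseq}: each $\OO_{\ys}(D)$ restricts to a free rank-one module on the monomial read off from the values of $\psi_D$ at the three vertices of $T_j$, and each $1_{D,E}$ restricts to multiplication by the monomial cutting out $E$. First, $(\star)$ is a complex: on the summand $\OO_{\ys}(H_i)$ both branches of the composite equal the canonical inclusion $1_{H_i,F^+-H_i}$ (canonical inclusions compose canonically), using $I_{i-\h}=H_i+F^-_{i-\h}$ and $I_{i+\h}=H_i+F^+_{i+\h}$, and the two branches enter with opposite signs. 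The first map is injective because, for $i\in\hhhind$, the indices $i\mp\h$ sweep out $I$ in an incidence pattern which, after a suitable reordering of the summands, is triangular with nonvanishing entries on the diagonal. Surjectivity of the second map is the statement that on each $T_j$ there is a (unique) $k\in I$ with $\psi_{I_k}\equiv\psi_{F^+}$, so that $1_{I_k,F^+-I_k}|_{U_j}$ is an isomorphism; this is where Lemma \ref{localconvex}, together with the parallelogram/median-triangle dichotomy already used for the global generation of $L$, enters, since it pins down $\psi_{F^+}$ and the $\psi_{F^+_k}$---hence the $\psi_{I_k}$---on $T_j$ explicitly. Exactness in the middle then follows from the rank count $0\to N-2\to N-1\to 1\to 0$ together with a direct monomial inspection on each $U_j$ showing that the middle homology, being a subquotient of a free module over the domain $\C[x_j,y_j,z_j]$, has no associated primes and hence vanishes.

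Part (ii), which I expect to be the main obstacle, I would carry out by the \v{C}ech computation with respect to the cover $\{U_j\}_{j\in\tI}$---the one invoked at the end of the proof of Proposition \ref{smallseq}, following \cite[\S 3.5]{fulton-toric}. Lifting a local generator of $\OO_{\ys}(F^+)$ over each $U_j$ to $\bigoplus_{k\in I}\OO_{\ys}(I_k)$, forming the differences over the overlaps $U_j\cap U_{j'}$, projecting to the $i$-th summand of $\bigoplus_{i}\OO_{\ys}(H_i)$ and twisting by $\OO_{\ys}(-H_i)\simeq\OO_{\ys}$ produces a \v{C}ech $1$-cocycle in $H^1(\ys,\OO_{\ys}(-F^+))$ whose support is concentrated exactly where $Z^\circ_{-F^+}(t^i_{F^+})$ breaks into its two connected components---i.e. on the triangles lying on the two sides of $T_i$. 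Comparing this with the description of $f_{t^i_{F^+}}$ in terms of the two components of $Z^\circ_{-F^+}(t^i_{F^+})$ recalled before Proposition \ref{smallseq} identifies the cocycle with a nonzero multiple of $f_{t^i_{F^+}}$. The delicate part is the divisor bookkeeping with $H_i$, $I_k$, $F^\pm_k$ and $G_i^\pm$ needed to write the lift down explicitly; once that is in place, the comparison reduces, chart by chart, to the monomial identities already used in (i) and in Proposition \ref{smallseq}.
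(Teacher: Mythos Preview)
Your plan is sound in outline, but the paper takes a much shorter route that bypasses your entire \v{C}ech computation (ii). Instead of lifting local sections and chasing cocycles, the paper simply observes that for each fixed $i\in\hhhind$ the pushout of $(\star)$ along the projection $\bigoplus_j\OO_{\ys}(H_j)\to\OO_{\ys}(H_i)$, i.e.\ the quotient sequence
\[
0\to \OO_{\ys}(H_i)\to \Bigl(\bigoplus_{k\in I}\OO_{\ys}(I_k)\Bigr)\Big/\Bigl(\bigoplus_{j\neq i}\OO_{\ys}(H_j)\Bigr)\to \OO_{\ys}(F^+)\to 0,
\]
is isomorphic to the sequence of Proposition~\ref{smallseq}. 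Since the $i$-th component of the extension class of $(\star)$ is by definition the class of this pushout, this immediately gives (ii). The identification itself is a direct check: the maps $\OO_{\ys}(H_j)\to\OO_{\ys}(I_{j-\h})\oplus\OO_{\ys}(I_{j+\h})$ form a chain, and deleting the $i$-th link leaves two telescoping pieces whose quotients are $\OO_{\ys}(G_i^+)$ and $\OO_{\ys}(G_i^-)$ respectively (using $I_{i-\h}=H_i+F^-_{i-\h}$, $I_{i+\h}=H_i+F^+_{i+\h}$, and the linear triviality of the $F^\Delta_k$). This also gives exactness of $(\star)$ for free: if every such quotient sequence is short exact then so is $(\star)$, by an elementary argument (injectivity of each $B_i\to E/\bigoplus_{j\neq i}B_j$ forces $\ker(\bigoplus_j B_j\to E)\subset\bigcap_i\bigoplus_{j\neq i}B_j=0$, and middle exactness follows similarly). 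So your chart-by-chart verification in (i) is not needed either.

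Two smaller corrections. First, $(\star)$ is not the Baer sum of the Proposition~\ref{smallseq} extensions: a Baer sum would yield a single extension by $\OO_{\ys}$ with class $\sum_i f_{t^i_{F^+}}$, whereas the universal extension has class $(f_{t^i_{F^+}})_i\in\bigoplus_i\Ext^1(\OO_{\ys}(F^+),\OO_{\ys})$; your fibre-product description is the right one, only the name is wrong. Second, your middle-exactness step ``the middle homology \dots\ has no associated primes and hence vanishes'' is invalid: every nonzero finitely generated module over a Noetherian ring has associated primes, and a subquotient of a free module need not be torsion-free. The rank count only tells you the middle homology is torsion; to conclude it vanishes you would still need the explicit monomial check, or the paper's quotient argument above.
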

\begin{proof}
For $i\in\hhhind$ we can check the sequence
\[
0\to \OO_{\mca{Y}_\sigma}(H_i)\to \left(\oplus_{k}\OO_{\mca{Y}_\sigma}(I_k)\right)\big/\left(\oplus_{j\neq i}\OO_{\mca{Y}_\sigma}(H_j)\right)\to \OO_{\mca{Y}_\sigma}(F^+)\to 0 
\]
is isomorphic to the exact sequence in Proposition \ref{smallseq}.
\end{proof}

Now we have the following theorem:
\begin{thm}
The direct sum $\OO_{\mca{Y}_\sigma}\oplus\bigoplus_{k\in I}L_k$ is a projective generator of ${}^{-1}\pervs$.
\end{thm}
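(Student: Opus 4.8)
The plan is to verify the three defining properties of a projective generator of the abelian category ${}^{-1}\pervs$ of perverse coherent sheaves: namely that $T:=\OO_{\mca{Y}_\sigma}\oplus\bigoplus_{k\in I}L_k$ is (i) an object of ${}^{-1}\pervs$, (ii) projective in that category, and (iii) a generator. Recall that, following Bridgeland and Van den Bergh, ${}^{-1}\mr{Per}(\mca{Y}_\sigma/\mca{X})$ consists of complexes concentrated in degrees $-1,0$ whose cohomology sheaves satisfy the appropriate vanishing of $f_{\sigma,*}$ and of $\mr{Hom}$ against sheaves supported on the exceptional fibre; a line bundle $L$ sits in (the heart shifted appropriately of) this category exactly when $R^1f_{\sigma,*}(L^{-1})=0$, or dually when $L$ is generated by global sections fibrewise on $\mca{X}$ together with $R^{>0}f_{\sigma,*}L=0$. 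Since $f_\sigma$ is crepant and small, $\mca{Y}_\sigma$ has rational singularities upstairs is automatic, and the cohomological computations reduce to the combinatorics of the support functions $\psi_D$ already set up in the excerpt.

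First I would record that each $L_k=\OO_{\mca{Y}_\sigma}(F_k^+)\simeq\OO_{\mca{Y}_\sigma}(F_k^-)$, and more relevantly $L=\bigotimes_k L_k=\OO_{\mca{Y}_\sigma}(F^+)$, is globally generated: this is exactly the Proposition proved above via upper-convexity of $\psi_{F^+}$, and the same argument applied to each $F_k^+$ (whose support function is likewise upper convex, being a partial sum of the $E_j^+$) shows each $L_k$ is globally generated. This gives membership in ${}^{-1}\pervs$ once we also know $R^1f_{\sigma,*}L_k^{-1}=0$ for the relevant bundles; the one nonvanishing piece is $H^1(\mca{Y}_\sigma,\OO_{\mca{Y}_\sigma}(-F^+))$, computed in \eqref{eq-h1}, and the point of Propositions \ref{smallseq} and \ref{univseq} is precisely to resolve it away by the universal extension
\[
0\to\bigoplus_{i\in\hhhind}\OO_{\mca{Y}_\sigma}(H_i)\to\bigoplus_{k\in I}\OO_{\mca{Y}_\sigma}(I_k)\to\OO_{\mca{Y}_\sigma}(F^+)\to 0.
\]
So the strategy for projectivity is: show $\mca{E}:=\OO_{\mca{Y}_\sigma}\oplus\bigoplus_k L_k$ has no higher self-extensions in the perverse category, i.e. $\Ext^{>0}_{\mca{Y}_\sigma}(\mca{E},\mca{E}\otimes(\text{perverse stuff}))$ vanishes in the relevant range; concretely one checks $\mr{Ext}^i_{\mca{Y}_\sigma}(\mca{E},\mca{E})=0$ for $i>0$ and that $f_{\sigma,*}\vHom(\mca{E},\mca{E})$ is acyclic in positive degrees, using the resolution above together with the vanishing of $H^{>0}(\OO(I_k-F_l^+))$ etc., all reducible to counting connected components of $Z^\circ_D(u)$ as in the text.

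For the generator property I would argue that $\mca{Y}_\sigma$ together with $\mca{E}$ reproduces Van den Bergh's tilting bundle: the exact sequences constructed give, dually, all the exceptional divisors' structure sheaves $\OO_{D_{\varepsilon,x}}$ and more generally the simple perverse objects supported on fibres as iterated cones of the summands of $\mca{E}$, so no proper subcategory of ${}^{-1}\pervs$ closed under the operations is stable under $\mr{Hom}(\mca{E},-)$ unless it is everything; equivalently $\mca{E}^\perp=0$ in $D^b(\mca{Y}_\sigma)$. I expect the main obstacle to be the projectivity/Ext-vanishing step: while each individual cohomology-vanishing statement is a finite combinatorial check on support functions over the charts $U_j$, assembling them uniformly over all partitions $\sigma$ and all pairs of summands $L_k,L_l$ — in particular controlling $H^1(\mca{Y}_\sigma,\OO_{\mca{Y}_\sigma}(F_k^+-F_l^+))$ and verifying it is killed after passing to the perverse heart via the universal extension of Proposition \ref{univseq} — is where the real work lies; the globally-generated and generator parts follow fairly directly from the propositions already established. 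The cleanest route may be to invoke Van den Bergh's criterion (\cite{vandenbergh-3d}): a vector bundle $\mca{E}$ with $\OO\subset\mca{E}$, globally generated summands, and $R^{>0}f_{\sigma,*}\vEnd(\mca{E})=0$ is a tilting generator whose endomorphism algebra gives the noncommutative crepant resolution, and then identify its module category with ${}^{-1}\pervs$ by the dimension/$t$-structure matching of Bridgeland-Van den Bergh.
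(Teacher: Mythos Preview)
Your proposal contains the right ingredients but does not assemble them in the way the paper does, and as a result you are setting yourself up for far more work than is needed. The paper's entire proof is one sentence: it invokes \cite[Proposition 3.2.5]{vandenbergh-3d} together with Proposition~\ref{univseq}. The point is that Van den Bergh's Proposition 3.2.5 is not the criterion you state at the end (globally generated summands plus $R^{>0}f_{\sigma,*}\vEnd(\mca{E})=0$); rather, it says that if $L$ is a line bundle generated by global sections, and $M$ is the \emph{universal extension} of $L$ by copies of $\OO$ corresponding to a set of $R$-module generators of $H^1(\mca{Y}_\sigma,L^{-1})$, then $\OO\oplus M$ is automatically a projective generator of ${}^{-1}\pervs$. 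No separate Ext-vanishing or generator check is required --- those are built into Van den Bergh's argument.

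The only thing left to observe is that the middle term $\bigoplus_{k\in I}\OO_{\mca{Y}_\sigma}(I_k)$ of the universal extension in Proposition~\ref{univseq} is isomorphic to $\bigoplus_{k\in I}L_k$: indeed $I_k=H_{k-\h}+F_k^+$ and each $H_i=\sum F_k^\Delta$ is principal by Lemma~\ref{lem2}(2), so $\OO(I_k)\simeq\OO(F_k^+)=L_k$. Thus $\bigoplus_{k\in I}L_k$ \emph{is} the $M$ of Van den Bergh's construction (for $L=\OO(F^+)$, already shown globally generated), and the theorem follows immediately. Your plan to verify $\Ext$-vanishing pairwise between the $L_k$'s, to control $H^1(\OO(F_k^+-F_l^+))$ uniformly in $\sigma$, and to check $\mca{E}^\perp=0$ by producing simple perverse objects, is all unnecessary once you recognise that Proposition~\ref{univseq} has done the structural work for you.
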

\begin{proof}
This claim follows from \cite[Proposition 3.2.5]{vandenbergh-3d} and Proposition \ref{univseq}.
\end{proof}

\subsection{Crepant resolutions as moduli spaces}\label{as_moduli}
We will associate $\sigma$ with a quiver with superpotential $A_{\sigma}=(Q_{\sigma},\omega_{\sigma})$.
The set of vertices of the quiver $Q_{\sigma}$ is $\hI$, which is identified with $\Z/N\Z$. 
The set of edges of the quiver $Q_{\sigma}$ is given by
\[
H:=\left(\coprod_{i\in\hind}h^+_i\right)\sqcup\left(\coprod_{i\in\hind}h^-_i\right)\sqcup\left(\coprod_{k\in \hI _r}r_k\right).
\]
Here $h^+_i$ (resp. $h^-_i$) is an edge from $i-\h$ to $i+\h$ (resp. from $i+\h$ to $i-\h$), $r_k$ is an edge from $k$ to itself and
\[
\hI _r:=\left\{k\in \hI\,\Big|\,\sigma_y(k-\h)=\sigma_y(k+\h)\right\}.
\]
The relation is given as follows:
\begin{itemize}
\item $h^+_i\circ r_{i-\h}=r_{i+\h}\circ h^+_i$ and $r_{i-\h}\circ h^-_i=h^-_i\circ r_{i+\h}$ for $i\in \hind$ such that $i-\h$, $i+\h\in \hI _r$.
\item $h^+_i\circ r_{i-\h}=h^-_{i+1}\circ h^+_{i+1}\circ h^+_i$ and $r_{i-\h}\circ h^-_i=h^-_i\circ h^-_{i+1}\circ h^+_{i+1}$ for $i\in \hind$ such that $i-\h\in \hI _r$, $i+\h\notin \hI _r$.
\item $h^+_i\circ h^+_{i-1}\circ h^-_{i-1}=r_{i+\h}\circ h^+_i$ and $h^+_{i-1}\circ h^-_{i-1}\circ h^-_i=h^-_i\circ r_{i+\h}$ for $i\in \hind$ such that $i-\h\notin \hI _r$, $i+\h\in \hI _r$.
\item $h^+_i\circ h^+_{i-1}\circ h^-_{i-1}=h^-_{i+1}\circ h^+_{i+1}\circ h^+_i$ and $h^+_{i-1}\circ h^-_{i-1}\circ h^-_i=h^-_i\circ h^-_{i+1}\circ h^+_{i+1}$ for $i\in \hind$ such that $i-\h$, $i+\h\notin \hI _r$.
\item $h^+_{i-\h}\circ h^-_{i-\h}=h^-_{i+\h}\circ h^+_{i+\h}$ for $k\in \hI _r$.
\end{itemize}

This quiver is derived from the following bipartite graph on a $2$-dimensional torus.
Let $S$ be the union of infinite number of rhombi with edge length $1$ as in Figure \ref{fig:S} which is located so that the centers of the rhombi are on a line parallel to the $x$-axis in $\R^2$  and $H$ be the union of infinite number of hexagons with edge length $1$ as in Figure \ref{fig:H} which is located so that the centers of the hexagons are in a line parallel to the $x$-axis in $\R^2$.
\begin{figure}[htbp]
  \centering
  \input{4gon-2.tpc}
  \caption{S}
  \label{fig:S}
\end{figure}
\begin{figure}[htbp]
  \centering
  \input{6gon.tpc}
  \caption{H}
  \label{fig:H}
\end{figure}
We make the sequence $\Z\to \{S,H\}$ which maps $l$ to $S$ (resp. $H$) if $l$ module $N$ is not in $\hI _r$ (resp. is in $\hI _r$) and cover the whole plane $\R^2$ by arranging $S$'s and $H$'s according to this sequence (see Figure \ref{fig:P}). 
We regard this as a graph on the $2$-dimensional torus $\R^2/\Lambda$, where $\Lambda$ is the lattice generated by $(\sqrt{3},0)$ and $(N_0-N_1,(N_0-N_1)\sqrt{3}+N_1)$.
\begin{figure}[htbp]
  \centering
  \input{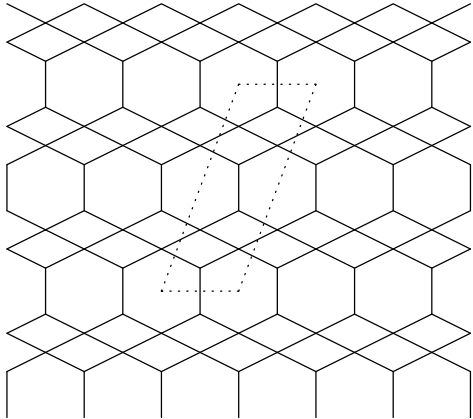}
  \caption{$P_\sigma$ in case Example \ref{example}}
  \label{fig:P}
\end{figure}
We can colored the vertices of this graph by black or white so that each edge connect a black vertex and a white one. 
Let $P_{\sigma}$ denote this bipartite graph on the torus. 
For each edge $h^\vee$ in $P_{\sigma}$, we make its dual edge $h$ directed so that we see the black end of $h^\vee$ on our right hand side when we cross $h^\vee$ along $h$ in the given direction. 
The resulting quiver coincides with $Q_{\sigma}$. 
For each vertex $q$ of $P_{\sigma}$, let $\omega_q$ be the superpotential\footnote{A superpotential of a quiver $Q$ is an element in $\C Q/[\C Q,\C Q]$, i.e. a linear combination of equivalent classes of cyclic paths in $Q$ where two paths are equivalent if they coincide after a cyclic rotation.} which is the composition of all arrows in $Q_{\sigma}$ corresponding to edges in $P_{\sigma}$ with $q$ as their ends.
We define 
\[
\omega_\sigma:=\sum_{\text{$q$ : black}}\omega_q-\sum_{\text{$q$ : white}}\omega_q.
\]
\begin{rem}\label{lem_consistency}
Take a polygon in the plane labeled by $k\in \hat{I}$.
Then, we have the bijection between the set of equivalent classes of paths starting from $k$ in $A_\sigma$ and the direct product of the set of polygons in the plane and $\Z_{\geq 0}$. 
See \cite[\S 4]{ncdt-brane}.
\end{rem}

Put $\delta=(1,\ldots,1)=\Z^{\hI}$ and take a stability parameter $\theta\in \Hom(\Z^{\hI},\R)\simeq \R^{\hI}$ so that $\theta(\delta)=0$. 
\begin{defn}
An $A_\sigma$-module $V$ is $\theta$-(semi)stable if for any nonzero submodule $0\neq S\subsetneq V$ we have $\theta(\dimv S)\,(\leq)\,0$.
\end{defn}
By \cite{king} such a stability condition coincides with a stability condition in geometric invariant theory and we can define the moduli space $\sM{\theta}{\delta}$ of $\theta$-semistable $A_\sigma$-modules $V$ such that $\dimv\,V=\delta$.
A stability parameter $\theta$ is said to be generic if $\theta$-semistability and $\theta$-stability are equivalent to each other.
\begin{thm}[\protect{\cite[Theorem 6.4]{ishii-ueda}}]
For a generic stability parameter $\theta$, the moduli space $\sM{\theta}{\delta}$ is a crepant resolution of $\mca{X}$.
\end{thm}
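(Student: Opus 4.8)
This is \cite[Theorem 6.4]{ishii-ueda}; I outline the strategy of its proof, which is the standard recipe for realising crepant resolutions as moduli of quiver representations, specialised to the dimer model $P_\sigma$. Write $\as$ for the Jacobi algebra $\C\qs/(\partial\os)$ and $G=\prod_{i\in\hI}\C^\times$ for the group acting on $\mathrm{Rep}(\qs,\delta)$ (its diagonal torus acts trivially).

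The plan proceeds in four steps. First, at the wall $\theta=0$ every module is semistable, so $\sM{0}{\delta}$ is the affine GIT quotient $\mathrm{Spec}\bigl(\C[\mathrm{Rep}(\as,\delta)]^{G}\bigr)$; its coordinate ring is generated by traces of cyclic paths in $\as$, and the path combinatorics of Remark \ref{lem_consistency} (the consistency of $P_\sigma$) identifies these traces and their relations with the generators $X,Y,Z,W$ and the relation $XY=Z^{N_1}W^{N_0}$ defining $R$, so $\sM{0}{\delta}\simeq\mca{X}$. Second, for a generic $\theta$ with $\theta(\delta)=0$, $\theta$-semistability and $\theta$-stability coincide, so $\sM{\theta}{\delta}$ is a fine moduli space and the variation-of-GIT morphism $\pi\colon\sM{\theta}{\delta}\to\sM{0}{\delta}=\mca{X}$ is projective; over the dense torus of $\mca{X}$ the corresponding $\as$-module of dimension vector $\delta$ is simple, hence $\theta$-stable for every $\theta$, so $\pi$ is an isomorphism there and therefore birational, with $\sM{\theta}{\delta}$ irreducible of dimension $3$.

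The third step, smoothness, is the heart of the matter. The consistency recorded in Remark \ref{lem_consistency} implies that $\as$ is a $3$-Calabi--Yau algebra of global dimension $3$; for a $\theta$-stable $V$ one has $\End_{\as}(V)=\C$, so by Calabi--Yau duality $\Ext^{\bullet}_{\as}(V,V)$ equals $\C,\ \Ext^1_{\as}(V,V),\ \Ext^1_{\as}(V,V)^{\vee},\ \C$ in degrees $0,1,2,3$, and $\sM{\theta}{\delta}$ is, formally near $[V]$, the critical locus of a function on $\Ext^1_{\as}(V,V)$. One then invokes the $(\C^\times)^{3}$-action on $\sM{\theta}{\delta}$ coming from the torus acting on the arrows of $\qs$: this reduces smoothness to an analysis of the torus-fixed charts, which are read off from the perfect matchings of $P_\sigma$, and consistency forces every such chart to be isomorphic to $\C^{3}$. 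The fourth step, crepancy, is then formal: $\sM{\theta}{\delta}$ is a smooth toric variety whose fan subdivides $\Delta$ using only rays through lattice points of $\Gamma$, i.e.\ rays in $\{z=1\}$, so all discrepancies vanish (equivalently, the tautological bundle is a tilting bundle with endomorphism algebra the non-commutative crepant resolution $\as$ of $R$, and crepancy follows from Van den Bergh's argument).

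I expect the main obstacle to be this smoothness step: showing that for generic $\theta$ the formal superpotential controlling the deformations of each $\theta$-stable $V$ becomes, after a formal change of coordinates, a non-degenerate quadratic form together with three free variables — equivalently, that every torus-fixed chart of $\sM{\theta}{\delta}$ is an affine $3$-space. This is precisely where the consistency of the dimer model is indispensable, since without it the moduli space need be neither smooth nor irreducible.
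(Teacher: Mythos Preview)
Your outline is a reasonable sketch of how Ishii--Ueda's theorem is actually proved, but it is a genuinely different approach from what the paper does. The paper does not reprove the cited result at all: its entire proof consists of two sentences, namely (i) verifying that the hypotheses of \cite[Theorem~6.4]{ishii-ueda} are satisfied by the dimer model $P_\sigma$, and (ii) checking that the convex hull of the height charges (in the sense of \cite[\S 2]{ishii-ueda}) coincides with the cone $\Delta$, so that the resolution produced by the cited theorem is indeed a resolution of the intended $\mca{X}$. In other words, the paper treats the theorem as a black box and only does the bookkeeping needed to apply it.

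What each approach buys: the paper's route is short and honest about its dependence on \cite{ishii-ueda}; your route is self-contained in spirit and makes visible where the consistency of $P_\sigma$ is used (the $3$-Calabi--Yau property and the perfect-matching charts). Your sketch is broadly faithful to the Ishii--Ueda strategy, though the transition in your third step from the critical-locus description of the local deformation space to the assertion that each torus-fixed chart is $\C^3$ is where all the work lies, and you have flagged this yourself. If you want to align with the paper, you should replace your outline by the two verifications above; if you want to keep your outline, be aware that you are effectively reproducing the content of the cited reference rather than the paper's argument.
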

\begin{proof}
We can verify easily that the assumptions in \cite[Theorem6.4]{ishii-ueda} are satisfied. 
We can also check that the convex hull of height charges (see \cite[\S 2]{ishii-ueda}) is equivalent to $\Delta$ (see the description of divisors before Theorem \ref{generator} for example). 
\end{proof}
Let $T\subset \sM{\theta}{\delta}$ be the open subset consisting of representations $t$ such that $t(h^\pm_{i})\neq 0$ for any $i\in \tI$ and $t(r_k)\neq 0$ for any $k\in \hI _r$. Then $T$ is the $3$-dimensional torus acting on $\sM{\theta}{\delta}$ by edge-wise multiplications (see \cite[\S 3 and Proposition 5.1]{ishii-ueda}).
We define the map $f\colon T\to \C^*$ by $f(t)=t(e_r)\circ \cdots \circ t(e_1)$ where $e_r\cdot\cdots\cdot e_1$ is a representative of the superpotential $w_q$ for a vertex $q$ in $P_\sigma$ ($f(t)$ does not depend on the choice of the vertex $q$).
We put $T':=f^{-1}(1)\subset T$. 
This is a $3$-dimensional subtorus of $T$.


\subsection{Description at a specific parameter}\label{specific-parameter}
Let $\theta_0$ be a stability parameter so that $\theta_0(\delta)=0$ and $(\theta_0)_k<0$ for any $k\neq 0$.  

For $i\in\hind$ let $p_i\in\sM{\theta_0}{\delta}$ be the representation such that 
\[
p_i(h^+_{j})=\begin{cases}1 & (j<i),\\0 & (j\geq i),\end{cases}\quad
p_i(h^-_{j})=\begin{cases}1 & (j>i),\\0 & (j\leq i),\end{cases}\quad
p_i(r_k)=0.
\]
This is fixed by the torus action. 
We can take a coordinate $(x_i,y_i,z_i)$ on the neighborhood $U_i$ of $p_i$ in $\sM{\theta_0}{\delta}$ such that the representation $v[x_i,y_i,z_i]$ with the coordinate $(x_i,y_i,z_i)$ is give by  
\begin{align*}
v[x_i,y_i,z_i](h^+_{j})&=1\quad (j< i),\\
v[x_i,y_i,z_i](h^-_{j})&=1\quad (j> i),\\
v[x_i,y_i,z_i](h^+_{i})&=x,\\
v[x_i,y_i,z_i](h^-_{i})&=y,\\
v[x_i,y_i,z_i](q^-_{i-\h})&=v[x_i,y_i,z_i](q^+_{i+\h})=z,
\end{align*}
where
\[
q^\pm_{l}=\begin{cases}r_{l} & (l\in \hI _r),\\h^\mp_{l\pm\h}\circ h^\pm_{l\pm\h} & (l\notin \hI _r).\end{cases}
\]

For $k\in I$ let $c^1_k\in\sM{\theta_0}{\delta}$ be the representation such that 
\begin{align*}
c^1_k(h^+_{j})&=\begin{cases}1 & (j<k),\\0 & (j>k),\end{cases}\\
c^1_k(h^-_{j})&=\begin{cases}1 & (j>k),\\0 & (j<k),\end{cases}\\
c^1_k(r_l)&=0,
\end{align*}
and $\mca{C}_k$ be the closure of the orbit of $c^1_k$ with respect to the torus action.
This is isomorphic to $\CP^1$, contained in $U_{k-\h}\cap U_{k+\h}$ and 
\[
\mca{C}_k|_{U_{k-\h}}=\{y_{k-\h}=z_{k-\h}=0\},\quad \mca{C}_k|_{U_{k+\h}}=\{x_{k+\h}=z_{k+\h}=0\}.
\]
Note that the coordinate transformation is given by
\[
(x_{k+\h},y_{k+\h},z_{k+\h})\longmapsto \begin{cases}(x^2_{k-\h}y_{k-\h},x_{k-\h}^{-1},z_{k-\h}) & (k\in \hI _r)\\(x_{k-\h}z_{k-\h},x_{k-\h}^{-1},x_{k-\h}y_{k-\h}) & (k\notin \hI _r),\end{cases}
\]
Hence $\mca{C}_k$ is a $(0,-2)$-curve if $k\in \hI _r$ and a $(-1,-1)$-curve if $k\notin \hI _r$.

For a crepant resolution of $\mca{X}$ the configuration of $(0,-2)$-curves and $(-1,-1)$-curves determines its toric diagrams. So we have the following proposition:

\begin{prop}
The crepant resolution $\sM{\theta_0}{\delta}$ is isomorphic to $\mca{Y}_\sigma$. 
For $i\in\tI$ the triangle $T_i$ corresponds to the fixed point $p_i$ and for $k\in I$ the edge $l_k$ corresponds to the curve $\mca{C}_k$. 
\end{prop}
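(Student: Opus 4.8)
The plan is to realise both $\sM{\theta_0}{\delta}$ and $\mca{Y}_\sigma$ as toric crepant resolutions of the affine toric threefold $\mca{X}$ and then to match their fans. By the Ishii--Ueda theorem recalled above (applied with $\theta_0$ generic, i.e.\ $\theta_0$-semistability $=$ $\theta_0$-stability, which holds for $\theta_0$ chosen suitably within the region $(\theta_0)_k<0$), $\sM{\theta_0}{\delta}$ is a crepant resolution of $\mca{X}$; as a GIT quotient of a torus representation it carries a torus action, and since $\mca{X}$ is the affine toric variety attached to the cone $\Delta$ over $\Gamma$, the resolution $\sM{\theta_0}{\delta}$ corresponds to a lattice triangulation of $\Gamma$ into $N$ triangles of area $\h$. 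The same holds for $\mca{Y}_\sigma$ by construction, with triangulation $\{T_i\}_{i\in\tI}$. So it is enough to show the two triangulations coincide, compatibly with the indexings $p_i\leftrightarrow T_i$ and $\mca{C}_k\leftrightarrow l_k$.

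For $\sM{\theta_0}{\delta}$ everything needed was assembled in the preceding paragraphs. First I would check that the $p_i$ ($i\in\tI$) exhaust the torus-fixed points --- a finite check on representations of dimension vector $\delta$ --- so that the affine charts $U_i\simeq\C^3$ cover $\sM{\theta_0}{\delta}$ (a smooth toric variety projective over an affine one is covered by the charts of its maximal cones, equivalently of its fixed points), and that the compact torus-invariant curves are precisely the $\mca{C}_k\simeq\CP^1$ ($k\in I$), with $\mca{C}_k$ joining $p_{k-\h}$ and $p_{k+\h}$. The coordinate transformations between $U_{k-\h}$ and $U_{k+\h}$ were written down explicitly; they show that $\mca{C}_k$ is a $(0,-2)$-curve when $k\in\hI_r$ and a $(-1,-1)$-curve when $k\notin\hI_r$, and, what matters for the comparison, that these transformations are exactly the glueing data of the toric variety obtained by glueing the affine charts of the cones over $T_{k-\h}$ and $T_{k+\h}$.

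For $\mca{Y}_\sigma$, the triangle $T_i$ corresponds to a fixed point and the interior edge $l_k=T_{k-\h}\cap T_{k+\h}$ to a compact invariant $\CP^1$. By the case analysis already made in the proof that $\psi_{F^+}$ is upper convex, the pair $(T_{k-\h},T_{k+\h})$ is either a non-degenerate parallelogram with $l_k$ a diagonal --- which occurs exactly when $\sigma_y(k-\h)\neq\sigma_y(k+\h)$, i.e.\ $k\notin\hI_r$, and then the other diagonal is a flop, so $l_k$ gives a $(-1,-1)$-curve --- or a triangle with $l_k$ a median --- which occurs exactly when $\sigma_y(k-\h)=\sigma_y(k+\h)$, i.e.\ $k\in\hI_r$, and then no flop exists, so $l_k$ gives a $(0,-2)$-curve. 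Hence $\mca{Y}_\sigma$ and $\sM{\theta_0}{\delta}$ carry the same sequence of fixed points $p_{1/2},\ldots,p_{N-1/2}$, with consecutive ones joined by an invariant curve of the same type. I would then conclude either by invoking the remark recalled just before the statement --- that for a crepant resolution of $\mca{X}$ the configuration of $(0,-2)$- and $(-1,-1)$-curves determines the toric diagram, which together with the given $N_0,N_1$ pins down the triangulation --- or, more directly and cleanly, by verifying that the transition functions displayed above for $U_{k-\h}$ and $U_{k+\h}$ coincide with those of $\mca{Y}_\sigma$ for the charts of $T_{k-\h}$ and $T_{k+\h}$, so that $\{U_i\}$ glues $\sM{\theta_0}{\delta}$ onto $\mca{Y}_\sigma$ and the resulting isomorphism over $\mca{X}$ sends $p_i$ to the chart of $T_i$ and $\mca{C}_k$ to $l_k$. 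The step I expect to need the most care is the bookkeeping that the $p_i$ are \emph{all} the torus-fixed points and the $\mca{C}_k$ \emph{all} the compact invariant curves, so that the covers on the two sides match with nothing left over; granting that, what remains is a comparison of explicit toric glueing data.
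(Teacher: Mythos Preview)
Your proposal is correct and follows essentially the same approach as the paper: the paper simply states (in the sentence immediately preceding the proposition) that for a crepant resolution of $\mca{X}$ the configuration of $(0,-2)$- and $(-1,-1)$-curves determines the toric diagram, and then concludes. Your argument fills in the details behind that sentence --- matching the curve types on both sides via the case split $k\in\hI_r$ versus $k\notin\hI_r$ --- and your alternative of directly comparing transition functions is a reasonable, slightly more hands-on way to reach the same conclusion.
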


For $\varepsilon=0$ or $1$ and $0\leq x\leq N_\varepsilon$ let $d^1_{\varepsilon,x}\in\sM{\theta_0}{\delta}$ be the representation such that
\begin{align*}
d^1_{\varepsilon,x}(h^+_{j})&=\begin{cases}1 & (\sigma_y(j)\neq \varepsilon\ \text{or}\ \sigma_x(i)>x),\\0 & (\text{otherwise}),\end{cases}\\
d^1_{\varepsilon,x}(h^-_{j})&=\begin{cases}1 & (\sigma_y(j)\neq \varepsilon\ \text{or}\ \sigma_x(i)<x),\\0 & (\text{otherwise}),\end{cases}\\
d^1_{\varepsilon,x}(r_k)&=\begin{cases}1 & (\sigma_y(k\pm\h)=\varepsilon),\\0 & (\text{otherwise}),\end{cases}.
\end{align*}
Then the closure of the orbit of $d^1_{\varepsilon,x}$ with respect to the torus action coincides with the divisor $D_{\varepsilon,x}$.

\begin{figure}[htbp]\label{fig:univ}
  \centering
  \input{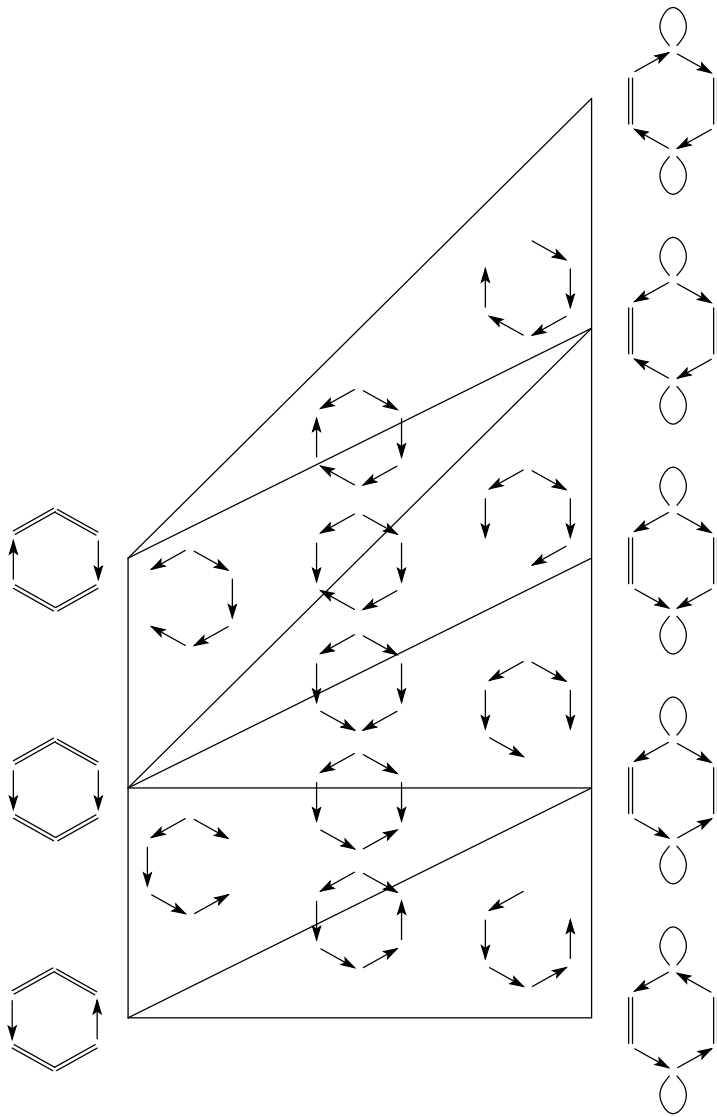}
  \caption{Universal representations on $\sM{\theta_0}{\delta}$ in case Example \ref{example}}
\end{figure}

Let $\oplus_{k\in\hI}L_k^{\sigma,\theta_0}$ be the tautological vector bundle on $\sM{\theta_0}{\delta}$, where $L_k^{\sigma,\theta_0}$ is the tautological line bundle corresponding to the vertex $k$ of the quiver $Q_\sigma$.
By tensoring a line bundle if necessary, we may assume that $L_0^{\sigma,\theta_0}\simeq \OO_{\sM{\theta_0}{\delta}}$. 
We have the tautological section of the line bundle $\Hom(L_{i\mp\h}^{\sigma,\theta_0},L_{i\pm\h}^{\sigma,\theta_0})$ corresponding to the edge $h^\pm_i$. From the description above, its divisor coincides with $E^\pm_i$ defined in \S \ref{def1}.
Hence we have 
\[
L_k^{\sigma,\theta_0}\simeq\OO(\sum_{i=\h}^{k-\h}E^+_i)\simeq L_k,
\]
where $L_k$ is defined just after \ref{lem2}.
In summary, we have the following theorem:
\begin{thm}\label{generator}
The tautological vector bundle $L^{\sigma,\theta_0}=\bigoplus_{k\in\hI}L_k^{\sigma,\theta_0}$ is a projective generator of $\pervs$.
\end{thm}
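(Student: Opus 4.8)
The plan is to deduce Theorem~\ref{generator} from the fact established at the end of \S\ref{tilting-generator}, namely that $\OO_{\mca{Y}_\sigma}\oplus\bigoplus_{k\in I}L_k$ is a projective generator of ${}^{-1}\pervs$. The isomorphism $\sM{\theta_0}{\delta}\simeq\mca{Y}_\sigma$ of the preceding Proposition, being compatible with the maps to $\mca{X}$, transports projective generators of the perverse heart, so it suffices to produce an isomorphism of vector bundles $L^{\sigma,\theta_0}\simeq\OO_{\mca{Y}_\sigma}\oplus\bigoplus_{k\in I}L_k$. Since the normalization $L^{\sigma,\theta_0}_0\simeq\OO_{\mca{Y}_\sigma}$ has already been fixed, everything reduces to identifying $L^{\sigma,\theta_0}_k$ with $L_k$ for each $k\in I$.

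For each $i\in\tI$, I would examine the tautological section $s^+_i$ of $\Hom\big(L^{\sigma,\theta_0}_{i-\h},L^{\sigma,\theta_0}_{i+\h}\big)$ coming from the arrow $h^+_i$ (and symmetrically $s^-_i$ from $h^-_i$), and compute its zero divisor. On the affine chart $U_i$ around the torus-fixed point $p_i$, with coordinates $(x_i,y_i,z_i)$ as above, $h^+_i$ is the function $x_i$; on $U_j$ with $j<i$ the arrow $h^+_i$ is invertible; and on the remaining charts it vanishes along an explicitly determined divisor. Matching this with the description of the equivariant divisors $D_{\varepsilon,x}$ as orbit closures of the representations $d^1_{\varepsilon,x}$, one reads off $\mr{div}(s^+_i)=E^+_i$, with $E^+_i$ as in Definition~\ref{def1}. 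Hence $L^{\sigma,\theta_0}_{i+\h}\otimes\big(L^{\sigma,\theta_0}_{i-\h}\big)^{-1}\simeq\OO_{\mca{Y}_\sigma}(E^+_i)$, and telescoping from $i=\h$ up to $i=k-\h$, together with $L^{\sigma,\theta_0}_0\simeq\OO_{\mca{Y}_\sigma}$, gives
\[
L^{\sigma,\theta_0}_k\simeq\OO_{\mca{Y}_\sigma}\Big(\sum_{i=\h}^{k-\h}E^+_i\Big)=\OO_{\mca{Y}_\sigma}(F^+_k)\simeq L_k.
\]
Running the same computation with the arrows $h^-_i$ produces $\OO_{\mca{Y}_\sigma}(F^-_k)$ instead, which agrees with the above by Lemma~\ref{lem2}(2); this serves as a convenient internal consistency check.

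Combining these isomorphisms over all $k\in\hI$ yields $L^{\sigma,\theta_0}\simeq\OO_{\mca{Y}_\sigma}\oplus\bigoplus_{k\in I}L_k$, which is a projective generator of $\pervs$ by the theorem recalled in the first paragraph; this proves Theorem~\ref{generator}. The main obstacle is the divisor identification $\mr{div}(s^\pm_i)=E^\pm_i$: it requires translating the representation-theoretic data (the universal representation, the fixed points $p_i$, the rational curves $\mca{C}_k$, the invariant divisors $D_{\varepsilon,x}$) into the toric combinatorics of the partition $\sigma$ (the triangles $T_i$, the edges $l_k$, the lattice points $(x,\varepsilon)$) in a manner consistent across all charts $U_j$ simultaneously. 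The coordinate transition maps recorded above, the local equations for $\mca{C}_k|_{U_{k\mp\h}}$, and the identifications $T_i\leftrightarrow p_i$, $l_k\leftrightarrow\mca{C}_k$ of the preceding Proposition are precisely the input that makes this bookkeeping go through.
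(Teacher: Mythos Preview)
Your proposal is correct and follows essentially the same route as the paper: the paper's argument (given in the paragraph immediately preceding the theorem) normalizes $L_0^{\sigma,\theta_0}\simeq\OO$, observes that the tautological section associated to $h^\pm_i$ has divisor $E^\pm_i$ ``from the description above,'' telescopes to obtain $L_k^{\sigma,\theta_0}\simeq L_k$, and then invokes the theorem at the end of \S\ref{tilting-generator}. Your write-up simply unpacks the divisor identification $\mr{div}(s^+_i)=E^+_i$ in more detail across the charts $U_j$, and adds the symmetric check via $h^-_i$ and Lemma~\ref{lem2}(2), which the paper leaves implicit.
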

In particular, we have the following equivalence:
\[
\begin{array}{ccc}
D^b(\cohs) & \simeq & D^b(\mr{mod}(\End_{\ys}(L^{\sigma,\theta_0})))\\
\cup & & \cup\\
\pervs & \simeq & \mr{mod}(\End_{\ys}(L^{\sigma,\theta_0})).
\end{array}
\]

\subsection{Computation of the endomorphism algebra}\label{end-alg}
In this subsection, we denote $L_k^{\sigma,\theta_0}$ and $L^{\sigma,\theta_0}$ simply by $L_k^{\sigma}$ and $L^{\sigma}$ respectively.  
Since $L^\sigma$ is the tautological bundle on the moduli space of $A_\sigma$-modules, we have the tautological map 
\[
\phi\colon A_\sigma\to \End_{\ys}(L^\sigma).
\]

\begin{prop}
Let $e_k\in A_\sigma$ be the idempotent corresponding to the vertex $k$. Then the restriction of the tautological map
\[
\phi_k\colon e_kA_\sigma e_k\to \End_{\ys}(L^\sigma_k)\simeq H^0(\ys,\OO_{\ys})\simeq \C[X,Y,Z,W]/(XY-Z^{N_1}W^{N_0}).
\]
is bijective.
\end{prop}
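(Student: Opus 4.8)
\emph{Proof strategy.} The plan is to identify both sides of $\phi_k$ and then reduce bijectivity to a bijection of natural bases. Since $L^\sigma_k$ is a line bundle, $\End_{\ys}(L^\sigma_k)\simeq H^0(\ys,\OO_{\ys})$; and since $f_\sigma\colon\ys\to\mca{X}$ is a crepant, in particular proper birational, resolution of the normal affine variety $\mca{X}=\mr{Spec}\,R$, we have $(f_\sigma)_*\OO_{\ys}=\OO_{\mca{X}}$, so $H^0(\ys,\OO_{\ys})\simeq R\simeq\C[X,Y,Z,W]/(XY-Z^{N_1}W^{N_0})$, with $R$-basis the semigroup monomials $\{e_u\mid u\in S_\Delta\}$. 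So the statement reduces to producing a basis of $e_k\as e_k$ that $\phi_k$ carries bijectively onto $\{e_u\mid u\in S_\Delta\}$.

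First I would make $\phi_k$ explicit on cyclic paths. By the moduli-theoretic construction, $\phi$ sends each arrow of $\qs$ to the corresponding morphism of tautological line bundles, whose vanishing divisor is $E^\pm_i$ for the arrow $h^\pm_i$ and, for the loop edge $r_k$ (present when $k\in\hI_r$), a sum of the $D_{\varepsilon,x}$'s determined by the representations $d^1_{\varepsilon,x}$ of \S\ref{specific-parameter}. Hence composing along a cyclic path $p$ at $k$ yields a global section of $\OO_{\ys}(D_p)$ for an effective torus-invariant divisor $D_p$; since running once around the cycle identifies $L^\sigma_k$ with $L^\sigma_k(D_p)$, we get $\OO_{\ys}(D_p)\simeq\OO_{\ys}$, so $\phi_k(p)$ is the monomial $e_{u(p)}$, where $u(p)\in S_\Delta$ is the unique element with $D_p=\mr{div}(e_{u(p)})$, read off from $\psi_{D_p}$ via the support-function dictionary of \S\ref{tilting-generator}. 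Passing to the relations of $\as$, this defines a homomorphism of commutative monoids $\overline{\phi}_k\colon e_k\as e_k\to S_\Delta$, and it remains to show $\overline{\phi}_k$ is a bijection of underlying sets.

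Surjectivity is the easy half: $S_\Delta$ is generated by $X,Y,Z,W$, and these are the images under $\overline{\phi}_k$ of the winding loop $h^+_{k-\h}\circ\cdots\circ h^+_{k+\h}$ (whose divisor $\sum_i E^+_i$ equals $\mr{div}(X)$, since $\psi_{\sum_i E^+_i}=-x$ as computed in the proof of Lemma \ref{lem2}), the winding loop $h^-_{k+\h}\circ\cdots\circ h^-_{k-\h}$ (divisor $\sum_i E^-_i=\mr{div}(Y)$), and the short loops $q^\pm_k$ (with divisors among $\mr{div}(Z)$ and $\mr{div}(W)$, by Lemma \ref{lem2}(1)); the linear equivalence $\sum_i(E^+_i+E^-_i)\sim 0$ then reproduces the relation $X+Y=N_1Z+N_0W$. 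The main obstacle is injectivity, and here I would invoke the consistency of the brane tiling $P_\sigma$: by Remark \ref{lem_consistency} (following \cite[\S4]{ncdt-brane}), the equivalence classes of paths of $\as$ starting at $k$ are in bijection with pairs consisting of a polygon of the universal cover of $P_\sigma$ together with an element of $\Z_{\geq0}$; restricting to paths returning to $k$, this gives a distinguished basis of $e_k\as e_k$ indexed by $\{\text{polygons labelled }k\}\times\Z_{\geq0}$ and consisting of pairwise inequivalent cyclic paths, in which the $\Z_{\geq0}$-direction records multiplication by the cyclic path running once around the face $k$ of $P_\sigma$, whose $\overline{\phi}_k$-image is the monoid element of $XY=Z^{N_1}W^{N_0}$. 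It then remains to verify that $\overline{\phi}_k$ sends this basis bijectively onto $\{e_u\mid u\in S_\Delta\}$: a polygon labelled $k$ determines the divisor $D_p$ with coefficients read directly off the diagram $\Gamma_\sigma$ (Lemma \ref{localconvex} doing the convexity bookkeeping), distinct polygons yield distinct $u(p)$, and together with the face-cycle direction every $u\in S_\Delta$ is obtained exactly once --- the last, purely combinatorial point being controlled by $\Gamma$ having heights at most $1$. Granting this verification, $\overline{\phi}_k$, and hence $\phi_k$, is bijective.
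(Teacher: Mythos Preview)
Your surjectivity argument is essentially the paper's: exhibit explicit loops at $k$ whose associated divisors have support functions $-x,-y+N_0z-(N_0-N_1)y,\ldots$ and hence map to $X,Y,Z,W$. The paper names these loops $X_k,Y_k,Z_k,W_k$ and computes their divisors just as you describe.

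For injectivity you take a genuinely different route, and there is a gap. You cite Remark~\ref{lem_consistency} to obtain a vector-space basis of $e_kA_\sigma e_k$ indexed by $\{\text{polygons labelled }k\}\times\Z_{\ge0}$, and then assert that $\phi_k$ carries this basis bijectively onto $\{e_u\mid u\in S_\Delta\}$. But you explicitly write ``Granting this verification'' for the last step, and that step is not routine: the $\Z_{\ge0}$-direction in Remark~\ref{lem_consistency} records powers of a single face cycle, whereas the semigroup $S_\Delta$ is three-dimensional, so the combinatorial matching you need amounts to showing that minimal paths to the polygons labelled $k$ already account for two independent monoid directions and that distinct polygons give distinct $u(p)$. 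Carrying this out is comparable in difficulty to what the paper actually proves, and your reference to ``$\Gamma$ having heights at most $1$'' does not by itself supply the argument.

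The paper's approach to injectivity is more elementary and self-contained. It proves a normal-form lemma: any path $P$ starting at $k$ satisfies $P=P_{\mr{red}}\circ Z_k^{z(P)}\circ W_k^{w(P)}$ in $A_\sigma$, using only the quiver relations (the key observation being $Z_{k'}\circ P=P\circ Z_k$ and $W_{k'}\circ P=P\circ W_k$ for any path $P$ from $k$ to $k'$). From this it follows that $X_k,Y_k,Z_k,W_k$ generate $e_kA_\sigma e_k$, commute, and satisfy $X_kY_k=Z_k^{N_1}W_k^{N_0}$; hence there is a surjection $R\twoheadrightarrow e_kA_\sigma e_k$ whose composite with $\phi_k$ is the identity on $R$, forcing $\phi_k$ to be bijective. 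This avoids invoking the external consistency result and makes the commutativity of $e_kA_\sigma e_k$ (which you implicitly use when you speak of ``commutative monoids'') a consequence rather than an assumption.
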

\begin{proof}
We have the tautological section of the line bundle $\vEnd_{\ys}(L^\sigma_k)\simeq \OO_{\ys}$ corresponding to the path
\[
X_k:=h^+_{k-\h}\circ \cdots \circ h^+_{\h}\circ h^+_{N-\h}\circ \cdots \circ h^+_{k+\frac{3}{2}}\circ h^+_{k+\h}.
\]
Its divisor is 
\[
F^+_N=\sum_{i=\h}^{N-\h}E^+_i
\]
and as we have seen in Lemma \ref{lem2} we have $-\psi_{F^+_N}=x$.
So $\phi_k(X_k)$ coincides with $X$ up to scalar multiplication (see the defining equation (\ref{eq1}) of $X$ in \S \ref{tilting-generator}). 

Similarly, we put
\begin{align*}
Y_k&:=h^-_{k+\h}\circ \cdots \circ h^-_{N-\h}\circ h^-_{\h}\circ \cdots \circ h^-_{k-\frac{3}{2}}\circ h^-_{k-\h},\\
Z_k&:=
\begin{cases}
h^-_{k+\h}\circ h^+_{k+\h} & (\sigma_y(k+\h)=1),\\
h^+_{k-\h}\circ h^-_{k-\h} & (\sigma_y(k-\h)=1),\\
r_{k} & (\text{otherwise}),
\end{cases}\\
W_k&:=
\begin{cases}
h^-_{k+\h}\circ h^+_{k+\h} & (\sigma_y(k+\h)=0),\\
h^+_{k-\h}\circ h^-_{k-\h} & (\sigma_y(k-\h)=0),\\
r_{k} & (\text{otherwise}).
\end{cases}
\end{align*}
Then $\phi_k(Y_k)$, $\phi_k(Z_k)$ and $\phi_k(W_k)$ respectively coincide with $Y$, $Z$ and $W$ up to scalar multiplications. 
Hence $\phi_k$ is surjective. 
To show the injectivity, it is enough to check that $X_k$, $Y_k$, $Z_k$ and $W_k$
\begin{itemize}
\item generate $e_k Ae_k$, 
\item commute with each other, and
\item satisfy the relation $X_kY_k=Z_k^{N_0}W_k^{N_1}$.
\end{itemize}
These follow from the lemma below. 
\end{proof}
We define an equivalent relation $\sim$ on the set of all paths in $Q_\sigma$ as follows: $P_1\circ P_2\circ P_3\sim P_1\circ P_3$ if $P_2$ is one of $r_k$, $h^+_i\circ h^-_i$ or $h^-_i\circ h^+_i$.
For a path $P$, let $P_{\mr{red}}$ be the unique minimal length path which is equivalent to $P$.
We set
\begin{align*}
r_k(P)&=(\text{the number of $r_k$ appearing in $P$}),\\
h_i(P)&=(\text{the number of $h^+_i$ appearing in $P$})-(\text{the number of $h^+_i$ appearing in $P_{\mr{red}}$})\\
&=(\text{the number of $h^-_i$ appearing in $P$})-(\text{the number of $h^-_i$ appearing in $P_{\mr{red}}$}),
\end{align*}
and
\begin{align*}
z(P)&=\sum_{\begin{subarray}{c}l\in \hI _r,\\\sigma_y(l\pm\h)=1\end{subarray}}r_k(P)+\sum_{\begin{subarray}{c}i\in \hind,\\\sigma_y(i)=0\end{subarray}}h_i(P)
,\\
w(P)&=\sum_{\begin{subarray}{c}l\in \hI _r,\\\sigma_y(l\pm\h)=0\end{subarray}}r_l(P)+\sum_{\begin{subarray}{c}i\in \hind,\\\sigma_y(i)=1\end{subarray}}h_i(P).
\end{align*}
\begin{lem}
\[
P=P_{\mr{red}}\circ (Z_k)^{z(P)}\circ (W_k)^{w(P)}.
\]
\end{lem}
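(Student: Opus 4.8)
The plan is to induct on the \emph{defect} $\ell(P)-\ell(P_{\mr{red}})$ of $P$, where $\ell(-)$ is the length of a path and $P$ is a path from $k$ to $k$ (the case needed for the preceding proposition). If the defect is $0$ then $P=P_{\mr{red}}$ contains no loop $r_l$ and no backtracking pair $h^+_ih^-_i$ or $h^-_ih^+_i$; hence $r_l(P)=0$ and $h_i(P)=0$ for all $l,i$, so $z(P)=w(P)=0$ and the identity reads $P=P_{\mr{red}}$.

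The mechanism driving the induction will be the transport of a ``bubble'' along a path, where a bubble at a vertex $l$ means a subpath equal to one of $r_l$, $h^+_{l-\h}\circ h^-_{l-\h}$, or $h^-_{l+\h}\circ h^+_{l+\h}$ (the last two being the two backtracking loops at $l$). Using associativity, the superpotential relation $h^+_{l-\h}\circ h^-_{l-\h}=h^-_{l+\h}\circ h^+_{l+\h}$ at a vertex $l\in\hat{I}_r$, and the other four families of relations defining $A_\sigma$, I would first establish: (i) at each vertex $l$ there are, modulo the relations, exactly two bubbles, a \emph{$Z$-bubble} and a \emph{$W$-bubble}, which coincide with the elements $Z_l$ and $W_l$ defined by the same formulas as $Z_k,W_k$, and the $Z$-bubbles (resp.\ $W$-bubbles) occurring in a path $P$ are precisely the subpaths counted by $z(P)$ (resp.\ $w(P)$); (ii) pushing a $Z$-bubble (resp.\ $W$-bubble) at $l$ past an adjacent transit arrow $h^\pm$ turns it into the $Z$-bubble (resp.\ $W$-bubble) at the neighbouring vertex, leaving that arrow unchanged; and (iii) $Z_l$ and $W_l$ commute for every $l$, and $Z_k$, $W_k$ each commute with $X_k$ and with $Y_k$ — the latter by transporting $Z_k$, resp.\ $W_k$, once around the $N$-cycle of vertices by means of (ii) and invoking the uniqueness in (i).

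With (i)--(iii) in hand I would run the inductive step as follows. If $P$ has positive defect, choose the leftmost bubble $S$ in $P$, at a vertex $l$, and write $P=P'\circ S\circ P''$; by minimality $P'$ is reduced, hence a chain of transit arrows. Transport $S$ leftward through $P'$ by repeated use of (ii) until it sits at $k$, where by (i) it becomes $Z_k$ or $W_k$; this gives $P=(Z_k\text{ or }W_k)\circ\tilde P$ with $\tilde P:=P'\circ P''$. Splicing out a bubble leaves the reduced word unchanged, so $\tilde P_{\mr{red}}=P_{\mr{red}}$, while $\ell(\tilde P)=\ell(P)-\ell(S)$, so $\tilde P$ has strictly smaller defect. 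The induction hypothesis gives $\tilde P=P_{\mr{red}}\circ Z_k^{z(\tilde P)}\circ W_k^{w(\tilde P)}$; since $P_{\mr{red}}$ is reduced it is a power of $X_k$ or of $Y_k$, so by (iii) the leading $Z_k$ or $W_k$ may be commuted past $P_{\mr{red}}$ and into its block, yielding $P=P_{\mr{red}}\circ Z_k^{z(\tilde P)+\epsilon_Z}\circ W_k^{w(\tilde P)+\epsilon_W}$ with $(\epsilon_Z,\epsilon_W)=(1,0)$ or $(0,1)$ according as $S$ is a $Z$- or a $W$-bubble. Finally, by (i), deleting $S$ lowers exactly the matching count, i.e.\ $z(\tilde P)+\epsilon_Z=z(P)$ and $w(\tilde P)+\epsilon_W=w(P)$, and the induction closes.

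The substantive part is the bookkeeping in (i)--(ii): one must produce the two-colouring of bubbles and verify that transport through each of the five relation families — together with the two associativity patterns that move a backtracking pair past an arrow — respects it, and that this colouring agrees with the formulas defining $Z_k,W_k$ and with the counts $z(-),w(-)$. I expect this to reduce to a short finite case analysis organised by whether each of the two vertices flanking a given transit arrow lies in $\hat{I}_r$, the case of a vertex in $\hat{I}_r$ being the one in which the superpotential relation is indispensable: it is precisely what makes the distinguished bubbles $Z_l,W_l$ well defined. Alternatively, (i)--(iii) together with the lemma are a formal consequence of the consistency of the brane tiling $P_\sigma$ recorded in Remark \ref{lem_consistency} (see \cite[\S4]{ncdt-brane}): a path from $k$ is determined by its target polygon in the universal cover together with a nonnegative integer, and re-expressing that normal form in terms of the generators $X_k,Y_k,Z_k,W_k$ gives the statement directly.
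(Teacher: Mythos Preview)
Your proposal is correct and follows essentially the same strategy as the paper: both arguments isolate a bubble $Z_l$ or $W_l$ inside a non-reduced path, transport it to the vertex $k$ using the commutation $Z_{k'}\circ P=P\circ Z_k$, $W_{k'}\circ P=P\circ W_k$ (your item (ii), the paper's equation~(\ref{eq3})), and induct on length. The only cosmetic difference is that the paper pushes the bubble directly to the right end of the composition, whereas you push it to the left end and then invoke (iii) to commute it past $P_{\mr{red}}$; your extra step is harmless but unnecessary, since transporting rightward along $P''$ already lands the bubble at $k$ on the correct side.
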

\begin{proof}
From the relations of $A_\sigma$ we can verify directly that for any path $P$ from $k$ to $k'$ we have
\begin{equation}\label{eq3}
Z_{k'}\circ P=P\circ Z_{k},\quad W_{k'}\circ P=P\circ W_{k}
\end{equation}
in $A_\sigma$. 

For a path $P$ in $Q_\sigma$ from $k$ we have a expression 
\[
P=P'\circ Z_{k'}\circ P''\ (\text{or}\ P'\circ W_{k'}\circ P'')
\]
for some paths $P'$, $P''$ in $Q_\sigma$ and $k'\in I$ unless $P=P_{\mr{red}}$.
This case we have 
\[
P=P'\circ P''\circ Z_{k}\ (\text{or}\ P\circ P''\circ W_{k})
\]
in $A_\sigma$ by the equation (\ref{eq3}). Then apply the same procedure for $P'\circ P''$ unless it is reduced. By the induction with respect to the lengths of paths, we can verify the claim.
\end{proof}

For $k\neq k'\in I$ let $X_{k,k'}$ (resp. $Y_{k,k'}$) be the minimal length path from $k$ to $k'$ which is a composition of $h^+_i$'s (resp. $h^-_i$'s). 
Then we have the following basis of $e_{k'}A_\sigma e_{k}$:
\[
\{(X_{k'})^n\circ X_{k,k'}\circ (Z_k)^{m}\circ (W_k)^{l}\}_{n,m,l\geq 0}\sqcup 
\{(Y_{k'})^n\circ Y_{k,k'}\circ (Z_k)^{m}\circ (W_k)^{l}\}_{n,m,l\geq 0}.
\]
We have
\begin{align*}
X_{k',k}\circ (X_{k'})^n\circ X_{k,k'}\circ (Z_k)^{m}\circ (W_k)^{l}&=(X_k)^{n+1}\circ (Z_k)^{m}\circ (W_k)^{l},\\
X_{k,k'}\circ (Y_{k'})^n\circ Y_{k,k'}\circ (Z_k)^{m}\circ (W_k)^{l}&=(Y_k) \circ (Z_k)^{m+m'}\circ (W_k)^{l+l'},
\end{align*}
where $m'$ and $l'$ is nonnegative integer such that $X_{k,k'}\circ Y_{k,k'}=(Z_k)^{m'}\circ (W_k)^{l'}$. 
In particular, we have
\begin{lem}\label{inj}
The map 
\[
X_{k,k'}\circ -\colon e_{k'}A_\sigma e_{k}\to e_{k}A_\sigma e_{k}
\]
is injective. 
\end{lem}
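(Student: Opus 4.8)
The plan is to read off injectivity directly from the explicit $\C$-basis of $e_{k'}A_\sigma e_{k}$ exhibited above, together with the two composition identities recorded just before the statement. Write the basis elements of the first type as $a_{n,m,l}:=(X_{k'})^n\circ X_{k,k'}\circ (Z_k)^{m}\circ (W_k)^{l}$ and of the second type as $b_{n,m,l}:=(Y_{k'})^n\circ Y_{k,k'}\circ (Z_k)^{m}\circ (W_k)^{l}$, with $n,m,l\geq 0$. By the identities above, the map in question sends $a_{n,m,l}$ to $(X_k)^{n+1}\circ (Z_k)^{m}\circ (W_k)^{l}$ and $b_{n,m,l}$ to $(Y_k)^{n+1}\circ (Z_k)^{m+m'}\circ (W_k)^{l+l'}$, where $m',l'\geq 0$ are the fixed integers determined by $X_{k,k'}\circ Y_{k,k'}=(Z_k)^{m'}\circ(W_k)^{l'}$.

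Next I would pass to $e_kA_\sigma e_k\simeq \C[X,Y,Z,W]/(XY-Z^{N_1}W^{N_0})$ via the isomorphism $\phi_k$ of the preceding Proposition, under which $X_k,Y_k,Z_k,W_k$ go (up to nonzero scalars) to $X,Y,Z,W$. The images of the $a_{n,m,l}$ then become the monomials $X^{n+1}Z^{m}W^{l}$ and the images of the $b_{n,m,l}$ become $Y^{n+1}Z^{m+m'}W^{l+l'}$. I would now observe two things. First, within each of the two families the monomials are pairwise distinct, since the exponent triple recovers the parameters $(n,m,l)$. Second, no monomial of the first family can equal one of the second: in the quotient ring the relation forces $XY=Z^{N_1}W^{N_0}$, so a nonzero monomial cannot simultaneously involve positive powers of $X$ and of $Y$, whereas the first family has $X$-exponent $\geq 1$ and the second has $Y$-exponent $\geq 1$. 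Moreover all of these monomials lie in the standard monomial $\C$-basis $\{X^aZ^mW^l\}_{a\geq 0}\cup\{Y^aZ^mW^l\}_{a\geq 1}$ of the quotient. Hence the images of the chosen basis of $e_{k'}A_\sigma e_{k}$ form a linearly independent subset of $e_kA_\sigma e_k$, and a $\C$-linear map carrying a basis to a linearly independent set is injective; this gives the lemma.

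The only substantive input is the two composition identities themselves, which I am taking as already established above; were one to prove them here, that bookkeeping would be the main obstacle. It amounts to pushing the loop factors $X_{k'},Y_{k'}$ past $X_{k,k'}$, $Y_{k,k'}$, $X_{k',k}$ using the relations of $A_\sigma$ — in particular the identities $Z_{k'}\circ P=P\circ Z_{k}$ and $W_{k'}\circ P=P\circ W_{k}$ of equation $(\ref{eq3})$ and their analogues for $X$- and $Y$-loops — and to noting that along $X_{k',k}\circ Y_{k,k'}$ every pair $h^+_j\circ h^-_j$ (resp.\ $h^-_j\circ h^+_j$) collapses to a product of $Z_k$'s and $W_k$'s, which is exactly what produces the extra factor $(Z_k)^{m'}\circ(W_k)^{l'}$. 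Granting those identities, the linear-independence step above is immediate, and the one point needing a moment's care is simply the disjointness of the two monomial families, which rests on the commutative description of $e_kA_\sigma e_k$ from the preceding Proposition.
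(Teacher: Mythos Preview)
Your approach is exactly the paper's: the lemma is stated right after the two composition identities with the phrase ``In particular, we have'', and the intended argument is precisely what you wrote --- the identities send the explicit basis of $e_{k'}A_\sigma e_k$ to distinct standard monomials in $e_kA_\sigma e_k\simeq \C[X,Y,Z,W]/(XY-Z^{N_1}W^{N_0})$, hence to a linearly independent set.

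One small correction to your computation: the image of $b_{n,m,l}$ is $(Y_k)^{n}\circ (Z_k)^{m+m'}\circ (W_k)^{l+l'}$, not $(Y_k)^{n+1}$. Indeed for $n=0$ one has $X_{k',k}\circ Y_{k,k'}=(Z_k)^{m'}(W_k)^{l'}$ with no $Y_k$ factor, and in general $(Y_{k'})^n\circ Y_{k,k'}=Y_{k,k'}\circ(Y_k)^n$ by the commutation relations, so the $(Y_k)^n$ passes through unchanged. (The paper's displayed identity has its own typos here.) This does not affect your argument: the $a$-family lands among monomials with $X$-exponent $\geq 1$, while the $b$-family lands among monomials with $X$-exponent $0$, so the two image families are still disjoint and linear independence, hence injectivity, follows as you claimed.
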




\begin{prop}
For $k\neq k'\in I$ the restriction of the tautological map
\[
\phi_{k,k'}\colon e_{k'}Ae_k\to \Hom_{\ys}(L_k^\sigma,L_{k'}^\sigma)
\]
is bijective.
\end{prop}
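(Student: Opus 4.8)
The plan is to deduce the statement from the already-established case $k=k'$ (the preceding proposition) by exploiting that the tautological map $\phi\colon A_\sigma\to\End_{\ys}(L^\sigma)$ is a homomorphism of algebras, so that the various $\phi_{k,k'}$ are compatible with composition of paths. Let $X_{k',k}$ be the path of $h^+$-arrows running from $k'$ to $k$ (once around the cyclic quiver $Q_\sigma$ when $k<k'$), chosen so that $X_{k',k}\circ X_{k,k'}=X_k$ in $A_\sigma$, and put $s:=\phi(X_{k',k})\in\Hom_{\ys}(L^\sigma_{k'},L^\sigma_k)$. Since $s$ is a nonzero morphism of line bundles on the integral scheme $\ys$, it is an inclusion of sheaves, hence $s\circ-\colon\Hom_{\ys}(L^\sigma_k,L^\sigma_{k'})\to\End_{\ys}(L^\sigma_k)$ is injective. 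As $\phi$ is an algebra homomorphism, the square
\[
\begin{array}{ccc}
e_{k'}A_\sigma e_k & \xrightarrow{\ \ \phi_{k,k'}\ \ } & \Hom_{\ys}(L^\sigma_k,L^\sigma_{k'})\\[2mm]
\ \ {\scriptstyle X_{k',k}\,\circ\,-}\big\downarrow & & \big\downarrow{\scriptstyle s\,\circ\,-}\ \ \\[2mm]
e_kA_\sigma e_k & \xrightarrow{\ \ \ \phi_k\ \ \ } & \End_{\ys}(L^\sigma_k)
\end{array}
\]
commutes. Its bottom arrow is bijective, its right arrow is injective, and its left arrow is injective by Lemma \ref{inj}; therefore $s\circ\phi_{k,k'}=\phi_k\circ(X_{k',k}\circ-)$ is injective, so $\phi_{k,k'}$ is injective.

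By the square, surjectivity of $\phi_{k,k'}$ is equivalent to the coincidence of two $R$-submodules of $\End_{\ys}(L^\sigma_k)\cong R\cong\C[X,Y,Z,W]/(XY-Z^{N_1}W^{N_0})$, namely the image of $e_{k'}A_\sigma e_k$ under $\phi_k\circ(X_{k',k}\circ-)$ and the image of $\Hom_{\ys}(L^\sigma_k,L^\sigma_{k'})$ under $s\circ-$. For the first, the explicit basis of $e_{k'}A_\sigma e_k$ displayed before Lemma \ref{inj}, together with the relations $X_{k',k}\circ X_{k,k'}=X_k$, $X_{k,k'}\circ Y_{k,k'}=Z_k^{m'}W_k^{l'}$ and the identifications $\phi_k(X_k)=X$, $\phi_k(Y_k)=Y$, $\phi_k(Z_k)=Z$, $\phi_k(W_k)=W$, expresses it as an explicit monomial submodule of $R$. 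For the second, the toric description of global sections from \S\ref{tilting-generator} identifies $\Hom_{\ys}(L^\sigma_k,L^\sigma_{k'})\cong H^0(\ys,\OO_{\ys}(F^+_{k'}-F^+_k))$ with $\bigoplus_{u\in S^0_\Delta(F^+_{k'}-F^+_k)}\C\,e_u$, and under $s\circ-$ this becomes $\{g\in R\mid \mr{div}_{\ys}(g)\geq E\}$, where $E$ is the effective divisor of zeros of $s$. From Definition \ref{def1} one has $E=\sum_iE^+_i$, the sum over the arrows occurring in $X_{k',k}$; equivalently, since $-\psi_{F^+_N}=X$ by Lemma \ref{lem2}, $E=\mr{div}_{\ys}(X)-(F^+_{k'}-F^+_k)$. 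A comparison of lattice points, using the support-function description of the $F^\pm_k$ from Definition \ref{def1}, then shows these two submodules of $R$ are equal. The case $k>k'$ is identical after interchanging the roles of the $h^+$- and $h^-$-arrows, i.e.\ of $X$ and $Y$.

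The formal part---injectivity and the reduction above---is immediate; the real work is the surjectivity step, where the combinatorial description of $e_{k'}A_\sigma e_k$ furnished by the quiver relations has to be matched on the nose with the toric cohomology group $H^0(\ys,L^\sigma_{k'}(L^\sigma_k)^{-1})$. The point requiring care is the bookkeeping around the wrap-around relation $F^+_N\sim 0$ of Lemma \ref{lem2}: it is exactly this relation that forces the two families ($X$-type and $Y$-type) of basis elements of $e_{k'}A_\sigma e_k$ to fuse into a single rank-one reflexive $R$-module, which is what one needs in order to match $H^0(\ys,L^\sigma_{k'}(L^\sigma_k)^{-1})$. (Conceptually, both $e_{k'}A_\sigma e_k$ and $\Hom_{\ys}(L^\sigma_k,L^\sigma_{k'})$ are rank-one reflexive modules over the hypersurface ring $R$, so such an equality of submodules of $R$ is exactly what should be expected.)
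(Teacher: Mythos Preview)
Your injectivity argument is exactly the paper's: it also deduces injectivity of $\phi_{k,k'}$ from Lemma~\ref{inj} together with the bijectivity of $\phi_k$ via the same commutative square.

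For surjectivity the approaches are equivalent in spirit but differ in execution. The paper does not embed into $R$; instead it argues directly that $\Hom_{\ys}(L_k,L_{k'})$ is generated as an $R$-module by the images of $X_{k,k'}$ and $Y_{k,k'}$, and then carries out the toric verification explicitly: for any $u\in S^0_\Delta(F^+_{k,k'})$ with $\langle u,v\rangle<0$ for some $v\in\Delta$, it exhibits a concrete $u'\in S_\Delta$ (determined by the values $\langle u,(N_\varepsilon,\varepsilon,1)\rangle$ and $\langle u,(1,0,0)\rangle$) such that $u-u'=\psi_{F^+_{k,k'}-F^-_{k,k'}}$. Your reformulation---comparing the two images inside $R$ after composing with $s$---is correct and amounts to the same statement, but the sentence ``a comparison of lattice points \ldots\ then shows these two submodules of $R$ are equal'' is precisely the step that carries all the content, and you have only asserted it. In particular, matching your monomial description of $\phi_k(X_{k',k}\cdot e_{k'}A_\sigma e_k)$ (generated by $X$ and $Z^{m''}W^{l''}$) with the divisorial description $\{g\in R\mid \mathrm{div}_{\ys}(g)\ge E\}$ still requires the same lattice-point argument the paper spells out. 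There is no separate case distinction $k\lessgtr k'$ needed; the paper's argument works uniformly.
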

\begin{proof}
The injectivity follows from Lemma \ref{inj} and the injectivity of $\phi_k$. 
For the surjectivity, it is enough to check that $\Hom_{\ys}(L_k,L_{k'})$ is generated by the image of 
$X_{k,k'}$ and $Y_{k,k'}$ as an $R$-module.
Let $F^+_{k.k'}$ (resp. $F^-_{k.k'}$) be the sum of $E^+_i$'s (resp. $E^-_i$'s) corresponding to $X_{k,k'}$ (resp. $Y_{k,k'}$).
Note that $\psi_{F^+_{k.k'}-F^-_{k.k'}}$ is the unique element in $M$ such that $\psi_{F^+_{k.k'}-F^-_{k.k'}}(N_\varepsilon,\varepsilon,1)=\psi_{F^+_{k.k'}}(N_\varepsilon,\varepsilon,1)$ and 
such that $\psi_{F^+_{k.k'}-F^-_{k.k'}}(1,0,0)=-1$.  

Let $u\in S_\Delta^0(F^+_{k.k'})$ be an element such that $\langle u,v \rangle<0$ for some $v\in Q$. 
It is enough to check there exists $u'\in S_\Delta$ such that $u=\psi_{F^+_{k.k'}-F^-_{k.k'}}+u'$.
Since $\langle u,(0,0,1) \rangle, \langle u,(0,1,1) \rangle\geq 0$ and $\langle u,v \rangle<0$ for some $v\in Q$, we have $\langle u,(1,0,0) \rangle<0$. 
Let $u'\in S_\Delta$ be the element such that 
\begin{align*}
\langle u',(1,0,0) \rangle&=\langle u,(1,0,0)\rangle +1\\
\langle u',(N_\varepsilon,\varepsilon,1) \rangle&=\langle u,(N_\varepsilon,\varepsilon,1)\rangle+F^+_{k,k'}(\varepsilon,N_\varepsilon).
\end{align*}
It follows from the characterization of $\psi_{F^+_{k.k'}-F^-_{k.k'}}$ above that $u-u'=\psi_{F^+_{k.k'}-F^-_{k.k'}}$.
\end{proof}

In summary, we have the following theorem:
\begin{thm}
The tautological homomorphism 
\[
A_\sigma\to \End_{\ys}(L^\sigma).
\]
is an isomorphism.
\end{thm}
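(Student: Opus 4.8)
The plan is to deduce the theorem formally from the two preceding Propositions by unwinding the idempotent decomposition of $A_\sigma$. Writing $1=\sum_{k\in\hI}e_k$ for the sum of the vertex idempotents, we have a decomposition $A_\sigma=\bigoplus_{k,k'\in\hI}e_{k'}A_\sigma e_k$, and, since $L^\sigma=\bigoplus_{k\in\hI}L^\sigma_k$, a matching decomposition $\End_{\ys}(L^\sigma)=\bigoplus_{k,k'\in\hI}\Hom_{\ys}(L^\sigma_k,L^\sigma_{k'})$. The tautological homomorphism $\phi$ is compatible with these: it carries $e_{k'}A_\sigma e_k$ into $\Hom_{\ys}(L^\sigma_k,L^\sigma_{k'})$, and the induced map on this block is exactly $\phi_{k,k'}$. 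Hence $\phi$ is an isomorphism if and only if $\phi_{k,k'}$ is an isomorphism for every $(k,k')\in\hI\times\hI$, and the theorem is just the assembly of the block statements.

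For $k=k'$ the bijectivity of $\phi_{k,k}$ is the first Proposition above; its proof goes through for every vertex $k\in\hI$, including $k=0$, where $\End_{\ys}(L^\sigma_0)\simeq H^0(\ys,\OO_{\ys})$ because of the normalisation $L^\sigma_0\simeq\OO_{\ys}$ (the divisor of the cyclic path $X_k$ equals $F^+_N$ independently of $k$, and the normal form $P=P_{\mr{red}}\circ(Z_k)^{z(P)}\circ(W_k)^{w(P)}$ is available at any vertex). For $k\ne k'$ with $k,k'\in I$ the bijectivity of $\phi_{k,k'}$ is the second Proposition. Thus the only blocks left to treat are the off-diagonal ones $e_{k'}A_\sigma e_k$ in which exactly one of the two indices equals $0$.

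For those I would simply rerun the argument of the second Proposition, which uses nothing specific to $I$ as opposed to $\hI$: its ingredients — that each $E^+_i$, hence each $F^+_{k,k'}$, has vanishing coefficient along every divisor $D_{\varepsilon,0}$, so that $\psi_{F^+_{k,k'}}$ vanishes at the corners $(0,\varepsilon,1)$ of $\Gamma$; the injectivity of composition with $X_{k,k'}$ from Lemma \ref{inj}; and the characterisation of $\psi_{F^+_{k,k'}-F^-_{k,k'}}$ as the unique element of $M$ with the prescribed values at $(N_\varepsilon,\varepsilon,1)$ and at $(1,0,0)$ — remain valid verbatim when an endpoint is the vertex $0$, the relevant reduced paths then being $X_{0,k'}=h^+_{k'-\h}\circ\cdots\circ h^+_{\h}$ and the corresponding $h^-$-path $Y_{k,0}$. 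Equivalently, one may use $L^\sigma_0\simeq\OO_{\ys}$ to identify $\Hom_{\ys}(L^\sigma_0,L^\sigma_{k'})$ with $H^0(\ys,\OO_{\ys}(F^+_{k'}))$ and $\Hom_{\ys}(L^\sigma_k,L^\sigma_0)$ with $H^0(\ys,\OO_{\ys}(-F^+_{k}))$, and repeat the same toric computation. With every block accounted for, $\phi$ is an isomorphism.

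I do not expect a genuine obstacle in the theorem itself: its content lies entirely in the two Propositions, and what is left is bookkeeping over the $|\hI|^2$ blocks plus keeping straight the dictionary between reduced quiver paths and effective toric divisors on $\ys$ once an endpoint is the special vertex $0$. The only point that must truly be checked, rather than merely assembled, is that no relation in the Jacobi algebra $A_\sigma$ causes an unexpected collapse inside some $e_{k'}A_\sigma e_k$; this, however, is precisely what the explicit $R$-bases of the spaces $e_{k'}A_\sigma e_k$ and Lemma \ref{inj} have already ruled out.
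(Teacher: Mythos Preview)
Your proposal is correct and takes essentially the same approach as the paper: the theorem is stated there merely ``in summary'' with no separate proof, the content being entirely in the two preceding Propositions, and the assembly via the idempotent block decomposition is exactly what is intended. You are in fact more careful than the paper, which writes ``$k\neq k'\in I$'' in the second Proposition rather than $\hI$; your remark that the same argument goes through unchanged when one endpoint is the vertex $0$ fills this small gap.
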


\section{Counting invariants}\label{counting}
From now on, we denote $Q_\sigma$ and $A_\sigma$ simply by $Q$ and $A$.
Let $Q_0$ and $Q_1$ denote the sets of vertices and edges of the quiver $Q$ respectively, and $\afmod$ denote the category of finite dimensional $A$-modules. 
\subsection{Koszul resolution}
We set a grading on the path algebra $\C Q$ such that 
\[
\mr{deg}(e_k)=0,\quad \mr{deg}(h^\pm_i)=1,\quad \mr{deg}(r_k)=2.
\]
The superpotential $\omega$ is homogeneous of degree $4$ with respect to this grading. So the quiver with superpotential $A=(Q,\omega)$ is graded $3$-dimensional Calabi-Yau algebra in the sense of \cite{graded3CY}. 

We denote the subalgebra $\C Q_0=\oplus_{k\in Q_0}\C e_k$ of $A$ by $S$. 
For an $S$-module $T$ we define an $A$-bimodule $A_T$ by
\[
A_T=A\otimes_S T\otimes_S A.
\]
For $k, k'\in I$ let $T_{k,k'}$ denote the $1$-dimensional $S$-module given by
\[
e_l\cdot 1=\delta_{k,l},\quad 1\cdot e_l=\delta_{k',l},
\]
and we set 
\[
A_k:=A_{T_{k,k}},\quad A_{k,k'}:=A_{T_{k,k'}}.
\]
Note that an element of $A_{k,k'}$ is described as a linear combination of $\{p\otimes 1\otimes q\}_{p\in Ae_k,\ q\in e_{k'}A}$.

For a quiver with superpotential $A$, the {\it Koszul complex} of $A$ is the following complex of $A$-bimodules: 
\[
0 \to \bigoplus_{k\in Q_0}A_k \overset{d_3}{\longrightarrow} \bigoplus_{a\in Q_1}A_{\mr{out}(a),\mr{in}(a)} \overset{d_2}{\longrightarrow} \bigoplus_{b\in Q_1}A_{\mr{in}(b),\mr{out}(b)} \overset{d_1}{\longrightarrow} \bigoplus_{k\in Q_0}A_k \overset{m}{\longrightarrow} A \to 0.
\]
Here $\mr{in}(a)$ (resp. $\mr{out}(a)$) is the vertex at which an arrow $a$ ends (resp. starts).
The maps $m$, $d_1$, $d_3$ are given by 
\begin{align*}
m(p\otimes 1\otimes q)&=pq\quad (p\in Ae_k,\ q\in e_kA),\\
d_1(p\otimes 1\otimes q)&=(pb\otimes 1\otimes q)-(p\otimes 1\otimes bq)
\quad (p\in Ae_{\mr{in}(b)},\ q\in e_{\mr{out}(b)}A),\\
d_3(p\otimes 1\otimes q)&=\left(\bigoplus_{a\colon \mr{in}(a)=k}pa\otimes 1\otimes q\right)-\left(\bigoplus_{a\colon \mr{out}(a)=k}p\otimes 1\otimes aq\right)\quad(p\in Ae_k,\ q\in e_kA).
\end{align*}
The map $d_2$ is defined as follows: 
Let $c$ be a cycle in the quiver $Q$. 
We define the map $\partial_{c;a,b}\colon A_{\mr{out}(a),\mr{in}(a)}\to A_{\mr{in}(b),\mr{out}(b)}$ by 
\begin{equation}\label{eq_del_c}
\partial_{c;a,b}(p\otimes 1\otimes q)=\sum_{
\begin{subarray}{c}
arbs=c
\end{subarray}
}ps\otimes 1\otimes rq.
\end{equation}
Then $d_2=\partial_{\omega}$ is defined as the linear combination of $\partial_c$'s. 

Since $A$ is graded $3$-dimensional Calabi-Yau algebra, the Koszul complex is exact (\cite[Theorem 4.3]{graded3CY}). 

Let $E$ be a finite dimensional $A$-module. 
By taking the tensor product of $E$ and the Koszul complex, we get a projective resolution of $E$:
\begin{align}
0 \to & \bigoplus_{k\in Q_0}Ae_k\otimes_\C E_k \overset{d_3}{\longrightarrow} \bigoplus_{a\in Q_1}Ae_{\mr{out}(a)}\otimes_\C E_{\mr{in}(a)} \overset{d_2}{\longrightarrow}\notag\\
&\bigoplus_{b\in Q_1}Ae_{\mr{in}(b)}\otimes_\C E_{\mr{out}(b)} \overset{d_1}{\longrightarrow} \bigoplus_{k\in Q_0}Ae_k\otimes_\C E_k \overset{m}{\longrightarrow} E \to 0.\label{eq_proj_resol}
\end{align}

\subsection{new quiver}
We make a new quiver $\Q$ by adding one vertex $\infty$ and one arrow from the vertex $\infty$ to the vertex $0$ to the original quiver $Q$. 
The original superpotential $\omega$ gives the superpotential on the new quiver $\Q$ as well. 
We set $\A:=(\Q,\omega)$ and denote the category of finite dimensional $\A$-modules by $\hafmod$.
Giving a finite dimensional $\A$-module $\V$ is equivalent to giving a pair $(V,W,i)$ of a finite dimensional $A$-module $V$, a finite dimensional vector space $W$ at the vertex $\infty$ and a linear map $i\colon W\to V_0$. 
Let $\iota\colon \hafmod\to \afmod$ be the forgetting functor mapping $(V,W,i)$ to $V$. 

We also consider the following Koszul type complex $\A$-bimodules as well:
\[
0 \to \bigoplus_{k\in Q_0}\tilde{A}_k \overset{\tilde{d}_3}{\longrightarrow} \bigoplus_{a\in Q_1}\tilde{A}_{\mr{out}(a),\mr{in}(a)} \overset{\tilde{d}_2}{\longrightarrow} \bigoplus_{b\in \Q_1}\tilde{A}_{\mr{in}(b),\mr{out}(b)} \overset{\tilde{d}_1}{\longrightarrow} \bigoplus_{k\in \Q_0}\tilde{A}_k \overset{\tilde{m}}{\longrightarrow} \A \to 0,
\]
where $\tilde{A}_i$, $\tilde{A}_{i,i'}$, $\tilde{d}_\bullet$ and $\tilde{m}$ are defined in the same way.
This is also exact. 
The exactness at the last three terms is equivalent to the definition of generators and relations of the algebra $\A$.
The exactness at the first two terms is derived from that of the exactness of the Koszul complex of $A$. 
For a finite dimensional $\A$-module, we get a projective resolution in the same way as \eqref{eq_proj_resol}.



\begin{prop}\label{prop3.1.}
For $E$, $F\in \hafmod$ we have
\begin{align*}
&\hom_{\A}(E,F)-\ext^1_{\A}(E,F)+\ext^1_{\A}(F,E)-\hom_{\A}(F,E)\\
&=\dim E_\infty\cdot \dim F_0-\dim E_0\cdot\dim F_\infty.
\end{align*}
\end{prop}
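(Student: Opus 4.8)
The plan is to deduce the identity from a computation of the Euler pairing, using the length‑$3$ projective resolution \eqref{eq_proj_resol} in its $\A$‑version. For finite dimensional $\A$‑modules $E,F$ put
\[
\chi_{\A}(E,F):=\hom_{\A}(E,F)-\ext^1_{\A}(E,F)+\ext^2_{\A}(E,F)-\ext^3_{\A}(E,F).
\]
Applying $\Hom_{\A}(-,F)$ to the $\A$‑analogue of \eqref{eq_proj_resol} (a projective resolution of $E$, since the Koszul type complex of $\A$ is exact) produces a complex $C^\bullet_{\A}(E,F)$ of length $3$ whose cohomology is $\ext^\bullet_{\A}(E,F)$ and whose terms in cohomological degrees $0,1,2,3$ are, respectively,
\[
\bigoplus_{k\in\Q_0}\Hom_{\C}(E_k,F_k),\qquad
\bigoplus_{b\in\Q_1}\Hom_{\C}(E_{\mr{out}(b)},F_{\mr{in}(b)}),
\]
\[
\bigoplus_{a\in Q_1}\Hom_{\C}(E_{\mr{in}(a)},F_{\mr{out}(a)}),\qquad
\bigoplus_{k\in Q_0}\Hom_{\C}(E_k,F_k).
\]
Thus $\chi_{\A}(E,F)$ depends only on $\dimv E$ and $\dimv F$ and equals the alternating sum of the dimensions of these terms. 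Since $\Q_0=Q_0\sqcup\{\infty\}$ and $\Q_1=Q_1\sqcup\{a\}$ with $a$ the added arrow $\infty\to 0$, and since the arrows of $Q$ are permuted by an orientation‑reversing involution (interchanging $h^+_i$ with $h^-_i$ and fixing each loop $r_k$), the $Q_0$‑indexed contributions cancel against each other, the $Q_1$‑indexed contributions cancel against each other, and what remains is
\[
\chi_{\A}(E,F)=\dim E_\infty\cdot\dim F_\infty-\dim E_\infty\cdot\dim F_0 ,
\]
so that $\chi_{\A}(E,F)-\chi_{\A}(F,E)$ is the right hand side of the proposition.

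It then remains to identify $\chi_{\A}(E,F)-\chi_{\A}(F,E)$ with the left hand side. The left hand side equals $\chi_{\A}(E,F)-\chi_{\A}(F,E)$ minus the combination $\bigl(\ext^2_{\A}(E,F)-\ext^3_{\A}(E,F)\bigr)-\bigl(\ext^2_{\A}(F,E)-\ext^3_{\A}(F,E)\bigr)$, so it suffices to show this combination vanishes. To that end I would compare $C^\bullet_{\A}(E,F)$ with the analogous complex $C^\bullet_A(\iota E,\iota F)$ for the underlying $A$‑modules, built from the Koszul complex of $A$: in cohomological degrees $2$ and $3$ the two complexes coincide (both are indexed by $Q_1$ and $Q_0$), while in degrees $0$ and $1$ the $\A$‑complex carries the extra summands $\Hom_{\C}(E_\infty,F_\infty)$ (the vertex $\infty$) and $\Hom_{\C}(E_\infty,F_0)$ (the arrow $a$). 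Because the superpotential $\omega$ does not involve $a$, the differential $\partial_\omega$ annihilates the $a$‑summand, so these two summands form a two‑term subcomplex $D^\bullet=\bigl[\Hom_{\C}(E_\infty,F_\infty)\to\Hom_{\C}(E_\infty,F_0)\bigr]$ in degrees $0,1$, with quotient exactly $C^\bullet_A(\iota E,\iota F)$. The long exact sequence attached to $0\to D^\bullet\to C^\bullet_{\A}(E,F)\to C^\bullet_A(\iota E,\iota F)\to 0$, together with $H^{\geq 2}(D^\bullet)=0$, gives $\ext^i_{\A}(E,F)\cong\ext^i_A(\iota E,\iota F)$ for $i\geq 2$. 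Feeding these isomorphisms in and using that $A$ is $3$‑Calabi‑Yau, so that $\ext^i_A(\iota E,\iota F)\cong\ext^{3-i}_A(\iota F,\iota E)^{\ast}$, the combination above becomes the Euler pairing $\chi_A(\iota E,\iota F)$ of $A$ on the underlying modules, which vanishes identically by the same orientation‑reversing symmetry of $Q$. This proves the proposition.

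The point demanding the most care will be the construction of the short exact sequence $0\to D^\bullet\to C^\bullet_{\A}(E,F)\to C^\bullet_A(\iota E,\iota F)\to 0$: one has to check that $D^\bullet$ is genuinely a subcomplex (the differential $C^0_{\A}\to C^1_{\A}$ carries the $\infty$‑summand into the $a$‑summand, while $C^1_{\A}\to C^2_{\A}$ kills the $a$‑summand precisely because $\partial_\omega$ does not involve $a$) and that the differentials induced on the quotient are exactly those of the $A$‑Koszul complex. Everything else is routine bilinear bookkeeping with dimension vectors, together with a careful sign check in the cancellations producing $\chi_A\equiv 0$ and in the passage from $\chi_{\A}$ to the stated combination.
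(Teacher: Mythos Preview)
Your proof is correct and arrives at the same identity, but by a different route than the paper. The paper's argument is shorter and more elementary: after writing down the same length-$3$ complex $C^\bullet_{\A}(E,F)$, it simply observes that the middle differential $d_2^{\A}(E,F)$ coincides with $d_2^{A}(\iota E,\iota F)$ (because the superpotential does not involve the extra arrow) and that self-duality of the Koszul complex of $A$ forces
\[
\mr{rank}\,d_2^{\A}(E,F)=\mr{rank}\,d_2^{A}(\iota E,\iota F)=\mr{rank}\,d_2^{A}(\iota F,\iota E)=\mr{rank}\,d_2^{\A}(F,E).
\]
Since $\hom_{\A}(E,F)-\ext^1_{\A}(E,F)=\dim C^0_{\A}(E,F)-\dim C^1_{\A}(E,F)+\mr{rank}\,d_2^{\A}(E,F)$, the rank equality reduces the left hand side to $\dim C^1_{\A}(F,E)-\dim C^1_{\A}(E,F)$, and the quiver symmetry finishes the computation in one line.

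Your approach instead packages the combination as $\chi_{\A}(E,F)-\chi_{\A}(F,E)$ minus a degree-$\geq 2$ tail, identifies $\ext^{\geq 2}_{\A}$ with $\ext^{\geq 2}_{A}$ via the subcomplex $D^\bullet$, and then invokes the $3$-Calabi--Yau property of $A$ to rewrite the tail as $\chi_A(\iota E,\iota F)=0$. This is valid and more conceptual, but it uses strictly more machinery: you appeal to Serre duality for $A$ where the paper only needs the linear-algebra fact that a map and its transpose have the same rank (which is all the ``self-duality of the Koszul complex'' amounts to here). Your short exact sequence $0\to D^\bullet\to C^\bullet_{\A}\to C^\bullet_A\to 0$ is in fact exactly what makes the paper's first equality $\mr{rank}\,d_2^{\A}=\mr{rank}\,d_2^{A}$ transparent, so the two arguments rest on the same structure; the paper just takes the shortest path through it.
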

\begin{proof}
The bimodule resolution above provides a projective resolution of $E$.
We can compute $\Ext^\bullet_{\A}(E,F)$ by the complex given by applying $\Hom(-,F)$ for projective resolution of $E$:
\begin{align}
0 \to & \bigoplus_{k\in Q_0}\Hom(E_k,F_k) 
\to
\bigoplus_{a\in Q_1}\Hom(E_{\mr{in}(a)},F_{\mr{out}(a)})
\to\notag\\
&\bigoplus_{b\in Q_1}\Hom(E_{\mr{out}(b)},F_{\mr{in}(b)}) 
\to
\bigoplus_{k\in Q_0}\Hom(E_k,F_k) \to 0.
\end{align}
Let $d_i^{\A}(E,F)$ denote the derivations in the complex above.
We can compute $d_i^{A}(\iota(E),\iota(F))$ in the same way.
Let $d_i^{A}(\iota(E),\iota(F))$ denote the corresponding derivation.

Note that 
\begin{align*}
\mr{rank}\left(d_2^{\A}(E,F)\right)&=\mr{rank}\left(d_2^{A}(\iota(E),\iota(F))\right)\\
&=\mr{rank}\left(d_2^{A}(\iota(F),\iota(E))\right)\\
&=\mr{rank}\left(d_2^{\A}(F,E)\right),
\end{align*}
where the second equation comes from the self-duality of the Koszul complex of $A$.
Hence we have
\begin{align*}
&\hom_{\A}(E,F)-\ext^1_{\A}(E,F)+\ext^1_{\A}(F,E)-\hom_{\A}(F,E)\\
=\,& \sum_{h\in\Q_1}(\dim E_{\mr{out}(h)}\cdot \dim F_{\mr{in}(h)}-\dim E_{\mr{in}(h)}\cdot \dim F_{\mr{out}(h)})\\
=\,&\dim E_\infty\cdot \dim F_0-\dim E_0\cdot\dim F_\infty.
\end{align*}
Here the last equation follows from the fact that for any $i,\, j\in I$ 
\[
\sharp (\text{arrows from $i$ to $j$})=\sharp (\text{arrows from $j$ to $i$}).
\]
\end{proof}
\begin{rem}
By the Koszul resolution of $A$, the last equation in the proof is equivalent to the vanishing of the Euler form on $\afmod$. This is equivalent to the vanishing of the Euler form on the category $\mr{Coh}_{\mr{cpt}}(\mca{Y})$ of coherent sheaves on $\mca{Y}$ with compact supports. 
The vanishing on $\mr{Coh}_{\mr{cpt}}(\mca{Y})$ follows from Hirzebruch-Riemann-Roch theorem. 
\end{rem}
\begin{rem}
Joyce-Song proved more general statement (\cite[Theorem 7.5]{joyce-song}).
\end{rem}

\subsection{Counting invariants}\label{counting-inv}
For $\tz \in \R^{\Q_0}$ and a finite dimensional $\A$-module $\V$ we set 
\[
\theta_{\tz}(\V)=\frac{\sum_{k\in\Q_0}\tz_k\cdot \dim \V_k}{\sum_{k\in\Q_0}\dim \V_k}.
\]
\begin{defn}
A finite dimensional $\A$-module $\V$ is said to be $\theta_{\tz}$-(semi)stable if we have
\[
\theta_{\tz}(\V')\,(\le)\,\theta_{\tz}(\V)
\]
for any nonzero proper $\A$-submodule $\V'$
\end{defn}
Here we adapt the convention for the short-hand notation. The above
means two assertions: semistable if we have `$\le$', and stable if we
have `$<$'.

\begin{rem}
\begin{enumerate}
\item[(1)] These stability conditions coincide with ones in geometric invariant theory (\cite{king}). 
\item[(2)] For a real number $c$, we put $\tz':=(\zeta_k+c)_{k\in \Q_0}\in\R^{\Q_0}$. 
Then we have
\[
\theta_{\tz'}(\V)=\theta_{{\tz}}(\V)+c.
\]
Hence $\theta_{\tz'}$-(semi)stability and $\theta_{\tz}$-(semi)stability are equivalent. 
\end{enumerate}
\end{rem}

\begin{thm}[\protect{\cite{rudakov}}]
Let a stability parameter $\tz\in\R^{\Q_0}$ be fixed.
\begin{enumerate}
\item[(1)] A finite dimensional $\A$-module $\V\in\hafmod$ has \HN:
\[
\V=\V_0\supset \V_1\supset \cdots \supset \V_k\supset \V_{k+1}=0
\]
such that $\V_i/\V_{i+1}$ is $\theta_{\tz}$-semistable for $i=0,1,\ldots,k$ and 
\[
\theta_{\tz}(\V_0/\V_{1})<\theta_{\tz}(\V_1/\V_2)<\cdots<\theta_{\tz}(\V_k/\V_{k+1}).
\]
The Harder-Narasimhan filtration is unique.
\item[(2)] A finite dimensional $\theta_{\tz}$-semistable $\A$-module $\V\in\hafmod$ has \JH:
\[
\V=\V_0\supset \V_1\supset \cdots \supset \V_k\supset \V_{k+1}=0
\]
such that $\V_i/\V_{i+1}$ is $\theta_{\tz}$-stable for $i=0,1,\ldots,k$ and 
\[
\theta_{\tz}(\V_0/\V_{1})=\theta_{\tz}(\V_1/\V_2)=\cdots=\theta_{\tz}(\V_k/\V_{k+1}).
\]
\end{enumerate}
\end{thm}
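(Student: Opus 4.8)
The plan is to run the standard Harder--Narasimhan/Jordan--H\"older machinery, using only that $\hafmod$ is an abelian category in which every object has finite length (each $\A$-module $\V$ is finite dimensional, so chains of submodules of $\V$ have bounded length) and that $\theta_{\tz}$ is a slope function of ``mediant'' type. Concretely, writing $\theta_{\tz}(\V)=\ell(\V)/d(\V)$ with $\ell(\V)=\sum_{k}\tz_k\dim\V_k$ and $d(\V)=\sum_{k}\dim\V_k$ both additive on short exact sequences, one first records the seesaw inequality: for $0\to\V'\to\V\to\V''\to 0$ with $\V',\V''\neq 0$, the value $\theta_{\tz}(\V)$ lies non-strictly between $\theta_{\tz}(\V')$ and $\theta_{\tz}(\V'')$, and $\theta_{\tz}(\V')<\theta_{\tz}(\V)\iff\theta_{\tz}(\V)<\theta_{\tz}(\V'')\iff\theta_{\tz}(\V')<\theta_{\tz}(\V'')$, with the analogous statements for $=$ and $>$. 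Note also that for fixed $\V$ the dimension vectors $\dimv\V'$ of submodules $\V'\subseteq\V$ range over a finite set, so $\{\theta_{\tz}(\V')\mid 0\neq\V'\subseteq\V\}$ is finite, and in particular attains a maximum.

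For part (1), I would first produce the maximal destabilising submodule. Among the nonzero submodules of $\V$ of maximal slope $\mu_{\max}$, choose one, $\V_1$, of maximal dimension. Using the seesaw inequalities one checks that $\V_1$ is $\theta_{\tz}$-semistable (a submodule of larger slope would, together with $\V_1$, span a submodule of $\V$ of slope $>\mu_{\max}$, or of slope $\mu_{\max}$ and larger dimension), that it is the unique maximal such submodule (the span of two distinct ones would again violate maximality), and that every nonzero submodule of $\V/\V_1$ has slope $<\mu_{\max}$, whence $\theta_{\tz}(\V/\V_1)<\mu_{\max}$. Applying the induction hypothesis on $\dim\V$ to $\V/\V_1$ yields the filtration with strictly increasing slopes, the inequality $\theta_{\tz}(\V_1)=\mu_{\max}>\theta_{\tz}(\V_1/\V_2)$ again following from seesaw. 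For uniqueness the key lemma is that if $\V$ is $\theta_{\tz}$-semistable of slope $\mu_1$ and $\V'$ is $\theta_{\tz}$-semistable of slope $\mu_2<\mu_1$ then $\Hom_{\A}(\V,\V')=0$, since the image of a nonzero map would be both a quotient of $\V$ (slope $\ge\mu_1$) and a submodule of $\V'$ (slope $\le\mu_2$); given two HN filtrations of $\V$ this forces their top terms to coincide, and one concludes by induction on $\dim\V$.

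For part (2), let $\V$ be $\theta_{\tz}$-semistable of slope $\mu$. Among the nonzero submodules $\V'\subseteq\V$ with $\theta_{\tz}(\V')=\mu$ (there is at least one, namely $\V$ itself) choose one of minimal dimension; then $\V'$ is $\theta_{\tz}$-stable, because a nonzero proper submodule $\V''\subsetneq\V'$ has $\theta_{\tz}(\V'')\le\mu$ by semistability of $\V$, while $\theta_{\tz}(\V'')=\mu$ would contradict minimality of $\dim\V'$, so $\theta_{\tz}(\V'')<\mu=\theta_{\tz}(\V')$. Since $\theta_{\tz}(\V')=\theta_{\tz}(\V)=\mu$, the seesaw equality gives $\theta_{\tz}(\V/\V')=\mu$, and $\V/\V'$ is again $\theta_{\tz}$-semistable of slope $\mu$ (a submodule of $\V/\V'$ pulls back to a submodule of $\V$ of slope $\le\mu$, and a further seesaw application bounds the corresponding quotient inside $\V/\V'$ by $\mu$). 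Induction on $\dim\V$ applied to $\V/\V'$ produces the Jordan--H\"older filtration. The only real point requiring care throughout is the bookkeeping of strict versus non-strict inequalities in the seesaw manipulations together with the verification that the relevant extrema of slopes are attained; here these are immediate since every object is finite dimensional, so no genuine chain-condition or boundedness obstacle arises. Alternatively, one may simply invoke \cite{rudakov}: the pair $(\hafmod,\theta_{\tz})$ satisfies the axioms there for a stability on an abelian category.
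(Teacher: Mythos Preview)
The paper does not prove this theorem at all; it simply quotes the statement with a citation to \cite{rudakov}. Your write-up supplies exactly the standard slope/seesaw argument that Rudakov gives, so in that sense there is nothing to compare: you are filling in a proof the paper chose to omit, and your outline is mathematically sound.

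One small point of bookkeeping: in the paper's convention the filtration is indexed so that the \emph{smallest} nonzero term $\V_k$ has the largest slope (the inequalities $\theta_{\tz}(\V_0/\V_1)<\cdots<\theta_{\tz}(\V_k/\V_{k+1})$ are increasing). Your maximal destabilising submodule is therefore $\V_k$, not $\V_1$; correspondingly, the induction on $\V/\V_k$ produces the terms $\V_0\supset\cdots\supset\V_{k-1}$ above it, and the inequality you need to check at the inductive step is $\theta_{\tz}(\V_{k-1}/\V_k)<\theta_{\tz}(\V_k)$. As written, your labels $\V_1,\V_2$ and the inequality ``$\theta_{\tz}(\V_1)=\mu_{\max}>\theta_{\tz}(\V_1/\V_2)$'' do not match the statement's indexing (and $\V_1/\V_2$ is not a quotient you have constructed at that stage). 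This is purely notational---the argument itself is correct---but it would be worth realigning the indices with the statement before submitting.
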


We sometimes denote an $\A$-module $(V,\C,i)$ with $1$-dimensional vector space at the vertex $\infty$ simply by $(V,i)$.
\begin{defn}\label{def-stability}
\begin{enumerate}
\item[(1)]
Given $\zeta\in\R^{Q_0}$, we take $\tz\in\R^{\Q_0}$ such that $\tz_k=\zeta_k$ for $k\in Q_0$.
A finite dimensional $A$-module $V$ said to be $\zeta$-(semi)stable if the $\A$-module $(V,0,0)$ is $\theta_{\tz}$-(semi)stable.
The definition does not depend on the choice of $\tz_\infty$.
\item[(2)]
Given $(V,i)\in \hafmod$ and $\zeta\in\R^{Q_0}$, 
we define $\tz\in\R^{\Q_0}$ by $\tz_k=\zeta_k$ for $k\in Q_0$ and
\[
\tz_\infty=-\zeta\cdot\dimv\,V.
\]
We say $(V,i)$ is $\zeta$-(semi)stable if it is $\theta_{\tz}$-(semi)stable.
\end{enumerate}
\end{defn}
\begin{lem}
An $\A$-module $(V,i)$ is $\zeta$-(semi)stable if the following conditions are satisfied:
\begin{enumerate}
\item[(A)] for any nonzero $A$-submodule $0\neq S\subseteq V$, we have
\[
\zeta\cdot\dimv\,S\,(\le)\,0,
\]
\item[(B)] for any proper $A$-submodule $T\subsetneq V$ such that $\mr{im}(i)\subset T_0$, we have
\[
\zeta\cdot\dimv\,T\,(\le)\,\zeta\cdot\dimv\,V.
\]
\end{enumerate}
\end{lem}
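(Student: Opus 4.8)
The plan is to unwind Definition~\ref{def-stability}(2) and reduce the $\theta_{\tz}$-(semi)stability of $(V,i)=(V,\C,i)\in\hafmod$ to a condition on the $A$-submodules of $V$; in fact I would prove the full equivalence of $\zeta$-(semi)stability with (A) and (B). The one observation that makes everything go through is that the reference slope vanishes: with $\tz_k=\zeta_k$ for $k\in Q_0$ and $\tz_\infty=-\zeta\cdot\dimv\,V$, the numerator of $\theta_{\tz}\big((V,\C,i)\big)$ is $\sum_{k\in Q_0}\zeta_k\dim V_k+\tz_\infty=\zeta\cdot\dimv\,V-\zeta\cdot\dimv\,V=0$, so $\theta_{\tz}\big((V,\C,i)\big)=0$. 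Therefore $(V,i)$ is $\theta_{\tz}$-(semi)stable exactly when $\theta_{\tz}(\V')\,(\le)\,0$ for every nonzero proper $\A$-submodule $\V'\subseteq(V,\C,i)$, and since each such $\theta_{\tz}(\V')$ has strictly positive denominator, this is equivalent to the same inequality holding for the numerator of $\theta_{\tz}(\V')$ alone.

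Next I would classify those submodules. As the component of $(V,\C,i)$ at $\infty$ is one-dimensional, the $\infty$-component of any $\A$-submodule $\V'$ is $0$ or $\C$. In the first case $\V'=(S,0,0)$ with $S$ an arbitrary nonzero $A$-submodule of $V$ (there is no compatibility constraint with $i$, since the arrow $\infty\to 0$ then acts as zero on $\V'$), and such a $\V'$ is automatically a proper submodule of $(V,\C,i)$, even when $S=V$; its numerator is $\zeta\cdot\dimv\,S$, so $\theta_{\tz}(\V')\,(\le)\,0$ is exactly condition~(A). In the second case $\V'=(T,\C,i)$ with $T$ an $A$-submodule of $V$ satisfying $\mr{im}(i)\subseteq T_0$, and properness of $\V'$ means precisely $T\subsetneq V$; its numerator is $\zeta\cdot\dimv\,T+\tz_\infty=\zeta\cdot\dimv\,T-\zeta\cdot\dimv\,V$, so $\theta_{\tz}(\V')\,(\le)\,0$ is exactly condition~(B). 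Since every nonzero proper $\A$-submodule of $(V,\C,i)$ is of exactly one of these two types, running over both cases gives the claim, in the semistable and the stable readings simultaneously.

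There is no genuine obstacle here; once the normalization $\theta_{\tz}\big((V,\C,i)\big)=0$ is recorded, the rest is bookkeeping. The points to watch are: that $(S,0,0)$ must be counted as a \emph{proper} submodule even for $S=V$, so that condition~(A) is genuinely imposed also at $S=V$; that the modules $(T,\C,i)$ with $T=V$ are precisely the ones excluded as non-proper, so no constraint is placed there; that the degenerate submodule $(0,\C,i)$ — which occurs exactly when $i=0$ — is a legitimate instance of the second type with $T=0$, matching the fact that $T=0$ is allowed in~(B); and that every denominator $\sum_k\dim S_k$, respectively $1+\sum_k\dim T_k$, is strictly positive, so that comparing a slope to $0$ is the same as comparing its numerator to $0$. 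It is also worth remarking that the value $\tz_\infty=-\zeta\cdot\dimv\,V$ prescribed by Definition~\ref{def-stability}(2) is exactly the one forcing $\theta_{\tz}\big((V,\C,i)\big)=0$, which is why the right-hand sides of (A) and (B) come out as $0$ and $\zeta\cdot\dimv\,V$ with no extra shift.
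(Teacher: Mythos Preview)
Your proof is correct and is exactly the standard unwinding one expects; the paper itself states this lemma without proof, treating it as an immediate consequence of Definition~\ref{def-stability}(2). Your observation that $\theta_{\tz}\big((V,\C,i)\big)=0$ and the dichotomy on $\dim\V'_\infty\in\{0,1\}$ is precisely the intended argument, and you even prove the full equivalence, which is slightly more than the ``if'' direction stated.
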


For $\zeta\in\R^{Q_0}$ and $\vv=(v_k)\in (\Z_{\geq 0})^{Q_0}$, let $\M{\zeta}{\vv}$ (resp. $\mf{M}_{\zeta}^{\,\mr{s}}(\vv)$) denote the moduli space of $\zeta$-semistable (resp. $\zeta$-stable) $\A$-modules $(V,i)$ such that $\dimv\,V=\vv$.
They are constructed using geometric invariant theory (\cite{king}). 

A stability parameter $\zeta\in\R^{Q_0}$ is said to be generic if $\zeta$-semistability and $\zeta$-stability are equivalent to each other. 
Since the defining relation of $A$ is derived from the derivation of the superpotential, the moduli space $\M{\zeta}{\vv}$ has a symmetric perfect obstruction theory (\cite[Theorem 1.3.1]{szendroi-ncdt}). 
By the result of \cite{behrend-dt} a constructible $\Z$-valued function $\nu$ is defined on the moduli space $\M{\zeta}{\vv}$. 
We define the counting invariants 
\[
D^{\mathrm{eu}}_{\zeta}(\vv):=\chi(\M{\zeta}{\vv}),\quad 
D_{\zeta}(\vv):=\sum_{n\in\Z}n\cdot \chi(\nu^{-1}(n))
\]
where $\chi(-)$ denote topological Euler numbers.
We encode them into the generating functions
\[
\mca{Z}^{\mathrm{eu}}_{\zeta}(\q):=\sum_{\vv\in(\Z_{\geq 0})^{Q_0}}D^{\mathrm{eu}}_{\zeta}(\vv)\cdot\q^\vv,\quad 
\mca{Z}_{\zeta}(\q):=\sum_{\vv\in(\Z_{\geq 0})^{Q_0}}D_{\zeta}(\vv)\cdot\q^\vv
\]
where $\q^\vv=\prod_{k\in ^{Q_0}}q_k^{v_k}$ and $q_k$'s are formal variables.

In the rest of this section, we will work on $\mca{Z}^{\mathrm{eu}}_{\zeta}(\q)$. In \S \ref{mutation}, we will compare $\mca{Z}_{\zeta}(\q)$ with $\mca{Z}^{\mathrm{eu}}_{\zeta}(\q)$.
\begin{NB}
The $3$-dimensional torus $T$ acts on $\mca{Y}$, and so it also acts on $\M{\zeta}{\vv}$.  
Assume that the set of $T$-fixed points $\M{\zeta}{\vv}^T$ is isolated and finite (see \S \ref{subsec-DTPTNCDT} and \S \ref{subsec-isolated}). 
By the argument in the proof of \cite[Lemma 2.5.2, Corollary 2.5.3 and Theorem 2.7.1]{szendroi-ncdt}, the contribution of a $T$-fixed point $P\in\M{\zeta}{\vv}^T$ to $D_\zeta(\vv)$ is 
\[
(-1)^{v_0+\sum_{a\in Q_1}v_{\mr{out}(a)}v_{\mr{in}(a)}-\sum_{k\in Q_0}v_{k}^2}=(-1)^{\sum_{k\in \hI _r}v_{k}+\sum_{k\in I}v_{k}},
\]
and so we have
\begin{align}\label{z}
\mca{Z}_{\zeta}(\q)&=\sum_{\vv\in(\Z_{\geq 0})^{Q_0}}(-1)^{\sum_{k\in \hI _r}v_{k}+\sum_{k\in I}v_{k}}\left|\,\M{\zeta}{\vv}^T\right|\cdot\q^\vv\\
&=\mca{Z}^{\mathrm{eu}}_{\zeta}(\mathbf{p})\notag
\end{align}
where the new formal variables $\mathbf{p}$ are given by
\[
p_k=
\begin{cases}
q_k & k\neq 0, k\in\hat{I}_r,\,\text{or},\, k=0, k\neq\hat{I}_r,\\
-q_k & k\neq 0, k\neq\hat{I}_r,\,\text{or},\, k=0, k\in\hat{I}_r.
\end{cases}
\]
\end{NB}

\subsection{Wall-crossing formula}\label{subsec.wall-crossing}
The set of non-generic stability parameters is the union of the hyperplanes in $\R^{Q_0}$. 
Each hyperplane is called a wall and each connected component of the set of generic parameters is called a chamber. 
The moduli space $\M{\zeta}{\vv}$ does not change as long as $\zeta$ moves in a chamber. 

Let $\zeta^\circ=(\zeta_k)_{k\in Q_0}$ be a stability parameter on a single wall and set $\zeta^\pm=(\zeta_k\pm\varepsilon)_{k\in Q_0}$ for sufficiently small $\varepsilon>0$.
\begin{prop}\label{prop3.8.}
\begin{enumerate}
\item[(1)]
Let $\V'=(V',\C,i')$ be a 
$\zeta^+$-stable $\A$-module.
Then we have an exact sequence 
\[
0\to \V\to \V'\to \V''\to 0,
\]
where $\V=(V,\C,i)$ is a 
$\zeta^\circ$-stable $\A$-module, $\V''=(V'',0,0)$ and $V''$ is a 
${\zeta^\circ}$-semistable $A$-module.
The isomorphism class of $\V$ and $V''$ are determined uniquely. 
In particular, assume $\V'=(V',\C,i')$ is $T$-invariant then $\V$ and $V''$ are also $T$-invariant.
\item[(2)]
Let $\V'=(V',\C,i')$ be a $\zeta^-$-stable $\A$-module.
Then we have an exact sequence
\[
0\to \V''\to \V'\to\V \to 0,
\]
where $\V=(V,\C,i)$ is a 
$\zeta^\circ$-stable $\A$-module, $\V''=(V'',0,0)$ and $V''$ is a 
${\zeta^\circ}$-semistable $A$-module.
The isomorphism class of $\V$ and $V''$ are determined uniquely. 
In particular, assume $\V'=(V',\C,i')$ is $T$-invariant then $\V$ and $V''$ are also $T$-invariant.
\end{enumerate}
\end{prop}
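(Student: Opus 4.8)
The plan is to recast $\zeta$-stability through the numerical criterion of the lemma above, with its conditions (A) and (B). Let $\tz^\circ,\tz^\pm\in\R^{\Q_0}$ be the parameters attached to $\zeta^\circ,\zeta^\pm$ as in Definition \ref{def-stability}(2), so that $\theta_{\tz^\circ}(\V')=\theta_{\tz^\pm}(\V')=0$, and note that $\zeta^\pm$ differs from $\zeta^\circ$ by $\pm\varepsilon$ in every coordinate of $Q_0$. Since $\dimv V'$ is fixed, only finitely many sub-dimension-vectors $\mathbf w\le\dimv V'$ occur, so I may take $\varepsilon$ small enough that $\varepsilon<|\zeta^\circ\cdot\mathbf w|/|\mathbf w|$ whenever $\zeta^\circ\cdot\mathbf w\ne0$; then the signs of $\zeta^\pm\cdot\mathbf w$ and of $\zeta^\circ\cdot\mathbf w$ agree unless $\zeta^\circ\cdot\mathbf w=0$ with $\mathbf w\ne0$, in which case $\zeta^+\cdot\mathbf w>0>\zeta^-\cdot\mathbf w$. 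Feeding this into (A) and (B): a $\zeta^+$-stable $\V'$ satisfies (A) strictly and (B) weakly at $\zeta^\circ$, hence is $\zeta^\circ$-semistable, and the only subobjects realizing equality in the $\zeta^\circ$-criterion are the $A$-submodules $T\subsetneq V'$ with $\im(i')\subseteq T_0$ and $\zeta^\circ\cdot\dimv T=\zeta^\circ\cdot\dimv V'$; dually, a $\zeta^-$-stable $\V'$ is $\zeta^\circ$-semistable, satisfies (B) strictly at $\zeta^\circ$, and the only equalities come from $A$-submodules $S\subseteq V'$ with $\zeta^\circ\cdot\dimv S=0$.

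For part (1) I would take $V\subseteq V'$ to be the smallest $A$-submodule with $\im(i')\subseteq V_0$ and $\zeta^\circ\cdot\dimv V=\zeta^\circ\cdot\dimv V'$: such submodules form a nonempty family ($V'$ belongs to it) closed under intersection---combine the modular identity $\dimv(T_1\cap T_2)+\dimv(T_1+T_2)=\dimv T_1+\dimv T_2$ with (B) for $\V'$ applied to $T_1+T_2$ and to $T_1\cap T_2$---so $V$ exists; set $\V:=(V,\C,i')$ and $\V'':=\V'/\V=(V'/V,0,0)$. Then $\V$ is $\zeta^\circ$-stable, since (A) is strict (a nonzero $A$-submodule of $V$ is already one of $V'$) and (B) is strict (by (B) for $\V'$ and minimality of $V$, no proper $A$-submodule of $V$ containing $\im(i')$ attains the value $\zeta^\circ\cdot\dimv V'=\zeta^\circ\cdot\dimv V$), while $V''$ is $\zeta^\circ$-semistable of slope $0$: $\zeta^\circ\cdot\dimv V''=0$, and its $A$-submodules are the $S/V$ with $V\subseteq S\subseteq V'$, $\im(i')\subseteq S_0$, for which $\zeta^\circ\cdot\dimv(S/V)=\zeta^\circ\cdot\dimv S-\zeta^\circ\cdot\dimv V'\le0$ by (B) for $\V'$. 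Part (2) is the mirror image: let $V''\subseteq V'$ be the largest $A$-submodule with $\zeta^\circ\cdot\dimv V''=0$ (these are closed under sums, by the same identity and (A) for $\V'$), set $\V'':=(V'',0,0)$ and $\V:=\V'/\V''$, and check $\zeta^\circ$-stability of $\V$ using maximality of $V''$ for (A) and the strictness of (B) for the $\zeta^-$-stable $\V'$ for (B); here $V''$ is $\zeta^\circ$-semistable because its $A$-submodules are submodules of $V'$.

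For uniqueness in (1), let $\V_1\subseteq\V'$ be any $\zeta^\circ$-stable submodule with $1$-dimensional $\infty$-part whose quotient $\V'/\V_1=(V'/V_1,0,0)$ is $\zeta^\circ$-semistable. A short slope computation---using that $\V_1$ is a subobject and $\V'/\V_1$ a quotient of the $\zeta^\circ$-semistable $\V'$, together with the strictness of the $\zeta^\circ$-criterion coming from $\zeta^+$-stability of $\V'$---shows $\zeta^\circ\cdot\dimv V_1=\zeta^\circ\cdot\dimv V'$, so $V\cap V_1$ is again a framed submodule of $V'$ attaining this value; minimality of $V$ gives $V\subseteq V_1$, and then $V=V_1$ because a proper $A$-submodule of the $\zeta^\circ$-stable $\V_1$ cannot attain $\zeta^\circ\cdot\dimv V_1$. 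Hence $\V$, and therefore $\V''$, is determined up to isomorphism, and (2) is symmetric with ``minimal/maximal'' and ``sub/quotient'' interchanged. $T$-invariance is then automatic: the passage $\V'\mapsto(\V,V'')$ is defined purely through dimension vectors and the submodule lattice, so it commutes with the $T$-action, and if $\V'$ is $T$-fixed then each $t\in T$ carries the canonical data of $\V'$ to that of $t\cdot\V'\cong\V'$, which by uniqueness is isomorphic to the original---so $\V$ and $V''$ are $T$-fixed. The places demanding genuine care---more pitfalls than true obstacles---are the verification that the extremal subobject (the minimal framed one for $\zeta^+$, the maximal unframed one for $\zeta^-$) is $\zeta^\circ$-\emph{stable} rather than merely semistable, and the slope identity used in the uniqueness step; both rest on combining $\zeta^\circ$-semistability of $\V'$ with the one-sided strictness produced by the sign of the perturbation.
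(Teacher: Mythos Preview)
Your approach is correct in spirit and genuinely different from the paper's. The paper constructs the decomposition via the Jordan--H\"older filtration of $\V'$ with respect to $\theta_{\tz^\circ}$: all JH factors have $\theta_{\tz^\circ}$-slope $0$, exactly one carries the $\infty$-component, and $\zeta^+$-stability forces that one to sit at the bottom, giving $\V=\V_M$. For uniqueness the paper then changes the parameter to $\tz^-$ and identifies the resulting filtration $\V'\supset\pi^{-1}(\V''_\bullet)\supset\V\supset 0$ with the (unique) Harder--Narasimhan filtration of $\V'$. So the paper's argument is structural, passing through both JH and HN machinery, whereas yours stays entirely at $\zeta^\circ$ and builds $\V$ directly as the extremal (minimal framed, respectively maximal unframed) submodule in the lattice of subobjects realizing equality in (B), respectively (A). Your route is more elementary and avoids switching parameters; the paper's route makes uniqueness a one-liner once HN is invoked.

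One step in your uniqueness argument is underjustified as written. You assert that a ``short slope computation'' using $\zeta^\circ$-semistability of $\V'$ together with the strictness of (A) at $\zeta^\circ$ gives $\zeta^\circ\cdot\dimv V_1=\zeta^\circ\cdot\dimv V'$. Semistability of $\V'$ only yields $\zeta^\circ\cdot\dimv V_1\le\zeta^\circ\cdot\dimv V'$ (equivalently $\zeta^\circ\cdot\dimv(V'/V_1)\ge 0$), and the strictness of (A) gives information about $A$-\emph{submodules} of $V'$, not about the quotient $V'/V_1$; I do not see how the reverse inequality follows from these inputs alone. A clean fix, staying within your framework: first show $V_1\subseteq V$ by noting that $V_1/(V\cap V_1)\cong(V+V_1)/V$ is an $A$-submodule of your $V''=V'/V$, which is $\zeta^\circ$-semistable of slope $0$, so $\zeta^\circ\cdot\dimv\bigl(V_1/(V\cap V_1)\bigr)\le 0$, while $\zeta^\circ$-stability of $\V_1$ forces this to be $>0$ unless $V\cap V_1=V_1$. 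Once $V_1\subseteq V$, the quotient $V''$ sits as a quotient of the $\zeta^\circ$-semistable $V'/V_1$, so $\mathrm{slope}(V'/V_1)\le\mathrm{slope}(V'')=0$; combined with $\ge 0$ this gives the equality you want, and then your minimality and stability finish the job. With this patch your argument is complete.
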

\begin{proof}
We take $\tilde{\zeta}^\circ\in \R^{\Q_0}$ as in Definition \ref{def-stability}.
Let
\[
\V'=\V_0\supset\cdots\supset \V_{M}\supset \V_{M+1}=0
\]
be \JH of $\V'$ with respect to the $\theta_{\tz^\circ}$-stability.
Since $\dim \V'_\infty=1$, there is an integer $0\leq m\leq M$ such that $\dim(\V_m/\V_{m+1})_\infty=1$ and $\dim(\V_{m'}/\V_{m'+1})_\infty=0$ for any $m'\neq m$. 
Then for $m'\neq m$ we have 
\[
\zeta^+\cdot\dim(\V_{m'}/\V_{m'+1})=\varepsilon\cdot\sum_{k\in I}(\V_{m'}/\V_{m'+1})_k>0.
\]
From the $\zeta^+$-stability of $\V'$, we have $m=M$. 
Put $\V=\V_M$ and $\V''=\V'/\V_M$, we have the required sequence.

Let 
\[
\V''=\V''_0\supset\cdots\supset \V''_{M''}\supset \V_{M''+1}=0
\]
be the \HN of $\V''$ with respect to the $\theta_{\tz^-}$-stability. 
Since $\V''$ is $\theta_{\tz^\circ}$-semistable, we have
\[
\zeta^-\cdot\dim(\V''_{M''})<\zeta^\circ\cdot\dim(\V''_{M''})\leq \zeta^\circ\cdot\dim(\V'')=0<\zeta^-\cdot\dim(\V).
\]
Then the sequence
\[
\V'=\pi^{-1}(\V''_0)\supset\cdots\supset \pi^{-1}(\V''_{M''+1})=\V \supset 0
\]
gives the \HN of $\V'$ with respect to the $\theta_{\tz^-}$-stability, 
where $\pi\colon \V'\twoheadrightarrow \V''$ is the projection.
Assume $\V'$ is $T$-invariant, then it follows from the uniqueness of \HN that $\V$ and $\V''$ are $T$-invariant.  

We can verify the claim of (2) similarly.
\end{proof}

We identify $\Z^{Q_0}$ with the root lattice of affine Lie algebra of type $A_N$ and denote the set of positive root vectors by $\rs^+$. 
We put $W_\alpha:=\{\zeta\in\Z^{Q_0}\mid\zeta\cdot \alpha=0\}$ ($\alpha\in \rs^+$). 
\begin{prop}\label{prop3.9.}
Assume $C$ is a 
$\zeta$-stable $A$-module for some $\zeta\in\R^{Q_0}$. 
Then $\dimv\,C\in\rs^+$. 
Moreover, given a positive real root $\alpha\in\rs^{\mr{re},+}$ and a stability parameter $\zeta^\circ$ such that $\zeta^\circ$ is on the wall $W_\alpha$ but not on any other wall $W_{\alpha'}$ ($\alpha'\neq\alpha$),
then we have the unique ${\zeta^\circ}$-stable $T$-invariant $A$-module $C$ such that $\dimv\,C=\alpha$.  
\end{prop}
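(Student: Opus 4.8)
The plan is to prove the two assertions in turn; the first is essentially formal once one knows $\zeta$-stable modules are bricks, while the second carries the real content.

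\emph{First assertion.} A $\zeta$-stable $A$-module $C$ is automatically a brick, $\End_A(C)=\C$: a nonzero non-invertible endomorphism would have image a proper nonzero submodule which is at the same time a proper nonzero quotient of $C$, and comparing the $\zeta$-slopes of this submodule and this quotient contradicts strict stability. Granting this, I would deduce $\dimv C\in\rs^+$ by the mechanism familiar from the representation theory of type $\hat A$ preprojective algebras and the associated quiver varieties alluded to in the introduction: if $\mathbf v=\dimv C$ were a nonzero effective class that is not a positive root of the affine root system, then one peels $C$ to expose a destabilizing subobject or quotient — taking $S$ to be the submodule generated by a suitably chosen collection of the vertex spaces $C_k$ and observing that either $S$ or $C/S$ has the wrong $\zeta$-slope. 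Since an effective nonzero vector of the affine type $A$ root lattice is a positive root as soon as its affine Cartan norm is at most $2$, this is ultimately an elementary root-system computation; one may also phrase the step homologically, via the symmetrized Euler form that plays the role of the affine Cartan matrix here and the fact that a brick has norm at most $2$ with respect to it.

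\emph{Second assertion.} Fix $\alpha\in\rs^{\mr{re},+}$ and $\zeta^\circ$ lying on $W_\alpha$ and on no other wall $W_{\alpha'}$. The first step is to reduce to a finite combinatorial problem: a $T$-invariant $A$-module decomposes into its $T$-weight spaces and the arrows of $Q$ are $T$-homogeneous, and a $T$-invariant module that is moreover a brick has all weight spaces at most one-dimensional (otherwise one builds a non-scalar endomorphism inside a weight space). Hence a $T$-invariant $\zeta^\circ$-stable $C$ with $\dimv C=\alpha$ is determined up to isomorphism by its support $\mca{S}=\{(k,\chi)\mid C^\chi_k\neq 0\}$ together with the $0/1$ pattern of its structure maps between adjacent members of $\mca{S}$, data which is tightly constrained by the relations of $A$ recorded in \S\ref{end-alg}. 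Because $\alpha$ is a \emph{real} root, enumerating the $T$-invariant representations with dimension vector $\alpha$ compatible with those relations produces a short, explicit list of ``string-like'' modules, each supported on a connected arc of vertices of $\Z/N\Z$ (wrapping around and switching on the loops $r_k$ the appropriate number of times when $\alpha\ge\delta$). The submodules of such a string module are exactly its terminal sub-arcs, with dimension vectors positive roots $\beta\neq\alpha$; since $\zeta^\circ$ lies on no wall but $W_\alpha$ we have $\zeta^\circ\cdot\beta\neq 0$ for all of them, and a sign count shows that exactly one module in the list has $\zeta^\circ\cdot\beta<0$ for every proper submodule class, i.e. is $\zeta^\circ$-stable — yielding existence and uniqueness simultaneously. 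It remains to check that the module singled out genuinely satisfies the superpotential relations and is indecomposable; the cleanest way is to identify it, through the derived equivalence of Theorem \ref{generator}, with a twist of the structure sheaf of the corresponding chain of exceptional curves $\mca{C}_k$, which is automatically a (compactly supported, connectedly supported) perverse coherent sheaf, hence an indecomposable $A$-module.

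\emph{Main obstacle.} The delicate point is the classification-and-selection step for a general positive real root $\alpha$: organizing the combinatorics of $T$-invariant representations with a prescribed root dimension vector — especially when $\alpha\ge\delta$, so that some $\dim C_k>1$ and the loops must be activated — and matching it against the single-wall stability condition so that precisely one candidate survives. Existence alone could instead be obtained by transporting a known $T$-invariant stable object through the reflection (mutation) functors starting from a chamber where the stable objects are explicit, for instance near the parameter $\theta_0$ of \S\ref{specific-parameter}; but propagating $T$-fixedness and uniqueness through those functors is itself the crux. The brick step and the first assertion are, by comparison, routine.
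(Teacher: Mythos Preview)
Your approach to both parts diverges from the paper's, and the first assertion has a real gap.

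For the first assertion you invoke ``the symmetrized Euler form that plays the role of the affine Cartan matrix here and the fact that a brick has norm at most $2$ with respect to it.'' But $A$ is $3$-Calabi--Yau, so the Euler form on $\afmod$ vanishes identically (this is exactly the Remark following Proposition~\ref{prop3.1.}). There is no Cartan norm to read off from $A$ itself, and your ``peeling'' alternative is not spelled out enough to carry weight. The paper's key step, which you are missing, is to use the central subalgebra $R\subset A$: any stable $C$ is supported at a single maximal ideal $\mca I\subset R$, and stability forces $\mca I\cdot C=0$. Over the $T$-fixed singular point this makes $C$ a module over the \emph{preprojective} algebra of type $\hat A_N$ (and over the other singular loci, of type $\hat A_{N_0}$ or $\hat A_{N_1}$), and only then does the Cartan-form argument $\vv\mb{C}\vv\ge 0$ from \cite{quiver1} apply to force $\dimv C$ to be a root.

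For the second assertion your combinatorial strategy---enumerate string-like $T$-invariant modules with dimension vector $\alpha$ and show exactly one is $\zeta^\circ$-stable---is in principle workable (and indeed the explicit basis description in \S\ref{appendix} shows the stable $C$ has precisely this shape). But the paper bypasses the enumeration entirely: after the reduction to the preprojective algebra, uniqueness is read off from the irreducibility of the corresponding quiver variety \cite{cb-decomp}. Your route would require a case analysis over all real roots, including those of the form $\alpha+n\delta$ with wrapping and loop activations, and a careful matching of the submodule lattice against the sign of $\zeta^\circ$; the paper's reduction-to-preprojective-then-cite approach is both shorter and more robust. If you want to salvage your argument, the first thing to add is the $R$-action step.
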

\begin{proof}
Note that we have the natural homomorphism
\[
R\to \bigoplus_{k\in \hI}\End_{\mca{Y}}(L_k)\to \End_{\mca{Y}}(\oplus L_k)\simeq A,
\]
where the first one is the diagonal embedding. 
The image of this map is central subalgebra of $A$.
Any $A$-module has the $R$-module structure given by this homomorphism. 
Any finite dimensional $A$-module is supported on finite number of points on $\mathrm{Spec}(R)=\mca{X}$ and so any finite dimensional $A$-module $C$ which is $\zeta$-stable for some $\zeta$ is supported on a maximal ideal $\mathcal{I}\subset R$.
Any nonzero element of $\mca{I}$ induces an $A$-module automorphism on $C$ by the multiplication.     
Since $C$ is $\zeta$-stable, any $A$-module automorphism on $C$ is either zero or isomorphic. 
But this can not be isomorphic, because $C$ is supported on $\mca{I}$. 
Hence we have $\mca{I}\cdot C=0$. 

Suppose that $\mathcal{I}$ corresponds to a nonsingular point $P$ on $\mca{X}$ and $\mca{I}\cdot C=0$. 
Then $C$ is, as a complex of sheaves on $\mca{Y}$, supported on $\pi^{-1}(P)$. 
Then we can verify that $C=\mca{O}_{\pi^{-1}(P)}$.
The singular points on $\mca{X}$ is classified as follows:
\begin{itemize}
\item the unique $T$-invariant $(X,Y,Z,W)$,
\item $(X,Y,Z-a,W)$ ($a\neq 0$) or
\item $(X,Y,Z,W-b)$ ($b\neq 0$).
\end{itemize}
An $A$-module $C$ such that $(X,Y,Z,W)\cdot C=0$ is a module over the preprojective algebra of type $\hat{A}_N$. 
The dimension of the moduli space of representations of the preprojective algebra of type $\hat{A}_N$ is $\vv\mb{C}\vv$, where $\mb{C}$ is the Cartan matrix of type $\hat{A}_N$ (\cite{quiver1}). 
If there exists a $\zeta$-stable $A$-module $C$ with $\dimv\, C=\mathbf{v}$, then $\vv\mb{C}\vv$ is nonnegative, which is the definition of the root vectors. 
The uniqueness follows from the irreducibility of the moduli space (\cite{cb-decomp}).  
Let $C$ be an $A$-module such that $(X,Y,Z-a,W)\cdot C=0$ ($a\neq 0$).  
For $k\in\hat{I}$ such that $\sigma_y(k+\frac{1}{2})=1$, $h^-_{k+\frac{1}{2}}$ and $h^+_{k+\frac{1}{2}}$ give isomorphisms between $C_k$ and $C_{k+1}$. 
For $k\in\hat{I}$ such that $\sigma_y(k+\frac{1}{2})=0$, we have 
\[
h^+_{k+\frac{1}{2}}\circ h^-_{k+\frac{1}{2}}=0,\quad h^-_{k+\frac{1}{2}}\circ h^+_{k+\frac{1}{2}}=0.
\]
Thus we can identify an $A$-modules $C$ such that $(X,Y,Z-a,W)\cdot C=0$ ($a\neq 0$) with a module over the preprojective algebra of type $\hat{A}_{N_0}$.
Under this identification, a root vector of the root system of type $\hat{A}_{N_0}$ corresponds to a root vector of the root system of type $\hat{A}_{N}$.
Hence for a $\zeta$-stable $A$-module $C$ such that $(X,Y,Z-a,W)\cdot C=0$ its dimension vector is a root vector of the root system of type $\hat{A}_{N}$.
Similarly, for a $\zeta$-stable $A$-module $C$ such that $(X,Y,Z,W-b)\cdot C=0$ its dimension vector is a root vector of the root system of type $\hat{A}_{N}$. 
\end{proof}
\begin{rem}
Note that the fiber over the point corresponds to the maximal ideal $(X,Y,Z-a,W)$ (resp. $(X,Y,Z,W-b)$) is the $A_{N_0}$ (resp. $A_{N_1}$) configuration of $\CP^1$'s. 
\end{rem}
\begin{cor}
The set of nongeneric parameters is the union of the hyperplanes 
$W_\alpha$ ($\alpha\in \rs^+$). 
\end{cor}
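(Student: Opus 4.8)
The plan is to characterise non-generic parameters $\zeta$ as exactly those admitting a strictly $\zeta$-semistable $\A$-module, and then to match these with $\bigcup_{\alpha\in\rs^+}W_\alpha$ using Proposition \ref{prop3.9.} in both directions. First I would dispose of the inclusion ``non-generic $\subseteq\bigcup_{\alpha}W_\alpha$''. Given a $\zeta$-semistable but not $\zeta$-stable module $\V=(V,\C,i)$, take $\tz$ as in Definition \ref{def-stability}(2), so that $\theta_{\tz}(\V)=0$, and choose a Jordan--H\"older filtration of $\V$ with respect to $\theta_{\tz}$. Since $\dim\V_\infty=1$, exactly one graded piece carries a one-dimensional space at $\infty$; since $\V$ is not stable there is at least one further piece, which must be of the form $(C,0,0)$ with $C\neq0$. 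It is $\theta_{\tz}$-stable of slope $0$, and $\theta_{\tz}((C,0,0))=\bigl(\zeta\cdot\dimv C\bigr)/\bigl(\sum_k\dim C_k\bigr)$ forces $\zeta\cdot\dimv C=0$; moreover $C$ is then a $\zeta$-stable $A$-module in the sense of Definition \ref{def-stability}(1), because neither $(C,0,0)$ nor its submodules see the vertex $\infty$. By Proposition \ref{prop3.9.}, $\dimv C\in\rs^+$, hence $\zeta\in W_{\dimv C}$.

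For the reverse inclusion I would first record a trivial-framing construction: if $C$ is any $\zeta$-semistable $A$-module with $\dimv C\neq0$ and $\zeta\cdot\dimv C=0$, then $(C,\C,0)$ is $\zeta$-semistable — conditions (A) and (B) of the Lemma following Definition \ref{def-stability} collapse to $\zeta$-semistability of $C$ — but not $\zeta$-stable, since the submodule $(0,\C,0)$ has the same slope $0$ as $(C,\C,0)$; thus $\zeta$ is non-generic. So it remains to produce, for each $\alpha\in\rs^+$ and each $\zeta\in W_\alpha$, a $\zeta$-semistable $A$-module of dimension $\alpha$. When $\alpha$ is real and $\zeta$ lies on no other wall, this is exactly Proposition \ref{prop3.9.}. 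For the imaginary root (note $W_{n\delta}=W_\delta$ for all $n$) one instead uses the simple ``regular'' $A$-modules of dimension $\delta$ supported at a non-special singular point of $\mca{X}$, which under the identification in the proof of Proposition \ref{prop3.9.} are simple modules over a preprojective algebra of affine type $A$: being simple, they are $\zeta$-stable for every $\zeta$, so the construction applies whenever $\zeta\cdot\delta=0$. For an arbitrary $\zeta\in W_\alpha$ lying on several walls, I would close the gap either by a limiting argument — such modules exist on a dense subset of $W_\alpha$, and the non-generic locus is closed (the union $\bigcup_\alpha W_\alpha$ is closed, being locally finite away from $W_\delta$ and accumulating only onto $W_\delta$, and $\zeta$-semistability is a closed condition) — or, more directly, by invoking Crawley--Boevey's description \cite{cb-decomp} of the dimension vectors of $\zeta$-semistable modules over the relevant preprojective algebras.

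The hard part is precisely this last point: Proposition \ref{prop3.9.} only hands us a stable $A$-module of dimension $\alpha$ at a parameter on $W_\alpha$ and on no other wall, so the work is in propagating existence of a $\zeta$-semistable $A$-module of $\zeta$-degree zero to every point of every wall, which is where one must lean on the preprojective-algebra picture together with Crawley--Boevey's results (the same ingredients already used in Proposition \ref{prop3.9.}) or on the closedness of the non-generic set. The only other thing to verify, routine but worth stating, is that a Jordan--H\"older factor of a framed module that carries the trivial framing is genuinely a $\zeta$-stable $A$-module in the sense of Definition \ref{def-stability}(1), with the matching slope.
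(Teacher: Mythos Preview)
Your approach is the natural one and is essentially what the paper leaves implicit (the corollary carries no proof in the paper). The forward inclusion via a Jordan--H\"older factor with trivial framing is exactly right, and your verification that such a factor is a $\zeta$-stable $A$-module in the sense of Definition~\ref{def-stability}(1) is the point one has to check.

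For the reverse inclusion, your two alternatives are not on equal footing. The Crawley--Boevey route is solid: the preprojective identification in the proof of Proposition~\ref{prop3.9.} together with \cite{cb-decomp} produces, for every positive root $\alpha$ and every $\zeta\in W_\alpha$, a $\zeta$-semistable $A$-module of the right slope, after which your trivial-framing construction $(C,\C,0)$ finishes the job. The ``closedness'' alternative, however, has a gap as written: saying ``$\zeta$-semistability is a closed condition'' only tells you that for a \emph{fixed} module $C$ the set $\{\zeta : C\text{ is }\zeta\text{-semistable}\}$ is closed; non-genericity is an existential over all modules, and a union of closed sets need not be closed. To salvage this line you would need to argue that finitely many fixed modules $C_1,\dots,C_r$ of dimension $\alpha$ suffice to witness non-genericity on all of $W_\alpha$ (e.g.\ by showing the $T$-invariant stable module of Proposition~\ref{prop3.9.} is locally constant on the generic stratum of $W_\alpha$ and takes only finitely many values), which is more work than simply citing \cite{cb-decomp}. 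Since you already offer the Crawley--Boevey alternative, the overall argument is fine; just drop or repair the limiting branch.
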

Take a positive real root $\alpha\in\rs^{\mr{re},+}$ and a parameter $\zeta^\circ\in\R^{Q_0}$ which is on $W_\alpha$ but not on any other wall. 
Let $C$ be the unique $T$-invariant $\zeta^\circ$-stable $A$-module such that $\zeta^\circ\cdot\dimv\,C$. 
We set $\zeta^\pm=(\zeta_k\pm \varepsilon)$ for sufficiently small $\varepsilon$. 
We fix these notations throughout this subsection. 
\begin{prop}[\protect{\cite[Proposition 3.7]{nagao-nakajima}}]\label{prop3.11.} 
For a $\zeta^\circ$-stable $\A$-module $\V=(V,i)$ we have
\[
\ext^1_{\A}(C,\V)-\ext^1_{\A}(\V,C)=\dim C_0.
\]
\end{prop}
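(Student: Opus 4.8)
The plan is to extract the identity from the Euler-form relation of Proposition \ref{prop3.1.} after checking that two $\Hom$-spaces vanish. First I would regard the $A$-module $C$ as the $\A$-module $(C,0,0)$, so that $\dim C_\infty=0$, while $\dim\V_\infty=1$ since $\V=(V,i)$ carries a one-dimensional space at $\infty$. Applying Proposition \ref{prop3.1.} with $E=\V$ and $F=C$ makes the right-hand side collapse to $\dim\V_\infty\cdot\dim C_0-\dim\V_0\cdot\dim C_\infty=\dim C_0$, so
\[
\hom_{\A}(\V,C)-\ext^1_{\A}(\V,C)+\ext^1_{\A}(C,\V)-\hom_{\A}(C,\V)=\dim C_0,
\]
and hence $\ext^1_{\A}(C,\V)-\ext^1_{\A}(\V,C)=\dim C_0+\hom_{\A}(C,\V)-\hom_{\A}(\V,C)$. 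It then remains to show $\hom_{\A}(C,\V)=\hom_{\A}(\V,C)=0$. Throughout I will use that $\zeta^\circ\cdot\dimv C=\zeta^\circ\cdot\alpha=0$, since $\zeta^\circ\in W_\alpha$; thus the $\zeta^\circ$-stable module $C$ is $\zeta^\circ$-semistable with $\zeta^\circ\cdot\dimv C=0$, so every nonzero $A$-submodule of $C$ has $\zeta^\circ\cdot\dimv(-)\le 0$ and every nonzero quotient has $\zeta^\circ\cdot\dimv(-)\ge 0$.

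For the first vanishing, a morphism of $\A$-modules $(C,0,0)\to(V,\C,i)$ is simply an $A$-module homomorphism $f\colon C\to V$, the framing arrow $\infty\to 0$ imposing no condition since the source vanishes at $\infty$. If $f\ne 0$, then $\im f$ is a nonzero $A$-submodule of $V$, so $\zeta^\circ\cdot\dimv(\im f)<0$ by part (A) of the $\zeta^\circ$-stability of $\V$; but $\im f$ is also a nonzero quotient of $C$, so $\zeta^\circ\cdot\dimv(\im f)\ge 0$, a contradiction. For the second vanishing, a morphism $(V,\C,i)\to(C,0,0)$ is an $A$-module homomorphism $g\colon V\to C$ subject to the relation coming from the arrow $\infty\to 0$, namely $g_0\circ i=0$, i.e.\ $\im(i)\subseteq(\Ker g)_0$. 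If $g\ne 0$, then $\Ker g$ is a proper $A$-submodule of $V$ with $\im(i)\subseteq(\Ker g)_0$, so part (B) of the $\zeta^\circ$-stability of $\V$ gives $\zeta^\circ\cdot\dimv(\Ker g)<\zeta^\circ\cdot\dimv V$, whence $\zeta^\circ\cdot\dimv(\im g)=\zeta^\circ\cdot\dimv V-\zeta^\circ\cdot\dimv(\Ker g)>0$; but $\im g$ is a nonzero $A$-submodule of $C$, so $\zeta^\circ\cdot\dimv(\im g)\le 0$, again a contradiction. (If $V=0$ both $\Hom$-spaces vanish trivially.) This completes the argument.

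Essentially all of the content is carried by Proposition \ref{prop3.1.}; the remaining $\Hom$-vanishing is a routine stability computation, with one point deserving care. The delicate step is the treatment of $\hom_{\A}(\V,C)$: it is essential to use the framing relation $g_0\circ i=0$, which is exactly what makes $\Ker g$ a submodule to which stability condition (B) applies --- without this constraint the space $\Hom_A(V,C)$ of plain $A$-module maps need not vanish. Correctly identifying $\hom_{\A}(\V,C)$ in this way is the only real obstacle.
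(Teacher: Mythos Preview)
Your proof is correct and follows essentially the same approach as the paper: apply Proposition~\ref{prop3.1.} and show that both $\Hom$-spaces vanish. The paper dispatches the vanishing in one line by noting that $C=(C,0,0)$ and $\V$ are both $\theta_{\tz^\circ}$-stable $\A$-modules of the same slope (zero) and are not isomorphic, hence have no nonzero morphisms between them; you have simply unpacked this standard Schur-type argument into the explicit stability conditions (A) and (B).
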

\begin{proof}
Since $C$ and $\V$ are $\zeta^\circ$-stable and not isomorphic each other, we have $\hom_{\A}(C,\V)=\hom_{\A}(\V,C)=0$. So the claim follows form Proposition \ref{prop3.1.}.
\end{proof}
\begin{prop}\label{prop3.12.}
\begin{itemize}
\item[(1)]
\[
\ext^1_A(C,C)=\begin{cases}
0 & \text{$\sum_{k\notin \hI _r}\alpha_k$ is odd},\\
1 & \text{$\sum_{k\notin \hI _r}\alpha_k$ is even}.
\end{cases}
\]
\item[(2)]
Assume $\sum_{k\notin \hI _r}\alpha_k$ is even. 
For a positive integer $m$, we have the unique indecomposable $A$-module $C_m$ which is described as $m-1$ times successive extensions of $C$'s.
\end{itemize}
\end{prop}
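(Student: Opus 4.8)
The plan is to combine the $3$-Calabi-Yau duality on $\afmod$ with an explicit dimension count in the Koszul complex of Proposition~\ref{prop3.1.}, and then to deduce (2) from (1) by the standard ``homogeneous tube'' argument. First I would record what Proposition~\ref{prop3.9.} already gives: the central elements $X,Y,Z,W$ annihilate $C$, so $C$ is a module over $\Pi:=A/\mf{m}A$ with $\mf{m}=(X,Y,Z,W)$, the preprojective-type algebra of the cyclic quiver on $\hat{I}$; and under the equivalence of Theorem~\ref{generator} the object $\mca{E}\in D^b(\cohs)$ corresponding to $C$ is supported on the exceptional fibre over the origin, a type-$A$ chain whose $k$-th component $\mca{C}_k$ is a $(0,-2)$-curve for $k\in\hat{I}_r$ and a $(-1,-1)$-curve for $k\notin\hat{I}_r$. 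Since $A$ is graded $3$-Calabi-Yau, $\ext^i_A(C,C)=\ext^{3-i}_A(C,C)$, and $\zeta^\circ$-stability gives $\End_A(C)=\C$; hence $\ext^0_A(C,C)=\ext^3_A(C,C)=1$ and $\ext^1_A(C,C)=\ext^2_A(C,C)=:e$, so (1) reduces to showing $e\in\{0,1\}$ together with the asserted parity.

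For the parity I would use the complex $K^0\to K^1\to K^2\to K^3$ obtained by applying $\Hom_A(-,C)$ to the Koszul resolution \eqref{eq_proj_resol} of $C$, with $\dim K^0=\dim K^3=\sum_k\alpha_k^2$ and $\dim K^1=\dim K^2=\sum_{a\in Q_1}\alpha_{\mr{out}(a)}\alpha_{\mr{in}(a)}$. From $\ext^0_A(C,C)=1$ one gets $\mr{rank}(\delta^0)=\sum_k\alpha_k^2-1$, hence $e=\dim K^1-\sum_k\alpha_k^2+1-\mr{rank}(\delta^1)$. Counting arrows (two arrows $h^\pm_i$ between each pair of cyclically adjacent vertices of $\hat I$, and one loop $r_k$ for each $k\in\hat{I}_r$) gives $\dim K^1-\sum_k\alpha_k^2=2\sum_{k\in\Z/N\Z}\alpha_k\alpha_{k+1}-\sum_{k\notin\hat{I}_r}\alpha_k^2$, and since $\alpha$ is a positive real root we have $2\sum_k\alpha_k\alpha_{k+1}=2\sum_k\alpha_k^2-2$; thus $e\equiv\sum_{k\notin\hat{I}_r}\alpha_k+1-\mr{rank}(\delta^1)\pmod 2$. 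Now $\delta^1$ is induced by $d_2=\partial_\omega$; under the identification $K^2\cong(K^1)^\vee$ given by the trace pairings on the summands, $\delta^1$ is a skew-symmetric form on $K^1$ — this skew-symmetry is exactly the self-duality of the Koszul complex of the CY${}_3$ algebra $A$ (\cite{graded3CY}), reflecting that reversing all arrows of the dimer $P_\sigma$ negates $\omega$ — so $\mr{rank}(\delta^1)$ is even and $e\equiv\sum_{k\notin\hat{I}_r}\alpha_k+1\pmod 2$. It remains to prove $e\le1$: the parity count alone leaves $e$ undetermined modulo $2$, and this is the main obstacle. I would pass to $\ys$ and compute $\Ext^1_{\ys}(\mca{E},\mca{E})$ by the local-to-global spectral sequence — it is $H^0$ of the rank-two normal bundle of the curve-chain, which has degree $-2$ and splitting type admitting at most a one-dimensional space of sections (this, together with the bookkeeping that $\sum_{k\notin\hat I_r}\alpha_k$ has the same parity after a $\delta$-shift because $\#(\hat I\setminus\hat I_r)$ is even, is the content of \S\ref{appendix}). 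Combining, $e=0$ when $\sum_{k\notin\hat{I}_r}\alpha_k$ is odd and $e=1$ when it is even, which is (1).

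Finally, assuming $e=1$, I would prove (2) by induction with $C_1:=C$. Suppose $C_{m-1}$ is constructed, with $\Hom_A(C,C_{m-1})\cong\Hom_A(C_{m-1},C)\cong\C$, $\Ext^1_A(C,C_{m-1})\cong\Ext^1_A(C_{m-1},C)\cong\C$, and $\End_A(C_{m-1})\cong\C[t]/(t^{m-1})$. Since $\Ext^1_A(C,C_{m-1})\cong\C\neq0$ there is a non-split extension $0\to C_{m-1}\to C_m\to C\to0$, unique up to isomorphism because the scalars $\C^\times\subset\mr{Aut}(C)$ act transitively on the nonzero classes in the one-dimensional space $\Ext^1_A(C,C_{m-1})$. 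Applying $\Hom_A(C,-)$ and $\Hom_A(-,C)$ to this sequence, the connecting map $\Hom_A(C,C)\to\Ext^1_A(C,C_{m-1})$ sends $\mr{id}_C$ to the (nonzero) extension class and is therefore injective; together with $3$-Calabi-Yau duality this propagates all the inductive hypotheses to $C_m$ and yields $\End_A(C_m)\cong\C[t]/(t^m)$, so in particular $C_m$ is indecomposable. For uniqueness, any indecomposable $(m-1)$-fold self-extension $C'$ of $C$ has all composition factors isomorphic to $C$ and a local endomorphism ring, and filtering $C'$ while comparing with the construction above at each stage (using $\hom_A(C,C_j)=\ext^1_A(C,C_j)=1$) forces $C'\cong C_m$. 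Equivalently, $C$ generates a homogeneous tube: its extension-closure in $\afmod$ is equivalent to the category of finite-length $\C[[t]]$-modules, with $C_m\leftrightarrow\C[[t]]/(t^m)$. Apart from the bound $e\le1$, every step here is formal — $3$-Calabi-Yau duality, the Koszul dimension count, and the tube combinatorics — so the fine geometry of the chain enters only at that one point.
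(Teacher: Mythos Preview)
Your argument for part (2) is essentially the same inductive ``homogeneous tube'' argument the paper gives (the paper phrases it as showing $\hom(C,C_m)=1$ and $\ext^1(C,C_m)=1$ by induction), so that portion is fine.

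For part (1) your strategy is genuinely different from the paper's, but it contains a real gap. The key step of your parity argument is the assertion that, under the trace pairing $K^2\cong (K^1)^\vee$, the differential $\delta^1=\partial_\omega$ is \emph{skew}-symmetric, so that $\mr{rank}(\delta^1)$ is even. This is false: the bilinear form on $K^1$ induced by $\delta^1$ is precisely the Hessian of the trace-of-potential function $W(M)=\mr{tr}\,\omega(M)$ on $\mr{Rep}(Q,\alpha)$ at the critical point $C$, and that Hessian is \emph{symmetric} by cyclicity of trace. Concretely, from the description $\partial_{c;a,b}(p\otimes 1\otimes q)=\sum_{arbs=c}ps\otimes 1\otimes rq$ one computes
\[
B(\phi,\psi)=\sum_{c}\varepsilon_c\sum_{arbs=c}\mr{tr}\bigl(C(s)\,\phi_b\,C(r)\,\psi_a\bigr),
\]
and the cyclic rotation $arbs\mapsto bsar$ together with cyclicity of trace gives $B(\phi,\psi)=B(\psi,\phi)$. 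Your justification (``reversing arrows of the dimer negates $\omega$'') describes an involution of $K^1$ under which $B$ changes sign, which is a different statement and does not force $\mr{rank}(\delta^1)$ to be even. In fact, unwinding your dimension count, the assertion ``$\mr{rank}(\delta^1)$ is even'' is \emph{equivalent} to the parity statement you are trying to prove, so nothing has been gained.

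Without the parity, your proof rests entirely on the geometric bound $e\le 1$, which you only sketch. To carry this out one would need to identify $\mca{E}=\Psi^{-1}(C)$ explicitly for an arbitrary positive real root $\alpha=\alpha_{[a,b]}+n\delta$, not just for simple roots; this object is not in general a line bundle on a subchain of the exceptional fibre, and the ``normal bundle of the curve-chain'' description does not apply directly. The paper avoids this entirely: it works on the quiver side, writes down an explicit basis $\{v_n\}_{n_0\le n\le n_1}$ of $C$, parameterises $\ker(d_1)$ by explicit elements $\beta_{a,d}$ indexed by sets $J_d$, and uses the $\zeta^\circ$-stability of $C$ (Lemma~\ref{lem_2.25}) to show that all $\beta_{a,d}$ with $d\neq 0$ lie in $\im(d_2)$ while the $d=0$ contribution has dimension $0$ or $1$ according to the parity of $|J_0|$; one then checks $|J_0|\equiv \sum_{k\notin\hat I_r}\alpha_k+1\pmod 2$. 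This combinatorial analysis is where the actual content of the proposition lies, and your proposal does not supply a substitute for it.
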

We prove this proposition in \S \ref{appendix}.

Let $\alpha\in\rs^{\mr{re},+}$ be a positive real root such that $\sum_{k\notin \hI _r}\alpha_k$ is odd. 
In such cases, wall-crossing formulas are given in \cite{nagao-nakajima}:
\begin{thm}[\protect{\cite[Theorem 3.9]{nagao-nakajima}\label{thm3.13.}}]
\[
\mca{Z}^{\mathrm{eu}}_{\zeta^-}(\q)=\left(1+\q^{\alpha}\right)^{\alpha_0}\cdot \mca{Z}^{\mathrm{eu}}_{\zeta^+}(\q).
\]
\end{thm}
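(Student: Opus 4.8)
The plan is to run the argument of \cite[\S 3]{nagao-nakajima}, all of whose ingredients are now in place. Fix a dimension vector $\vv$. Since a torus action preserves Euler characteristics, $\chi(\M{\zeta^\pm}{\vv})=\chi(\M{\zeta^\pm}{\vv}^T)$, so throughout I would work with $T$-fixed pairs; this is why $T$-invariance is tracked in Propositions \ref{prop3.8.} and \ref{prop3.9.}.

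First I would pin down the shape of the wall-crossing data. Given a $T$-fixed $\zeta^+$-stable pair $\V'$ of dimension $\vv$, Proposition \ref{prop3.8.}(1) yields $0\to\V\to\V'\to V''\to 0$ with $\V$ a $T$-fixed $\zeta^\circ$-stable pair and $V''$ a $T$-fixed $\zeta^\circ$-semistable $A$-module of slope $0$. I would then argue $V''\simeq C^{\oplus n}$: its Jordan--H\"older factors are $\zeta^\circ$-stable $A$-modules, hence by Proposition \ref{prop3.9.} have dimension vectors in $\rs^+$; being of slope $0$ they lie on $W_\alpha$, and since $\zeta^\circ$ lies on no other wall and $\alpha$ is real each factor has dimension vector $\alpha$; a finite-dimensional $T$-invariant module admits no non-$T$-invariant composition factor (its $T$-orbit would be positive-dimensional), so by the uniqueness in Proposition \ref{prop3.9.} every factor is $\simeq C$; finally $\ext^1_A(C,C)=0$ (Proposition \ref{prop3.12.}(1); recall $\sum_{k\notin\hI_r}\alpha_k$ is odd by hypothesis), so an iterated self-extension of $C$ splits and $V''\simeq C^{\oplus n}$. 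Dually, a $T$-fixed $\zeta^-$-stable pair sits in $0\to C^{\oplus n}\to\V'\to\V\to 0$ with $\V$ a $T$-fixed $\zeta^\circ$-stable pair.

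Next I would turn these extensions into a stratification of the fixed loci. Writing $\mf{N}(\vv):=\mf{M}^{\,\mr{s}}_{\zeta^\circ}(\vv)^T$ and noting that a $\zeta^\circ$-stable pair is $\zeta^\pm$-stable for small $\varepsilon$ (only finitely many sub-dimension-vectors need checking), Step~2 gives $\M{\zeta^+}{\vv}^T=\bigsqcup_{n\geq 0}S_n^+$ and $\M{\zeta^-}{\vv}^T=\bigsqcup_{n\geq 0}S_n^-$ indexed by the multiplicity $n$ of $C$, with $S_0^\pm=\mf{N}(\vv)$. For $n\geq 1$ I would show $S_n^+$ fibres over $\mf{N}(\vv-n\alpha)$ with fibre, over $\V$, the Grassmannian of $n$-dimensional subspaces of $\Ext^1_{\A}(C,\V)$: the extension class of $\V'$ lies in $\Ext^1_{\A}(C^{\oplus n},\V)=\Hom_\C(\C^n,\Ext^1_{\A}(C,\V))$, two classes give isomorphic $\V'$ exactly when they differ by $\mr{Aut}(C^{\oplus n})=GL_n$, and $\zeta^+$-stability is equivalent to injectivity of the classifying map (a kernel vector splits off a sub $C\hookrightarrow\V'$ of slope $\zeta^+\cdot\alpha/|\alpha|>0$, contradicting $\zeta^+$-stability; the converse is the routine direction). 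Dually $S_n^-$ fibres over $\mf{N}(\vv-n\alpha)$ with fibre the Grassmannian attached to a class in $\Ext^1_{\A}(\V,C^{\oplus n})$, $\zeta^-$-stability now being surjectivity of the classifying map; here the relevant Grassmannian has dimension governed by $\ext^1_{\A}(\V,C)$.

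Finally I would assemble the generating functions. Stratifying each $\mf{N}(\vv')$ by the value $d=\ext^1_{\A}(\V,C)$, Proposition \ref{prop3.11.} together with $\hom_{\A}(C,\V)=\hom_{\A}(\V,C)=0$ gives $\ext^1_{\A}(C,\V)=d\pm\alpha_0$ on that piece; using $\chi(\mr{Gr}(n,m))=\binom{m}{n}$ and $\sum_{n\geq 0}\binom{m}{n}\q^{n\alpha}=(1+\q^\alpha)^m$, both $\mca{Z}^{\mathrm{eu}}_{\zeta^\pm}(\q)$ become a sum over $(\vv',d)$ of (the Euler characteristic of the corresponding piece) times $\q^{\vv'}(1+\q^\alpha)^{m}$, the two exponents $m$ differing by the constant $\alpha_0$; dividing the two series then gives $\mca{Z}^{\mathrm{eu}}_{\zeta^-}(\q)=(1+\q^\alpha)^{\alpha_0}\,\mca{Z}^{\mathrm{eu}}_{\zeta^+}(\q)$. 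I expect the main obstacle to be the stability analysis of the preceding paragraph --- showing that $\V'\leftrightarrow(\V,[\text{extension class}])$ is a genuine bijection onto the stable locus in both directions (in particular that an injective, resp.\ surjective, class really yields a $\zeta^+$-, resp.\ $\zeta^-$-, stable pair, and that this pair is $T$-fixed when the data is torus-homogeneous, so that the stratifications are $T$-equivariant) --- together with keeping all slope and sign conventions consistent so that the surviving factor is $(1+\q^\alpha)^{\alpha_0}$ on the correct side. This is precisely the content of \cite[\S 3]{nagao-nakajima}, which transfers to the present situation without change.
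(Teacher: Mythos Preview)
The paper does not give its own proof of this statement but cites \cite[Theorem 3.9]{nagao-nakajima}, noting only (in the subsequent Remark) that the argument needs the slight modification of working with $T$-invariant objects; your reconstruction is correct and is precisely that argument, and it parallels the template the paper writes out in full for the companion case $\ext^1_A(C,C)=1$ (Propositions \ref{prop3.14.}--\ref{prop3.15.} and the computation leading to Theorem \ref{thm3.16.}), with Grassmannians in place of flag varieties. Your caution about signs is well placed: the key numerical input is exactly Proposition \ref{prop3.11.}, and tracking which of $\Ext^1_{\A}(C,\V)$ and $\Ext^1_{\A}(\V,C)$ governs each side is the only delicate bookkeeping.
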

\begin{rem}
To be precise we should modify the argument in \cite{nagao-nakajima} a little, since the stable objects on the wall are not unique, while so are the $T$-invariant stable objects. 
See the argument after Proposition \ref{prop3.15.} and Remark \ref{rem_non_T_inv}.
\end{rem}
\begin{prop}\label{prop3.14.}
\begin{enumerate}
\item[(1)] 
Let $\V'=(V',\C,i')$ be a $T$-invariant $\zeta^+$-stable $\A$-module.
Then we have an exact sequence
\[
0\to \V\to \V'\to \oplus_{m'\geq 1}(C_{m'})^{\bigoplus n_{m'}}\to 0,
\]
where $\V=(V,\C,i)$ is a $T$-invariant $\zeta^\circ$-stable $\A$-module.
The integers $n_{m'}$ and isomorphism class of $\V$ are determined uniquely and satisfy 
\[
\hom(\V',C_m)=\sum_{m'\geq 1}n_{m'}\cdot\min(m',m).
\] 
Moreover, the composition of the maps
\[
\C^{N_m}\hookrightarrow \Hom_{\A}(C_{m},\oplus_{m'\geq 1}(C_{m'})^{\oplus n_{m'}})\longrightarrow\Ext^1_{\A}(C,\V)
\]
is injective. 
Here $N_m=\sum_{m'\geq m}n_{m'}$ and the first map is induced by inclusions $C_m\hookrightarrow C_{m'}$ ($m'\geq m$).
The second map is given by composing the inclusion $C\hookrightarrow C_m$ and $\V'\in\Ext^1_{\A}(\oplus_{m'\geq 1}(C_{m'})^{\oplus n_{m'}},\V)$.
\item[(2)]
Let $\V'=(V',\C,i')$ be a $T$-invariant $\zeta^-$-stable $\A$-module.
Then we have an exact sequence 
\[
0\to\oplus_{m'\geq 1}(C_{m'})^{\bigoplus n_{m'}} \to \V'\to\V \to 0,
\]
where $\V=(V,\C,i)$ is a $T$-invariant $\zeta^\circ$-stable $\A$-module. 
The integers $n_{m'}$ and isomorphism class of $\V$ are determined uniquely and satisfy
\[
\hom(C_m,\V')=\sum_{m'\geq 1}n_{m'}\cdot\min(m',m).
\] 
Moreover, the composition of the maps
\[
\C^{N_m}\hookrightarrow \Hom_{\A}(\oplus_{m'\geq 1}(C_{m'})^{\oplus n_{m'}},C_m)\longrightarrow\Ext^1_{\A}(\V,C)
\]
is injective.
Here $N_m=\sum_{m'\geq m}n_{m'}$ and the first map is induced by surjections $C_{m'}\twoheadrightarrow C_{m}$ ($m'\geq m$).
The second map is given by composing the surjection $C_m\twoheadrightarrow C$ and $\V'\in\Ext^1_{\A}(\V,\oplus_{m'\geq 1}(C_{m'})^{\oplus n_{m'}})$.
\end{enumerate}
\end{prop}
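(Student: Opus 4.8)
The plan is to obtain part~(1) by refining the short exact sequence produced in Proposition~\ref{prop3.8.}, and to deduce part~(2) by dualizing every step (reversing arrows, interchanging sub- and quotient modules, replacing $\zeta^+$ by $\zeta^-$, the inclusions $C_m\hookrightarrow C_{m'}$ by the surjections $C_{m'}\twoheadrightarrow C_m$, and the connecting homomorphism by its transpose); I will therefore only describe~(1). First I would apply Proposition~\ref{prop3.8.}(1) to the $T$-invariant $\zeta^+$-stable module $\V'$, getting $0\to\V\to\V'\to\V''\to 0$ with $\V=(V,\C,i)$ a $T$-invariant $\zeta^\circ$-stable $\A$-module, $\V''=(V'',0,0)$ with $V''$ a $T$-invariant $\zeta^\circ$-semistable $A$-module, and both isomorphism classes uniquely determined. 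It then remains only to identify $V''$.

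Comparing $\theta_{\tz^\circ}$-slopes exactly as in the proof of Proposition~\ref{prop3.8.} gives $\theta_{\tz^\circ}(\V')=\theta_{\tz^\circ}(\V)=0$, hence $\zeta^\circ\cdot\dimv V''=0$, so every $\zeta^\circ$-Jordan--H\"older factor of $V''$ is a $\zeta^\circ$-stable $A$-module whose dimension vector is a positive root lying on the wall $W_\alpha$. Since $\zeta^\circ$ lies on no wall besides $W_\alpha$, Proposition~\ref{prop3.9.} forces that root to be $\alpha$ itself (in particular $\zeta^\circ\cdot\delta\ne0$, so no imaginary root occurs), and as $T$ is connected it acts trivially on the finite multiset of these factors, so by the uniqueness clause of Proposition~\ref{prop3.9.} each factor is $\cong C$. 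Hence $V''$ lies in the extension closure of $C$, and Proposition~\ref{prop3.12.} together with Krull--Schmidt yields $V''\cong\bigoplus_{m'\ge1}(C_{m'})^{\oplus n_{m'}}$; the $n_{m'}$, being invariants of $V''$, are uniquely determined. (In the case $\sum_{k\notin\hat{I}_r}\alpha_k$ odd one has $\ext^1_A(C,C)=0$, only $n_1$ is nonzero, $C_1=C$, and the formulas specialize to the conifold-type form; the general Jordan-block statement is what is needed for the even walls.)

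Next I would compute $\hom_{\A}(\V',C_m)$ by applying $\Hom_{\A}(-,C_m)$ to the sequence above, the key point being $\hom_{\A}(\V,C_m)=0$: an $\A$-homomorphism $\V\to(C_m,0,0)$ is an $A$-homomorphism $f\colon V\to C_m$ annihilating $\im i$, so its kernel defines a proper sub-$\A$-module of $\V$ whose quotient is $(\im f,0,0)$, a submodule of the $\zeta^\circ$-semistable module $C_m$ and hence of $\theta_{\tz^\circ}$-slope $\le0$; this contradicts $\zeta^\circ$-stability of $\V$, under which every proper nonzero quotient has strictly positive slope. Therefore $\hom_{\A}(\V',C_m)=\hom_{\A}(\V'',C_m)=\hom_A(V'',C_m)=\sum_{m'}n_{m'}\min(m',m)$, using the standard identity $\hom_A(C_{m'},C_m)\cong\C^{\min(m',m)}$ for the uniserial modules of Proposition~\ref{prop3.12.}. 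For the ``moreover'' clause, naturality of connecting maps identifies the stated composite with $\C^{N_m}\xrightarrow{\iota}\Hom_A(C,V'')\xrightarrow{\partial}\Ext^1_{\A}(C,\V)$, where $\partial$ is the connecting map of $0\to\V\to\V'\to V''\to0$ and $\iota$ sends the given basis to the $N_m$ socle inclusions $C\hookrightarrow C_{m'}\hookrightarrow V''$ with $m'\ge m$, which are linearly independent. From the exact sequence $\Hom_A(C,V')\to\Hom_A(C,V'')\xrightarrow{\partial}\Ext^1_{\A}(C,\V)$, $\ker\partial$ is the image of $\Hom_A(C,V')$; so if $\partial(\iota(\xi))=0$ then $\iota(\xi)$ lifts to some $\tilde h\colon C\to V'$. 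But $C$ is $\zeta^\circ$-stable, so every nonzero quotient of $C$ has $\zeta^\circ\cdot\dimv\ge0$, whence $(\im\tilde h,0,0)$ is a nonzero sub-$\A$-module of $\V'$ with $\theta_{\tz^+}$-slope $\ge\varepsilon>0=\theta_{\tz^+}(\V')$, contradicting $\zeta^+$-stability of $\V'$ unless $\tilde h=0$; hence $\xi=0$ and the composite is injective.

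I expect the genuine difficulties to be concentrated in two places: the identification of $V''$ with the sum of Jordan blocks $C_{m'}$ --- which is precisely where the hypotheses that $\zeta^\circ$ meets only the single wall $W_\alpha$ and that $T$-invariance pins the stable object down to $C$ are used, and which has no analogue in \cite{nagao-nakajima} --- and the two slope estimates of the last paragraph, which must be arranged so that exactly the right amount of extension data is extracted for the subsequent wall-crossing count to be sharp. Everything else, including the dualization needed for part~(2), should be routine bookkeeping once~(1) is in place.
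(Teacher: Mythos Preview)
Your proposal is correct and follows essentially the same approach as the paper. The paper's proof is extremely terse—it cites Propositions~\ref{prop3.8.}, \ref{prop3.9.}, \ref{prop3.12.} for the existence of the sequence, states that $\hom(\V,C_m)=0$ (without justification) to obtain the $\hom$-formula via $\hom(C_{m'},C_m)=\min(m',m)$, and for the injectivity of the composite simply says ``since otherwise $\V'$ has $C$ as its direct summand.'' Your argument unpacks each of these steps; in particular, your injectivity argument (a nonzero lift $C\to\V'$ produces a sub with $\zeta^+$-slope $>0$, contradicting stability) is a cleaner and more precise version of the paper's direct-summand remark, which ultimately rests on the same contradiction.
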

\begin{proof}
The existence of the sequences follow from Proposition \ref{prop3.8.}, Proposition \ref{prop3.9.} and Proposition \ref{prop3.12.}. 
Since $\hom(\V,C_m)=0$, we have $\hom(\V',C_m)=\hom(\oplus_{m'}C_{m'},C_m)$ from the long exact sequences. 
The middle equations follow from $\hom(C_{m'},C_m)=\min(m',m)$.
The compositions of the maps are injective since otherwise $\V'$ has $C$ as its direct summand. 
\end{proof}

Given a non-increasing sequence $(N_m)_{m\geq 1}$ of non-negative integers such that $N_{m''}=0$ for some $m''$, let $Fl((N_k);N)$ be the flag variety 
\[
\{0= W_{m''}\subseteq \cdots \subseteq W_{1}\subseteq \C^N\mid \dim W_m=N_m\}.
\] 
We can verify the following claim as well:
\begin{prop}\label{prop3.15.}
\begin{enumerate}
\item[(1)] 
Let $\V=(V,i)$ be a $T$-invariant $\zeta^\circ$-stable $\A$-module. 
For an element
\[
(W_k)\in Fl((N_k);\ext^1_{\A}(C,\V))^T,
\]
let $\V'$ denote the $\A$-module given by the universal extension 
\[
0\to \V\to \V'\to \bigoplus_{m\geq 1}(C_{m})^{\oplus n_{m}}\to 0,
\]
such that the image of the composition map in Proposition \ref{prop3.14.} coincides with $W_k$.
Then $\V'$ is $T$-invariant and $\zeta^+$-stable.
\item[(2)]
Let $\V=(V,i)$ be a $T$-invariant $\zeta^\circ$-stable $\A$-module. 
For an element
\[
(W_k)\in Fl((N_k);\ext^1_{\A}(\V,C))^T,
\]
let $\V'$ denote the $\A$-module given by the universal extension 
\[
0\to \bigoplus_{m\geq 1}(C_{m})^{\oplus n_{m}} \to \V'\to \V\to 0
\]
such that the image of the composition map in Proposition \ref{prop3.14.} coincides with $W_k$.
Then $\V'$ is $T$-invariant and $\zeta^-$-stable.
\end{enumerate}
\end{prop}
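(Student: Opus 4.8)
The plan is to prove part~(1); part~(2) follows by the same argument after applying the vector-space duality $\Hom_{\C}(-,\C)$ to the underlying representations, which interchanges sub- and quotient $\A$-modules, swaps $\zeta^{+}$ and $\zeta^{-}$, and converts the inclusions $C_{m}\hookrightarrow C_{m'}$ into the surjections $C_{m'}\twoheadrightarrow C_{m}$ --- exactly the move used to deduce the second half of Proposition~\ref{prop3.14.} from the first. Write $G:=\bigoplus_{m\geq 1}(C_{m})^{\oplus n_{m}}$, so that $\V'$ fits in an exact sequence $0\to\V\to\V'\to G\to 0$ classified by a class $e\in\Ext^{1}_{\A}(G,\V)$. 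For the $T$-invariance: $C$ is $T$-invariant by Proposition~\ref{prop3.9.}, hence so is each $C_{m}$ (uniqueness of the indecomposable in Proposition~\ref{prop3.12.}(2)), so $G$ and $\V$ are $T$-equivariant and $T$ acts on $\Ext^{1}_{\A}(G,\V)$; the locus of classes producing a $\V'$ with the prescribed image $W_{k}$ is $T$-stable (because $\V$, $G$ and the flag $(W_{k})$ are), and all such classes give isomorphic $\V'$ (the uniqueness clause of Proposition~\ref{prop3.14.}(1) read in reverse), whence $t^{*}\V'\simeq\V'$ for every $t\in T$.

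The substance is the $\zeta^{+}$-stability, which I would first reduce to a single non-splitting statement by means of the criterion expressing $\zeta$-(semi)stability of $(V',i')$ in terms of $A$-submodules (conditions (A) and (B) stated after Definition~\ref{def-stability}) together with the bookkeeping of $\zeta^{\circ}\cdot\dimv(-)$. Here one uses that $\V$ is $\zeta^{\circ}$-stable and that $G$ --- being a direct sum of iterated self-extensions of the $\zeta^{\circ}$-stable slope-zero module $C$ (Proposition~\ref{prop3.12.}(2)) --- is $\zeta^{\circ}$-semistable of slope zero with all slope-zero Jordan--H\"older factors isomorphic to $C$. With $\zeta^{\pm}=(\zeta_{k}\pm\varepsilon)$ and $\varepsilon$ small one checks: (B) every proper $A$-submodule $T\subsetneq V'$ with $\im i'\subseteq T_{0}$ satisfies $\zeta^{+}\cdot\dimv T<\zeta^{+}\cdot\dimv V'$ (treat the cases $T\cap V=V$ and $T\cap V\subsetneq V$ separately); and (A) every nonzero $A$-submodule $S\subseteq V'$ with $S\cap V\neq 0$ satisfies $\zeta^{\circ}\cdot\dimv S<0$, hence $\zeta^{+}\cdot\dimv S<0$ (because $\zeta^{\circ}\cdot\dimv(S\cap V)<0$ by $\zeta^{\circ}$-stability of $\V$ and $\zeta^{\circ}\cdot\dimv(S/(S\cap V))\leq 0$ since $S/(S\cap V)\hookrightarrow G$). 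Therefore $\V'$ is $\zeta^{+}$-stable \emph{unless} there is a nonzero $A$-submodule $S\subseteq V'$ with $S\cap V=0$ and $\zeta^{+}\cdot\dimv S\geq 0$. For such an $S$ the projection $\V'\to G$ identifies it with a submodule $\bar{S}\subseteq G$ with $\zeta^{\circ}\cdot\dimv\bar{S}\leq 0$ (as $G$ has slope zero and is $\zeta^{\circ}$-semistable); combined with $\zeta^{+}\cdot\dimv\bar{S}\geq 0$ and $\varepsilon$ small this forces $\zeta^{\circ}\cdot\dimv\bar{S}=0$, so $\bar{S}$ is filtered by copies of $C$ and contains a submodule isomorphic to $C$.

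Now comes the point where the construction of the universal extension is used. If $\V'$ failed to be $\zeta^{+}$-stable we would obtain an injection $g\colon C\hookrightarrow G$ that lifts along $\V'\twoheadrightarrow G$ to a map $C\to V'$ (namely the composite of $g$ with the inverse of the isomorphism $S\simeq\bar{S}$ given by restricting the projection), i.e.\ the Yoneda product $e\circ g$ vanishes in $\Ext^{1}_{\A}(C,\V)$. But $\V'$ was constructed so that the composite
\[
\Hom_{\A}(C,G)\;=\;\C^{N_{1}}\;\hookrightarrow\;\Ext^{1}_{\A}(C,\V),\qquad g\longmapsto e\circ g,
\]
is injective with image $W_{1}$ --- this is the $m=1$ instance of the injectivity/image condition appearing in Proposition~\ref{prop3.14.}(1) and imposed in the statement of Proposition~\ref{prop3.15.}(1). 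Hence $g=0$, a contradiction, so no destabilizing $S$ exists and $\V'$ is $\zeta^{+}$-stable.

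The step I expect to be the main obstacle is the reduction carried out in the second paragraph: extracting the clean dichotomy ``$\zeta^{+}$-stable, or else a copy of $C$ splits off meeting $V$ trivially'' from the submodule criterion, including the routine but delicate ``for $\varepsilon$ sufficiently small'' estimates (legitimate because for a fixed $V'$ only finitely many dimension vectors of submodules occur, and the arrangement of walls $W_{\alpha}$ is locally finite) and the identification of the destabilizing submodules with the slope-zero submodules of $G$. By contrast, the homological input --- that the defining property of the universal extension is precisely the injectivity of $g\mapsto e\circ g$ on $\Hom_{\A}(C,G)$ --- is just a restatement of Proposition~\ref{prop3.14.}, and the overall architecture runs parallel to \cite[Proposition~3.8]{nagao-nakajima}, the one new feature being that the self-extensions $C_{m}$ provided by Proposition~\ref{prop3.12.}(2) must be carried along throughout.
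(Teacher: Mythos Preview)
The paper omits the proof of this proposition, remarking only that one ``can verify the following claim as well,'' the intended argument being the one parallel to \cite[Proposition~3.8]{nagao-nakajima} with Proposition~\ref{prop3.12.}(2) supplying the extra bookkeeping for the modules $C_{m}$. Your argument is correct and is exactly this verification: the reduction of $\zeta^{+}$-stability via conditions (A) and (B) to the non-existence of a copy of $C\subset\V'$ with $C\cap V=0$, and the exclusion of such a copy by the $m=1$ injectivity built into the universal-extension construction, are precisely what the paper leaves implicit.
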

Hereafter we denote the set of $T$-fixed points on $X$ by ${}^TX$. 
Let $R(T)$ be the representation ring of $T$. 
For a nonnegative integer $N$ (resp. $\mca{N}\in R(T)$), 
let ${}^T\mf{M}^{\,\mr{s}}_{\zeta^\circ}(\vv)_N$ (resp. ${}^T\mf{M}^{\,\mr{s}}_{\zeta^\circ}(\vv)_{\mca{N}}$) denote the subscheme of ${}^T\mf{M}^{\,\mr{s}}_{\zeta^\circ}(\vv)$ consisting of closed points $\V$ such that $\ext^1(C,\V)=N$ (resp. $\Ext^1(C,\V)=\mca{N}$ as $T$-modules).
Let ${}^T\M{\zeta^+}{\vv'}_{(n_m)}$ denote the subscheme of ${}^T\M{\zeta^+}{\vv'}$ consisting of closed points $\V'$ such that 
$\hom(\V',C_m)=\sum_{m'\geq 1}n_{m'}\cdot\min(m',m)$
We have the canonical morphism ${}^T\M{\zeta^+}{\vv'}_{(n_m)}\to {}^T\mf{M}^{\,\mr{s}}_{\zeta^\circ}(\vv)$ where $\vv=\vv'-\sum_{m}mn_m\cdot\dimv\,C$ such that a closed point $\V'\in{}^T\M{\zeta^+}{\vv'}_{(n_m)}$ is mapped to the closed point $\V\in{}^T\mf{M}^{\,\mr{s}}_{\zeta^\circ}(\vv)$ appeared in the exact sequence
\[
0\to \V\to \V'\to \oplus_{m'\geq 1}(C_{m'})^{\bigoplus n_{m'}}\to 0.
\]
Let ${}^T\M{\zeta^+}{\vv'}_{(n_m),N}$ (resp. ${}^T\M{\zeta^+}{\vv'}_{(n_m),\mca{N}}$) denote the inverse image of ${}^T\mf{M}^{\,\mr{s}}_{\zeta^\circ}(\vv)_N$ (resp. ${}^T\mf{M}^{\,\mr{s}}_{\zeta^\circ}(\vv)_{\mca{N}}$) with respect to the above morphism. 
Similarly, we define ${}^T\mf{M}^{\,\mr{s}}_{\zeta^\circ}(\vv)^N$, ${}^T\mf{M}^{\,\mr{s}}_{\zeta^\circ}(\vv)^{\mca{N}}$,  ${}^T\M{\zeta^+}{\vv'}^{(n_m)}$, ${}^T\M{\zeta^+}{\vv'}^{(n_m),N}$ and ${}^T\M{\zeta^+}{\vv'}^{(n_m),\mca{N}}$.

By Proposition \ref{prop3.14.} and Proposition \ref{prop3.15.}, 
the natural map 
\[
{}^T\M{\zeta^+}{\vv'}_{(n_m),\mca{N}}\to {}^T\mf{M}^{\,\mr{s}}_{\zeta^\circ}(\vv)_\mca{N}
\]
is a fibration.
So we have
\begin{align*}
\sum_{\vv'}\chi({}^T\M{\zeta^+}{\vv'})\cdot\q^{\vv'}&=
\sum_{\vv',(n_m),\mca{N}}\chi({}^T\M{\zeta^+}{\vv'}_{(n_m),\mca{N}})\cdot\q^{\vv'}\\
&=\sum_{\vv,(n_m),\mca{N}}\chi(Fl((N_m);N))\cdot\chi({}^T\mf{M}^{\,\mr{s}}_{\zeta^\circ}(\vv)_\mca{N})\cdot\q^{\vv+\sum mn_m\cdot\dimv(C)}\\
&=\sum_{\vv,N}\left(\sum_{(n_m)}\chi(Fl((N_m);N))\cdot\q^{\sum mn_m\cdot\dimv(C)}\right)\chi({}^T\mf{M}^{\,\mr{s}}_{\zeta^\circ}(\vv)_{N})\cdot\q^\vv\\
&=\sum_{\vv,N}\left(1-\q^{\dimv(C)}\right)^{-N}\chi({}^T\mf{M}^{\,\mr{s}}_{\zeta^\circ}(\vv)_{N})\cdot\q^\vv.
\end{align*}
Similarly we have
\[
\sum_{\vv'}\chi({}^T\M{\zeta^-}{\vv'})\cdot\q^{\vv'}
=\sum_{\vv,N}\left(1-\q^{\dimv(C)}\right)^{-N}\chi({}^T\mf{M}^{\,\mr{s}}_{\zeta^\circ}(\vv)^{N})\cdot\q^\vv.
\]
By Proposition \ref{prop3.11.} we have $\M{\zeta}{\vv}_N=\M{\zeta}{\vv}^{N+\dim C_0}$. Hence we have 
\begin{align*}
\sum_{\vv'}\chi(\M{\zeta^-}{\vv'})\cdot\q^{\vv'}
&=\sum_{\vv,N}\left(1-\q^{\dimv(C)}\right)^{-N}\chi(\mf{M}^{\,\mr{s}}_{\zeta^\circ}(\vv)^{N})\cdot\q^\vv\\
&=\sum_{\vv,N}\left(1-\q^{\dimv(C)}\right)^{-N}\chi(\mf{M}^{\,\mr{s}}_{\zeta^\circ}(\vv)_{N-\dim C_0})\cdot\q^\vv\\
&=\sum_{\vv,N}\left(1-\q^{\dimv(C)}\right)^{-N-\dim C_0}\chi(\mf{M}^{\,\mr{s}}_{\zeta^\circ}(\vv)_{N})\cdot\q^\vv\\
&=\left(1-\q^{\dimv(C)}\right)^{-\dim C_0}\cdot\sum_{\vv'}\chi(\M{\zeta^+}{\vv'})\cdot\q^{\vv'}.
\end{align*}
\begin{rem}\label{rem_non_T_inv}
In the argument above, we do not use any module on a wall which is not $T$-invariant.
\end{rem}
In summary, we have the following {\it wall-crossing formula}: 
\begin{thm}\label{thm3.16.}
We have
\[
\mca{Z}^{\mathrm{eu}}_{\zeta^-}(\q)=\left(1-\varepsilon(\alpha)\q^{\alpha}\right)^{-\varepsilon(\alpha)\alpha_0}\cdot \mca{Z}^{\mathrm{eu}}_{\zeta^+}(\q),
\]
where 
\[
\varepsilon(\alpha)=(-1)^{\sum_{k\notin \hI _r}\alpha_k}.
\]
\end{thm}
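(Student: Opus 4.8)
The plan is to reduce the theorem to two cases according to the parity of $\sum_{k\notin\hI_r}\alpha_k$, equivalently according to whether $\ext^1_A(C,C)$ equals $0$ or $1$; by Proposition~\ref{prop3.12.} these alternatives correspond exactly to $\varepsilon(\alpha)=-1$ and $\varepsilon(\alpha)=+1$. In the odd case $C$ has no nontrivial self-extension, so on the wall side of the exact sequences of Proposition~\ref{prop3.8.} only direct sums of copies of $C$ occur; this is the situation of \cite{nagao-nakajima}, and Theorem~\ref{thm3.13.} already gives $\mca{Z}^{\mathrm{eu}}_{\zeta^-}(\q)=(1+\q^{\alpha})^{\alpha_0}\,\mca{Z}^{\mathrm{eu}}_{\zeta^+}(\q)$. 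Since $\bigl(1-\varepsilon(\alpha)\q^{\alpha}\bigr)^{-\varepsilon(\alpha)\alpha_0}$ equals $(1+\q^{\alpha})^{\alpha_0}$ when $\varepsilon(\alpha)=-1$, this disposes of the odd case, modulo the point recorded in the remark after Theorem~\ref{thm3.13.} that the argument of \cite{nagao-nakajima} must be run using only the unique $T$-invariant stable module on the wall, which is what Proposition~\ref{prop3.9.} supplies.

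For the even case, where $\varepsilon(\alpha)=1$ and $\ext^1_A(C,C)=1$, I would carry out the fibration argument already set up above. First use Proposition~\ref{prop3.12.}(2) to produce the indecomposable iterated self-extensions $C_1=C,C_2,C_3,\dots$ of $C$. Then Proposition~\ref{prop3.8.} together with Proposition~\ref{prop3.9.} shows that every $T$-invariant $\zeta^+$-stable $\A$-module $\V'$ fits into an exact sequence $0\to\V\to\V'\to\bigoplus_m C_m^{\oplus n_m}\to 0$ with $\V$ a $T$-invariant $\zeta^\circ$-stable module, and Propositions~\ref{prop3.14.} and~\ref{prop3.15.} identify the extension data of $\V'$ with a point of the flag variety $Fl((N_m);N)$, where $N=\ext^1_{\A}(C,\V)$ and $N_m=\sum_{m'\ge m}n_{m'}$, while conversely every such flag yields a $\zeta^+$-stable module. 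Thus, stratifying by the data $(n_m)$ and by the $T$-module $\Ext^1_{\A}(C,\V)$, the natural map ${}^T\M{\zeta^+}{\vv'}\to{}^T\mf{M}^{\,\mr{s}}_{\zeta^\circ}(\vv)$ becomes a fibration with fibre $Fl((N_m);N)$; symmetrically for $\zeta^-$ with $\Ext^1_{\A}(\V,C)$ replacing $\Ext^1_{\A}(C,\V)$.

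Next I would assemble the generating functions. Passing to $T$-fixed loci does not change Euler characteristics, so it suffices to work with ${}^T(-)$. Summing over $\vv'$ and the strata and using the identity $\sum_{(n_m)}\chi\bigl(Fl((N_m);N)\bigr)\,\q^{\sum_m mn_m\cdot\alpha}=(1-\q^{\alpha})^{-N}$ — the multinomial expansion of $(1+\q^{\alpha}+\q^{2\alpha}+\cdots)^N$, since a partial flag variety has Euler characteristic equal to the relevant multinomial coefficient — gives $\sum_{\vv'}\chi\bigl({}^T\M{\zeta^+}{\vv'}\bigr)\q^{\vv'}=\sum_{\vv,N}(1-\q^{\alpha})^{-N}\chi\bigl({}^T\mf{M}^{\,\mr{s}}_{\zeta^\circ}(\vv)_N\bigr)\q^{\vv}$ and the analogous formula with the superscript $N$ for $\zeta^-$. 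Finally, Proposition~\ref{prop3.11.} gives $\mf{M}^{\,\mr{s}}_{\zeta^\circ}(\vv)_N=\mf{M}^{\,\mr{s}}_{\zeta^\circ}(\vv)^{N+\dim C_0}$; this reindexing turns the two series into one another up to the single factor $(1-\q^{\alpha})^{-\dim C_0}=(1-\q^{\alpha})^{-\alpha_0}$, which, as $\varepsilon(\alpha)=1$ here, is exactly the claimed formula.

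The genuinely substantial input is not this assembly — which is bookkeeping — but Proposition~\ref{prop3.12.}: the computation that $\ext^1_A(C,C)\in\{0,1\}$ is governed by the parity of $\sum_{k\notin\hI_r}\alpha_k$, together with the construction of the tower $(C_m)$ of self-extensions. This is where the type-$\hat A$ root structure of the problem is really used, and it is deferred to \S\ref{appendix}. The other point requiring care is the verification of Propositions~\ref{prop3.14.} and~\ref{prop3.15.} while tracking $T$-invariance at every stage, so that the torus-localization argument is legitimate and the non-$T$-invariant stable modules on the wall — which need not be unique — are never invoked.
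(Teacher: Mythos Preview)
Your proposal is correct and follows essentially the same route as the paper: split into the odd and even cases via Proposition~\ref{prop3.12.}, invoke Theorem~\ref{thm3.13.} for the odd case, and for the even case run the flag-variety fibration argument from Propositions~\ref{prop3.14.}--\ref{prop3.15.} together with the reindexing from Proposition~\ref{prop3.11.}. The bookkeeping you outline matches the paper's computation line for line, including the use of $T$-fixed loci throughout so that only the unique $T$-invariant stable module $C$ on the wall is needed.
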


\begin{NB}
\subsection{DT, PT and NCDT}\label{subsec-DTPTNCDT}
Note that the set $\{[\mca{C}_k]\}_{k\in I}$ form a basis of $H_2(Y;\Z)$. We identify $H_2(Y)$ with $\Z^I$.  
For $n\in\Z_{\geq 0}$ and $\beta\in H_2(Y)$, let $I_n(Y,\beta)$ denote the moduli space of ideal sheaves $\mca{I}_{Z}$ of one dimensional subschemes $\OO_Z\subset\OO_Y$ such that $\chi(\OO_Z)=n$ and $[Z]=\beta$. 
Note that the set of torus fixed points $I_n(Y,\beta)^T$ is isolated and finite (\cite{mnop}).
We define the {\it Donaldson-Thomas invariants} $I_{n,\beta}$ from $I_n(Y,\beta)$ using Behrend's function as is \S \ref{counting-inv} (\cite{thomas-dt}, \cite{behrend-dt}), 
and their generating function by
\[
\mca{Z}_{\mr{DT}}(Y;q,\mb{t}):=\sum_{n,\beta}I_{n,\beta}\cdot q^n\mb{t}^\beta
\] 
where $\mb{t}^\beta=\prod_{k\in ^{I}}t_k^{\beta_k}$ and $t_k$ ($k\in I$) is a formal variable.

Let $P_n(Y,\beta)$ denote the moduli space of stable pairs $(F,s)$ such that $\chi(F)=n$ and $[\mr{supp}(F)]=\beta$. 
The set of torus fixed points $I_n(Y,\beta)^T$ is also isolated and finite (\cite{pt3}).
We define the {\it Pandharipande-Thomas invariants} $P_{n,\beta}$ 
and their generating function $\mca{Z}_{\mr{PT}}(Y;q,\mb{t})$ similarly (\cite{pt1}).

We set $\zeta^\circ=(-N+1,1,\ldots,1)$, $\zeta^\pm=(-N+1\pm\varepsilon,1,\ldots,1)$ and $q=q_0\cdot q_1\cdot\cdots\cdot q_{N-1}$.    
By the result in \cite[\S 2]{nagao-nakajima} and the fact that the sets of torus fixed points are isolated and finite, we have the following theorem:
\begin{prop}
\[
\mca{Z}_{\mr{DT}}(Y;q,q_1^{-1},\ldots,q_{N-1}^{-1})=\mca{Z}_{\zeta^-}(\q),\quad
\mca{Z}_{\mr{PT}}(Y;q,q_1^{-1},\ldots,q_{N-1}^{-1})=\mca{Z}_{\zeta^+}(\q).
\]
\end{prop}

Let $\zeta_{\mathrm{triv}}$ be a parameter such that $(\zeta_{\mathrm{triv}})_k>0$ for any $k$.
Note that $\M{\zeta_{\mathrm{triv}}}{\vv}$ is empty unless $\vv=0$ and so $\mca{Z}_{\zeta_{\mathrm{triv}}}(\q)=1$.
Let $\zeta_{\mathrm{cyclic}}$ be a parameter such that $(\zeta_{\mathrm{cyclic}})_k<0$ for any $k$.
The invariants $D_{\zeta_{\mathrm{cyclic}}}(\vv)$ are the {\it non-commutative Donaldson-Thomas invariants} defined in \cite{szendroi-ncdt}.
Note that the set of torus fixed points $\mathfrak{M}_{\zeta_{\mathrm{cyclic}}}(\vv)^T$ is isolated and finite.
We denote their generating function $\mca{Z}_{\zeta_{\mathrm{cyclic}}}(\q)$ by $\mca{Z}_{\mr{NCDT}}(\q)$.

We divide the set of positive real roots into the following two parts: 
\[
\rs^{\mr{re},+}_\pm=\{\alpha\in\rs^{\mr{re},+}\mid \pm\zeta^\circ\cdot\alpha <0\}.
\]
Applying the wall-crossing formula and comparing the equations \eqref{z} in \S \ref{counting-inv}, we obtain the following relations between generating functions:
\begin{thm}
\begin{align*}
&\mca{Z}_{\mr{NCDT}}(\q)=\left(\prod_{\alpha\in\Delta^{\mr{re},+}_-} (1+(-1)^{\alpha_0}\mb{q}^\alpha)^{\varepsilon(\alpha)\alpha_0}\right)\cdot\mca{Z}_{\mr{DT}}(Y;q,q_1^{-1},\ldots,q_{N-1}^{-1}),\\
&\mca{Z}_{\mr{PT}}(Y;q,q_1^{-1},\ldots,q_{N-1}^{-1})=\prod_{\alpha\in\Delta^{\mr{re},+}_+} (1+(-1)^{\alpha_0}\mb{q}^\alpha)^{\varepsilon(\alpha)\alpha_0}.
\end{align*}
\end{thm}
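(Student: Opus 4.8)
\emph{Proof plan.} The assertion is the integrated form of the single-wall formula of Theorem \ref{thm3.16.}: one walks from the trivial chamber to the Pandharipande--Thomas chamber, and from the noncommutative chamber to the Donaldson--Thomas chamber, accumulating one factor per real-root wall crossed. First I would fix the four parameters bounding these walks. Take $\zeta_{\mathrm{triv}}\in\R^{Q_0}$ with all coordinates positive; then for any $\V=(V,i)$ with $V\neq 0$ the submodule $V$ violates condition (A) of the stability criterion, so $\M{\zeta_{\mathrm{triv}}}{\vv}=\emptyset$ for $\vv\neq 0$ and $\mca{Z}^{\mathrm{eu}}_{\zeta_{\mathrm{triv}}}(\q)=\mca{Z}_{\zeta_{\mathrm{triv}}}(\q)=1$. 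Take $\zeta_{\mathrm{cyclic}}$ with all coordinates negative, so $\mca{Z}_{\zeta_{\mathrm{cyclic}}}(\q)=\mca{Z}_{\mr{NCDT}}(\q)$ by definition. With $\zeta^\circ=(-N+1,1,\dots,1)$ and $\zeta^\pm=(-N+1\pm\varepsilon,1,\dots,1)$ for small $\varepsilon>0$, the results of \cite[\S2]{nagao-nakajima} together with the isolatedness of the torus-fixed loci identify, in each fixed dimension vector, $\M{\zeta^-}{\vv}$ (resp. $\M{\zeta^+}{\vv}$) with the Donaldson--Thomas (resp. Pandharipande--Thomas) moduli space in the matching curve class and Euler characteristic, so that with $q=q_0\cdots q_{N-1}$ one has $\mca{Z}_{\zeta^-}(\q)=\mca{Z}_{\mr{DT}}(Y;q,q_1^{-1},\dots,q_{N-1}^{-1})$ and $\mca{Z}_{\zeta^+}(\q)=\mca{Z}_{\mr{PT}}(Y;q,q_1^{-1},\dots,q_{N-1}^{-1})$.

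Next I would pin down the walls separating the chambers. By Proposition \ref{prop3.9.} and its corollary the nongeneric locus is $\bigcup_{\alpha\in\rs^+}W_\alpha$, and the only imaginary wall is $W_\delta$, since the positive imaginary roots are the $n\delta$ and $W_{n\delta}=W_\delta$. As $\zeta_{\mathrm{triv}}\cdot\delta>0$, $\zeta^+\cdot\delta=\varepsilon>0$, $\zeta_{\mathrm{cyclic}}\cdot\delta<0$ and $\zeta^-\cdot\delta=-\varepsilon<0$, no segment from $\zeta_{\mathrm{triv}}$ to $\zeta^+$ or from $\zeta_{\mathrm{cyclic}}$ to $\zeta^-$ meets $W_\delta$. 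For a positive \emph{real} root $\alpha$ one checks that $\zeta^\circ$ lies on no $W_\alpha$ (the equality $\zeta^\circ\cdot\alpha=0$ would force $\mr{ht}(\alpha)=N\alpha_0$, which holds only for imaginary $\alpha$), so for small $\varepsilon$ the sign of $\zeta^{\pm}\cdot\alpha$ equals that of $\zeta^\circ\cdot\alpha$; since $\zeta_{\mathrm{triv}}\cdot\alpha>0>\zeta_{\mathrm{cyclic}}\cdot\alpha$, the real-root walls met on the way from $\zeta_{\mathrm{triv}}$ to $\zeta^+$ are exactly $\{W_\alpha:\alpha\in\Delta^{\mr{re},+}_+\}$, and those met from $\zeta_{\mathrm{cyclic}}$ to $\zeta^-$ are exactly $\{W_\alpha:\alpha\in\Delta^{\mr{re},+}_-\}$, where $\Delta^{\mr{re},+}_\pm$ are the sets defined just above.

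Then I would iterate Theorem \ref{thm3.16.}. Since $\mca{Z}^{\mathrm{eu}}_\zeta$ depends only on the chamber of $\zeta$, I may replace each segment by a generic piecewise-linear path crossing those (infinitely many) walls one at a time; the order is immaterial because the factors are power series in the commuting variables $q_k$, and for each fixed $\vv$ only the finitely many $\alpha\le\vv$ contribute to the coefficient of $\q^{\vv}$. Recording that the ``$\zeta^+$'' side in Theorem \ref{thm3.16.} is the side on which $\zeta\cdot\alpha>0$, telescoping yields, as formal power series,
\[
\mca{Z}^{\mathrm{eu}}_{\zeta^+}(\q)=\prod_{\alpha\in\Delta^{\mr{re},+}_+}\left(1-\varepsilon(\alpha)\q^{\alpha}\right)^{-\varepsilon(\alpha)\alpha_0},\qquad \mca{Z}^{\mathrm{eu}}_{\zeta_{\mathrm{cyclic}}}(\q)=\left(\prod_{\alpha\in\Delta^{\mr{re},+}_-}\left(1-\varepsilon(\alpha)\q^{\alpha}\right)^{-\varepsilon(\alpha)\alpha_0}\right)\cdot\mca{Z}^{\mathrm{eu}}_{\zeta^-}(\q).
\]
Finally I would pass to Behrend-weighted invariants: the substitution $\q\mapsto\mb{p}$ of \eqref{z} sends $\mca{Z}^{\mathrm{eu}}_\zeta$ to $\mca{Z}_\zeta$, and a direct sign computation (comparing $\sum_{k\in\hat{I}_r}\alpha_k+\sum_{k\in I}\alpha_k$ with $\sum_{k\notin\hat{I}_r}\alpha_k$ modulo $2$) converts $\varepsilon(\alpha)\mb{p}^{\alpha}$ into $\pm(-1)^{\alpha_0}\q^{\alpha}$, turning each factor into $\bigl(1+(-1)^{\alpha_0}\q^{\alpha}\bigr)^{\varepsilon(\alpha)\alpha_0}$; inserting $\mca{Z}_{\zeta_{\mathrm{triv}}}=1$, $\mca{Z}_{\zeta_{\mathrm{cyclic}}}=\mca{Z}_{\mr{NCDT}}$ and the identifications of the first paragraph gives the two stated identities.

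The telescoping itself is essentially formal. The two things demanding care are: (a) the chamber geometry near the imaginary wall---the real-root walls $W_\alpha$ with $\alpha_0\to\infty$ accumulate on $W_\delta$, so both the wall-walk and the claim that $\mca{Z}_{\zeta^\pm}$ already computes DT/PT must be organised degree by degree, choosing $\varepsilon$ small enough for each $\vv$; and (b) keeping consistent the three layers of signs that enter---the Behrend weight of \eqref{z}, the factor $\varepsilon(\alpha)$ of Theorem \ref{thm3.16.}, and the final $(-1)^{\alpha_0}$. Everything beyond these is transcription of results already established.
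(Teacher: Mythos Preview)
Your proposal is correct and follows exactly the approach the paper itself indicates: the theorem is stated as a direct consequence of iterating the single-wall formula (Theorem~\ref{thm3.16.}) along the two paths $\zeta_{\mathrm{triv}}\leadsto\zeta^{+}$ and $\zeta_{\mathrm{cyclic}}\leadsto\zeta^{-}$, and then passing from $\mca{Z}^{\mathrm{eu}}$ to $\mca{Z}$ via the sign substitution of~\eqref{z}. Your elaborations on the chamber geometry, the coefficient-wise handling of the infinitely many walls accumulating on $W_\delta$, and the sign bookkeeping are exactly the details the paper leaves implicit.
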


\begin{rem}
For the case $N_1=0$, the formula on $\mca{Z}_{\mr{NCDT}}$ and $\mca{Z}_{\mr{DT}}$ have been given in \cite{young-mckay}.
\end{rem}

We define the sets of positive real roots of the finite root system by 
\[
\rs^{\mr{fin},+}=\{\alpha_{[a,b]}:=\alpha_a+\cdots+\alpha_b \mid 0<a\leq b<N\},
\]
then we have
\[
\rs^{\mr{re},+}_+=\{\alpha+n\delta \mid \alpha\in \rs^{\mr{fin},+},\ n\geq 0\}.
\]
Note that for $\alpha\in\rs$ we have $\varepsilon(\alpha+n\delta)=\varepsilon(\alpha)$. 
Let 
\[
M(x,q)=\prod_{n=1}^\infty(1-xq^n)^{-n}
\]
be the {\it MacMahon function}. 
\begin{cor}
\[
\mca{Z}_{\mr{PT}}(Y;q,q_1,\ldots,q_{N-1})=\prod_{0<a\leq b<N}M(q_{[a,b]},-q)^{\varepsilon(\alpha_{[a,b]})},
\]
where $q_{[a,b]}=q_a\cdot\cdots\cdot q_b$.
\end{cor}
The root lattice of the finite root system is identified with $H_2(Y)$ so that $\alpha_k$ corresponds to $[\mca{C}_k]$. 
The corollary claims the {\it Gopakumar-Vafa BPS state counts} in genus $g$ and class $\alpha$ {\it defined} in \cite[\S 3.4]{pt1} is given by 
\[
n_{g,\alpha}=\begin{cases}
-\varepsilon(\alpha_{[a,b]}) & \alpha=\alpha_{[a,b]},\, g=0,\\
0 & \text{otherwise}.
\end{cases}
\]
\end{NB}

\subsection{Appendix}\label{appendix}
In this subsection, we prove Proposition \ref{prop3.12.}.
First, recall the proof of Proposition \ref{prop3.9.}.
We can take integers $n_0$, $n_1$ and a basis $\{v_n\}_{n_0\leq n\leq n_1}$ of $C$ such that 
\[
C_k=\bigoplus_{\begin{subarray}{c}n_0\leq n\leq n_1,\\n\equiv k\ (\mr{mod}\, N)\end{subarray}}\C v_n,
\]
\begin{align*}
\bigr(h_i^+(v_n),h_i^-(v_{n+1})\bigl)&=(v_{n+1},0),\,\text{or}\ (0,v_n)\quad (i-\h\equiv n),\\
h_i^-(v_{n_0})&=0,\quad (i-\h\equiv n_0),\\
h_i^+(v_{n_1})&=0,\quad (i-\h\equiv n_1),\\
r_k(v_{n})&=0,\quad (k\equiv n,\, k\in \hI _r).
\end{align*}
Here the choice in the first equation depends on the stability. 
\begin{ex}\label{ex_1}
We put $N=4$, $|\hat{I}_r|=0$, $n_0=0$, $n_1=10$ and take $\theta$ such that
\[
0<\theta(0)\ll\theta(1)\ll\theta(2),\quad \theta(3)=-3(\theta(0)+\theta(1)+\theta(2))/2.
\]
For $n_0<j<n_1$, we put 
\[
C(j)=
\begin{cases}
+ & h^+_j(v_{j-1/2})=v_{j+1/2},\\
- & h^-_j(v_{j+1/2})=v_{j-1/2}.
\end{cases}
\]
Then we have 
\[
(C(1/2),\ldots, C(19/2))=(+,+,+,-,-,-,+,-,-,-).
\]
\begin{figure}[htbp]
  \centering
  \input{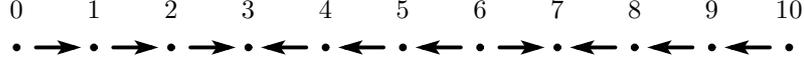}
  \caption{example of $C$}
  \label{fig:8}
\end{figure}
\end{ex}
\begin{lem}\label{lem_2.25}
Assume that we have $d\in\Z$ and $j<j'\in\tZ$ such that $n_0\leq j+\h,j+dN+\h$ and $j'-\h,j'+dN-\h\leq n_1$.
If we have $v_{j+\h}\notin\im(h^+_{\uj})$, $v_{j+\h+dN}\in\ker(h^-_{\uj})$, 
$v_{j'-\h}\notin\im(h^-_{\uj'})$ and $v_{j'-\h+dN}\in\ker(h^+_{\uj'})$, then $d=0$ and $\{j,j'\}=\{n_0-\h,n_1+\h\}$.
\end{lem}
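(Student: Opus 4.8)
Set $a:=j+\h$ and $b:=j'-\h$; then $j<j'$ gives $a\le b$, and the range hypotheses read $n_0\le a\le b\le n_1$ and $n_0\le a+dN\le b+dN\le n_1$. For $n_0\le m\le m'\le n_1$ put $C[m,m']:=\bigoplus_{m\le n\le m'}\C\,v_n\subseteq C$, with the convention that an empty interval gives the zero submodule and $C[n_0,n_1]=C$. The plan is to reinterpret each of the four hypotheses as the statement that one such subspace is an $A$-submodule of $C$, and then to run the $\zeta^\circ$-stability of $C$ against these submodules.

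First I would set up the submodule dictionary from the explicit description of $C$ recalled above: the loops $r_k$ and the two boundary arrows act by zero, and across each step $[n,n+1]$ exactly one of $h^+$, $h^-$ is nonzero, carrying $v_n\mapsto v_{n+1}$ (``type $h^+$'') or $v_{n+1}\mapsto v_n$ (``type $h^-$''). Hence $C[m,m']$ is an $A$-submodule if and only if the step across $[m-1,m]$ is of type $h^+$ or $m=n_0$, and the step across $[m',m'+1]$ is of type $h^-$ or $m'=n_1$. Unwinding the definitions of the image and kernel of $h^\pm$ on the basis $\{v_n\}$, and using the range hypotheses to know all four relevant vectors exist, one checks
\begin{align*}
v_{j+\h}\notin\im(h^+_{\uj})\ &\Longleftrightarrow\ C[n_0,a-1]\text{ is an $A$-submodule of }C,\\
v_{j+\h+dN}\in\ker(h^-_{\uj})\ &\Longleftrightarrow\ C[a+dN,n_1]\text{ is an $A$-submodule of }C,\\
v_{j'-\h}\notin\im(h^-_{\uj'})\ &\Longleftrightarrow\ C[b+1,n_1]\text{ is an $A$-submodule of }C,\\
v_{j'-\h+dN}\in\ker(h^+_{\uj'})\ &\Longleftrightarrow\ C[n_0,b+dN]\text{ is an $A$-submodule of }C.
\end{align*}
In particular, intersecting the submodules of the second and fourth lines shows that $C[a+dN,b+dN]=C[a+dN,n_1]\cap C[n_0,b+dN]$ is an $A$-submodule of $C$.

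Now I would invoke stability. Since $C$ is $\zeta^\circ$-stable and $\zeta^\circ\cdot\dimv C=0$ (we sit on the wall $W_\alpha$ with $\alpha=\dimv C$), every $A$-submodule $S\subseteq C$ has $\zeta^\circ\cdot\dimv S\le 0$, with equality only for $S=0$ and $S=C$. As $C[a+dN,b+dN]$ is the translate of $C[a,b]$ by the multiple $dN$ of $N$, the two subspaces have the same dimension vector; combining this with the basis decomposition $\dimv C=\dimv C[n_0,a-1]+\dimv C[a,b]+\dimv C[b+1,n_1]$ yields
\begin{align*}
0\ \ge\ \zeta^\circ\cdot\dimv C[a+dN,b+dN]
&=\zeta^\circ\cdot\dimv C[a,b]\\
&=\zeta^\circ\cdot\dimv C-\zeta^\circ\cdot\dimv C[n_0,a-1]-\zeta^\circ\cdot\dimv C[b+1,n_1]\ \ge\ 0,
\end{align*}
where the last inequality uses that $C[n_0,a-1]$ and $C[b+1,n_1]$ are submodules. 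Hence $\zeta^\circ\cdot\dimv C[a+dN,b+dN]=0$, so $C[a+dN,b+dN]\in\{0,C\}$; since $a+dN\le b+dN$ this interval is nonempty, so $C[a+dN,b+dN]=C$, i.e. $a+dN=n_0$ and $b+dN=n_1$. Finally $n_0\le a$ forces $dN\le 0$ and $b\le n_1$ forces $dN\ge 0$, so $d=0$, $a=n_0$, $b=n_1$, that is $j=n_0-\h$ and $j'=n_1+\h$.

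The only delicate part is the dictionary of the middle step: one must be careful to match ``$\notin\im$'' and ``$\in\ker$'' with the correct interval rather than its complement, and to treat the degenerate cases $a=n_0$, $b=n_1$, $a+dN=n_0$, $b+dN=n_1$, in which some of the four intervals collapse to $0$ or to all of $C$. Once this bookkeeping is done against the explicit form of $C$, the remaining stability computation is immediate, so I expect the full proof to be short.
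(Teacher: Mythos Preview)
Your proof is correct and follows essentially the same strategy as the paper's: both arguments use the $\zeta^\circ$-stability of $C$ together with the fact that the intervals $C[a,b]$ and $C[a+dN,b+dN]$ have the same dimension vector. The paper phrases this as ``$\bigoplus_{j<n<j'}\C v_n$ is a submodule and $\bigoplus_{j+dN<n<j'+dN}\C v_n$ is a quotient, giving opposite strict signs of $\theta$'', while you instead exhibit $C[n_0,a-1]$, $C[b+1,n_1]$, and $C[a+dN,b+dN]$ as three submodules and squeeze the non-strict inequalities to equality; these are the same argument up to taking complements, and your dictionary for which intervals are submodules is the careful version of what the paper sketches.
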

\begin{proof}
Assume that $(j,j')\neq (n_0-\h,n_1+\h)$ or $d\neq 0$.
Note that
$\bigoplus_{j<n<j'}\C v_n$ 
is a nonzero proper $A$-submodule of $C$ and 
$\bigoplus_{j+dN<n<j'+dN}\C v_n$ 
is a nonzero proper $A$-quotient module of $C$. 
By the $\theta$-stability of $C$ we have
\[
\sum_{j<n<j'}\theta_{\underline{n}}<0, \quad \sum_{j+dN<n<j'+dN}\theta_{\underline{n}}> 0.
\]
This is a contradiction.
\end{proof}

Using the Koszul complex, we can compute $\Ext_A^1(C,C)$ as the cohomology of the following complex:
\[
\bigoplus_{a\in Q_1}\Hom(C_{\mr{out}(a)},C_{\mr{in}(a)})\overset{d_2}{\longrightarrow}\bigoplus_{b\in Q_1}\Hom(C_{\mr{in}(b)},C_{\mr{out}(b)})\overset{d_1}{\longrightarrow}\bigoplus_{k\in Q_0}\Hom(C_k,C_k).
\]

For $d\in\Z$ and $j\in\tZ$ such that both  $j$ and $j-\h+dN$ (resp. $j+\h+dN$) are in the interval $[n_0,n_1]$ we define an element
\[
\alpha^\pm_{j,d}\in \Hom\left(C_{\mr{in}(h^\pm_{\uj})},C_{\mr{out}(h^\pm_{\uj})}\right)
\]
by 
\[
\alpha^\pm_{j,d}(v_{n})=\delta_{n,j\pm\h}v_{j\mp\h+dN},
\]
and for $l\in\hat{I}_r$ such that both $l$ and $l+dN$ are in the interval $[n_0,n_1]$ we define an element
\[
\beta_{l,d}\in \Hom\left(C_{\mr{in}(r_{\ul})},C_{\mr{out}(r_{\ul})}\right)
\]
by 
\[
\beta_{l,d}(v_{n})=\delta_{n,l}v_{l+dN}.
\]

We define a set $J_0$ by 
\[
J_0=\{n\in\Z\mid n_0\leq n\leq n_1,\,\underline{n}\in \hI _r\}
\sqcup \{j\in \tZ\mid n_0<j<n_1\}.
\]
We define a set $J_d$ for $d\in\Z_{>0}$ such that $dN\leq n_1-n_0$ by
\[
\{n\in\Z\mid n_0\leq n\leq n_1-dN,\,\underline{n}\in \hI _r\}
\sqcup \{j\in \tZ\mid n_0<j<n_1-dN\}
\sqcup \{n_0-1/2\}
\]
if $h_{\underline{(n_0-1/2)}}^+(v_{n_0-1+dN})=0$, and 
\[
\{n\in\Z\mid n_0\leq n\leq n_1-dN,\,\underline{n}\in \hI _r\}
\sqcup \{j\in \tZ\mid n_0<j<n_1-dN\}
\sqcup \{n_1+1/2\}
\]
if $v_{n_0+1-dN}\notin \im\left(h_{\underline{(n_0+1/2)}}^+\right)$.
We also define a set $J_d$ for $d\in\Z_{<0}$ such that $-dN\leq n_1-n_0$ by
\[
\{n\in\Z\mid n_0-dN\leq n\leq n_1,\,\underline{n}\in \hI _r\}
\sqcup \{j\in \tZ\mid n_0-dN<j<n_1\}
\sqcup \{n_1+1/2\}
\]
if $h_{\underline{(n_1+1/2)}}^-(v_{n_1+1+dN})=0$, and 
\[
\{n\in\Z\mid n_0-dN\leq n\leq n_1,\,\underline{n}\in \hI _r\}
\sqcup \{j\in \tZ\mid n_0-dN<j<n_1\}
\sqcup \{n_0-1/2-dN\}
\]
if $v_{n_0-1-dN}\notin \im(h_{\underline{(n_0-1/2)}}^-)$.
Applying Lemma \ref{lem_2.25} for $j=n_0-\h$ and $j'=n_1+\h-dN$, we can see that these definitions make sense.

For $d\in\Z$ and $j\in\tZ\cap J_d$ we take an element $\beta_{j,d}$ in the kernel of $d_1$ as follows:
\begin{itemize}
\item $\beta_{j,d}:=\alpha^+_{j,d}$ 
if $v_{j+\h}\notin \im(h^+_{\uj})$ and
$v_{j-\h+dN}\in \ker(h^+_{\uj})$,
\item $\beta_{j,d}:=\alpha^-_{j,d}$ 
if $v_{j-\h}\notin \im(h^-_{\uj})$ and 
$v_{j+\h+dN}\in\ker(h^-_{\uj})$,
\item $\beta_{j,d}:=\alpha^+_{j,d}+\alpha^-_{j,d}$ 
if $h^-_{\uj}(v_{j+\h})=h^+_{\uj}(v_{j-\h+dN})=0$ or 
$h^+_{\uj}(v_{j-\h})=h^-_{\uj}(v_{j+\h+dN})=0$.
\end{itemize}
Note that the set
\[
\{\beta_{j,d}\}_{d\in \Z, j\in \tZ\cap J_d}\sqcup\{\beta_{l,d}\}_{d\in\Z,l\in \Z\cap J_d}
\]
forms a basis of $\ker(d_1)$. 
\begin{ex}
In Figure \ref{fig:9}, we describe $\beta_{9/2,-1}$, $\beta_{13/2,-1}$ and $\beta_{21/2,-1}$ in the case of Example \ref{ex_1}.
\begin{figure}[htbp]
  \centering
  \input{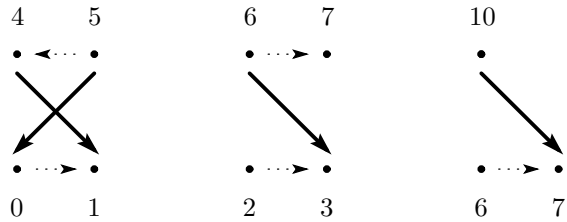}
  \caption{examples of elements in $\mr{ker}(d_1)$.}
  \label{fig:9}
\end{figure}
\end{ex}
\begin{NB}
For an element $a\in J_d$ let $a^+$ (resp. $a^-$) denote the minimal (resp. maximal) element in $J_d$ which is larger (resp. smaller) than $a$ if it exists. 
We divide $J_d$ into two disjoint subset $J'_d$ and $J''_d$ so that exactly one of $a$ or $a^+$ is in $J'_d$ for any $a\in J_d$ such that $a^+$ exists.
Furthermore we divide $J'_d$ (resp. $J''_d$) into disjoint subsets satisfying the following condition: 
Take $a\in J'_d$ such that $b:=(a^+)^+$ exists.
Let
$H$ be the subset of $J'_d$ containing $a$. 
Then $b\notin H$ if 
\begin{itemize}
\item $a\in \tZ$, $v_{a+1/2}\notin \im (h^+_{\underline{a}})$ and  $h^-_{\underline{a}}(v_{a+1/2+dN})=0$, or
\item $a^+\in \tZ$, $v_{a^++1/2}\notin \im (h^+_{\underline{a^+}})$ and  $h^-_{\underline{a^+}}(v_{a^++1/2+dN})=0$, or
\item $a^+\in \tZ$, $v_{a^+-1/2}\notin \im (h^-_{\underline{a^+}})$ and  $h^+_{\underline{a^+}}(v_{a^+-1/2+dN})=0$, or
\item $b\in \tZ$, $v_{b-1/2}\notin \im (h^-_{\underline{b}})$ and  $h^+_{\underline{b}}(v_{b-1/2+dN})=0$,
\end{itemize}
and otherwise $b\in H$. 
Note that $J'_0$ and $J''_0$ are divided into themselves. 

Let $H$ be one of the subsets of $J_d$ ($d\neq 0$).  
\end{NB}

\begin{lem}
\begin{itemize}
\item[(1)]
If $|J_0|$ is even, then the subspace of $\ker(d_1)$ spanned by $\{\beta_{a,d}\}_{a\in J_0}$ is contained in $\im(d_2)$.
If $|J_0|$ is odd, then 
\[
\dim (\{\beta_{a,d}\}_{a\in J_0}/\im(d_2)\cap\{\beta_{a,d}\}_{a\in J_0})=1.
\]
\item[(2)] 
The subspace of $\ker(d_1)$ spanned by $\{\beta_{a,d}\}_{a\in J_d}$ is contained in $\im(d_2)$.
\end{itemize}
\end{lem}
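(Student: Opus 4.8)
The strategy is to compute $\Ext^1_A(C,C)=\ker(d_1)/\im(d_2)$ directly, using that $\ker(d_1)$ has the explicit basis $\{\beta_{a,d}\}_{d,\,a\in J_d}\sqcup\{\beta_{l,d}\}_{d,\,l\in\Z\cap J_d}$ just produced, and working out, degree by degree in $d$, the intersection $\im(d_2)\cap\mathrm{span}\{\beta_{a,d}:a\in J_d\}$ by exhibiting explicit preimages. The first task is to make $d_2=\partial_\omega$ concrete on the module $C$. Every monomial of $\omega$ is a short cycle of the form $\tilde r_k\,h^+_i h^-_i$ or $\tilde r_k\,h^-_i h^+_i$, where $\tilde r_k=r_k$ when $k\in\hI_r$ and $\tilde r_k$ is a length-two path $h^\mp_\bullet h^\pm_\bullet$ when $k\notin\hI_r$; since each $r_k$ acts as $0$ on $C$, the formula \eqref{eq_del_c} for $\partial_{\omega;a,b}$ collapses considerably once evaluated on $C$, and this collapse is what ultimately makes $\sum_{k\notin\hI_r}\alpha_k$ the relevant quantity.

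The core is then a bookkeeping along the chain $v_{n_0},\dots,v_{n_1}$. For each position I would write down source elements of $\bigoplus_{a\in Q_1}\Hom(C_{\mr{out}(a)},C_{\mr{in}(a)})$ supported on a single arrow or loop, with the nonzero matrix coefficient dictated by the structure constants of $C$ (the choices $C(j)$ of \S\ref{appendix}), and evaluate $\partial_\omega$. Since the monomials of $\omega$ are genuine cycles, $\partial_\omega$ preserves the index $d$, so all resulting relations stay inside $\mathrm{span}\{\beta_{a,d}:a\in J_d\}$. Ordering $J_d=\{a_1<\cdots<a_m\}$, the relations fall into ``propagation'' relations $\pm\beta_{a_t,d}\pm\beta_{a_{t+1},d}$ linking consecutive positions and occasional ``terminal'' relations $\pm\beta_{a,d}$ arising where the arrow that would pair with $\beta_{a,d}$ is killed by an end-of-chain or loop condition. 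For $d\ne 0$ the decisive input is Lemma \ref{lem_2.25}, applied with $j=n_0-\h$ and $j'=n_1+\h-dN$: it guarantees that exactly one of the two alternative end-conditions in the definition of $J_d$ holds, so the single extra boundary element of $J_d$ carries a terminal relation; propagating it forces the whole span into $\im(d_2)$, which gives (2). For $d=0$ there is no extra element and no such terminal relation at $a_1$ or $a_m$ in general; the $m-1$ propagation relations then identify $\beta_{a_1,0},\dots,\beta_{a_m,0}$ up to sign, and I expect one further relation — coming from a source element adapted to the two ends of the chain, where $v_{n_0}$ and $v_{n_1}$ are annihilated by $h^-$ and $h^+$ — to relate $\beta_{a_1,0}$ to $\beta_{a_m,0}$ with a definite sign; comparing it with the telescoped identity $\beta_{a_1,0}=\pm\beta_{a_m,0}$ gives no new information when $m=|J_0|$ is odd (one-dimensional quotient) and forces $\beta_{a_1,0}=0$, hence every $\beta_{a,0}\in\im(d_2)$, when $m$ is even. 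This is (1).

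I expect the main obstacle to be exactly this case analysis and the sign-tracking it requires: the $\partial_\omega$-computation must be carried out in each configuration already present in the definitions of $\beta_{a,d}$ and $J_d$ — integer versus half-integer position, the three types $\alpha^+$, $\alpha^-$, $\alpha^++\alpha^-$, the interfaces between $\hI_r$ and non-$\hI_r$ blocks, and the two ends of the chain — and the parity dichotomy in (1) lives entirely in one sign, the holonomy of the string of propagation relations, so the bookkeeping has to be airtight. To cut down the casework I would use the self-duality of the Koszul complex of $A$ (as in the proof of Proposition \ref{prop3.1.}), which pairs the two halves of the complex, and I would calibrate the signs first in the situation of Example \ref{ex_1} and Figure \ref{fig:9}.
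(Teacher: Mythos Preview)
Your overall strategy matches the paper's: choose simple source elements in $\bigoplus_{a\in Q_1}\Hom(C_{\mr{out}(a)},C_{\mr{in}(a)})$, compute $d_2=\partial_\omega$ on them, and analyse the resulting linear relations among the $\beta_{a,d}$. The paper even restricts to $|\hat I_r|=0$ and uses the elements $\gamma^\pm_{j,d}\in\Hom(C_{\mr{out}(h^\mp_{\uj})},C_{\mr{in}(h^\mp_{\uj})})$ given by $\gamma^\pm_{j,d}(v_n)=\delta_{n,j\pm\h}\,v_{j\mp\h+dN}$, exactly the kind of single-arrow source elements you have in mind.

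Where your picture for $d=0$ goes wrong is the shape of the relations. The computation gives
\[
d_2(\gamma^{\,C(j)}_{j,0})=\beta_{j-1,0}+\beta_{j+1,0},\qquad d_2(\gamma^{-C(j)}_{j,0})=0,
\]
with the convention $\beta_{n_0-\h,0}=\beta_{n_1+\h,0}=0$. So the relations link positions two apart in $J_0$, not consecutive ones, and there is no ``holonomy'' relation tying $\beta_{a_1,0}$ to $\beta_{a_m,0}$. Instead, the two endpoint relations ($j=n_0+\h$ and $j=n_1-\h$) are themselves terminal: one of the two summands vanishes. The full system of $m=|J_0|$ relations is exactly the adjacency matrix of the path $P_m$, which has corank $1$ when $m$ is odd and corank $0$ when $m$ is even; this is the parity mechanism, not a sign comparison between a propagated identity and a separate holonomy. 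If you carry out the computation you outlined, this is what you will find --- the speculative end-to-end relation does not appear.

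For $d\neq 0$ your plan is close to the paper's. The paper observes that whenever $C(j)$ and $C(j+dN)$ have opposite signs, one gets $d_2(\gamma^\pm_{j,d})=\beta_{j\mp\h,d}$ directly as a terminal relation. Lemma~\ref{lem_2.25} is then used, as you say, to show these sign-flips are one-sided (one cannot have both $C(j)=-,\,C(j+dN)=+$ and $C(j')=+,\,C(j'+dN)=-$), which together with the propagation relations forces the whole span into $\im(d_2)$. The role of the extra boundary element in $J_d$ is absorbed into this.
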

\begin{proof}
To make the notations less complicated, we give a proof only in the case of $|\hat{I}_r|=0$. 
(The argument, with a slight modification, works in the general case as well.) 
We define $C(j)$ ($n_0<j<n_1$) as in Example \ref{ex_1}.

For $d\in\Z$ and $j\in\tZ$ such that both  $j$ and $j-\h+dN$ (resp. $j+\h+dN$) are in the interval $[n_0,n_1]$ we define an element
\[
\gamma^\pm_{j,d}\in \Hom\left(C_{\mr{out}(h^\mp_{\uj})},C_{\mr{in}(h^\mp_{\uj})}\right)
\]
by 
\[
\gamma^\pm_{j,d}(v_{n})=\delta_{n,j\pm\h}v_{j\mp\h+dN}.
\]

For $d=0$, we can verify\footnote{Recall that $d_2$ is the linear combination of $\partial_c$'s (see \eqref{eq_del_c}) for equvalent classes of the cycles $c$ which appear in the superpotential. 
For example, if we put 
\[
c_{k}=\Bigl[h^+_{k+\h}\circ h^+_{k-\h}\circ h^-_{k-\h}\circ h^-_{k+\h}\Bigr]\quad (k\in \tI)
\]
then we have 
\[
\partial_{c_k}(\gamma^+_{\uj,d})=
\gamma^+_{\j,d}\circ h^-_{\uj}\circ h^-_{\uj+1}
+h^+_{\uj+1}\circ \gamma^+_{\j,d}\circ h^-_{\uj}
+h^-_{\uj+1}\circ h^+_{\uj+1}\circ \gamma^+_{\j,d}.
\]
Moreover, the third term equals to zero.}
\[
d_2(\gamma^{C(j)}_{j,0})=\beta_{j-1,0}+\beta_{j+1,0}, \quad d_2(\gamma^{-C(j)}_{j,0})=0
\]
where we put $\beta_{n_0-1/2,0}=\beta_{n_0+1/2,0}=0$.
This is followed by the first claim.

For $d\neq 0$, if $C(j)=\pm$ and $C(j+dN)=\mp$ then we can verify
\[
d_2(\gamma^+_{j,d})=d_2(\gamma^-_{j,d})=\beta_{j\mp 1/2,d}.
\]
Note that, by Lemma \ref{lem_2.25}, if we have $j$ and $d$ such that $C(j)=-$ and $C(j+dN)=+$, then we do not have $j'$ such that $C(j')=+$ and $C(j'+dN)=-$.
Then we can verify the second claim.
\begin{NB}
For $d\neq 0$, we define 
\[
J^{\mp}_d:=\{j\in J_d\mid j\pm 1/2\equiv 0\mod 2\}.
\]
Using Lemma $\ref{lem_2.25}$, 
we can verify that $\beta_{j,d}(d\neq 0, j\in J^{\pm}_d)$ is contained in the subspace spanned by 
\[
\{d_2(\alpha^+_{j,d}),d_2(\alpha^-_{j,d})\mid j\in J^{\mp}_d\}.
\]
(see Example \ref{ex_2}.)
This implies the second one.
Take $j\in J_d$ such that 
\[
h^\pm(v_{j\mp 1/2})=v_{j\pm 1/2},\quad 
h^\pm(v_{j\mp 1/2+cN})=v_{j\pm 1/2+cN},
\]
then we have 
\[
d_2(\alpha_{j,d}^\pm)=\begin{cases}
\beta_{j-1,d} &j+1\notin J_d,\\
\beta_{j+1,d} &j-1\notin J_d,\\
\beta_{j-1,d}+\beta_{j+1,d} & \text{otherwise}.
\end{cases}
\]
Take $j\in J_d$ such that 
\[
h^+(v_{j- 1/2})=0,\quad 
h^-(v_{j+ 1/2+cN})=v_{j- 1/2+cN},
\]
then we have 
\[
d_2(\alpha_{j,d}^+)=\beta_{j-1,d}.
\]

we define an element $\gamma_{j,d}$ by
\begin{itemize}
\item $\bal^+_{j,d}\in \Hom\left(C_{\mr{out}(h^+_{\underline{a}})},C_{\mr{in}(h^+_{\underline{a}})}\right)$ if $a\in\tZ$ and 
$h^+_{\underline{a}}(v_{a-1/2})=h^+_{\underline{a}}(v_{a+dN-1/2})=0$,
\item $\bal^-_{a,d}\in \Hom\left(C_{\mr{out}(h^-_{\underline{a}})},C_{\mr{in}(h^-_{\underline{a}})}\right)$ if $a\in\tZ$ and 
$h^-_{\underline{a}}(v_{c+1/2})=h^-_{\underline{a}}(v_{a+dN+1/2})=0$,
\end{itemize}
where $\bal^\pm_{j,d}$ is given by 
\[
\bal^\pm_{a,d}(v_{n})=\delta_{n,a\mp\h}v_{j+dN\pm\h}.
\]
For $a\in J_d$ such that $a^+$, $a^-\in J_d$ we have
\[
d_2(\gamma_{a,d})=\beta_{a^-,d}+\beta_{a^+,d}.
\]
Let $a_{\mr{min}}$ and $a_{\mr{max}}$ be the maximal and minimal elements in $H$.
We can verify that either $\beta_{a_{\mr{min}},d}\in \im(d_2)$ or $\beta_{a_{\mr{max}},d}\in \im(d_2)$ holds by case-by-case argument.
For example, if $a_{\mr{min}}\in\Z$ and $h^-_{\underline{{a_{\mr{min}}}^-}}(v_{{a_{\mr{min}}}^-+1/2})=v_{{a_{\mr{min}}}^--1/2}$ then we have
\[
d_2(\bal^+_{{a_{\mr{min}}}^-,d})=\beta_{a_{\mr{min}},d}. 
\]
\end{NB}
\end{proof}
\begin{NB}
\begin{figure}[htbp]
  \centering
  \input{pic10.tpc}
  \caption{example: $d_2(\alpha^+_{9/2,-1})=\beta_{11/2,-1}$.}
  \label{fig:10}
\end{figure}
\end{NB}
\begin{NB}
\begin{prop}
If $|J_0|$ is even, then $J_0$ is contained in $\im(d_2)$. 
If $|J_0|$ is odd, then we have $\dim(J_0/\im(d_2)\cap J_0)=1$. 
\end{prop}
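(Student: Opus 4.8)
The plan is to cut $\ker(d_1)$ according to the shift parameter $d$ (the only $d$ that can hit $V_0:=\langle\beta_{a,0}\rangle_{a\in J_0}$ is $d=0$), and then, inside the $d=0$ block, to produce and solve a chain of linear relations coming from the superpotential. Following the preceding Lemma, I would first treat the case $|\hat{I}_r|=0$, where $J_0=\{j\in\tZ\mid n_0<j<n_1\}$ and $V_0\subseteq\ker(d_1)$ has dimension $m:=|J_0|=n_1-n_0$; the general case is obtained afterwards by carrying along the summands attached to the loops $r_k$, equivalently to the integers $n$ with $\underline{n}\in\hat{I}_r$, which behave in exactly the same way.

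The one genuine computation I would carry out is the evaluation of $d_2=\partial_\omega$ on the auxiliary homomorphisms
\[
\gamma^{\pm}_{j,0}\in\bigoplus_{a\in Q_1}\Hom(C_{\mr{out}(a)},C_{\mr{in}(a)}),\qquad \gamma^{\pm}_{j,0}(v_n)=\delta_{n,j\pm\h}\,v_{j\mp\h}.
\]
Using that $d_2$ is the sum of the $\partial_c$ over the cyclic summands $c$ of $\omega$ (the square cycles $[h^+_{k+\h}\circ h^+_{k-\h}\circ h^-_{k-\h}\circ h^-_{k+\h}]$ and their analogues) together with the explicit action of the arrows on $C$ recorded at the start of this subsection — each $v_n$ is carried to a basis vector by exactly one of $h^+,h^-$ and annihilated by the other, the choice being encoded by $C(j)$ — one obtains
\[
d_2\!\left(\gamma^{C(j)}_{j,0}\right)=\beta_{j-1,0}+\beta_{j+1,0},\qquad d_2\!\left(\gamma^{-C(j)}_{j,0}\right)=0,
\]
with the boundary convention $\beta_{n_0-\h,0}=\beta_{n_1+\h,0}=0$; the third term in each $\partial_c(\gamma^{\pm}_{j,0})$ drops out because $h^+\circ h^-$ or $h^-\circ h^+$ kills the relevant $v_n$. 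I would not spell this out further.

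It then remains to run the linear algebra. Writing $J_0=\{a_1<\cdots<a_m\}$, the relations read $\beta_{a_{s-1},0}+\beta_{a_{s+1},0}\in\im(d_2)$ for $1\le s\le m$ (with $\beta_{a_0,0}=\beta_{a_{m+1},0}=0$). Walking along this chain shows that $\beta_{a_s,0}\in\im(d_2)$ for every even $s$ and that the $\beta_{a_s,0}$ with $s$ odd are all congruent modulo $\im(d_2)$. If $m$ is even, the relation at $s=m$ says $\beta_{a_{m-1},0}\in\im(d_2)$ with $m-1$ odd, so every odd $\beta_{a_s,0}$ lands in $\im(d_2)$ and $V_0\subseteq\im(d_2)$. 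If $m$ is odd, that relation is redundant, so $V_0/(V_0\cap\im(d_2))$ is spanned by the class of $\beta_{a_1,0}$ and has dimension at most $1$.

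The hard part is the matching lower bound when $m$ is odd, i.e. bounding $\im(d_2)\cap V_0$ from above. For this I would use that $d_2$ is compatible with the $d$-grading on both $\ker(d_1)$ and its domain, so only $d=0$ contributions can hit $V_0$, and that inside the $d=0$ block the only preimages of elements of $V_0$ are linear combinations of the $\gamma^{\pm}_{j,0}$ — this is precisely where Lemma \ref{lem_2.25} enters, ruling out other homomorphisms of this shape. Granting that, $\im(d_2)\cap V_0$ is exactly the span of the displayed relations, the chain analysis is sharp, and the class of $\beta_{a_1,0}$ is genuinely nonzero.
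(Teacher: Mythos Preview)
Your approach is essentially the paper's: both reduce to the case $|\hat I_r|=0$, introduce the auxiliary elements $\gamma^{\pm}_{j,0}$, verify $d_2(\gamma^{C(j)}_{j,0})=\beta_{j-1,0}+\beta_{j+1,0}$ and $d_2(\gamma^{-C(j)}_{j,0})=0$, and read off the result from the resulting chain of relations. Your write-up is in fact more detailed than the paper's, which simply records the relation and says ``thus the claim follows''.

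One small correction to your last paragraph: Lemma~\ref{lem_2.25} is not what is needed for the lower bound in the $d=0$ block. In the case $|\hat I_r|=0$ the elements $\gamma^{\pm}_{j,0}$ with $n_0<j<n_1$ already form a \emph{basis} of the degree-$0$ part of the domain of $d_2$ (this is just bookkeeping of the index ranges), so there are no ``other homomorphisms of this shape'' to rule out. The lower bound then follows from what you already computed: since $d_2(\gamma^{-C(j)}_{j,0})=0$, the image of the degree-$0$ domain is spanned exactly by the relations $\beta_{j-1,0}+\beta_{j+1,0}$, and your chain analysis is sharp. (Lemma~\ref{lem_2.25} is used in the paper for the $d\neq 0$ blocks, not here.) A second harmless slip: the odd-indexed $\beta_{a_s,0}$ are congruent \emph{up to sign}, not literally equal, modulo $\im(d_2)$; this does not affect the dimension count.
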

\begin{proof}
Let $a_{\mr{min}}$ and $a_{\mr{max}}$ be the maximal and minimal elements in $J_0$. 
For $a\in J_0$ we have
\[
d_2(\gamma_{a,0})=\begin{cases}
\beta_{a_{\mr{min}}^+,d} & \text{if $a=a_{\mr{min}}$},\\
\beta_{a_{\mr{max}}^-,d} & \text{if $a=a_{\mr{max}}$},\\
\beta_{a^-,d}+\beta_{a^+,d} & \text{otherwise}.
\end{cases}
\]
Thus the claim follows.
\end{proof}
\end{NB}
The lemma above implies Proposition \ref{prop3.12.} (1).
Moreover, we can take $\alpha_{n_0+1/2,0}^{C(n_0+1/2)}$ as a generator of $\mr{Ext}^1(C,C)$ in the case $\mr{ext}^1(C,C)=1$ (we keep using the notations in the case of $|\hat{I}_r|=0$ for brevity).

For $1\leq M\leq m$, let $v_{a}^{(M)}$ denote the element in the $M$-th copy of $C$. 
By putting 
\[
h_{n_0+1/2,0}^{+}(v_{n_0}^{(M)})=
v_{n_0+1}^{(M)}+v_{n_0+1}^{(M+1)}
\]
if $C(n_0+1/2)=+$, or 
\[
h_{n_0+1/2,0}^{-}(v_{n_0+1}^{(M)})=
v_{n_0}^{(M)}+v_{n_0}^{(M+1)}
\]
if $C(n_0+1/2)=-$, we can define a $A$-module $C_m$ which is described as $m-1$ times successive extensions of $C$'s.
\begin{lem}
$\mr{hom}(C,C_m)=1$.
\begin{NB}
\begin{itemize}
\item[(1)]
The $A$-module $C_m$ is the unique $A$-module which is described as $m-1$ times successive extensions of $C$'s. 
\item[(2)] $\mr{ext}^1(C_m,C)=1$.
\end{itemize}
\end{NB}
\end{lem}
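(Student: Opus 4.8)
The plan is to compute $\hom_A(C,C_m)$ by hand from the two explicit presentations at our disposal: $C$ as the string module with basis $\{v_n\}_{n_0\le n\le n_1}$ and the arrow actions recorded in the proof of Proposition \ref{prop3.9.}, and $C_m$ as the $(m-1)$-fold self-extension just constructed, with underlying space $C^{\oplus m}$, basis $\{v_n^{(M)}\}_{1\le M\le m,\,n_0\le n\le n_1}$, and arrow actions that agree with those of $C$ applied diagonally to the $m$ copies, except for the modification at the edge $n_0+\tfrac12$ built from the generator $\alpha^{C(n_0+1/2)}_{n_0+1/2,0}$ of $\Ext^1_A(C,C)\cong\C$.

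I would take an arbitrary $\phi\in\Hom_A(C,C_m)$ and write $\phi(v_n)=\sum_{M=1}^m c_{n,M}\,v_n^{(M)}$. Every arrow that carries the string (each $h^\pm_i$ or $r_k$ sending one basis vector of $C$ to the next with coefficient $1$) acts diagonally on the copies in $C_m$, so compatibility of $\phi$ with all of these---using that the string $v_{n_0},\dots,v_{n_1}$ is connected through them---forces the vector $(c_{n,1},\dots,c_{n,m})$ to be the same for all $n$; call it $c$. It then remains to impose compatibility with the one composition at the edge $n_0+\tfrac12$ that is zero on $C$ but, after the modification, acts on $C_m$ by the nilpotent shift $v^{(M)}\mapsto v^{(M-1)}$ ($v^{(0)}:=0$): the identity $\phi\circ(\text{this})=(\text{this})\circ\phi$ becomes $0=\sum_{M\ge 2}c_M\,v^{(M-1)}$, i.e. $c_2=\dots=c_m=0$. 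Thus $\phi$ is the scalar multiple by $c_1$ of the map $v_n\mapsto v_n^{(1)}$; and since that map is a well-defined nonzero homomorphism (the inclusion of the bottom step $C$ of the filtration of $C_m$), we get $\hom_A(C,C_m)=1$. The two sign cases $C(n_0+\tfrac12)=\pm$ are mirror images of one another.

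The step requiring real care is the first one: one has to confirm that, apart from the modification localised at the edge $n_0+\tfrac12$, every arrow of $Q$---in particular the loops $r_k$ and any arrow at a vertex that the string visits more than once---acts on $C_m$ exactly copywise as on $C$, and that it is precisely the modified composition whose compatibility collapses the parameter space from $\C^m$ to a line. This is where the explicit form of $C$ from the proof of Proposition \ref{prop3.9.} and Lemma \ref{lem_2.25} (which controls the ends of the string, so that the modification can be placed at the single edge $n_0+\tfrac12$) are used. A coordinate-free variant of the same argument: $C$ is a brick since it is $\zeta^\circ$-stable, so any nonzero $\phi\colon C\to C_m$ is injective with image a submodule of $C_m$ isomorphic to $C$; one then checks directly---again from the explicit structure---that $C_m$ has a unique such submodule, namely $\mathrm{span}\{v_n^{(1)}\}$, whence $\hom_A(C,C_m)=\End_A(C)=\C$.
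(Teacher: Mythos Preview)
Your approach is genuinely different from the paper's: you go for a direct computation in the explicit basis, whereas the paper argues by induction on $m$ via the long exact sequence obtained by applying $\Hom(C,-)$ to $0\to C\to C_{m+1}\to C_m\to 0$, using that the connecting map $\Hom(C,C_m)\to\Ext^1(C,C)$ is an isomorphism (both sides being one-dimensional, with nontriviality following from the very construction of $C_{m+1}$).

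There is, however, a real gap in your direct computation. You write $\phi(v_n)=\sum_M c_{n,M}\,v_n^{(M)}$, but this ansatz is already an unjustified restriction: the graded piece $(C_m)_{\underline n}$ is spanned by \emph{all} $v_{n'}^{(M)}$ with $n'\equiv n\pmod N$, and whenever $n_1-n_0\ge N$ (the generic case---see Example~\ref{ex_1}) there are several such $n'$. So a priori $\phi(v_n)$ has components along $v_{n'}^{(M)}$ with $n'\neq n$, and your connectedness argument only tracks the diagonal coefficients. The paragraph where you flag ``any arrow at a vertex that the string visits more than once'' is addressing the wrong concern: the arrows \emph{do} act copywise on $C_m$ by construction; the issue is that $\phi$ need not respect the $n$-labeling within a copy. (Even for $m=1$, showing these off-diagonal coefficients vanish is exactly the content of $\hom(C,C)=1$, which you get from stability, not from the bare string picture.) There is also a small mis-description of the modification: in the paper's construction only the arrow $h^{C(n_0+1/2)}_{\underline{n_0+1/2}}$ itself is altered (its off-diagonal part shifts $M\mapsto M+1$, so the submodule copy sits at $M=m$, not $M=1$); there is no ``composition that is zero on $C$ but becomes the nilpotent shift on $C_m$''.

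Your coordinate-free variant is closer to correct. Injectivity of a nonzero $\phi$ does follow from $C$ being $\zeta^\circ$-stable and $C_m$ being $\zeta^\circ$-semistable of the same slope. But the remaining step---that $C_m$ has a \emph{unique} submodule isomorphic to $C$---is asserted, not proved, and proving it cleanly leads you straight back to induction: if $S\subset C_m$ with $S\cong C$ projects nontrivially to $C_{m-1}$, then by induction its image is the unique sub-$C$ there, forcing $S$ into the length-two piece at the bottom, which is the nonsplit $C_2$ and hence has a unique sub-$C$. Unwinding this is precisely the paper's long-exact-sequence argument.
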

\begin{proof}
We will prove by induction.
Assume that $\mr{hom}(C,C_{m})=1$.
We apply $\Hom(C,-)$ for the short exact sequence
\[
0\to C\to C_{m+1}\to C_m\to 0
\]
to get the long exact sequence.
By the definition of $C_{m+1}$, the connected homomorphism 
\[
\Hom(C,C_m)\to \mr{Ext}^1(C,C)
\]
is isomorphism.
Hence we have $\mr{hom}(C,C_{m+1})=\mr{hom}(C,C)=1$.
\end{proof}
\begin{lem}
$\mr{ext}^1(C,C_m)=1$
\end{lem}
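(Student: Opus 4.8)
The plan is to run an induction on $m$ combining the long exact sequence already used in the proof of the preceding lemma with one direct non-splitting observation. The base case is $m=1$: here $C_1=C$, so $\mr{ext}^1(C,C_1)=\mr{ext}^1_A(C,C)=1$ by Proposition \ref{prop3.12.}~(1), since we are throughout in the case where $\sum_{k\notin\hI_r}\alpha_k$ is even. I will freely use the two facts recorded in the previous lemma: that $\mr{hom}(C,C_m)=1$ for every $m\ge 1$, and that the connecting homomorphism $\Hom(C,C_m)\to\Ext^1(C,C)$ induced by the short exact sequence $0\to C\to C_{m+1}\to C_m\to 0$ is an isomorphism.

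For the upper bound I would apply $\Hom(C,-)$ to $0\to C\to C_{m+1}\to C_m\to 0$. Since the connecting map $\Hom(C,C_m)\to\Ext^1(C,C)$ is surjective, the next arrow $\Ext^1(C,C)\to\Ext^1(C,C_{m+1})$ vanishes, so $\Ext^1(C,C_{m+1})$ injects into $\Ext^1(C,C_m)$. Hence $\mr{ext}^1(C,C_{m+1})\le\mr{ext}^1(C,C_m)$, and iterating down to the base case gives $\mr{ext}^1(C,C_m)\le 1$ for every $m$.

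For the reverse inequality I would read off from the explicit description of $C_{m+1}$ that the submodule spanned by all but the first of its $m+1$ copies of $C$ is isomorphic to $C_m$, with quotient isomorphic to $C$; thus there is a short exact sequence $0\to C_m\to C_{m+1}\to C\to 0$. This sequence cannot split, for a splitting would force $C_{m+1}\cong C_m\oplus C$ and hence $\mr{hom}(C,C_{m+1})=\mr{hom}(C,C_m)+\mr{hom}(C,C)=2$, contradicting $\mr{hom}(C,C_{m+1})=1$. Therefore $\Ext^1(C,C_m)\ne 0$, i.e. $\mr{ext}^1(C,C_m)\ge 1$, and combining the two bounds yields $\mr{ext}^1(C,C_m)=1$.

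The argument is essentially formal once the previous lemma is in hand; the only point that requires a little care is the direction of the extension used for the lower bound, namely that we need $C_m$ as a \emph{sub}module and $C$ as a \emph{quotient} of $C_{m+1}$ (not the other way round), which is why one singles out the submodule spanned by the last $m$ copies rather than the first one. One could alternatively deduce the equality from the vanishing of the Euler form on $\afmod$ together with the Calabi--Yau identities $\mr{ext}^2(C,C_m)=\mr{ext}^1(C_m,C)$ and $\mr{ext}^3(C,C_m)=\mr{hom}(C_m,C)$, but that route would require the separate computations of $\mr{hom}(C_m,C)$ and $\mr{ext}^1(C_m,C)$, so the inductive argument above seems the more economical one.
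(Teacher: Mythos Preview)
Your proof is correct and follows essentially the same approach as the paper: the upper bound via the injection $\Ext^1(C,C_{m+1})\hookrightarrow\Ext^1(C,C_m)$ coming from the long exact sequence of the previous lemma, and the lower bound via the non-split extension $0\to C_m\to C_{m+1}\to C\to 0$ together with $\hom(C,C_{m+1})=1$. The paper states both steps in a single sentence each; you have simply made the induction and the choice of submodule explicit.
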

\begin{proof}
The proposition above implies that $C_{m+1}$ is a nontrivial extension of $C$ and $C_m$ and that $\mr{ext}^1(C,C_m)\geq 1$. 
On the other hand, the long exact sequence above implies $\mr{ext}^1(C,C_m)\leq 1$.
Then the claim follows.
\end{proof}
These lemmas above imply Proposition \ref{prop3.12.} (2).

\section{Mutations and stabilities}\label{mutation}
In the final section, we provide an alternative description of the moduli spaces for a generic stability parameter.
As by-products, we see that for a generic stability parameter "between DT and NCDT" the set of torus fixed points on the moduli space is isolated and that the weighted Euler characteristics coincide with the Euler characteristics up to signs. 
(The combinatorial description of the fixed point set is given in \cite{open_3tcy, NCDTviaVO}.)

\subsection{mutations}
In \S \ref{as_moduli} we associate a quiver with a superpotential $A:=A_\tau$ with a map $\tau\colon \hat{I}\to \{\pm 1\}$, where we use $1$ and $-1$ instead of $H$ and $S$ respectively.
For $k\in\hat{I}$, let $\mu_k(\tau)\colon \hat{I}\to \{\pm 1\}$ be the map given by 
\[
\mu_k(\tau)(l)=
\begin{cases}
\tau(k)\tau(l) & (l=k\pm 1),\\
\tau(l)& (\text{otherwise}),
\end{cases}
\]
and let $\mu_k(A)$ denote the quiver with the superpotential $A_{\mu_k(\tau)}$.

Let $P_k$ be the projective $A$-module associated with the vertex $k\in\hat{I}$ and we set $P:=\bigoplus_k P_k(=A)$.
An element in $P_k$ is a linear combination of paths which start at the vertex $k$.
We define the new $A$-module 
\begin{equation}\label{eq_P'}
P_k':=\mathrm{coker}(P_k\to P_{k-1}\oplus P_{k+1})
\end{equation}
where $P_k\to P_{k\pm 1}$ be the maps given by composing the arrows from $k\pm 1$ to $k$ to the paths. 
\begin{prop}\label{prop-mutation}
\begin{enumerate}
\item[(1)] The object $\mu_k(P)=\bigoplus_{l\neq k} P_l\oplus P_k'$ is a tilting generator in $D^b(A\text{-}\mathrm{mod})$. 
In particular, non T-invariant
\[
\mb{R}\Hom(\mu_k(P),-)=:\Psi_k\colon D^b(A\text{-}\mathrm{mod})\to D^b(\mathrm{End}_A(\mu_k(P))^{\mr{op}}\text{-}\mathrm{mod})
\]
gives an equivalence.
\item[(2)] $\mu_k(A)\overset{\sim}{\longrightarrow}\mathrm{End}_A(\mu_k(P))^{\mr{op}}$.
\end{enumerate}
\end{prop}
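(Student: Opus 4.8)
The plan is to recognize the statement as an instance of the general theory of mutations of quivers with potential (Derksen--Weyman--Zelevinsky) combined with the derived-category description of mutation of tilting objects in the Calabi--Yau setting (Iyama--Reiten, Keller--Yang). First I would set up the derived picture for (1). Since $A$ is a graded $3$-dimensional Calabi--Yau algebra (established in \S\ref{counting} via the Koszul complex), the projective $P_k$ associated with the vertex $k$ admits the two-term complex $P_k\to P_{k-1}\oplus P_{k+1}$, and I would check that this map is injective so that $P_k'$ defined in \eqref{eq_P'} sits in a short exact sequence $0\to P_k\to P_{k-1}\oplus P_{k+1}\to P_k'\to 0$. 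The key homological input is that $\mu_k(P)=\bigoplus_{l\neq k}P_l\oplus P_k'$ is a tilting object: one verifies $\Ext^{>0}_A(\mu_k(P),\mu_k(P))=0$ using the short exact sequence above together with the fact that there are no arrows between $P_{k-1}$ and $P_{k+1}$ factoring through $k$ other than those already accounted for, and that $\mu_k(P)$ generates $D^b(A\text{-}\mathrm{mod})$ because $P_k$ is recovered from $P_{k-1}$, $P_{k+1}$ and $P_k'$ by the triangle. Then $\Psi_k=\mb{R}\Hom(\mu_k(P),-)$ is a derived equivalence onto $D^b(\mathrm{End}_A(\mu_k(P))^{\mr{op}}\text{-}\mathrm{mod})$ by the standard tilting theorem.

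For (2) — the statement at hand — I would compute $\mathrm{End}_A(\mu_k(P))^{\mr{op}}$ explicitly and identify it with $A_{\mu_k(\tau)}=(\mu_k(Q),\mu_k(\omega))$. The computation splits into three kinds of $\Hom$-spaces: $\Hom_A(P_l,P_{l'})$ for $l,l'\neq k$, which is unchanged and contributes the arrows of $Q$ not incident to $k$ (here one uses the endomorphism-algebra computation of \S\ref{end-alg}, i.e. $A\simeq \End_{\ys}(L^\sigma)$, to read off the path combinatorics); $\Hom_A(P_l,P_k')$ and $\Hom_A(P_k',P_l)$ for $l\neq k$, which, via the defining short exact sequence and the long exact $\Hom$-sequences, produce the reversed arrows $k\to l$ (for $l=k\pm 1$), the composed "shortcut" arrows through the new vertex, and nothing else for $l\neq k\pm1$; and $\End_A(P_k')$, which one checks is again isomorphic to $e_kAe_k$. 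Assembling these gives precisely the quiver $\mu_k(Q)$ obtained from $Q$ by the Fomin--Zelevinsky/DWZ mutation rule at $k$: reverse all arrows at $k$, add a composite for each length-two path through $k$, and cancel $2$-cycles. Simultaneously the relations, which by the Koszul/CY property come from the cyclic derivatives of $\omega$, transform into the cyclic derivatives of the mutated potential $\mu_k(\omega)$ (after deleting trivial $2$-cycle terms). Finally, I would invoke Remark~\ref{lem_consistency} / \cite[\S4]{ncdt-brane} — the dimer-model consistency statement — to guarantee that no further relations appear and that the $2$-cycles cancel cleanly, so that $\mathrm{End}_A(\mu_k(P))^{\mr{op}}$ is exactly $\mu_k(A)$ with no reduction needed; equivalently the combinatorial rule for $\mu_k(\tau)$ matches the effect on the underlying bipartite graph.

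The main obstacle I anticipate is the bookkeeping in the $\Hom$-computations for (2): one must carefully track, using the path description of \S\ref{end-alg}, exactly which new arrows and which relations are produced when passing through the cokernel $P_k'$, and in particular verify that the $2$-cycles created at $k\pm 1$ by composing a new arrow into $k$ with a new arrow out of $k$ are exactly killed by the mutated relations, so that the result is the \emph{reduced} quiver with potential $\mu_k(A)$ rather than a non-reduced version. This is where the consistency of the dimer model (the fact that $A$ comes from a brane tiling) is essential, and where I would lean on \cite[\S4]{ncdt-brane} and the established isomorphism $A_\sigma\simeq\End_{\ys}(L^\sigma)$ rather than reproving a general mutation compatibility statement. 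The tilting part (1) is comparatively routine given the CY-$3$ property already in hand.
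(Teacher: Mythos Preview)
Your proposal is correct in outline but takes a different route from the paper. The paper does \emph{not} invoke Derksen--Weyman--Zelevinsky or Keller--Yang; instead it performs a completely hands-on computation tailored to this specific quiver. For (1), rather than appealing to a general CY-3 tilting-mutation theorem, the paper applies $\Hom(-,P_l)$ and $\Hom(P_l,-)$ directly to the defining two-term complex and checks surjectivity/injectivity by an explicit case analysis on paths (splitting on whether $k\in\hat{I}_r$), and handles $\Ext^i(P'_k,P'_k)$ via the associated $2\times 2$ double complex. For (2), the paper does not identify the endomorphism algebra abstractly with a reduced DWZ mutation and then match it to $\mu_k(\tau)$; instead it \emph{writes down} the isomorphism arrow by arrow: for each generator $H^\pm_i$, $R_l$ of $\mu_k(A)$ it exhibits an explicit element of the relevant $\Hom$-space (e.g.\ $H^\pm_{k\pm\frac{1}{2}}$ is sent to the map $P'_k\to P_{k\pm1}$ induced by $(h^\pm_{k\pm\frac{1}{2}}\circ h^\pm_{k\mp\frac{1}{2}})+(h^\pm_{k\mp\frac{1}{2}}\circ h^\mp_{k\pm\frac{1}{2}})$), verifies the mutated relations by hand, and checks bijectivity using the path description from \S\ref{end-alg}.

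Your approach has the advantage of conceptual clarity and would generalize immediately to other consistent dimer models, but it front-loads two nontrivial verifications: that the general Keller--Yang machinery applies to this particular (non-complete, infinite-dimensional) Jacobi algebra, and that the reduced DWZ mutation at $k$ really coincides with the combinatorial rule $\tau\mapsto\mu_k(\tau)$ on the level of quivers with potential (your appeal to \cite[\S4]{ncdt-brane} is the right place to look, but this identification deserves an explicit sentence). The paper's approach avoids both issues by never leaving the concrete path algebra, at the cost of a case-by-case computation. Either is fine; if you go your route, be explicit about why the general theorems apply here and spell out the $\mu_k(\tau)$ matching rather than leaving it as a remark.
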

\begin{proof}
(1) It is clear that $\mu_k(P)$ is a generator. 

Applying the functor $\Hom(-,P_l)$ ($l\neq k$) for the complex in the definition \eqref{eq_P'}, we get
\begin{equation}\label{eq_surj}
\Hom (P_{k-1},P_l)\oplus \Hom (P_{k+1},P_l) \to \Hom (P_{k},P_l).
\end{equation}
Take a path $\mca{P}$ from $l$ to $k$.
If $k\in \hat{I}_r$ then either 
\begin{itemize}
\item $\mca{P}=(r_k)^a \circ h^+_{k-\frac{1}{2}} \circ \mca{P}'$ for a path $\mca{P}'$ from $l$ to $k-1$ and for a non-negative integer $a$, or
\item $\mca{P}=(r_k)^a \circ h^-_{k+\frac{1}{2}} \circ \mca{P}'$ for a path $\mca{P}'$ from $l$ to $k+1$ and for a non-negative integer $a$.
\end{itemize}
In each case, we have $\mca{P}=h^\pm_{k\mp\frac{1}{2}}\circ \mca{P}''$ for a path $\mca{P}''$ from $l$ to $k\pm 1$ (see the relation given in \S \ref{as_moduli}).
Thus the map in \eqref{eq_surj} is surjective and we have
\[
\mr{Ext}^i(P'_k,P_l)=0\quad (i\neq 0).
\] 
We can show the vanishing for $k\in \hat{I}_r$ and the vanishing of $\mr{Ext}^i(P_l,P'_k)$ in the same way.

The cohomologies of the total complex of the following double complex give $\mr{Ext}^i(P'_k,P'_k)$: 
\[
\xymatrix{
\Hom(P_{k-1}\oplus P_{k+1},P_k) \ar[d] \ar[r] & \Hom(P_k,P_k) \ar[d]\\
\Hom(P_{k-1}\oplus P_{k+1},P_{k-1}\oplus P_{k+1}) \ar[r]& \Hom(P_k,P_{k-1}\oplus P_{k+1})
}
\]
(the middle cohomology gives $\mr{Ext}^0(P'_k,P'_k)$).
We can verify that the left map in the total complex is injective and the right one is surjective.

Hence we have $\mr{Ext}^i(\mu_k(P),\mu_k(P))=0$ ($i\neq 0$). 
So $\mu_k(P)$ is a tilting generator, which gives an equivalence.

\smallskip
\noindent
(2) We will construct the isomorphism explicitly.
Let $H^\pm_i$ and $R_l$ denote the arrows of the mutated quiver $\mu_k(A)$, where $h^\pm_i$ and $r_l$ are the arrows of the original quiver $A$.

For an arrow in the mutated quiver from $s\neq k$ to $t\neq k$, we associate the element in $\Hom(P'_s,P'_t)=\Hom(P_s,P_t)$ which is given by the same arrow in the original quiver.

Assume that $k\in\hat{I}_r$ (we can verify for $k\notin\hat{I}_r$ in the same way).

Note that the composition of $P_k\to P_{k-1}\oplus P_{k+1}$ and 
\[
(h^\pm_{k\pm\frac{1}{2}} \circ h^\pm_{k\mp\frac{1}{2}})+(h^\pm_{k\mp\frac{1}{2}} \circ h^\mp_{k\pm\frac{1}{2}}) \colon P_{k-1}\oplus P_{k+1}\to P_{k\pm 1}
\]
is zero. 
We associate the induced map $P_k'\to P_{k\pm 1}$ with the arrow $H^\pm_{k\pm\frac{1}{2}}$. 

For the arrow $H^\mp_{k\pm\frac{1}{2}}$, we associate the map $P_{k\pm 1}\to P'_{k}$ induced by 
\[
0\oplus \mathrm{id}_{P_{k\pm 1}}\colon P_{k\pm 1}\to P_{k-1}\oplus P_{k+1}.
\]
Furthermore, the following diagram commutes:
\[
\xymatrix{
P_k \ar[d]_{r_k} \ar[r] & P_{k-1}\oplus P_{k+1} \ar[d]^{\footnotesize \begin{pmatrix}r_{k-1}& 0\\0&r_{k+1}\end{pmatrix}} \\
P_k \ar[r]& P_{k-1}\oplus P_{k+1},
}\]
where 
\[
R_{k\pm 1}=
\begin{cases}
r_{k\pm 1} & (k\pm 1\in \hat{I}_r),\\
h^\mp_{k\pm \frac{3}{2}}\circ h^\pm_{k\pm \frac{3}{2}}& (k\pm 1\notin \hat{I}_r).\\
\end{cases}
\]
We associate this with the arrow $R_k$. 

These maps satisfy the relations of the mutated quiver. 
We can also verify that the correspondence above gives an isomorphism. 
For example, the cokernel of the map \eqref{eq_surj} is isomorphic to $\Hom(P_k,P_l)$ and so we have $\Hom(P'_k,P_l)=\Hom(P_k,P_l)$. 
\end{proof}

\subsection{the affine Weyl group action}\label{subsec-weyl}
For $k\in\hat{I}$ we define the map $\mu_k\colon \Z^{\hat{I}}\to\Z^{\hat{I}}$ by 
\[
(\mu_k(\mathbf{v}))_l=
\begin{cases}
\mathbf{v}_{k-1}-\mathbf{v}_k+\mathbf{v}_{k+1} & l=k,\\
\mathbf{v}_{l} & \text{otherwise}
\end{cases}
\]
for $\mathbf{v}\in\Z^{\hat{I}}$.
We also define $\mu_k\colon \R^{\hat{I}}\to\R^{\hat{I}}$ by
\[
(\mu_k(\zeta))_l=
\begin{cases}
-\zeta_k & l=k,\\
\zeta_{k\pm 1}+\zeta_k & l=k\pm 1,\\
\zeta_{l} & \text{otherwise}
\end{cases}
\]
for $\zeta\in\R^{\hat{I}}$.
Note that 
\[
\mathbf{v}\cdot \zeta =\mu_k(\mathbf{v})\cdot\mu_k(\zeta)
\]
for any $\mathbf{v}$ and $\zeta$.

\begin{NB}
\begin{prop}
Let $\alpha\in \Z^{\hat{I}}$ be a positive root and $\zeta$ be a parameter such that $\alpha\cdot \zeta=0$, $\zeta_k<0$ and such that $\zeta$ is not on any other wall.
Given a $\zeta$-stable $A$-module $C$ with $\dimv\,{C}=\alpha$ and $\mathcal{I}\cdot C=0$ for a maximal ideal $\mathcal{I}\subset R$.
Then $\Psi_k(C)$ is the unique $\mu_k(\zeta)$-stable $\mu_k(A)$-module with $\dimv(\Psi_k(C))=\mu_k(\alpha)$ such that $\mathcal{I}\cdot \Psi_k(C)=0$.
\end{prop}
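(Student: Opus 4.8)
The plan is to verify, in order, that $\Psi_k(C)$ is an honest $\mu_k(A)$-module, to compute its dimension vector and the ideal annihilating it, to prove it is $\mu_k(\zeta)$-stable, and to deduce uniqueness from Proposition \ref{prop3.9.}. Throughout I will use the projective resolution $0\to P_k\to P_{k-1}\oplus P_{k+1}\to P'_k\to 0$ supplied by Proposition \ref{prop-mutation}, the identification $\mu_k(A)\cong\End_A(\mu_k(P))^{\mathrm{op}}$, the fact that $\Psi_k$ (hence $\Psi_k^{-1}$) acts as $\mu_k$ on dimension vectors (as one checks on the simple modules), and the elementary identity $\mathbf{v}\cdot\zeta=\mu_k(\mathbf{v})\cdot\mu_k(\zeta)$.

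First, $\zeta_k<0$ together with $\zeta\cdot\alpha=0$ forces $\alpha\neq\alpha_k$. The complex $\Psi_k(C)=\mathbf{R}\Hom_A(\mu_k(P),C)$ is supported in cohomological degrees $0$ and $1$, with $H^1(\Psi_k(C))=\Ext^1_A(P'_k,C)=\coker\bigl(f\colon C_{k-1}\oplus C_{k+1}\to C_k\bigr)$, where $f$ is induced by the two arrows $h^+_{k-1/2}$ and $h^-_{k+1/2}$ entering the vertex $k$. If $f$ were not surjective, the $A$-submodule $M\subseteq C$ generated by $\bigoplus_{l\neq k}C_l$ would satisfy $M_k=\im f\subsetneq C_k$: the only other arrow into $k$ is the loop $r_k$ when $k\in\hat{I}_r$, and $\mathcal{I}\cdot C=0$ forces $r_k$ (which coincides with $Z_k$ or $W_k$) to act on $C_k$ as a scalar, hence to preserve $\im f$. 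Then $C/M$ would be a nonzero module supported at $k$, so $C$ would admit $S_k$ as a quotient, and the kernel would be a proper (since $\alpha\neq\alpha_k$) nonzero submodule $K$ with $\zeta\cdot\dimv K=-\zeta_k>0$, contradicting $\zeta$-stability of $C$. Hence $f$ is surjective, $\Psi_k(C)$ is concentrated in degree $0$, and reading dimensions off the resolution gives $\dimv\Psi_k(C)=\mu_k(\alpha)$. Moreover $\mathcal{I}\cdot\Psi_k(C)=0$, because the image of $R$ in $A$ is central and acts on $\mathbf{R}\Hom_A(\mu_k(P),C)$ through its action on $C$, compatibly with the $\mu_k(A)$-action.

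For stability, let $0\neq D'\subsetneq\Psi_k(C)$ be a submodule, with quotient $D''$, and apply $\Psi_k^{-1}$ (whose cohomology lies in degrees $-1$ and $0$) to $0\to D'\to\Psi_k(C)\to D''\to 0$. Since $\Psi_k^{-1}(\Psi_k(C))=C$ is a module, the cohomology long exact sequence shows that $\Psi_k^{-1}(D')$ is a module $C'$ and yields a four-term exact sequence $0\to E\to C'\xrightarrow{\,g\,}C\to F\to 0$ with $E=H^{-1}\Psi_k^{-1}(D'')$ and $F=H^0\Psi_k^{-1}(D'')$. The key point is that $E$ is supported only at the vertex $k$: for $l\neq k$ one has $E_l=\Hom_A(P_l,E)=H^{-1}\mathbf{R}\Hom_{\mu_k(A)}(\Psi_k(P_l),D'')=0$, since $P_l\in\mathrm{add}(\mu_k(P))$ makes $\Psi_k(P_l)$ a projective $\mu_k(A)$-module concentrated in degree $0$. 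Thus $\dimv E=c\,\alpha_k$ with $c\geq 0$, while $\im g\subseteq C$ is a submodule. Now $\mu_k(\zeta)\cdot\dimv D'=\zeta\cdot\dimv C'=\zeta\cdot\dimv E+\zeta\cdot\dimv(\im g)=c\,\zeta_k+\zeta\cdot\dimv(\im g)\leq 0$, because $\zeta_k<0$ and $\im g$ is a submodule of the $\zeta$-stable module $C$, which has $\zeta\cdot\alpha=0$. Equality would force $c=0$ and $\im g\in\{0,C\}$; but $\im g=0$ gives $C'=E=0$ and $D'=0$, while $\im g=C$ with $c=0$ gives $E=F=0$, hence $\Psi_k^{-1}(D'')=0$ and $D'=\Psi_k(C)$ --- both excluded. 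Therefore $\mu_k(\zeta)\cdot\dimv D'<0$ for every nonzero proper submodule, so $\Psi_k(C)$ is $\mu_k(\zeta)$-stable.

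It remains to address uniqueness and to flag the main difficulty. Since $\mu_k$ preserves the pairing $\mathbf{v}\cdot\zeta$, it carries the wall arrangement to itself, sending $W_\alpha$ to $W_{\mu_k(\alpha)}$, and it sends the positive root $\alpha\neq\alpha_k$ to the positive root $\mu_k(\alpha)$. Thus $\mu_k(\zeta)$ lies on $W_{\mu_k(\alpha)}$ and on no other wall, and uniqueness of a $\mu_k(\zeta)$-stable $\mu_k(A)$-module of dimension vector $\mu_k(\alpha)$ annihilated by $\mathcal{I}$ is precisely Proposition \ref{prop3.9.} for $\mu_k(A)$. The hard part is the stability step, and within it the assertion that $H^{-1}\Psi_k^{-1}(D'')$ is supported at the vertex $k$: this is what converts the non-positivity above into strict negativity once $\zeta_k<0$ is used, and everything else is routine bookkeeping with the long exact sequences and the identity $\mathbf{v}\cdot\zeta=\mu_k(\mathbf{v})\cdot\mu_k(\zeta)$.
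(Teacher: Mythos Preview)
Your proof is correct and follows essentially the same route as the paper's argument (which appears both in the excluded version of this proposition and in the closely related Proposition~\ref{prop3.2}): surjectivity of $C_{k-1}\oplus C_{k+1}\to C_k$ from $\zeta$-stability with $\zeta_k<0$, then applying $\Psi_k^{-1}$ to a short exact sequence of $\mu_k(A)$-modules and using that $H^{-1}_A\Psi_k^{-1}$ of a module is supported at the vertex $k$. You are more careful than the paper in a few places---handling the loop $r_k$ explicitly (though one could also use the relations directly, as in the proof of Proposition~\ref{prop-mutation}, to see $\im f$ is $r_k$-stable without invoking $\mathcal{I}\cdot C=0$), separating out the boundary cases $\im g\in\{0,C\}$ to secure strict inequality, and addressing uniqueness---but the skeleton is the same.
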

\begin{proof}
Since $C$ is $\zeta$-stable and $\zeta_k<0$, the map
\[
C_{k-1}\oplus C_{k+1}\to C_k
\]
is surjective. 
Hence $\Psi_k(C)$ is a $\mu_k(A)$-module.
Suppose we have an exact sequence 
\[
0\to V\to \Psi_k(C)\to V'\to 0
\]
of nontrivial $\mu_k(A)$-modules. 
Let $H^*_{A}(-)$ denote the cohomology with respect to the natural t-structure of $D^b(A\text{-}\mathrm{mod})$.
Note that for a $\mu_k(A)$-module $V$, $H^*_{A}(\Psi^{-1}_k(V))$ is concentrated on the degree $0$ and $-1$ and $(H^{-1}_{A}(\Psi^{-1}_k(V)))_l=0$ for $l\neq k$. 
By the long exact sequence, we have $H^{-1}_{A}(\Psi^{-1}_k(V))=0$ and the following exact sequence
\[
0\to H^{-1}_{A}(\Psi^{-1}_k(V'))\to \Psi^{-1}_k(V) \to C\to H^{0}_{A}(\Psi^{-1}_k(V'))\to 0
\]
of $A$-modules.
Then we have
\begin{align*}
\mu_k(\zeta)\cdot \dimv\,V&=\zeta\cdot\dimv\,\Psi^{-1}_k(V)\\
&<\zeta\cdot \dimv\,H^{-1}_{A}(\Psi^{-1}_k(V'))\\
&\leq 0
\end{align*}
where the first inequality follows from the $\zeta$-stablility of $C$ and the second one follows from the assumption $\zeta_k<0$.
\end{proof}
\end{NB}

In the rest of this section, we fix a parameter $\zeta$ such that $\sum \zeta_l<0$ and such that $\zeta+c\cdot \mathbf{1}$ is not on an intersection of two walls for any $c\in\R$. See Remark \ref{rem-+} for the case $\sum \zeta_l>0$.
Then we get the sequence $H_0,\ldots,H_r$ of chambers such that
\begin{itemize}
\item $\zeta-c\cdot \mathbf{1}\in\cup\,\overline{H_s}$ for any $c\geq 0$,
\item for any $H_s$, there exists some $c\geq 0$ such that $\zeta-c\cdot \mathbf{1}\in {H_s}$, and 
\item suppose $\zeta-c\cdot \mathbf{1}\in {H_s}$, $\zeta-c'\cdot \mathbf{1}\in {H_{s'}}$ and $s<s'$, then $c>c'$.
\end{itemize}
(See Figure \ref{fig:C} for example.)
\begin{figure}[htbp]
  \centering
  \input{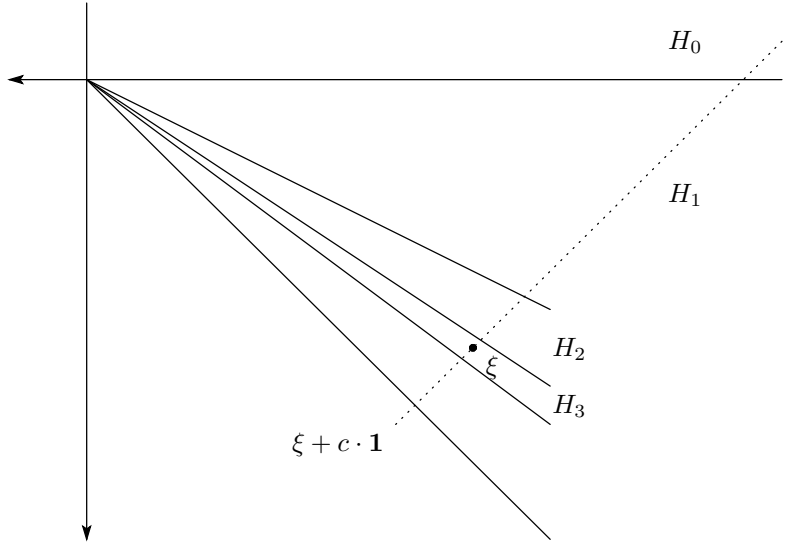}
  \caption{example of $H_s$}
  \label{fig:C}
\end{figure}
The chamber $H_0$ is given by $\{\xi\mid \xi\cdot \alpha_k<0\}$, 
and its boundary is the union of subsets of the hyperplanes
\[
W_\alpha=\{\xi\mid \xi\cdot \alpha_k=0\}.
\]
Take $k_1$ such that $\overline{H_{0}}\cap\overline{H_{1}}$ is in $W_{\alpha_{k_1}}$.
Then, the chamber $H_1$ is given by
\[
\{\xi\mid \xi\cdot \mu_{k_1}(\alpha_k)<0\}
\]
and its boundary is the union of subsets of the hyperplanes
\[
W_{\mu_{k_1}(\alpha_k)}=\{\xi\mid \xi\cdot {\mu_{k_1}(\alpha_k)}=0\}.
\]
In other words, we have $H_1=\mu_{k_1}(H_0)$.
We can repeat to get the sequence $k_1,\ldots, k_r$ such that 
\[
H_s=\mu_{k_s}\circ\cdot\circ\mu_{k_1}(H_0)\quad (1\leq s\leq r).
\]
We denote $\mu_s:=\mu_{k_{s}}\circ\cdots\circ\mu_{k_{1}}$ and $\mu_\zeta:=\mu_r$.
We define $\Psi_s$ as the composition
\[
D^b(A\text{-}\mathrm{mod})
\underset{\Psi_{k_1}}{\overset{\sim}{\longrightarrow}}
D^b(\mu_1(A)\text{-}\mathrm{mod})
\underset{\Psi_{k_2}}{\overset{\sim}{\longrightarrow}}
\cdots
\underset{\Psi_{k_r}}{\overset{\sim}{\longrightarrow}}
D^b(\mu_r(A)\text{-}\mathrm{mod})
\]
and we put $\Psi_\zeta:=\Psi_r$.
\begin{prop}\label{prop3.2}
Let $C$ be a $\theta_{(\mu_{s})^{-1}(\xi)}$-stable $\mu_s(A)$-module for $\xi\in H_{s+1}$.
If $C$ is the simple $\mu_s(A)$-module supported on the vertex $k_{s+1}$, then $\Phi_{k_{s+1}}(C)$ is the simple $\mu_{s+1}(A)$-module supported on the vertex $k_{s+1}$ shifted by one.
Otherwise $\Phi_{k_{s+1}}(C)$ is $\theta_{(\mu_{s+1})^{-1}(\xi)}$-stable $\mu_{s+1}(A)$-module.
\end{prop}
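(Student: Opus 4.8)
The plan is to use the tilting equivalences $\Psi_{k_{s+1}}$ from Proposition \ref{prop-mutation} to transport stability. Recall that $\Psi_{k_{s+1}} = \mathbf{R}\Hom(\mu_{k_{s+1}}(P),-)$ is a derived equivalence, so it identifies $D^b(\mu_s(A)\text{-}\mathrm{mod})$ with $D^b(\mu_{s+1}(A)\text{-}\mathrm{mod})$, and a standard fact about tilting at a single vertex is that the image of any module lies in cohomological degrees $0$ and $-1$, with the degree $-1$ part supported only at the mutated vertex $k_{s+1}$. So first I would record this structural statement: for a $\mu_{s+1}(A)$-module $V$, the object $\Psi_{k_{s+1}}^{-1}(V)$ has $H^i = 0$ for $i \neq 0,-1$ and $(H^{-1})_l = 0$ for $l \neq k_{s+1}$ (the argument is the one sketched in the commented-out proof of the old Proposition: apply the long exact cohomology sequence to the triangle defining $P'_{k_{s+1}}$, together with the vanishing $\mathrm{Ext}^i(P'_{k_{s+1}}, P_l) = 0$ for $i \neq 0$ proved in Proposition \ref{prop-mutation}).

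Next I would dispose of the exceptional case. If $C$ is the simple $\mu_s(A)$-module $S_{k_{s+1}}$ at the vertex $k_{s+1}$, then by the explicit description \eqref{eq_P'} of $P'_{k_{s+1}}$ and the defining triangle $P_{k_{s+1}} \to P_{k_{s+1}-1}\oplus P_{k_{s+1}+1} \to P'_{k_{s+1}} \to [1]$, one computes $\mathbf{R}\Hom(P_l, S_{k_{s+1}}) = 0$ for $l \neq k_{s+1}$ and $\mathbf{R}\Hom(P'_{k_{s+1}}, S_{k_{s+1}}) = \C[1]$; hence $\Psi_{k_{s+1}}(S_{k_{s+1}})$ is the simple $\mu_{s+1}(A)$-module at $k_{s+1}$ placed in cohomological degree $-1$, i.e. shifted by one. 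This is a direct calculation with the Koszul-type projective resolution of Section \ref{counting}.

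For the main case, suppose $C$ is $\theta_{(\mu_s)^{-1}(\xi)}$-stable and not the simple at $k_{s+1}$; I must show $V := \Psi_{k_{s+1}}(C)$ is an honest $\mu_{s+1}(A)$-module and is $\theta_{(\mu_{s+1})^{-1}(\xi)}$-stable. First, stability of $C$ together with the sign of the $k_{s+1}$-coordinate of $(\mu_s)^{-1}(\xi)$ (which is determined by the passage from chamber $H_s$ to $H_{s+1}$ across $W_{\mu_s(\alpha_{k_{s+1}})}$) forces the structure map $C_{k_{s+1}-1}\oplus C_{k_{s+1}+1} \to C_{k_{s+1}}$ to be surjective, so $\Psi_{k_{s+1}}(C)$ is concentrated in degree $0$, i.e. a genuine module $V$ with $\dimv V = \mu_{k_{s+1}}(\dimv C)$. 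For stability: given a nonzero proper submodule $0 \to V' \to V \to V'' \to 0$ in $\mu_{s+1}(A)\text{-}\mathrm{mod}$, apply $\Psi_{k_{s+1}}^{-1}$ and take the long exact sequence in $A$-cohomology. Using the structural statement, $H^{-1}(\Psi^{-1}(V')) = 0$, and one gets an exact sequence of $\mu_s(A)$-modules
\[
0 \to H^{-1}(\Psi_{k_{s+1}}^{-1}(V'')) \to \Psi_{k_{s+1}}^{-1}(V') \to C \to H^{0}(\Psi_{k_{s+1}}^{-1}(V'')) \to 0.
\]
Now using $\mathbf{v}\cdot\zeta = \mu_k(\mathbf{v})\cdot\mu_k(\zeta)$ and chasing the slopes through this four-term sequence — comparing $\theta_{(\mu_{s+1})^{-1}(\xi)}(V')$ with $\theta_{(\mu_s)^{-1}(\xi)}$ applied to the sub/quotient pieces, and invoking the $\theta_{(\mu_s)^{-1}(\xi)}$-stability of $C$ together with the sign condition on the $k_{s+1}$-coordinate controlling the sign of $\theta$ on modules supported at $k_{s+1}$ — yields the required strict inequality $\theta_{(\mu_{s+1})^{-1}(\xi)}(V') < \theta_{(\mu_{s+1})^{-1}(\xi)}(V)$.

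The main obstacle I expect is the slope bookkeeping in this last step: one has to handle carefully the two summands $H^{-1}(\Psi^{-1}(V''))$ (supported at $k_{s+1}$ only) and $H^0(\Psi^{-1}(V''))$, and argue that the contribution of the degree-$(-1)$ part pushes the inequality in the right direction precisely because $\xi$ lies in $H_{s+1}$ rather than $H_s$. This is essentially the same delicate sign analysis that appears in the commented-out proposition in the excerpt; making it airtight — in particular ensuring no boundary/equality case sneaks in, which is where genericity of $\zeta$ (no intersection of two walls) gets used — is the crux. Everything else is formal homological algebra plus the already-established equivalence $\mu_{s+1}(A) \cong \End(\mu_{k_{s+1}}(P))^{\mathrm{op}}$.
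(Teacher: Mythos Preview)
Your approach is essentially the same as the paper's: reduce to a single mutation, use surjectivity of $C_{k-1}\oplus C_{k+1}\to C_k$ (forced by stability and the sign of $\xi_k$) to see that $\Psi_k(C)$ is a module, then for a submodule $V'\subset\Psi_k(C)$ apply $\Psi_k^{-1}$, use the structural fact that $H^{-1}$ is supported at the mutated vertex to obtain the four-term exact sequence, and conclude by comparing with the stability of $C$.

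One small simplification in the paper's proof, which dissolves the ``slope bookkeeping obstacle'' you flag at the end: rather than work with the slope $\theta_\xi$ directly, first replace $\xi$ by $\xi+c\cdot\mathbf{1}$ so that $\xi\cdot\underline{\dim}(C)=0$ (this does not change the stability condition). Then $\xi$ may leave $H_{s+1}$, but one still has $\xi_{k_{s+1}}<0$, and the inequality to check becomes simply $\mu_{k_{s+1}}(\xi)\cdot\underline{\dim}(V')<0$, i.e.\ a linear condition rather than a ratio. The four-term sequence then gives this immediately: $\xi\cdot\underline{\dim}\,H^{-1}_A(\Psi^{-1}(V''))\le 0$ since $H^{-1}$ is supported at $k_{s+1}$ and $\xi_{k_{s+1}}<0$, and strict stability of $C$ gives $\xi\cdot\bigl(\underline{\dim}\,\Psi^{-1}(V')-\underline{\dim}\,H^{-1}_A(\Psi^{-1}(V''))\bigr)<0$; combining these with $\mu_k(\xi)\cdot\underline{\dim}(V')=\xi\cdot\underline{\dim}\,\Psi^{-1}(V')$ finishes it. No further genericity is needed at this step.
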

\begin{proof}
The claim in the second sentence is trivial.
For the third sentence, it is enough to show in the case $s=0$.

Since $\theta_{\xi}$-stability is equivalent to $\theta_{\xi'}$-stability ($\xi'=\xi+c\cdot \mathbf{1}$) for any $c$, we may assume $\underline{\dim}(C)\cdot\xi=0$. Then $\xi$ in not in $H_1$ any more but we have $\xi_{k_1}<0$.

Since $C$ is $\theta_{\xi}$-stable for $\xi_{k_1}<0$, the map
\[
C_{k_1-1}\oplus C_{k_1+1}\to C_{k_1}
\]
is surjective. 
Hence $\Psi_{k_1}(C)$ is a $\mu_{k_1}(A)$-module.
Suppose we have an exact sequence 
\[
0\to V\to \Psi_{k_1}(C)\to V'\to 0
\]
of nontrivial $\mu_{k_1}(A)$-modules. 
Let $H^*_{A}(-)$ denote the cohomology with respect to the natural t-structure of $D^b(A\text{-}\mathrm{mod})$.
Note that for a $\mu_{k_1}(A)$-module $V$, $H^*_{A}(\Psi^{-1}_{k_1}(V))$ is concentrated on the degree $0$ and $-1$ and $(H^{-1}_{A}(\Psi^{-1}_{k_1}(V)))_l=0$ for $l\neq k_1$. 
In particular, we have 
\[
\xi\cdot \underline{\dim}H^{-1}_{A}(\Psi^{-1}_{k_1}(V))\leq 0.
\] 
By the long exact sequence, we have $H^{-1}_{A}(\Psi^{-1}_{k_1}(V))=0$ and the following exact sequence
\[
0\to H^{-1}_{A}(\Psi^{-1}_{k_1}(V'))\to \Psi^{-1}_{k_1}(V) \to C\to H^{0}_{A}(\Psi^{-1}_{k_1}(V'))\to 0
\]
of $A$-modules.
Since $C$ is $\theta_\xi$-stable and $\xi\cdot\underline{\dim}(C)=0$, we have 
\[
\xi\cdot\Bigl(\underline{\dim}\,\Psi^{-1}_{k_1}(V)-\underline{\dim}\,H^{-1}_{A}(\Psi^{-1}_{k_1}(V'))\Bigr)<0.
\] 
Then we have
\[
\mu_{k_1}(\xi)\cdot \dimv\,V=\xi\cdot\dimv\,\Psi^{-1}_{k_1}(V)<\xi\cdot \dimv\,H^{-1}_{A}(\Psi^{-1}_{k_1}(V'))\leq 0.
\]
Hence $\Psi_{k_1}(C)$ is $\mu_{k_1}(\xi)$-stable.
\end{proof}
\begin{NB}
\begin{prop}\label{cor-mutation-stable}
\begin{enumerate}
\item[(1)] Let $C$ be a finite dimensional $\theta_\zeta$-stable $A$-module.
Suppose $\dimv(C)\cdot\zeta <0$, then $\Psi_\zeta(C)\in(\mu_\zeta(A))\text{-}\mathrm{mod}$.
Suppose $\dimv(C)\cdot\zeta >0$, then $\Psi_\zeta(C)\in(\mu_\zeta(A))\text{-}\mathrm{mod}[1]$.
\item[(2)] Let $C'$ be a finite dimensional $\theta_{\mu_{\zeta}(\zeta)}$-stable $\mu_\zeta(A)$-module.
Take $d\in\R$ such that $(\mu_{\zeta}(\zeta)+d\cdot \mathbf{1})\in \mu_\zeta(C^+)$ where $C^+:=\{\zeta\mid \zeta_l>0\}$.
Suppose $\dimv(C')\cdot(\mu_{\zeta}(\zeta)+d\cdot \mathbf{1}) >0$, then $\Psi^{-1}_\zeta(C')\in A\text{-}\mathrm{mod}$.
Suppose $\dimv(C')\cdot(\mu_{\zeta}(\zeta)+d\cdot \mathbf{1}) <0$, then $\Psi^{-1}_\zeta(C')\in A\text{-}\mathrm{mod}[-1]$.
\end{enumerate}
\end{prop}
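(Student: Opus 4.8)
\emph{Strategy.} The plan is to prove (1) by induction on the length $r$ of the mutation sequence $(k_1,\dots,k_r)$ attached to $\zeta$, peeling off one mutation at a time via Proposition~\ref{prop3.2}; part (2) will then follow from (1) applied to the inverse equivalence. Throughout I assume $\zeta$ generic. If $r=0$ then $\zeta$ lies in the interior of $H_0=\{\xi\mid\xi\cdot\alpha_k<0\ \forall k\}$, so $\dimv(C)\cdot\zeta<0$ for every nonzero finite dimensional $A$-module $C$, while $\Psi_\zeta=\mathrm{id}$ and $\mu_\zeta(A)=A$, and the assertion is immediate. For $r\ge 1$ I would set $A_1:=\mu_{k_1}(A)$, $\zeta_1:=\mu_{k_1}(\zeta)$ and use the analysis of \S\ref{subsec-weyl}: the datum attached to $(A_1,\zeta_1)$ has mutation sequence $(k_2,\dots,k_r)$, and one has $\mu_{\zeta_1}(A_1)=\mu_\zeta(A)$, $\Psi_{\zeta_1}\circ\Psi_{k_1}=\Psi_\zeta$, and $\sum_l(\zeta_1)_l=\sum_l\zeta_l<0$ because $\mu_{k_1}$ preserves the pairing with $\delta=\mathbf{1}$.

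\emph{One mutation step and the shift.} Applying Proposition~\ref{prop3.2} with $s=0$ leaves two cases. If $C$ is not the simple $A$-module at $k_1$, then $\Psi_{k_1}(C)$ is a genuine $\theta_{\zeta_1}$-stable $A_1$-module; since $\Psi_{k_1}$ induces the reflection $\mu_{k_1}$ on $K$-theory, $\dimv(\Psi_{k_1}(C))=\mu_{k_1}(\dimv C)$, hence $\dimv(\Psi_{k_1}(C))\cdot\zeta_1=\dimv(C)\cdot\zeta$, and the inductive hypothesis for $(A_1,\zeta_1)$ finishes this case with the sign condition preserved. If instead $C$ is the simple at $k_1$, then $\dimv(C)=\alpha_{k_1}$ and $\dimv(C)\cdot\zeta=\zeta_{k_1}$; because the line $\{\zeta-c\mathbf{1}\}$ meets $W_{\alpha_{k_1}}=\{\xi_{k_1}=0\}$ exactly once and $H_1,\dots,H_r$ all lie on the side $\xi_{k_1}>0$, we get $\zeta_{k_1}>0$, while $\Psi_{k_1}(C)$ is the simple $A_1$-module at $k_1$ placed in the single shifted degree of Proposition~\ref{prop3.2}, a module whose dimension vector pairs to $(\zeta_1)_{k_1}=-\zeta_{k_1}<0$ with $\zeta_1$, so the inductive hypothesis applies to it. In both cases $\Psi_\zeta(C)$ ends up concentrated in a single cohomological degree, and that this degree is the shifted one precisely when $\dimv(C)\cdot\zeta>0$ is pinned down at the end by the identity $\sum_i(-1)^i\dimv H^i(\Psi_\zeta(C))=\mu_\zeta(\dimv C)$ together with the positivity of the total dimension of a nonzero module.

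\emph{Part (2), and the main obstacle.} For (2), I would invoke the variant of (1) for parameters with positive $\delta$-pairing (Remark~\ref{rem-+}), applied to $B:=\mu_\zeta(A)$, the $B$-module $C'$ and the parameter $\mu_\zeta(\zeta)+d\cdot\mathbf{1}\in\mu_\zeta(C^+)$, which satisfies $\sum_l(\mu_\zeta(\zeta)+d\cdot\mathbf{1})_l=\sum_l\zeta_l+dN>0$: the mutation equivalence attached to this datum is exactly $\Psi_\zeta^{-1}$, and invariance of the pairing turns the two sign cases of that variant into the two cases of (2). The step I expect to be the real obstacle is the inductive bookkeeping in (1): one must check at every stage that the object produced by the previous mutations is still an \emph{honest} stable module over the current mutated algebra, stable for a parameter lying in the chamber demanded by Proposition~\ref{prop3.2} — i.e.\ that it does not split off a shifted-simple summand prematurely — for only then does the $K$-theoretic computation correctly assign the accumulated cohomological shift; this is exactly where stability and the genericity of $\zeta$ are indispensable.
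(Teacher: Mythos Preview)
Your strategy is the paper's: both iterate Proposition~\ref{prop3.2}. The paper packages part~(1) as Corollary~\ref{cor3.3}, whose brief argument first observes that $\zeta\cdot\dimv(C)>0$ happens exactly when $\dimv(C)$ equals one of the wall roots $\alpha^1,\dots,\alpha^r$ separating $H_0$ from $H_r$, and then applies Proposition~\ref{prop3.2} successively for $s=0,1,\dots,r-1$; at exactly one step the running module becomes the simple at $k_{s+1}$ and picks up the shift, otherwise it remains an honest stable module.

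The only genuine difference is organizational, and your version carries an extra burden. The paper keeps the chamber sequence $H_0,\dots,H_r$ fixed and steps through $s$; since $\Psi_\zeta$ is \emph{defined} as $\Psi_{k_r}\circ\cdots\circ\Psi_{k_1}$, this tautologically computes the right functor. You instead pass to $(A_1,\zeta_1)=(\mu_{k_1}(A),\mu_{k_1}(\zeta))$ and assert that its mutation sequence is $(k_2,\dots,k_r)$ and that $\Psi_{\zeta_1}\circ\Psi_{k_1}=\Psi_\zeta$. But the parameter-side reflection $\mu_{k_1}$ does not fix $\mathbf{1}$ (for instance $\mu_{k_1}(\mathbf{1})_{k_1}=-1$), so the ray $\{\zeta_1-c\mathbf{1}\}$ is not the $\mu_{k_1}$-image of $\{\zeta-c\mathbf{1}\}$; it crosses the correct \emph{set} of $r-1$ walls, but a priori in a possibly different order, and $\Psi_{\zeta_1}$---built from $\zeta_1$'s own sequence---need not equal $\Psi_{k_r}\circ\cdots\circ\Psi_{k_2}$ without an additional path-independence argument. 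Reorganizing your induction along the paper's lines removes this issue entirely; the ``main obstacle'' you flag is then precisely what Proposition~\ref{prop3.2} (for general~$s$) is designed to handle, and part~(2) follows symmetrically via Remark~\ref{rem-+}, as you say.
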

\begin{proof}
If 
\end{proof}
\end{NB}
\begin{cor}\label{cor3.3}
Let $C$ be a $\theta_\zeta$-stable $A$-module.
If $\zeta\cdot \underline{\dim}(C)<0$, then $\Phi_\zeta(C)\in \amod$.
If $\zeta\cdot \underline{\dim}(C)>0$, then $\Phi_\zeta(C)\in \amod[1]$.
\end{cor}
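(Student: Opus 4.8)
The plan is to prove the statement by induction on $s$, following the image $\Psi_s(C)$ of $C$ under the partial composite $\Psi_s=\Psi_{k_s}\circ\cdots\circ\Psi_{k_1}$ and applying Proposition \ref{prop3.2} at every step. The inductive assertion I would carry is that $\Psi_s(C)$ has the form $M_s$ or $M_s[1]$, where $M_s$ is a $\mu_s(A)$-module which is $\theta_\eta$-stable for \emph{every} $\eta$ in the cyclic chamber $H_0=\mu_s^{-1}(H_s)=\{\eta\in\R^{\hI}\mid\eta_l<0\ \forall l\}$. For $s=0$ this is simply the hypothesis: since $\theta$-stability of a module of fixed dimension vector does not vary within a chamber and is unchanged by translating the parameter by a multiple of $\mathbf 1$ (the Remark after the definition of $\theta_{\tz}$), and the ray $\zeta-c\cdot\mathbf 1$ $(c\ge 0)$ meets $H_0$, the $\theta_\zeta$-stable module $C$ is $\theta_\eta$-stable for all $\eta\in H_0$.

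For the inductive step, the first point to verify is that the hypothesis of Proposition \ref{prop3.2} is available at stage $s$: it asks that $M_s$ be $\theta$-stable for a parameter in $\mu_s^{-1}(H_{s+1})$, and the wall separating this chamber from $\mu_s^{-1}(H_s)=H_0$ is the \emph{simple} wall $W_{\alpha_{k_{s+1}}}$ (because the wall between $H_s$ and $H_{s+1}$ is $\mu_s(W_{\alpha_{k_{s+1}}})$, as $H_{s+1}=\mu_{k_{s+1}}(H_s)$, and one applies $\mu_s^{-1}$). Crossing $W_{\alpha_{k_{s+1}}}$ can destabilize $M_s$ only through a sub- or quotient-module that is a multiple of the simple $S_{k_{s+1}}$; but testing the slope inequality at the points of $H_0$ where the $k_{s+1}$-th coordinate is least, respectively most, negative shows that a module stable throughout $H_0$ admits no such submodule and no such quotient unless it \emph{equals} $S_{k_{s+1}}$. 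Hence: if $M_s\not\cong S_{k_{s+1}}$, Proposition \ref{prop3.2} yields that $\Psi_{k_{s+1}}(M_s)$ is again a module, stable for parameters in $\mu_{s+1}^{-1}(H_{s+1})=H_0$, so the shift is unchanged; if $M_s\cong S_{k_{s+1}}$, then $\Psi_{k_{s+1}}(M_s)$ is the simple $\mu_{s+1}(A)$-module at $k_{s+1}$ placed in cohomological degree $-1$, so the shift increases by one, and that simple is again stable throughout the cyclic chamber. In either case the inductive assertion is restored.

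It remains to control how often the shift jumps and to tie this to the sign of $\zeta\cdot\underline{\dim}(C)$; this is the one genuinely delicate point, and the main obstacle. The relevant numerical invariant is the pairing of the (pulled-back) stability parameter against the dimension vector; by the identity $\mathbf v\cdot\xi=\mu_k(\mathbf v)\cdot\mu_k(\xi)$ of \S\ref{subsec-weyl} this pairing is preserved by each mutation except for a factor $-1$ whenever the shift jumps, and it starts at $\zeta\cdot\underline{\dim}(C)$. A jump at stage $s+1$ means $M_s\cong S_{k_{s+1}}$, and then the pairing equals $\zeta\cdot\beta_{s+1}$, where $\beta_{s+1}$ is the positive root whose hyperplane separates $H_s$ from $H_{s+1}$; since the ray $\zeta-c\cdot\mathbf 1$ meets $W_{\beta_{s+1}}$ exactly once and $\zeta$ lies on the side opposite to $H_0$ (where every positive root pairs negatively), one has $\zeta\cdot\beta_{s+1}>0$. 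Consequently, if $\zeta\cdot\underline{\dim}(C)<0$ no jump ever occurs and $\Psi_\zeta(C)\in\amod$; if $\zeta\cdot\underline{\dim}(C)>0$, at least one jump must occur --- otherwise $\Psi_\zeta(C)$ would be an unshifted module stable throughout $H_0=\mu_\zeta^{-1}(H_r)$, forcing the pairing to be negative there, a contradiction --- and no \emph{second} jump can occur: after the first jump the module part is again a simple, the next mutation vertex differs from the one just used (else two of the chambers $H_t$ would coincide, contradicting their distinctness along the ray), and the running pairing has become $-\zeta\cdot\underline{\dim}(C)<0$, which by $\zeta\cdot\beta_{t+1}>0$ rules out ever returning to the simple-at-$k_{t+1}$ case. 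Hence the shift is exactly one and $\Psi_\zeta(C)\in\amod[1]$. (Should $\zeta$ happen to lie on a wall --- a case excluded by the genericity used above --- one first deforms $\zeta$ slightly along $\zeta+\R\mathbf 1$, which does not affect $\theta_\zeta$-stability of $C$.)
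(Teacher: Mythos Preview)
Your overall strategy matches the paper's: iterate Proposition~\ref{prop3.2} along the chain $\Psi_{k_1},\ldots,\Psi_{k_r}$ and track whether a shift occurs. The paper's proof, however, rests on a much cleaner bookkeeping device than your chamber-wide stability hypothesis. It simply observes that the walls crossed by the ray $\zeta-c\cdot\mathbf 1$ are $W_{\alpha^1},\ldots,W_{\alpha^r}$ with $\alpha^s=\mu_{s-1}^{\dim}(\alpha_{k_s})$, and hence $\underline{\dim}(C)=\alpha^s$ for some $s$ \emph{if and only if} $\zeta\cdot\underline{\dim}(C)>0$. Since $\underline{\dim}\,\Psi_s(C)=\mu_s^{\dim}(\underline{\dim}\,C)$, the module $M_{s-1}$ equals the simple $S_{k_s}$ precisely when $\underline{\dim}(C)=\alpha^s$; Proposition~\ref{prop3.2} then forces exactly one shift in that case and none otherwise (consecutive $k_s$'s are distinct, so a second shift is impossible). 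This avoids any claim about stability ``throughout $H_0$''.

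Your inductive hypothesis has a real gap. You assert that $C$ is $\theta_\eta$-stable for \emph{every} $\eta\in H_0$ because stability is $\mathbf 1$-translation invariant and ``does not vary within a chamber''. Translation invariance only gives stability along the single line $\zeta+\R\mathbf 1$, and the chambers $H_s$ are cut out by the hyperplanes $W_\alpha$ for positive roots $\alpha$ --- these are the walls for the \emph{framed} stability of $\tilde A$-modules, not for unframed $A$-module stability. The unframed wall for a submodule $S\subset C$ is $\{\eta:\eta\cdot(|\alpha|\,\underline{\dim}S-|S|\,\alpha)=0\}$ with $\alpha=\underline{\dim}\,C$, which is generally not one of the $W_\gamma$'s; so $A$-module stability can genuinely change inside $H_0$. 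The same conflation reappears in your wall-crossing step from $H_0$ to $\mu_s^{-1}(H_{s+1})$. The paper's dimension-vector criterion sidesteps this issue entirely: once you know whether $\underline{\dim}(C)$ is one of the $\alpha^s$, the behaviour under each $\Psi_{k_s}$ is dictated by whether the running dimension vector equals $\alpha_{k_s}$, and no chamber-wide stability statement is needed.
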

\begin{proof}
Note that there exists $1\leq s\leq r$ such that $\alpha^s=\underline{\dim}(C)$ if and only if $\zeta\cdot \underline{\dim}(C)>0$.
By Proposition \ref{prop3.2}, we can verify that if there exists $1\leq s\leq r$ such that $\alpha^s=\underline{\dim}(C)$ then $\Phi_\zeta(C)\in \amod[1]$ and otherwise $\Phi_\zeta(C)\in \amod$.
\end{proof}
\begin{rem}
A more general statement is given in \cite[Lemma 2.4]{cluster-via-DT}.
\end{rem}

\subsection{the tilting generator is a vector bundle}\label{subsec-isolated}
We identify $D^b(\coh)$ and $D^b(\mr{mod}A)$ by the equivalence constructed in \S \ref{derived-equiv} (here we use the notation without the subscripts $\sigma$ as we mentioned at the beginning of \S \ref{counting}). 
In this subsection, we assume that $N\cdot\zeta_k<\sum \zeta_l$ for any $k\neq 0$\footnote{This condition determines a chamber on the hyperplane $\{\zeta'\mid \sum \zeta'_i=0\}$ in which the line $\zeta+c\cdot \mathbf{1}$ intersect with the hyperplane. A chamber in this hyperplane determines a crepant resolution. If we take another chamber, Proposition \ref{prop-lb} holds for the corresponding crepant resolution.}.

$\iota_k\colon \tilde{I}\to \tilde{I}$ be the permutation of $k\pm 1/2$.
We put $\iota_s=\iota_{k_1}\circ\cdots\circ \iota_{k_s}$ and $\iota_\zeta=\iota_r$.
\begin{prop}\label{prop-lb}
Let $P_{\zeta,k}$ be the projective $\mu_\zeta(A)$-module.
Then $\Psi_\zeta^{-1}(P_{\zeta,k})$ is a line bundle $\mathcal{L}^\zeta_k$ on $\mca{Y}$.
Moreover, it is the following map that is associated with the arrow $H^\pm_i$:
\[
\mathcal{L}^\zeta_{i\mp\frac{1}{2}}\hookrightarrow \mathcal{L}^\zeta_{i\mp\frac{1}{2}}\otimes \OO_Y(E_{\iota_\zeta(i)})\simeq\mathcal{L}^\zeta_{i\pm\frac{1}{2}}.
\]
\end{prop}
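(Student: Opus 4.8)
The plan is to unwind the sequence of mutations $\mu_{k_1},\dots,\mu_{k_r}$ one vertex at a time, tracking the images of the indecomposable projectives under the equivalences $\Psi_{k_1},\dots,\Psi_{k_r}$ inside $D^b(\coh)\simeq D^b(\amod)$ identified in \S\ref{derived-equiv}. The base of the induction is \S\ref{specific-parameter}: with the normalisation $L_0\simeq\OO_\mca{Y}$, Theorem \ref{generator} together with the description of the tautological sections preceding it says that the projective $P_k=Ae_k$ corresponds to the line bundle $L_k$, and that the arrow $h^\pm_i$ corresponds to the tautological section of $\Hom_\mca{Y}(L_{i\mp1/2},L_{i\pm1/2})$, whose divisor is $E^\pm_i$ in the notation of Definition \ref{def1}. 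This is exactly the asserted conclusion in the case $\iota_\zeta=\mathrm{id}$, i.e.\ when no mutation is needed, so it remains to handle the inductive step.

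For the general case set $\mca{T}_0:=P$ and, inductively, let $\mca{T}_{s+1}$ be the tilting generator obtained from $\mca{T}_s$ by the cokernel construction \eqref{eq_P'} at the vertex $k_{s+1}$ (with the summands of $\mca{T}_s$ in the role of the $P_l$); this makes sense since $\mu_s(A)=A_{\mu_s(\tau)}$ is again a quiver with superpotential of the form treated in \S\ref{as_moduli}, so Proposition \ref{prop-mutation} applies at each step and yields $\mu_s(A)\overset{\sim}{\longrightarrow}\End_A(\mca{T}_s)^{\mr{op}}$. Since $\Psi_{k_{s+1}}=\mb{R}\Hom(\mca{T}_{s+1},-)$ carries the indecomposable summand of $\mca{T}_{s+1}$ at a vertex $l$ to the indecomposable projective $\mu_{s+1}(A)$-module at $l$, the functor $\Psi_s^{-1}$ sends the projective $\mu_s(A)$-module at $k$ to the $k$-th summand $\mca{T}_{s,k}$ of $\mca{T}_s$; in particular $\Psi_\zeta^{-1}(P_{\zeta,k})\simeq\mca{T}_{r,k}$, so $\mca{L}^\zeta_k=\mca{T}_{r,k}$. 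Thus the statement reduces to proving, by induction on $s$, that every $\mca{T}_{s,l}$ is a line bundle on $\mca{Y}$, and to identifying it together with the structure maps.

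The inductive step is the heart of the argument. Put $k=k_{s+1}$. By the way the chambers $H_0,\dots,H_r$ are ordered, $k$ is chosen so that $\mu_s(\zeta)$ has negative $k$-th coordinate; by Proposition \ref{prop3.2} this forces $\Psi_k$ to send the simple $\mu_s(A)$-module at $k$ to the simple $\mu_{s+1}(A)$-module at $k$ shifted by one, which is equivalent to the exchange triangle degenerating to a short exact sequence of coherent sheaves
\[
0\to\mca{T}_{s,k}\to\mca{T}_{s,k-1}\oplus\mca{T}_{s,k+1}\to\mca{T}_{s+1,k}\to0
\]
with $\mca{T}_{s+1,k}$ locally free of rank one. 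To verify this concretely, and to determine which line bundle occurs, I would restrict to the affine toric charts $U_j\cong\C^3$ and compute there the two monomial maps $\mca{T}_{s,k}\to\mca{T}_{s,k\pm1}$, exactly as in the proof of Proposition \ref{smallseq}: on each chart the cokernel is free of rank one, and a comparison of support functions shows that its divisor differs from that of $\mca{T}_{s,k-1}$ by a shift across the edge $l_k$. The effect of the mutation at $k$ is to interchange the two triangles flanking the curve $\mca{C}_k$ — precisely what the transposition $\iota_k$ records and what $\mu_k$ does to the data $\sigma$ — so that $\mca{T}_{s+1,l}=\mca{T}_{s,l}$ for $l\neq k$ while $\mca{T}_{s+1,k}\simeq\mca{T}_{s,k-1}\otimes\OO_\mca{Y}(E^\pm_{\iota_{s+1}(i)})$ for the relevant $i$, the connecting arrows again being tautological sections. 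Telescoping over $s=0,\dots,r$ and invoking Lemma \ref{lem2} for the loop relation $\sum_{i\in\tilde{I}}E^+_{\iota_\zeta(i)}\simeq\OO_\mca{Y}$ then gives $\mca{L}^\zeta_{i+1/2}\simeq\mca{L}^\zeta_{i-1/2}\otimes\OO_\mca{Y}(E^+_{\iota_\zeta(i)})$, and the explicit isomorphism $\mu_\zeta(A)\overset{\sim}{\longrightarrow}\End_\mca{Y}(\mca{T}_r)^{\mr{op}}$ constructed in the proof of Proposition \ref{prop-mutation}(2) identifies the tautological section with the arrow $H^\pm_i$, which is the stated formula.

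The main obstacle is this inductive step, and within it two points: ensuring that the exchange triangle degenerates in the correct direction — producing a module rather than a shift of one, which is where the ordering of the chambers $H_s$ and Proposition \ref{prop3.2} are indispensable — and organising the local toric computation so that the divisor appearing is precisely $E^\pm_{\iota_{s+1}(\cdot)}$ at each stage. The remaining combinatorial bookkeeping, and the passage from the objects to the structure maps, are routine.
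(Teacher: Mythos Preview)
Your overall strategy matches the paper's: induct on the length $r$ of the mutation sequence, identify the new summand at the inductive step as a line bundle via an explicit toric short exact sequence (by the chart-by-chart computation of Proposition~\ref{smallseq}), and read off the maps on arrows from the explicit description in the proof of Proposition~\ref{prop-mutation}(2). The paper's proof is in fact very terse: it simply records that the sequence
\[
0\to \OO\to \OO(E^+_i)\oplus \OO(E^-_j)\to \OO(E^+_i+E^-_j)\to 0
\]
is exact whenever $i<j$, and that after twisting by $\mca{T}_{s-1,k_s}$ this identifies $(\mu_s(P))_{k_s}$ with $P_{k_s}\otimes\OO(E^+_{\iota_s(k_s-1/2)}+E^-_{\iota_s(k_s+1/2)})$.

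The one point where you diverge is your appeal to Proposition~\ref{prop3.2} to force ``the exchange triangle to degenerate in the correct direction.'' This is unnecessary and slightly misapplied: Proposition~\ref{prop3.2} concerns the behaviour of \emph{stable} modules (in particular simples) under $\Psi_k$, not of the projectives, and the equivalence you assert between the shift of the simple and the degeneration of the triangle is not immediate. In the paper's argument no stability input is needed at this point --- the cokernel in \eqref{eq_P'} is a module by definition, and what must be checked is that it is a \emph{line bundle}, which is exactly the content of the exactness of the displayed sequence for $i<j$. The ordering condition $i<j$ is what the chamber structure actually guarantees (in the commented-out version of the paper this is made explicit via the sign of $(\mu_{s-1}(\zeta))_{k_s}$), so your instinct that the chamber ordering matters is correct, but the clean route is combinatorial/toric rather than through Proposition~\ref{prop3.2}. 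Apart from this detour (and a small sign slip: one has $(\mu_{s-1}(\zeta))_{k_s}>0$, not negative), your proof and the paper's coincide.
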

\begin{proof}
We prove the claim by induction with respect to $r$.

By the argument in the proof of Proposition \ref{smallseq}, we can verify that the sequence
\[
0\to \OO\to \OO(E^+_i)\oplus \OO(E^-_j)\to \OO(E^+_i+E^-_j)\to 0
\]
is exact if $i<j$.
Thus we have
\[
(\mu_s(P))_{k_s}\simeq {P}_{k_s}\otimes \OO(E^+_{\iota_s(k_s-1/2)}+E^-_{\iota_s(k_s+1/2)}).
\]
The second half of the claim can be verified by the explicit description of the endomorphism algebra in the proof of Proposition \ref{prop-mutation}.
\end{proof}

\begin{NB}
In this subsection, we assume that $N\cdot\zeta_k<\sum \zeta_l$ for any $k\neq 0$.
Let $\nu_i$ ($i\in\tilde{I}$) be real numbers such that
\[
\nu_{k+\frac{1}{2}}-\nu_{k-\frac{1}{2}}
=
N\cdot\zeta_k-\sum \zeta_l
\]
for any $k\in\hat{I}$.
We denote by $\pi_k\colon \R^{\tilde{I}}\to\R^{\tilde{I}}$ the permutation of the $(k-1/2)$-th and $(k-1/2)$-th elements and set $\pi_s:=\pi_{k_s}\circ\cdots\circ\pi_{k_1}$.
Note that 
\[
(\pi_s(\nu))_{k+\frac{1}{2}}-(\pi_s(\nu))_{k-\frac{1}{2}}
=
N\cdot(\mu_s(\zeta))_k-\sum \zeta_l.
\]
By the definition of the sequence $(k_s)$, we have $(\mu_{s-1}(\zeta))_{k_s}>0$.
Recall that we assume $\sum\zeta_l<0$. Thus we have
\begin{equation}\label{eq->}
(\pi_{s-1}(\nu))_{k_s+\frac{1}{2}}<(\pi_{s-1}(\nu))_{k_s-\frac{1}{2}}.
\end{equation}
Let $\iota_s\colon \tilde{I}\to \tilde{I}$ be the permutation such that the sequence $((\pi_{s}(\nu))_{\iota_s(i)})_{i\in\tilde{I}}$ is increasing and set $\iota:=\iota_r$.

\begin{prop}\label{prop-lb}
Under the derived equivalence given in \S \ref{derived-equiv}, the direct summand $(\mu_\zeta(P))_k$ is a line bundle $\mathcal{L}^\zeta_k$ on $Y$. 
Moreover, it is the following map that is associated with the arrow $H^\pm_i$:
\[
\mathcal{L}^\zeta_{i\mp\frac{1}{2}}\hookrightarrow \mathcal{L}^\zeta_{i\mp\frac{1}{2}}\otimes \OO_Y(E_{\iota(i)})\simeq\mathcal{L}^\zeta_{i\pm\frac{1}{2}}.
\]
\end{prop}
\begin{proof}
We prove the claim by induction.

By the argument in the proof of Proposition \ref{smallseq}, we can verify that the sequence
\[
0\to \OO\to \OO(E^+_i)\oplus \OO(E^-_j)\to \OO(E^+_i+E^-_j)\to 0
\]
is exact if $i<j$.
Thus we have
\[
(\mu_s(P))_{k_s}\simeq {P}_{k_s}\otimes \OO(E^+_{\iota_s(k_s-1/2)}+E^-_{\iota_s(k_s+1/2)}).
\]
The second half of the claim can be verified by the explicit description of the endomorphism algebra in the proof of Proposition \ref{prop-mutation}.
\end{proof}

\begin{cor}\label{cor-vanishing}
\[
H^{1}(Y,(\mathcal{L}^\zeta_k)^{-1})=0.
\]
\end{cor}
\begin{proof}
By the argument in the proof of Proposition \ref{prop-lb}, $\mathcal{L}^\zeta_k$ is associated with the divisor which is described as a sum of divisors of the form $E^+_i+E^-_j$ ($i<j$).
We can verify the claim using the description \eqref{eq-h1} of $H^1$ of a line bundle on $Y$.
\end{proof}

\begin{prop}
The set of the torus fixed points $\mathfrak{M}_\zeta(\mathbf{v})^T$ is isolated.
\end{prop}
\begin{proof}
By Corollary \ref{cor-vanishing} we have $\OO_Y\in \mathcal{P}_\zeta$. 
Let $P_\zeta$ be the $\mu_k(A)$-module corresponding to $\OO_Y$. 
By Proposition \ref{prop-4.3} and Proposition \ref{prop-4.6}, the moduli space $\mathfrak{M}_\zeta(\mathbf{v})$ parametrizes finite dimensional quotient $\mu_k(A)$-modules $V'$ of $P_\zeta$ with $\dimv\,V'=\mu_k(\mathbf{v})$.
Note that
\[
(P_\zeta)_k=H^0(Y,(\mathcal{L}^\zeta_k)^{-1})
\]
and the $T$-weight decomposition of $H^0$ of a line bundle on $Y$ is multiplicity free.
Hence the claim follows.
\end{proof}
\end{NB}
As a corollary of the proposition, we can see that the set of the torus fixed points $\mathfrak{M}_\zeta(\mathbf{v})^T$ is isolated. 
In \S \ref{sub_torus}, we will show a stronger result.

\subsection{mutations and change of stability parameters}
We set $\mathcal{P}:=\afmod$ and denote by $\mathcal{P}_\zeta$ the image of the Abelian category $\mu_\zeta(A)\text{-}\mathrm{fmod}$ under the equivalence $\Psi^{-1}_\zeta$. 
Let $\overline{\mathcal{P}}$ (resp. $\overline{\mathcal{P}_\zeta}$) be the category consisting of pairs $(V,W,s)$, where $V\in\mathcal{P}$ (resp, $\in\mathcal{P}_\zeta$), $W$ is a finite dimensional vector space and $s\colon P_0\otimes W\to V$. An object $(V,W,s)$ with $1$-dimensional $W$ is simply written $(F,s)$.
Note that $\overline{\mathcal{P}}$ is equivalent to $\hafmod$.

\begin{defn}
For $\zeta\in\R^{\hat{I}}$, we say an object $(V,s)\in\mathcal{P}_\zeta$ is $(\zeta,\mathcal{P}_\zeta)$-(semi)stable if the following conditions are satisfied:
\begin{enumerate}
\item[(A)] for any nonzero subobject $0\neq S\subseteq V$ in $\mathcal{P}_\zeta$, we have
\[
\zeta\cdot\dimv\,S\,(\le)\,0,
\]
\item[(B)] for any proper subobject $T\subsetneq V$ in $\mathcal{P}_\zeta$ which $s$ factors through, we have
\[
\zeta\cdot\dimv\,T\,(\le)\,\zeta\cdot\dimv\,V.
\]
\end{enumerate}
\end{defn}
From now on, the $\zeta$-(semi)stability for objects in $\overline{\mathcal{P}}\simeq\hafmod$ is written as the "$(\zeta,\overline{\mathcal{P}})$-(semi)stability". 
We set $\zeta_{\mathrm{cyclic}}:=\mu_\zeta(\zeta)$. Note that $(\zeta_{\mathrm{cyclic}})_l<0$ for any $l\in\hat{I}$.
The following four claims follow from Corollary \ref{cor3.3} by the same argument as in \cite[\S 4.4]{nagao-nakajima}: 

\begin{lem}\label{lem-4.2}\textup{(see \cite[Lemma 4.8]{nagao-nakajima})}
Let $(F,s)$ be a $(\zeta,\overline{\mathcal{P}})$-stable object, then $F\in\mathcal{P}_\zeta$.
\end{lem}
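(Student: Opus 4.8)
The statement to prove is Lemma \ref{lem-4.2}: if $(F,s)$ is a $(\zeta,\overline{\mathcal{P}})$-stable object, then $F\in\mathcal{P}_\zeta$. Here $\overline{\mathcal{P}}\simeq\hafmod$, so $(F,s)$ is a finite dimensional $\A$-module with $1$-dimensional space at $\infty$; writing it as $(F,\C,s)$, the underlying $A$-module $F$ lies a priori in $\mathcal{P}=\afmod$, and we must show it actually lies in the subcategory $\mathcal{P}_\zeta=\Psi_\zeta^{-1}(\mu_\zeta(A)\text{-fmod})$.

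The plan is to run the argument of \cite[Lemma 4.8]{nagao-nakajima}, which is a direct consequence of Corollary \ref{cor3.3}. First I would note that $F$, being finite dimensional, has a filtration whose subquotients are simple $A$-modules; equivalently, viewing $F\in D^b(A\text{-mod})$, all its Jordan--H\"older constituents are simple $A$-modules, each of which has a well-defined dimension vector that is a positive root. The point of Corollary \ref{cor3.3} is that under the equivalence $\Phi_\zeta$ (i.e.\ $\Psi_\zeta$), a $\theta_\zeta$-stable $A$-module $C$ with $\zeta\cdot\dimv(C)<0$ lands in $\mu_\zeta(A)\text{-mod}$, while one with $\zeta\cdot\dimv(C)>0$ lands in $\mu_\zeta(A)\text{-mod}[1]$; the category $\mathcal{P}_\zeta$ is by definition the subcategory of $D^b(A\text{-mod})$ corresponding to $\mu_\zeta(A)\text{-fmod}$, hence it is the extension-closure inside $D^b$ of those simple $A$-modules $C$ with $\zeta\cdot\dimv(C)\le 0$ (together with the degenerate behaviour on the exceptional simple, handled by Proposition \ref{prop3.2}).

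So the key step is: every simple $A$-module $C$ appearing as a subquotient of $F$ satisfies $\zeta\cdot\dimv(C)\le 0$, and then $F$, being an iterated extension of such $C$'s, lies in the extension-closed subcategory $\mathcal{P}_\zeta$. To see $\zeta\cdot\dimv(C)\le 0$ for the socle (and then by induction for all subquotients), I would use the $(\zeta,\overline{\mathcal{P}})$-stability of $(F,s)$ together with the characterisation of stability in the lemma preceding Definition \ref{def-stability}: condition (A) says that for any nonzero $A$-submodule $0\ne S\subseteq F$ we have $\zeta\cdot\dimv(S)\le 0$ (using that $(F,s)$ is $\zeta$-semistable, which stability implies). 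Applying this to $S$ a simple submodule of $F$ gives $\zeta\cdot\dimv(C)\le 0$ for that $C$; quotienting out and iterating — using that the quotient $(F/S, \bar s)$ remains built from an $A$-module all of whose simple submodules still have $\zeta\cdot\dimv\le 0$, or more directly, that $F$ itself as an $A$-module has all its submodules $S$ with $\zeta\cdot\dimv(S)\le 0$, hence all its simple subquotients too — shows every composition factor $C$ of $F$ has $\zeta\cdot\dimv(C)\le 0$. Then by Corollary \ref{cor3.3} each such $C$ lies in $\mathcal{P}_\zeta$, and since $\mathcal{P}_\zeta$ is closed under extensions (it is an Abelian subcategory, being the essential image of an Abelian category under an equivalence followed by restriction to a heart), $F\in\mathcal{P}_\zeta$.

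The main obstacle I expect is the bookkeeping around the exceptional/degenerate case in Proposition \ref{prop3.2}: for one distinguished simple module (the one supported at the relevant mutation vertex) the equivalence sends it to a shift, so strictly speaking $\mathcal{P}_\zeta$ is not literally the extension-closure of the ``$\zeta\cdot\dimv\le 0$'' simples inside $A\text{-mod}$ but inside $D^b(A\text{-mod})$. One must check that this does not occur for composition factors of a genuine $(\zeta,\overline{\mathcal{P}})$-stable $(F,s)$ — i.e.\ that the relevant exceptional simple, which would force $\zeta\cdot\dimv=0$ on the nose and could in principle contribute a shifted summand, is in fact excluded, or is harmless because $\zeta$ is chosen off the intersection of two walls so that the only dimension vectors with $\zeta\cdot\dimv(C)\le 0$ occurring are those for which Corollary \ref{cor3.3} gives $\Phi_\zeta(C)\in\amod$ outright. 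This is exactly the kind of case-analysis that \cite[\S 4.4]{nagao-nakajima} carries out, and I would cite that argument rather than redo it in full.
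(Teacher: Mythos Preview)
Your approach has a genuine gap, and it is not the ``exceptional simple'' bookkeeping you worry about in the last paragraph.

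The claim that $\mathcal{P}_\zeta$ is the extension-closure of the simple $A$-modules $S_k$ with $\zeta_k\le 0$ is false. A $\theta_\zeta$-stable $A$-module $C$ of dimension vector $\alpha$ with $\zeta\cdot\alpha<0$ lies in $\mathcal{P}_\zeta$ by Corollary~\ref{cor3.3}, yet it can perfectly well have composition factors $S_k$ with $\zeta_k>0$. So membership in $\mathcal{P}_\zeta$ is \emph{not} detected by the simple Jordan--H\"older factors of $F$. Relatedly, your inference ``every submodule $S$ of $F$ has $\zeta\cdot\dimv(S)\le 0$, hence every simple subquotient of $F$ does too'' fails: take a two-step module whose simple submodule has $\zeta$-value $-5$ and whose simple quotient has $\zeta$-value $+3$; every submodule has $\zeta$-value $\le 0$, but the top factor does not.

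The correct filtration is the $\theta_\zeta$-Harder--Narasimhan filtration of $F$, not its composition series. Condition~(A) of $(\zeta,\overline{\mathcal{P}})$-stability says every nonzero $A$-submodule $S\subseteq F$ satisfies $\zeta\cdot\dimv(S)<0$; applied to the maximal-slope HN submodule this gives $\mu^+(F)<0$, so \emph{all} HN factors of $F$ are $\theta_\zeta$-semistable with $\zeta\cdot\dimv<0$. Each such factor is an iterated extension of $\theta_\zeta$-stables with $\zeta\cdot\dimv<0$, which lie in $\mathcal{P}_\zeta$ by Corollary~\ref{cor3.3}; since $\mathcal{P}_\zeta$ is a heart and hence extension-closed, $F\in\mathcal{P}_\zeta$. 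This is exactly the argument of \cite[Lemma~4.8]{nagao-nakajima} that the paper cites, and no special treatment of an ``exceptional simple'' is needed once you use the right filtration.
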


\begin{prop}\label{prop-4.3}\textup{(see \cite[Proposition 4.9]{nagao-nakajima})}
Let $(F,s)$ be a $(\zeta,\overline{\mathcal{P}})$-stable object, then $(F,s)$ is $(\zeta_{\mathrm{cyclic}},\overline{\mathcal{P}_\zeta})$-stable.
\end{prop}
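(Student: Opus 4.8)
The plan is to follow the strategy indicated in the text: deduce everything from Corollary \ref{cor3.3} together with the mutation picture of \S\ref{subsec-weyl}, imitating the argument of \cite[\S 4.4]{nagao-nakajima}. First I would unwind the definitions: by Lemma \ref{lem-4.2} we already know that a $(\zeta,\overline{\mathcal{P}})$-stable object $(F,s)$ has underlying module $F\in\mathcal{P}_\zeta$, so that $\Psi_\zeta(F)$ is an honest $\mu_\zeta(A)$-module concentrated in degree $0$, and $\zeta_{\mathrm{cyclic}}=\mu_\zeta(\zeta)$ satisfies $(\zeta_{\mathrm{cyclic}})_l<0$ for all $l\in\hat I$. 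Thus the statement reduces to checking the two stability inequalities (A) and (B) of the definition of $(\zeta_{\mathrm{cyclic}},\overline{\mathcal{P}_\zeta})$-stability for the image of $(F,s)$, namely that for every nonzero subobject $S\subseteq F$ in $\mathcal{P}_\zeta$ one has $\zeta\cdot\dimv S<0$, and for every proper subobject $T\subsetneq F$ in $\mathcal{P}_\zeta$ through which $s$ factors one has $\zeta\cdot\dimv T<\zeta\cdot\dimv F$ — here I am already using $\mathbf{v}\cdot\zeta=\mu_k(\mathbf{v})\cdot\mu_k(\zeta)$ to translate $\zeta_{\mathrm{cyclic}}$-inequalities on $\mu_\zeta(A)$-modules back to $\zeta$-inequalities on objects of $\mathcal{P}_\zeta$.

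The key step is to transport a subobject $S$ (or $T$) of $F$ taken in the heart $\mathcal{P}_\zeta$ back into $\mathcal{P}=\afmod$ and compare with the given $(\zeta,\overline{\mathcal{P}})$-stability of $(F,s)$. Concretely, given a short exact sequence $0\to S\to F\to F/S\to 0$ in $\mathcal{P}_\zeta$, I would apply $\Psi_\zeta$, then take the long exact cohomology sequence with respect to the standard $t$-structure on $D^b(A\text{-}\mathrm{mod})$; as in the proof of Proposition \ref{prop3.2} only the degrees $0$ and $-1$ appear and, because the mutation only changes things at the vertices $k_1,\dots,k_r$, the degree $-1$ part is supported there. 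This yields a four-term exact sequence in $\mathcal{P}$ relating $S$, $F$, $F/S$ and the (small, controllable) $H^{-1}$-terms. Feeding this into Corollary \ref{cor3.3} — which tells us precisely when a $\theta_\zeta$-stable $A$-module lands in $\amod$ versus $\amod[1]$, i.e. controls the sign of $\zeta\cdot\dimv$ on the relevant pieces — and using the $(\zeta,\overline{\mathcal{P}})$-stability of $(F,s)$ (conditions (A) and (B) for $F$ as an object of $\hafmod$), one gets the strict inequalities $\zeta\cdot\dimv S<0$ and $\zeta\cdot\dimv T<\zeta\cdot\dimv F$ after rearranging. The factorization condition on $s$ is preserved under $\Psi_\zeta$ since $\Psi_\zeta$ is an equivalence carrying the object $P_0$ to the object governing the framing, so (B) transports cleanly.

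The main obstacle I expect is the bookkeeping around the $H^{-1}$-terms: one must check that in each short exact sequence in $\mathcal{P}_\zeta$ the resulting $A$-module $H^{-1}$ contributions have $\zeta$-slope of the correct sign (this is where the hypothesis $\sum\zeta_l<0$, the ordering of the chambers $H_0,\dots,H_r$, and Proposition \ref{prop3.2} all get used simultaneously), and that strictness — not merely $\le$ — survives. In \cite{nagao-nakajima} this is handled for the conifold, where the mutation sequence is short; here the sequence $k_1,\dots,k_r$ can be long, so I would argue by induction on $r$ exactly as Proposition \ref{prop3.2} is set up, reducing at each step to a single mutation $\mu_{k_{s+1}}$ and invoking the one-step statement of Proposition \ref{prop-mutation} together with the surjectivity of $C_{k-1}\oplus C_{k+1}\to C_k$ for $\theta_\xi$-stable $C$ with $\xi_k<0$. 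Once the one-step case is established with strict inequalities, the general case follows formally, and the proof is a routine (if tedious) adaptation of \cite[\S 4.4]{nagao-nakajima}, which is why I would simply cite that reference for the remaining details.
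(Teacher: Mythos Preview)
Your proposal is correct and follows exactly the approach the paper indicates: the paper gives no independent proof of this proposition, stating only that it (together with Lemma~\ref{lem-4.2}, Lemma~\ref{lem-4.5}, and Proposition~\ref{prop-4.6}) follows from Corollary~\ref{cor3.3} by the same argument as in \cite[\S 4.4]{nagao-nakajima}, which is precisely the route you outline. One small slip: in your key step you should not ``apply $\Psi_\zeta$'' to the short exact sequence in $\mathcal{P}_\zeta$---it is already a triangle in $D^b(A\text{-}\mathrm{mod})$, so you take $H^*_A$ directly (as in the proof of Proposition~\ref{prop3.2}); applying $\Psi_\zeta$ would land you in $D^b(\mu_\zeta(A)\text{-}\mathrm{mod})$, where standard cohomology just returns the original sequence.
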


\begin{lem}\label{lem-4.5}\textup{(see \cite[Lemma 4.11]{nagao-nakajima})}
Let $(F,s)$ be a $(\zeta_{\mathrm{cyclic}},\overline{\mathcal{P}_\zeta})$-stable object, then $F\in\mathcal{P}$.
\end{lem}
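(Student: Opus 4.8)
<br>

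The statement to prove is Lemma \ref{lem-4.5}: if $(F,s)$ is a $(\zeta_{\mathrm{cyclic}},\overline{\mathcal{P}_\zeta})$-stable object, then $F\in\mathcal{P}$ (i.e. $F$ lies in the heart $\afmod$, not merely in the tilted heart $\mathcal{P}_\zeta$).

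\medskip

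The plan is to mimic the argument of \cite[Lemma 4.11]{nagao-nakajima}, transported across the chain of mutation equivalences $\Psi_\zeta$. The key input is Corollary \ref{cor3.3}, which tells us exactly which $\theta_\zeta$-stable $A$-modules $C$ get sent by $\Phi_\zeta$ (equivalently $\Psi_\zeta$) into $\amod$ and which into $\amod[1]$: namely $\zeta\cdot\dimv(C)>0$ forces a shift by $1$. The objects causing the shift are precisely the finitely many simple modules of dimension vectors $\alpha^1,\dots,\alpha^r$ corresponding to the walls crossed in \S\ref{subsec-weyl}; under $\Psi_\zeta$ these become the simple $\mu_\zeta(A)$-modules at the mutated vertices, shifted by one. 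So $\mathcal{P}_\zeta$ and $\mathcal{P}=\afmod$ are two hearts of the same triangulated category which differ by a tilt at this finite collection of simple objects: writing $\mathcal{T}$ for the subcategory generated by those simples and $\mathcal{F}$ for its perpendicular, one has the usual torsion-pair picture, and every object $V\in\mathcal{P}_\zeta$ sits in a triangle $T[1]\to V\to F'\to$ with $T\in\mathcal{T}$, $F'\in\mathcal{F}\subset\mathcal{P}$, or dually. I would first make this torsion-pair description precise.

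\medskip

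First I would reduce to showing: if $(F,s)\in\overline{\mathcal{P}_\zeta}$ is $(\zeta_{\mathrm{cyclic}},\overline{\mathcal{P}_\zeta})$-stable, then the cohomology $H^{-1}_{\mathcal{P}}(F)$ (with respect to the $\mathcal{P}$-$t$-structure) vanishes, which is equivalent to $F\in\mathcal{P}$ since $F$ can only be supported in $\mathcal{P}$-degrees $0$ and $-1$ by the tilt description above, and $H^{-1}_{\mathcal{P}}(F)\in\mathcal{T}$ is a successive extension of the shifted simples. Then I would argue by contradiction: if $H^{-1}_{\mathcal{P}}(F)\neq 0$, there is a nonzero map from one of the shifted simples $C[1]$ (with $\zeta\cdot\dimv(C)>0$) into $F$ inside $\mathcal{P}_\zeta$, i.e. a nonzero subobject, hence — since $s$ need not factor through it — condition (A) of $(\zeta_{\mathrm{cyclic}},\overline{\mathcal{P}_\zeta})$-stability applies to the image $S$ of that map: $\zeta_{\mathrm{cyclic}}\cdot\dimv(S)\le 0$. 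But $S$ is built from copies of the mutated simple, for which $\mu_\zeta(\zeta)\cdot(\text{its dim vector})=\zeta\cdot\dimv(C)>0$ by the invariance $\mathbf{v}\cdot\zeta=\mu_k(\mathbf{v})\cdot\mu_k(\zeta)$ of \S\ref{subsec-weyl}; since $\zeta_{\mathrm{cyclic}}=\mu_\zeta(\zeta)$ this gives $\zeta_{\mathrm{cyclic}}\cdot\dimv(S)>0$, a contradiction. (One must also rule out the case where the destabilizing subobject coincides with all of $V$ with $s$ factoring through it; but the shifted simples have $s=0$ on them since $P_0$ maps to the vertex-$0$ component, and vertex $0$ is never a mutated vertex in the chosen sequence, so this case does not arise.) Hence $H^{-1}_{\mathcal{P}}(F)=0$ and $F\in\mathcal{P}$.

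\medskip

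The main obstacle I expect is the bookkeeping around the forgetful/framing data: $\overline{\mathcal{P}_\zeta}$ consists of triples $(V,W,s)$ and one must check carefully that the destabilizing subobject produced in $\mathcal{P}_\zeta$ really lifts to a subobject in $\overline{\mathcal{P}_\zeta}$ (with the induced or zero framing) against which the stability inequality can legitimately be invoked — this is exactly the point where one uses that vertex $0$ is untouched by the mutation sequence, so the framing behaves compatibly. Making the passage between $H^{-1}_{\mathcal{P}}$ being nonzero and the existence of an honest nonzero subobject of $V$ in $\mathcal{P}_\zeta$ rigorous (rather than just a nonzero morphism in the derived category) is the technical heart; it follows from the torsion-pair structure but should be spelled out. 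Everything else is parallel to \cite[Lemma 4.11]{nagao-nakajima}.
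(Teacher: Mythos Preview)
Your torsion-pair setup is right, as is the identification of $H^{-1}_{\mathcal P}(F)[1]$ as a subobject of $F$ in $\mathcal P_\zeta$. The gap is the sign in your application of condition~(A). If $S\subset F$ is built from shifts $C[1]$ with $C$ a $\theta_\zeta$-stable $A$-module and $\zeta\cdot\dimv_A(C)>0$, then the shift $[1]$ negates the class in $K_0$, so the invariance $\mathbf v\cdot\zeta=\mu_\zeta(\mathbf v)\cdot\mu_\zeta(\zeta)$ actually gives
\[
\zeta_{\mathrm{cyclic}}\cdot\dimv_{\mathcal P_\zeta}(S)
=\mu_\zeta(\zeta)\cdot\mu_\zeta\bigl(-\dimv_A C\bigr)
=-\,\zeta\cdot\dimv_A(C)<0,
\]
and condition~(A) is \emph{satisfied}, not violated. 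More directly: every entry of $\zeta_{\mathrm{cyclic}}=\mu_\zeta(\zeta)$ is negative and every dimension vector in $\mathcal P_\zeta\simeq\mu_\zeta(A)\text{-}\mathrm{fmod}$ lies in $(\Z_{\ge 0})^{\hat I}$, so condition~(A) is \emph{vacuous} for $(\zeta_{\mathrm{cyclic}},\overline{\mathcal P_\zeta})$-stability. Its only content is condition~(B): no proper $\mathcal P_\zeta$-subobject of $F$ carries the framing~$s$.

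The argument of \cite[\S 4.4]{nagao-nakajima} therefore runs through condition~(B). The inputs you isolate are the right ones --- Corollary~\ref{cor3.3}, the projectivity of $P_0$ (whence $\Hom_{D^b(A)}(P_0,G[1])=\Ext^1_A(P_0,G)=0$ for every $G\in\afmod$), and the compatibility of the framing with $\Psi_\zeta$ at vertex~$0$ --- but they must be assembled to produce, assuming $H^{-1}_{\mathcal P}(F)\neq 0$, a \emph{proper} $\mathcal P_\zeta$-subobject through which $s$ factors, contradicting~(B). Your outline is salvageable along these lines, but not via condition~(A).
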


\begin{prop}\label{prop-4.6}\textup{(see \cite[Proposition 4.12]{nagao-nakajima})}
Let $(F,s)$ be a $(\zeta_{\mathrm{cyclic}},\overline{\mathcal{P}_\zeta})$-stable object, then $(F,s)$ is $(\zeta,\overline{\mathcal{P}})$-stable.
\end{prop}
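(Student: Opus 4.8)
The plan is to verify the two defining conditions (A) and (B) of $(\zeta,\overline{\mathcal{P}})$-stability directly, transporting submodules and quotients of $F$ into the heart $\mathcal{P}_\zeta$ via the equivalence $\Psi_\zeta$; this is a transcription of the proof of \cite[Proposition 4.12]{nagao-nakajima}, the arithmetic input being Corollary \ref{cor3.3}. First, by Lemma \ref{lem-4.5} we have $F\in\mathcal{P}$, so the statement makes sense. Since $(\zeta_{\mathrm{cyclic}})_l<0$ for every $l\in\hat I$ and $\zeta_{\mathrm{cyclic}}=\mu_\zeta(\zeta)$, the hypothesis that $(F,s)$ is $(\zeta_{\mathrm{cyclic}},\overline{\mathcal{P}_\zeta})$-stable is equivalent to the statement that $\Psi_\zeta(F)$ is generated over $\mu_\zeta(A)$ by the image of $\Psi_\zeta(s)$, i.e.\ that no proper subobject of $F$ in $\mathcal{P}_\zeta$ contains $\mathrm{im}\,s$. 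We may assume $\zeta$ is generic, so $\zeta\cdot\dimv\,D\neq 0$ for every $\theta_\zeta$-stable $A$-module $D$. Two consequences of Corollary \ref{cor3.3} will be used repeatedly: every object of $\mathcal{P}=\afmod$, being filtered by $\theta_\zeta$-stable modules (Harder--Narasimhan, then Jordan--H\"older), has $\Psi_\zeta$-cohomology concentrated in the degrees $0$ and $-1$; and $\mathcal{P}_\zeta$ is exactly the degree-$0$ part, while a $\theta_\zeta$-stable $D$ with $\zeta\cdot\dimv\,D>0$ is sent by $\Psi_\zeta$ into $\mu_\zeta(A)\text{-}\mr{fmod}[1]$, i.e.\ into degree $-1$.

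\textbf{Condition (A).} Suppose, for contradiction, that a nonzero $A$-submodule $S\subseteq F$ has $\zeta\cdot\dimv\,S\ge 0$. The largest-slope step of the Harder--Narasimhan filtration of $S$ is a $\theta_\zeta$-semistable submodule of $F$ of slope $\ge\theta_\zeta(S)\ge 0$, and a Jordan--H\"older step of it is a $\theta_\zeta$-stable submodule $D\subseteq F$ with $\zeta\cdot\dimv\,D\ge 0$, hence $>0$ by genericity. By Corollary \ref{cor3.3}, $\Psi_\zeta(D)=M[1]$ for a $\mu_\zeta(A)$-module $M$, while $\Psi_\zeta(F)$ is a module in degree $0$; thus $\Hom_{D^b}(\Psi_\zeta(D),\Psi_\zeta(F))=\Ext^{-1}_{\mu_\zeta(A)}(M,\Psi_\zeta(F))=0$. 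Applying $\Psi_\zeta$ to the inclusion $D\hookrightarrow F$ therefore gives the zero morphism, so $D=0$, a contradiction. Hence every nonzero $A$-submodule of $F$ has strictly negative $\zeta$-degree, which is (A).

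\textbf{Condition (B).} Suppose, for contradiction, that a proper $A$-submodule $T\subsetneq F$ with $\mathrm{im}\,s\subseteq T_0$ satisfies $\zeta\cdot\dimv\,T\ge\zeta\cdot\dimv\,F$, i.e.\ $\zeta\cdot\dimv(F/T)\le 0$ with $F/T\neq 0$. The smallest-slope step of the Harder--Narasimhan filtration of $F/T$, followed by a Jordan--H\"older step, yields a $\theta_\zeta$-stable module $D$ that is a quotient of $F$ factoring through $F/T$ and has $\zeta\cdot\dimv\,D\le 0$, hence $<0$; so $\Psi_\zeta(D)\in\mu_\zeta(A)\text{-}\mr{fmod}$ by Corollary \ref{cor3.3}. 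Put $K=\ker(F\twoheadrightarrow D)$, an $A$-module containing $T$. Applying $\Psi_\zeta$ to $0\to K\to F\to D\to 0$ and taking the cohomology long exact sequence for the $\mathcal{P}_\zeta$-t-structure, and using that $K\in\mathcal{P}$ has no cohomology in degree $+1$, we obtain a short exact sequence $0\to U\to\Psi_\zeta(F)\to\Psi_\zeta(D)\to 0$ in $\mu_\zeta(A)\text{-}\mr{fmod}$, where $U=\mathcal{H}^0(\Psi_\zeta(K))$ is a proper subobject of $\Psi_\zeta(F)$. Since $s\colon P_0\to F$ factors through $T\hookrightarrow K$, and $\Psi_\zeta(P_0)$ is the projective $\mu_\zeta(A)$-module at the vertex $0$ --- vertex $0$ not occurring among the mutated vertices $k_1,\dots,k_r$, consistently with the normalization $\mathcal{L}^\zeta_0\simeq\OO_{\mca{Y}}$ of Proposition \ref{prop-lb} --- the section $\Psi_\zeta(s)$ factors through $U$. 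Thus $U$ is a proper subobject of $\Psi_\zeta(F)$ in $\mathcal{P}_\zeta$ containing $\mathrm{im}\,\Psi_\zeta(s)$, contradicting the cyclicity recorded above. Hence $\zeta\cdot\dimv\,T<\zeta\cdot\dimv\,F$, which is (B). Together with (A), $(F,s)$ is $(\zeta,\overline{\mathcal{P}})$-stable.

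\textbf{Main obstacle.} The delicate point is the cohomological-degree bookkeeping: one must check that Corollary \ref{cor3.3} places $\theta_\zeta$-stable modules of positive $\zeta$-degree in precisely the degree that makes the vanishing arguments go through (namely $\Hom_{D^b}(\Psi_\zeta(D),\Psi_\zeta(F))=0$ in (A), and $\mathcal{H}^{1}(\Psi_\zeta(K))=0$ in (B), both of which rely on $\mathcal{P}$ sitting in degrees $[-1,0]$ relative to $\mathcal{P}_\zeta$), and that the framing vertex $0$ is genuinely fixed by the mutation sequence, so that $\Psi_\zeta$ carries the section data of $(F,s)$ to that of $\Psi_\zeta(F,s)$. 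Once these are in place, the argument is routine and follows \cite[\S4.4]{nagao-nakajima} verbatim.
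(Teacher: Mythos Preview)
Your argument is correct and is exactly the transcription of \cite[\S 4.4]{nagao-nakajima} that the paper invokes; the key input, as you identify, is Corollary~\ref{cor3.3}, which places $\theta_\zeta$-stable $A$-modules in cohomological degrees $0$ or $-1$ under $\Psi_\zeta$ according to the sign of their $\zeta$-degree.

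One remark: the parenthetical about $\Psi_\zeta(P_0)$ being the projective $\mu_\zeta(A)$-module at vertex~$0$, and about $0\notin\{k_1,\dots,k_r\}$, is unnecessary and the appeal to Proposition~\ref{prop-lb} is out of place here, since that proposition carries the extra hypothesis $N\cdot\zeta_k<\sum_l\zeta_l$ for $k\neq 0$, which is not assumed in Proposition~\ref{prop-4.6}. Fortunately your argument does not actually use it: the factorization of $\Psi_\zeta(s)$ through $U=\mathcal{H}^0_{\mathcal{P}_\zeta}(\Psi_\zeta(K))$ follows simply because $\Psi_\zeta(F)$ lies in degree~$0$ for the $\mathcal{P}_\zeta$-t-structure, so any morphism $\Psi_\zeta(K)\to\Psi_\zeta(F)$ factors through $\tau_{\ge 0}\Psi_\zeta(K)=U$. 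You can therefore delete that clause without loss.
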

\begin{rem}\label{rem-+}
\begin{enumerate}
\item[(1)]
For a parameter $\zeta$ such that $\sum \zeta_l>0$, we can apply all the argument after a slight modification: 
first, the information in which chamber $\zeta+d\cdot \mathbf{1}$ ($d\geq 0$) is contained defines the sequence $k_1,\ldots,k_r$. 
We define $\mu_\zeta$ and $\Psi_\zeta$ in the same way and denote by $\mathcal{P}_\zeta$ the image of $(\mu_\zeta(A))\text{-}\mathrm{fmod}$ under the equivalence $\Psi^{-1}_\zeta[-1]$.
\item[(2)]
The Abelian category $A\text{-}\mathrm{fmod}$ and the function 
\[
Z_{\zeta_{\mathrm{cyclic}}}\colon K(A\text{-}\mathrm{fmod})\simeq \Z^{\hat{I}}\to \C
\]
given by 
\[
Z_{\zeta_{\mathrm{cyclic}}}(\mathbf{v}):=(\mathbf{1}\cdot \mathbf{v})+(-\zeta_{\mathrm{cyclic}}\cdot \mathbf{v})\sqrt{-1}
\]
provide a Bridgeland stability condition $(Z_{\zeta_{\mathrm{cyclic}}},\{\mathrm{P}(\phi)\}_{\phi\in\R})$, where $\mathrm{P}(\phi)$ is the full subcategory of $D^b(A\text{-}\mathrm{fmod})$ of semistable objects with phase $\phi$.
For $\phi\in\R$, let $\mathcal{P}_{[\phi,\phi+1)}$ denote the full subcategory of objects such that all the factors in their Hardar-Narashimhan filtrations are in
\[
\bigcup_{\phi\leq\phi'<\phi+1}\mathrm{P}(\phi').
\]
Then we have
\[
\mathcal{P}_{[0,1)}=A\text{-}\mathrm{fmod},\quad \mathcal{P}_{[\phi(d),\phi(d)+1)}=\mu_\zeta(A)\text{-}\mathrm{fmod}
\]
where $\phi(d)$ is determined by the condition $0\leq \phi(d)<1/2$ and $\tan(\phi(d)\pi)=d$.
This is the reason why our argument works (see \cite{open_3tcy} for the detail).
\begin{NB}
\item[(3)]
Let $A=(Q,\omega)$ be a quiver with a superpotential which is $3$-dimensional Calabi-Yau and does not have any cycles of length $1$ or $2$. 
For a vertex $k$, we can mutate $A$ at $k$ to get a new one $\mu_k(A)$ and we have a derived equivalence 
\[
\Psi_k\colon D^b(\mathrm{mod}(A))\to D^b(\mathrm{mod}(\mu_k(A))).
\]
Let $\zeta\in\R^{Q_0}$ be a parameter. 
Assume that there exist a sequence $k_1,\ldots,k_s$ satisfying the conditions in \S \ref{subsec-weyl}. 
We define the category $\overline{\mathcal{P}_\zeta}$ in the same way. 
Then the same statement as Proposition \ref{prop-4.3} and \ref{prop-4.6} hold.
\end{NB}
\end{enumerate}
\end{rem}
By the claims above, we can identify the moduli space of $\zeta$-stable framed modules over the original quiver with the one of $(\zeta_{\mathrm{cyclic}},\overline{\mathcal{P}_\zeta})$-stable objects.

\subsection{Torus fixed points and weighted Euler characteristics}\label{sub_torus}
The action of $2$-dimensional torus $T'$ (see \S \ref{as_moduli}) on the moduli spaces preserves the symmetric obstruction theory. 
We want to compute the weighted Euler characteristic $D_\zeta(\mathbf{v})$ by using Behrend-Fantechi's torus localization theorem (\cite{behrend-fantechi}). 
\begin{NB}
We can verify the following
\begin{itemize}
\item
the set of $T'$-fixed closed points $\M{\zeta}{\vv}^{T'}$ is isolated,
\item
For each $T'$-fixed closed point $P\in\M{\zeta}{\vv}^{T'}$, 
the Zariski tangent space to $\M{\zeta}{\vv}$ at $P$ has no $T'$-invariant subspace,
\item
For each $T'$-fixed point $P\in\M{\zeta}{\vv}^{T'}$, 
the parity of the dimension of the Zariski tangent space to $\M{\zeta}{\vv}$ at $P$ is same as the parity of $\sum_{k\in \hI _r}v_{k}+\sum_{k\in I}v_{k}$.
\end{itemize}
\end{NB}
To apply the localization theorem, we need the combinatorial description of the set of torus fixed points given in \cite{open_3tcy,NCDTviaVO}. 
Here, we show sketches of the proofs. 

In \cite[\S 3]{NCDTviaVO}, we define a {\it crystal} of type $(\sigma,\theta,\underline{\nu},\underline{\lambda})$ for the following data:
\begin{itemize}
\item $\sigma$ is same as this paper, 
\item $\theta$ determines the chamber in which the stability parameter $\zeta$ is,  
\item $\underline{\nu}$ and $\underline{\lambda}$ are sequences of Young diagrams which determines ``asymptotic behaviors'' of the (complexes of) sheaves on $Y_\sigma$.
\footnote{In \cite{open_3tcy,NCDTviaVO}, the author study ``open'' version of the invariants. If we take the sequences of the empty Young diagrams as $\underline{\nu}$ and $\underline{\lambda}$, then the invariants coincide with the ones which we study in this paper.}
\end{itemize}
Given a crystal $C$ of type $(\sigma,\theta,\underline{\nu},\underline{\lambda})$, we can construct a $\mu_\zeta(A)$-module $M(C)$ with the basis parametrized by $C$ \footnote{In \cite{NCDTviaVO}, we construct a $\mu_\zeta(A)$-module for a {\it transition of Young diagrams}. Giving a transition of Young diagrams is equivalent to giving a crystal as is mentioned in \cite{NCDTviaVO}.}. 
For an element $B$ in the crystal $C$, let $v_B$ denote the corresponding element in $M(C)$. 
We have a distinct crystal $C^{\sigma,\theta,\underline{\nu},\underline{\lambda}}_\mr{min}$ (which is denoted by $P(\mca{V}_{\mr{min}})$ in \cite{NCDTviaVO})\footnote{This should be called the {\it grand state} of the crystal model associated to the data $(\sigma,\theta,\underline{\nu},\underline{\lambda})$}. 
The moduli spaces $\sqcup_{\vv}\mf{M}_\zeta(\vv)$ coincide with the moduli spaces of quotient modules of $M\bigl(C^{\sigma,\theta,\underline{\emptyset},\underline{\emptyset}}_\mr{min}\bigr)$.
\begin{prop}
The set of $T'$-fixed points of $\sqcup_{\vv}\mf{M}_\zeta(\vv)$ is parameterized by the set of crystals of type $(\sigma,\theta,\underline{\emptyset},\underline{\emptyset})$.
\end{prop}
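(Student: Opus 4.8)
The plan is to reduce the assertion to counting finite sub-crystals of the ground-state crystal, by exploiting the cyclic-module description of the moduli spaces recalled just above. Recall that, combining Proposition \ref{prop-4.3} and Proposition \ref{prop-4.6} with the vector-bundle property of the tilting generator (Proposition \ref{prop-lb}), the space $\sqcup_{\vv}\M{\zeta}{\vv}$ is identified with the moduli space of finite-dimensional quotient $\mu_\zeta(A)$-modules of the cyclic module $P_\zeta=M\bigl(C^{\sigma,\theta,\underline{\emptyset},\underline{\emptyset}}_{\mr{min}}\bigr)$, with $(P_\zeta)_k=H^0\bigl(\ys,(\mathcal{L}^\zeta_k)^{-1}\bigr)$. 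So the whole question is about which such quotients are $T'$-fixed.

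First I would pin down the $T'$-action on $P_\zeta$. The torus $T'$ acts on $\OO_{\ys}$, hence on each $H^0\bigl(\ys,(\mathcal{L}^\zeta_k)^{-1}\bigr)$, and by the toric description of the global sections of a line bundle on $\ys$ recalled in \S\ref{tilting-generator} this weight decomposition is multiplicity free. Consequently the distinguished basis $\{v_B\}_{B\in C^{\sigma,\theta,\underline{\emptyset},\underline{\emptyset}}_{\mr{min}}}$ of $P_\zeta$ is, up to scalars, a $T'$-weight basis with pairwise distinct weights. A quotient $P_\zeta\twoheadrightarrow V$ is then $T'$-fixed if and only if its kernel $K$ is $T'$-stable, and by multiplicity-freeness this holds if and only if $K=\bigoplus_{B\notin C'}\C\cdot v_B$ for some subset $C'\subseteq C^{\sigma,\theta,\underline{\emptyset},\underline{\emptyset}}_{\mr{min}}$. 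Thus $T'$-fixed points of $\sqcup_{\vv}\M{\zeta}{\vv}$ correspond bijectively to subsets $C'$ for which this $\C$-span is a $\mu_\zeta(A)$-submodule with finite-dimensional quotient.

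It remains to identify such subsets $C'$ with the finite crystals of type $(\sigma,\theta,\underline{\emptyset},\underline{\emptyset})$ in the sense of \cite[\S 3]{NCDTviaVO}. Since each arrow of $\mu_\zeta(A)$ acts on the weight basis $\{v_B\}$ by sending $v_B$ to a scalar multiple of a single $v_{B'}$ (or to $0$), the condition that $\bigoplus_{B\notin C'}\C\cdot v_B$ be a submodule translates directly into the closure condition defining a sub-crystal $C'$, while finiteness of the quotient becomes $|C'|<\infty$; conversely every finite sub-crystal produces a $T'$-fixed quotient. I expect this last matching to be the main obstacle: it rests on the explicit construction of $M(C)$ and the precise combinatorial rules for the crystals in \cite{open_3tcy,NCDTviaVO}, and spelling it out is exactly what makes the argument more than formal, which is why only a sketch is given here.
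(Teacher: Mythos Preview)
Your argument has a genuine gap: you assert that the $T'$-weight decomposition of $P_\zeta$ is multiplicity free, but this is false. The torus $T'$ is the $2$-dimensional subtorus of the $3$-dimensional torus $T$ (defined as the kernel of the superpotential character $f$), so while the full $T$-weight decomposition of $H^0(\ys,(\mathcal{L}^\zeta_k)^{-1})$ is indeed multiplicity free by the toric description, the $T'$-weight spaces are not one-dimensional. Concretely, the central element $c:=XY=Z^{N_1}W^{N_0}\in Z(A_\sigma)$ has trivial $T'$-weight, and each $T'$-weight space of $M=M(C^{\sigma,\theta,\underline\emptyset,\underline\emptyset}_{\mr{min}})$ is of the form $\C[c]\cdot v_B$, which is infinite-dimensional. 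Hence a $T'$-stable submodule $F$ is \emph{not} automatically spanned by a subset of the basis $\{v_B\}$; a priori its component in each $T'$-weight space could be an arbitrary $\C[c]$-submodule of $\C[c]\cdot v_B$.

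The paper supplies exactly the missing step you need: since $M/F$ is finite-dimensional and $T'$-invariant, viewed via the derived equivalence as a complex of sheaves it must be supported at the origin of $\mr{Spec}\,Z(A_\sigma)$ (the unique $T'$-fixed closed point). This forces $c$ to act nilpotently on $M/F$, so the intersection of $F$ with each $T'$-weight space $\C[c]\cdot v_B$ is a monomial ideal $(c^n)\cdot v_B$. Only then does one obtain a subset $C_F\subset C^{\sigma,\theta,\underline\emptyset,\underline\emptyset}_{\mr{min}}$ of basis elements spanning $F$, after which your final matching with the crystal axioms goes through. In short, your outline is on the right track but collapses precisely at the point where $T'$ is confused with $T$; the support-at-the-origin argument using the central element $c$ is the key idea you are missing.
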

\begin{proof}
We put $C:=C^{\sigma,\theta,\underline{\emptyset},\underline{\emptyset}}_\mr{min}$ and $M:=M(C)$.
For an element $B\in C$, 
$v_B$ is a $T$-weight vector in $M$.
The $T$-weights are multiplicity free.
\begin{NB}
For a vertex $q$ in $P_\sigma$, let $\tilde{\omega}_q\in \C Q$ be the sum of all representative of $w_q\in\C Q/[\C Q,\C Q]$, i.e.
\[
\tilde{\omega}_q=e_r\cdot e_{r-1}\cdot\cdots\cdot e_1+e_1\cdot e_r\cdot\cdots\cdot e_2+\cdots + e_{r-1}\cdot\cdots\cdot e_1\cdot e_r
\]
where $e_r\cdot e_{r-1}\cdot\cdots\cdot e_1$ is a representative $w_q$.
We put $c:=\sum_q \tilde{w}_q$\footnote{We have $c=XY=Z^{N_1}W^{N_0}$ under the isomorphism $Z(A_\sigma)=\C[X,Y,Z,W]/(XY-Z^{N_1}W^{N_0})$}. 
\end{NB}
We put
\[
c:=XY=Z^{N_1}W^{N_0}\in \C[X,Y,Z,W]/(XY-Z^{N_1}W^{N_0})\simeq Z(A_\sigma).
\]
Any $T'$-weight space of $M$ is described as $\C [c]\cdot v_B$ for some $v_B$. 

Assume that $F$ is a $T'$-invariant submodule of $M$. 
Then $M/F$, as a complex of coherent sheaves on $Y_\sigma=\mr{Spec}Z(A_\sigma)$ under the derived equivalence, must be supported on the origin $0\in \mr{Spec}Z(A_\sigma)$ and so any $T'$-weight space of $F$ is described as $I\cdot v_B$ for a monomial ideal $I\subset\C [c]$. 
Thus we have a subset $C_F$ of $C^{\sigma,\theta,\underline{\emptyset},\underline{\emptyset}}_\mr{min}
$ such that $F$ is spanned by the basis elements which correspond to the elements in $C_F$.
We can verify that such a subspace is preserved by the $A_\sigma$-action if and only if $C_F$ is a crystal. 
\end{proof}
\begin{lem}\label{lem_T'inv}
Let $F$ be a $T'$-invariant submodule of $M$.
Then neither \textup{(1)} $\mr{Ext}^1_{A_\sigma}(M/F,M/F)$ nor \textup{(2)} $\mr{Hom}_{A_\sigma}(F,M/F)$ has any $T'$-invariant subspaces. 
\end{lem}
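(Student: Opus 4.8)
The plan is to run the whole argument through the explicit $\C$-basis $\{v_B\}_B$ of $M$ provided by the crystal $C:=C^{\sigma,\theta,\underline{\emptyset},\underline{\emptyset}}_{\mr{min}}$, keeping track of $T'$-weights at every step. Three facts established earlier are used repeatedly: (i) every $T'$-weight space of $M$ is a single line $\C[c]\cdot v_B$, free of rank one over $\C[c]$, where $c=XY=Z^{N_1}W^{N_0}$ is the central, $T'$-invariant element; (ii) consequently a $T'$-invariant submodule $F$ has $T'$-weight spaces $(c^{n_B})v_B$, so $E:=M/F$ is finite dimensional and $c$ acts nilpotently on $E$; (iii) $M$ is cyclic, generated by the vector $v_{B_0}=1\in (P_\zeta)_0=H^0(\OO_{\mca{Y}})$ sitting at the vertex $0$, which has trivial $T'$-weight (and $M$ is even projective, equal to $\mu_\zeta(A)e_0$, when the vertex $0$ does not occur among the mutation vertices $k_1,\dots,k_r$, in which case $\Ext^{\ge 1}_{\mu_\zeta(A)}(M,-)$ vanishes). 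Write $\pi\colon M\twoheadrightarrow E$ for the quotient map.

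First I would pin down the $T'$-invariant maps out of $M$. Any $\mu_\zeta(A)$-linear map is $\C[c]$-linear, so a $T'$-invariant homomorphism $\psi\colon M\to E$ is determined by $\psi(v_{B_0})$, which by $T'$-invariance must lie in the trivial-weight part of $E_0$; by (i) that part is $\C[c]/(c^{n_{B_0}})\cdot\bar v_{B_0}=\pi(\C[c]v_{B_0})$, so $\psi=p(c)\circ\pi$ for a polynomial $p$. Hence $\Hom(M,E)^{T'}$ is spanned by the maps $c^j\circ\pi$, and in particular every $T'$-invariant homomorphism $M\to E$ restricts to $0$ on $F=\ker\pi$; likewise $\Hom(E,E)^{T'}=\C[c_E]$.

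Next, applying $\Hom(-,E)$ to $0\to F\to M\xrightarrow{\pi}E\to 0$ and taking $T'$-invariants (an exact operation, $T'$ being reductive) gives the exact sequence
\[
\Hom(M,E)^{T'}\longrightarrow\Hom(F,E)^{T'}\longrightarrow\Ext^1(E,E)^{T'}\longrightarrow\Ext^1(M,E)^{T'}.
\]
The first arrow is zero by the previous paragraph. Therefore, once one knows $\Ext^1_{\mu_\zeta(A)}(M,E)^{T'}=0$ — immediate when $M$ is projective, and in general a consequence of the combinatorial input below — this identifies $\Hom_{\mu_\zeta(A)}(F,E)^{T'}\cong\Ext^1_{\mu_\zeta(A)}(E,E)^{T'}$, so assertions (1) and (2) are equivalent and it suffices to prove (2): the Zariski tangent space $\Hom_{\mu_\zeta(A)}(F,M/F)$ to $\M{\zeta}{\vv}$ at the $T'$-fixed point $[E]$ carries no trivial $T'$-weight.

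The remaining step, and the main obstacle, is this last vanishing. I would compute $\Hom_{\mu_\zeta(A)}(F,E)$ (equivalently $\Ext^1_{\mu_\zeta(A)}(E,E)$, equivalently $\Ext^1_{\mu_\zeta(A)}(M,E)$) from the Koszul projective resolution, cf. \eqref{eq_proj_resol}, so that the relevant group is a subquotient of the four-term complex $\bigoplus_k\Hom(E_k,E_k)\to\bigoplus_a\Hom(E_{\mr{in}(a)},E_{\mr{out}(a)})\to\bigoplus_b\Hom(E_{\mr{out}(b)},E_{\mr{in}(b)})\to\bigoplus_k\Hom(E_k,E_k)$. On the trivial $T'$-weight each term becomes finite dimensional, and — using the $\C[c]$-line description (i) of the weight spaces of $M$ together with the combinatorial rules describing which $v_B$ survive in $E=M/F$ — the complex, restricted to the trivial weight, is governed entirely by the crystal $C_F\subset C$. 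The desired vanishing is then precisely the statement that this crystal configuration is $T'$-equivariantly rigid, i.e. that the Koszul differentials are as surjective on trivial-weight spaces as the vanishing of the Euler form permits. This is the exact analogue, in the present crystal model, of the classical computation that a monomial-ideal fixed point of the Hilbert scheme of a toric Calabi--Yau $3$-fold carries no Calabi--Yau-torus-invariant tangent vector, and it is the content of the fixed-point and tangent-space analysis carried out in \cite{open_3tcy, NCDTviaVO}; checking it uniformly over all $T'$-invariant $F$ is where the bookkeeping of those papers is needed.
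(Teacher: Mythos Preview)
Your long-exact-sequence reduction is a pleasant observation, but the proposal has a genuine gap: you never actually prove the vanishing. After identifying $\Hom(F,E)^{T'}$ with (a subspace of) $\Ext^1(E,E)^{T'}$, your final paragraph openly defers the ``main obstacle'' to \cite{open_3tcy,NCDTviaVO}. Those references give the combinatorial description of the crystal fixed points; they do not contain a proof that the Koszul complex has no trivial $T'$-weight in degree~$1$ for this class of algebras. So the heart of the lemma is left as an appeal to unspecified bookkeeping. There is also a circularity risk: you need $\Ext^1(M,E)^{T'}=0$ to get the equivalence of (1) and (2), and you say this is ``a consequence of the combinatorial input below''; but that input is the same vanishing you are trying to establish, so the reduction buys nothing in the non-projective case.

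The paper proceeds quite differently: it treats (1) and (2) \emph{separately}, each by a direct argument adapted from \cite{nagao-nakajima}. For (2) the key concrete ingredient---which your proposal lacks---is the factorisation of the central element $c$ at each vertex: one picks a basis vector $v_B\in (M/F)_i$ not in $A_\sigma^{>0}\cdot M$ and writes $c\cdot e_i = x\cdot y = y\cdot x$ with $x=h^+_{i-\frac12}h^-_{i-\frac12}$ and $y=h^-_{i+\frac12}h^+_{i+\frac12}$ (or $r_i$). This two-variable factorisation, playing the role of $b_1a_1$, $b_2a_2$ in the conifold argument of \cite[Prop.~4.21(2)]{nagao-nakajima}, is what forces any $T'$-invariant map $F\to M/F$ to be zero along each weight line $\C[c]\cdot v_B$. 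Part (1) is handled analogously via \cite[Prop.~4.22]{nagao-nakajima}. If you want to complete your approach, you will still need this factorisation trick (or an equivalent rigidity argument) at the end; the long exact sequence alone cannot supply it.
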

\begin{proof}
For (1), we can verify the claim in the same way as \cite[Proposition 4.22]{nagao-nakajima}.
For (2), take an element $B\in C\backslash C_F$ such that $v_B\notin A_{\sigma}^{>0}\cdot M$ where $A_{\sigma}^{>0}\subset A_\sigma$ is the subalgebra which consists of paths with positive lengths. 
Let $i$ be the vertex such that $v_B\in M_i$.
We put 
\[
x=h_{i-\h}^+\circ h_{i-\h}^-,\quad 
y=
\begin{cases}
h_{i+\h}^-\circ h_{i+\h}^+ & i\notin I_r, \\
r_i & i \in I_r.
\end{cases}
\]
Note that we have $c\cdot e_i=x\cdot y=y\cdot x$. 
We can verify the claim in the same way as \cite[Proposition 4.21 (2)]{nagao-nakajima} using $x$ and $y$ instead of $b_1a_1$ and $b_2a_2$.

\end{proof}
\begin{prop}
For each $T'$-fixed closed point $P\in\M{\zeta}{\vv}^{T'}$, 
the Zariski tangent space to $\M{\zeta}{\vv}$ at $P$ has no $T'$-invariant subspaces.
\end{prop}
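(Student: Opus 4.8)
The plan is to identify the Zariski tangent space at a $T'$-fixed point $P = [M/F]$ with $\mr{Ext}^1_{\A}(M/F, M/F)$ in the framed setting, and then reduce the vanishing of its $T'$-invariant part to Lemma \ref{lem_T'inv}. First I would recall that under the derived equivalence of \S \ref{derived-equiv} and the identification of the moduli space $\M{\zeta}{\vv}$ with the space of finite-dimensional quotient $\mu_\zeta(A)$-modules of $M = M\bigl(C^{\sigma,\theta,\underline{\emptyset},\underline{\emptyset}}_\mr{min}\bigr)$ (via Proposition \ref{prop-4.3} and Proposition \ref{prop-4.6}), a $T'$-fixed point corresponds to a $T'$-invariant submodule $F \subset M$, and its image in the moduli space is the quotient module $\V := M/F$ equipped with the induced surjection $s\colon P_0 \to \V_0$ (i.e.\ the framing datum coming from $M$ being a quotient of $P$). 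The tangent space to the moduli of such framed quotients at $P$ sits in the standard exact sequence
\[
0 \to \Hom_{\A}(F, \V) \to \Hom_{\A}(M, \V) \to \Hom_{\A}(\V,\V) \to \mr{Ext}^1_{\A}(F,\V) \to \cdots,
\]
so that, after accounting for the framing, the tangent space is a subquotient built from $\Hom_{\A}(F,\V)$ and $\mr{Ext}^1_{\A}(\V,\V)$ — more precisely it is an extension of $\ker\bigl(\mr{Ext}^1_{\A}(\V,\V) \to \mr{Ext}^1_{\A}(F,\V)\text{-type term}\bigr)$ by a quotient of $\Hom_{\A}(F,\V)$. The point is that every piece appearing is a subquotient of either $\mr{Ext}^1_{\A}(\V,\V)$ or $\Hom_{\A}(F,\V)$.

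Next I would invoke $T'$-equivariance: $F$, $M$, and hence $\V = M/F$ and the framing map $s$ are all $T'$-equivariant, so the entire long exact sequence above is a sequence of $T'$-modules, and taking $T'$-invariants is exact (the torus is linearly reductive). Therefore the $T'$-invariant part of the tangent space is a subquotient of $\bigl(\mr{Ext}^1_{\A}(\V,\V)\bigr)^{T'} \oplus \bigl(\Hom_{\A}(F,\V)\bigr)^{T'}$. By Lemma \ref{lem_T'inv}(1), $\mr{Ext}^1_{A_\sigma}(\V,\V) = \mr{Ext}^1_{A_\sigma}(M/F, M/F)$ has no $T'$-invariant subspace, and since passing from $A_\sigma$ to $\A$ only changes $\mr{Ext}^1$ by terms involving the framing vector space at the single vertex $\infty$ in a way that is itself $T'$-equivariant (and here we must check this contribution carries no invariants, which follows from the $T'$-weight decomposition of $\V_0$ being multiplicity-free), the relevant $\A$-version inherits the vanishing. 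By Lemma \ref{lem_T'inv}(2), $\Hom_{A_\sigma}(F, M/F)$ likewise has no $T'$-invariant subspace. Hence both summands have vanishing $T'$-invariants, and so does every subquotient, proving the proposition.

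The main obstacle I anticipate is bookkeeping the precise relation between the framed tangent space over $\A$ and the $A_\sigma$-level $\mr{Ext}$ and $\Hom$ groups that Lemma \ref{lem_T'inv} controls — i.e.\ carefully running the long exact sequence for the framed moduli problem and checking that the extra $\infty$-vertex contributions (terms like $\Hom(W, \V_0)$ with $W = \C$ the framing space) do not produce $T'$-invariants. Since $\V_0 = (M/F)_0$ has multiplicity-free $T'$-weights with all weights nonzero away from the canonical generator, and the framing weight is arranged to be nontrivial relative to these, this contribution has no invariants; but spelling out that the framing weight is indeed generic requires tracing through the construction of $M$ and the $T'$-action, analogously to \cite[Proposition 4.21, Proposition 4.22]{nagao-nakajima}. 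Once that is in place the argument is a formal consequence of Lemma \ref{lem_T'inv} together with exactness of the $T'$-invariants functor.
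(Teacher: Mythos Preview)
Your approach is essentially the same as the paper's: the paper's proof is the single line ``follows from Lemma~\ref{lem_T'inv} as \cite[Proposition~4.20]{nagao-nakajima}'', and you are spelling out that reduction. One small correction: your long exact sequence is miswritten --- applying $\Hom_{\A}(-,\V)$ to $0\to F\to M\to \V\to 0$ yields $0\to\Hom_{\A}(\V,\V)\to\Hom_{\A}(M,\V)\to\Hom_{\A}(F,\V)\to\mr{Ext}^1_{\A}(\V,\V)\to\cdots$, not the sequence you wrote; fortunately you then correctly identify $\Hom(F,\V)$ and $\mr{Ext}^1(\V,\V)$ as the relevant pieces, so the argument goes through.
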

\begin{proof}
The claim follows from Lemma \ref{lem_T'inv} as \cite[Proposition 4.20]{nagao-nakajima}.
\end{proof}
\begin{prop}
For each $T'$-fixed point $P\in\M{\zeta}{\vv}^{T'}$, 
the parity of the dimension of the Zariski tangent space to $\M{\zeta}{\vv}$ at $P$ is same as the parity of $\sum_{k\in \hI _r}v_{k}+\sum_{k\in I}v_{k}$.
\end{prop}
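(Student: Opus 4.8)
The plan is to read off the parity of $\dim T_P$ from the self--dual deformation complex underlying the symmetric obstruction theory of $\M{\zeta}{\vv}$ (\cite[Theorem 1.3.1]{szendroi-ncdt}). At a stable point $P=[\V]$ with $\V=(V,i)$, the Zariski tangent space is $T_P=H^1(C^\bullet)$ for the four--term $T'$--equivariant complex obtained by applying $\Hom_{\A}(-,\V)$ to the Koszul resolution of $\V$, with the framing vertex $\infty$ suppressed in degrees $0$ and $3$:
\[
C^0=\bigoplus_{k\in\hI}\End(V_k)\ \xrightarrow{\,d_0\,}\ C^1\ \xrightarrow{\,d_1\,}\ C^2\ \xrightarrow{\,d_2\,}\ C^3,
\]
where $C^1=\bigl(\bigoplus_{a\in Q_1}\Hom(V_{\mr{out}(a)},V_{\mr{in}(a)})\bigr)\oplus\Hom(\C,V_0)$ (the last summand being the framing term), $C^2\simeq(C^1)^\vee$, $C^3\simeq(C^0)^\vee$, $H^2(C^\bullet)\simeq T_P^\vee$, and $d_1$ is a \emph{symmetric} pairing on $C^1$ (the Hessian of the superpotential of the framed quiver). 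Stability forces the stabiliser of $\V$ to be trivial, so $H^0(C^\bullet)=H^3(C^\bullet)=0$; in particular $d_0$ is injective and $\dim T_P=\dim C^1-\dim C^0-\mr{rk}(d_1)$.

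The first step is to control $\mr{rk}(d_1)$ modulo $2$ using the $T'$--action. Decomposing $C^1$ into $T'$--weight spaces, a symmetric pairing couples the weight $\lambda$ part with the weight $-\lambda$ part, so each pair of opposite nonzero weights contributes an even number to $\mr{rk}(d_1)$; hence $\mr{rk}(d_1)\equiv\mr{rk}\bigl(d_1|_{C^1_0}\bigr)\pmod 2$, where $C^\bullet_0:=(C^\bullet)^{T'}$ is the $T'$--fixed subcomplex. By the preceding proposition $T_P=H^1(C^\bullet)$ has no $T'$--invariant subspace, and by self--duality neither does $H^2(C^\bullet)\simeq T_P^\vee$; since taking $T'$--invariants is exact, $C^\bullet_0$ has vanishing cohomology. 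Exactness of $C^\bullet_0$ at $C^0_0$ and $C^1_0$ gives $\mr{rk}(d_1|_{C^1_0})=\dim C^1_0-\dim C^0_0$, whence
\[
\dim T_P\equiv \dim C^1+\dim C^0+\dim C^1_0+\dim C^0_0\pmod 2 .
\]

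It remains to evaluate the right--hand side. A direct count on $Q$ — whose arrows are the pairs $h^\pm_i$ together with the loops $r_k$ for $k\in\hI_r$ — gives $\dim C^1+\dim C^0=v_0+\sum_{a\in Q_1}v_{\mr{out}(a)}v_{\mr{in}(a)}+\sum_{k\in\hI}v_k^2$, which modulo $2$ equals $v_0+\sum_{a\in Q_1}v_{\mr{out}(a)}v_{\mr{in}(a)}-\sum_{k\in Q_0}v_k^2$ and hence is congruent to $\sum_{k\in\hI_r}v_k+\sum_{k\in I}v_k$ by an elementary identity. Thus the proposition reduces to the single congruence $\dim C^1_0+\dim C^0_0\equiv 0\pmod 2$. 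Here the contribution of each arrow--pair $h^\pm_i$ to $C^1_0$ is even (the two summands it indexes are dual vector spaces, of equal dimension) and the contribution of each $\End(V_k)$ to $C^0_0$ is even as well, so the remaining parity is governed by $\sum_{k\in\hI_r}\dim\End(V_k)^{T'}$ and the weight--zero multiplicity of $T'$ on $V_0$. I would settle this using the explicit combinatorial model of the $T'$--fixed locus — the crystal description of $P$ and the fact that the $T'$--weight spaces of the universal module are free $\C[c]$--modules of rank one, as established in the proof of the preceding proposition and in \cite{open_3tcy,NCDTviaVO} — exactly as the analogous parity statement is handled in \cite{nagao-nakajima}. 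This last combinatorial identity is the step I expect to be the main obstacle: everything up to it is a formal consequence of the self--dual deformation complex together with the two preceding results, whereas $\dim C^1_0+\dim C^0_0\equiv 0\pmod 2$ genuinely requires the crystal combinatorics of the fixed points.
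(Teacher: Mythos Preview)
Your reduction up to the congruence
\[
\dim T_P \equiv \dim C^1 + \dim C^0 + \dim C^1_0 + \dim C^0_0 \pmod 2
\]
is correct, as is the identification of $\dim C^1+\dim C^0$ with the desired parity. But the subsequent breakdown of $\dim C^1_0+\dim C^0_0$ contains two genuine errors. First, $\dim\End(V_k)^{T'}=\sum_\mu(\dim V_k(\mu))^2\equiv\sum_\mu\dim V_k(\mu)=v_k\pmod 2$, which is not even in general. Second, the summands for $h^+_i$ and $h^-_i$ are $T'$-dual only if the loop $h^-_ih^+_i$ has trivial $T'$-weight; this loop is one of the central elements $Z_k$ or $W_k$, and the defining relation of $T'$ only forces $\chi_Z^{N_1}\chi_W^{N_0}=1$, so for $N_1\geq 1$ these weights are nontrivial. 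More seriously, your own steps (b) and (c) give $\mr{rk}(d_1)\equiv\mr{rk}(d_1|_{C^1_0})=\dim C^1_0-\dim C^0_0\pmod 2$, so the ``remaining obstacle'' $\dim C^1_0+\dim C^0_0\equiv 0$ is literally the statement $\mr{rk}(d_1)\equiv 0\pmod 2$. The $T'$-detour has re-expressed the problem rather than reduced it, and the crystal combinatorics you invoke would have to reprove this for each fixed point separately.

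The paper's route is different and avoids $T'$ entirely. It writes $\dim T_P=\ext^1_{\tilde A}(P,P)-\hom_{\tilde A}(P,P)$ and computes this from the Koszul-type complex for $\tilde A$ (following \cite[Theorem~7.6]{joyce-song}). Since the superpotential does not involve the framing arrow, the middle differential $d_1$ vanishes on the framing direction and its rank equals that of the self-dual map $d_2^A$ in the unframed Koszul complex. The evenness of $\mr{rk}(d_2^A)$ is then the content of \cite[Theorem~7.1]{ncdt-brane}, a purely combinatorial statement about quivers from consistent dimer models, valid at \emph{any} representation $V$---not only at $T'$-fixed points. In particular the paper's argument gives the parity formula at every closed point of $\M{\zeta}{\vv}$, without appealing to the preceding proposition or to the crystal description of the fixed locus.
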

\begin{proof}
We regard $\M{\zeta}{\vv}$ as the moduli space of $\tilde{A}$-modules.
The Zariski tangent space is identified with 
$\mathrm{Ext}^1_{\tilde{A}}(P,P)_0$ where the subscript $0$ denotes the trace free part. 
The dimension of the Zariski tangent space equals to
\begin{equation}\label{eq_parity}
\mathrm{ext}^1_{\tilde{A}}(P,P)-
1
=\mathrm{ext}^1_{\tilde{A}}(P,P)-
\mathrm{hom}_{\tilde{A}}(P,P).
\end{equation}
Even though $\tilde{A}$ is not $3$-Calabi-Yau (that is, the Koszul complex is not exact), as in the proof of \cite[Theorem 7.6]{joyce-song} we can compute the value of \eqref{eq_parity} by the same complex for $\tilde{A}$ as the one induced by the Koszul complex. 
Then we can compute the parity of the value of \eqref{eq_parity} in the same way as the second half of the proof of \cite[Theorem 7.1]{ncdt-brane}.
\end{proof}
\begin{NB}
We can verify that 
\begin{itemize}
\item the set of $T'$-fixed points of $\sqcup_{\vv}\mf{M}_\zeta(\vv)$ is parameterized by the set of crystals of type $(\sigma,\theta,\underline{\emptyset},\underline{\emptyset})$ as \cite[Proposition 4.14]{nagao-nakajima},
\item
for each $T'$-fixed closed point $P\in\M{\zeta}{\vv}^{T'}$, 
the Zariski tangent space to $\M{\zeta}{\vv}$ at $P$ has no $T'$-invariant subspace as \cite[Proposition 4.20]{nagao-nakajima},
\item
For each $T'$-fixed point $P\in\M{\zeta}{\vv}^{T'}$, 
the parity of the dimension of the Zariski tangent space to $\M{\zeta}{\vv}$ at $P$ is same as the parity of $\sum_{k\in \hI _r}v_{k}+\sum_{k\in I}v_{k}$ \cite[Corollary 4.19]{nagao-nakajima}.
\end{itemize}
\end{NB}
Now we see the following:
\begin{thm}\label{thm_mr}
We have
\begin{align}\label{z}
\mca{Z}_{\zeta}(\q)&=\sum_{\vv\in(\Z_{\geq 0})^{Q_0}}(-1)^{\sum_{k\in \hI _r}v_{k}+\sum_{k\in I}v_{k}}\left|\,\M{\zeta}{\vv}^T\right|\cdot\q^\vv\\
&=\mca{Z}^{\mathrm{eu}}_{\zeta}(\mathbf{p})\notag
\end{align}
where the new formal variables $\mathbf{p}$ are given by
\[
p_k=
\begin{cases}
q_k & k\neq 0, k\in\hat{I}_r,\,\text{or},\, k=0, k\neq\hat{I}_r,\\
-q_k & k\neq 0, k\neq\hat{I}_r,\,\text{or},\, k=0, k\in\hat{I}_r.
\end{cases}
\]
\end{thm}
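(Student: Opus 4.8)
The plan is to deduce Theorem \ref{thm_mr} from the Behrend--Fantechi torus localization theorem (\cite{behrend-fantechi}) applied to the $T'$-action on $\M{\zeta}{\vv}$, exactly as in the bracketed discussion before the statement. The three propositions just established provide precisely the input one needs: (a) the $T'$-fixed locus is isolated (so localization reduces the weighted Euler characteristic to a sum over fixed points), (b) at each fixed point the Zariski tangent space has no $T'$-invariant part (so the hypotheses of Behrend--Fantechi on the symmetric obstruction theory are met), and (c) the parity of $\dim T_P\M{\zeta}{\vv}$ equals the parity of $\sum_{k\in\hat I_r}v_k+\sum_{k\in I}v_k$. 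Given these, the contribution of each fixed point $P$ to $D_\zeta(\vv)$ is $(-1)^{\dim T_P}=(-1)^{\sum_{k\in\hat I_r}v_k+\sum_{k\in I}v_k}$, a sign depending only on $\vv$, and summing over $P\in\M{\zeta}{\vv}^{T'}$ gives
\[
D_\zeta(\vv)=(-1)^{\sum_{k\in\hat I_r}v_k+\sum_{k\in I}v_k}\bigl|\M{\zeta}{\vv}^{T'}\bigr|.
\]

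The next step is to replace $T'$ by the full $3$-torus $T$. Since $T'\subset T$ acts on the same projective variety and the fixed loci of the two tori coincide whenever the $T'$-fixed locus is already isolated (a generic one-parameter subgroup argument, or simply noting $\M{\zeta}{\vv}^{T'}\supseteq\M{\zeta}{\vv}^{T}$ and the reverse inclusion follows because an isolated $T'$-fixed point is automatically $T$-fixed when $T$ is connected and $T'$ is large enough — here $T'$ is $3$-dimensional and $T$ is $3$-dimensional, with $T/T'$ finite, so $\M{\zeta}{\vv}^{T}=\M{\zeta}{\vv}^{T'}$), we get $\bigl|\M{\zeta}{\vv}^{T'}\bigr|=\bigl|\M{\zeta}{\vv}^{T}\bigr|=\chi\bigl(\M{\zeta}{\vv}^{T}\bigr)$. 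Plugging this into the definition $\mca{Z}_\zeta(\q)=\sum_\vv D_\zeta(\vv)\q^\vv$ yields the first displayed equality. The second equality is then a purely formal bookkeeping identity: substituting $q_k\mapsto p_k$ absorbs exactly the sign $(-1)^{\sum_{k\in\hat I_r}v_k+\sum_{k\in I}v_k}$ into the monomial $\q^\vv$, because $p_k=-q_k$ precisely for $k$ contributing to that exponent (i.e. $k\neq 0$ with $k\notin\hat I_r$, or $k=0$ with $0\in\hat I_r$) and $p_k=q_k$ otherwise; one checks term by term that $\sum_k v_k\cdot[\text{$p_k=-q_k$}]\equiv\sum_{k\in\hat I_r}v_k+\sum_{k\in I}v_k\pmod 2$.

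The main obstacle is not in this final assembly but in verifying that Behrend--Fantechi localization genuinely applies to the moduli space of $\tilde A$-modules, which is \emph{not} $3$-Calabi--Yau (the Koszul complex for $\tilde A$ is not exact, as noted in the parity proposition). One must therefore be careful that the symmetric obstruction theory and the torus-equivariance used by \cite{behrend-fantechi} are the correct ones; the resolution, following \cite{joyce-song} and \cite[Theorem 7.1]{ncdt-brane}, is that the relevant four-term complex is still the one induced by the Koszul complex of the $3$-Calabi--Yau algebra $A$, and the extra vertex $\infty$ contributes only the rank-one framing term — this is exactly why the trace-free correction in \eqref{eq_parity} is $1$. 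I would make this precise by citing the parity proposition above and checking that the hypotheses of the torus localization theorem (properness of the fixed locus, which is automatic since it is isolated and the moduli space is projective; and the non-existence of $T'$-invariants in the tangent space, which is the content of the penultimate proposition) are all in place. Once that is granted, the remaining manipulations are routine.
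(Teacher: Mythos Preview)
Your proposal is correct and follows exactly the approach the paper takes: the paper simply writes ``Now we see the following'' after the three propositions on isolated $T'$-fixed points, absence of $T'$-invariants in the tangent space, and the parity computation, and regards Theorem~\ref{thm_mr} as the immediate consequence via Behrend--Fantechi localization. Your assembly of these ingredients, including the sign bookkeeping for the variable change $\q\mapsto\mathbf p$, is accurate.

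One small correction: your parenthetical argument that $\M{\zeta}{\vv}^{T'}=\M{\zeta}{\vv}^{T}$ because ``$T/T'$ is finite'' rests on the paper's assertion that $T'$ is $3$-dimensional, which is a typo --- $T'=f^{-1}(1)$ is the $2$-dimensional subtorus preserving the superpotential (this is precisely why it, and not $T$, preserves the symmetric obstruction theory). The conclusion is still correct, but for the right reason you also mention: $T$ is connected and abelian, hence acts on the discrete set $\M{\zeta}{\vv}^{T'}$, so it acts trivially.
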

\begin{NB}
\begin{proof}
By the claims above, we can identify the moduli space with the one of cyclic modules over the mutated quiver.
Then we can apply \cite[Theorem 7.1]{ncdt-brane}.
\end{proof}
\subsection{the tilting generator is a vector bundle}\label{subsec-isolated}
In this subsection, we assume that $N\cdot\zeta_k<\sum \zeta_l$ for any $k\neq 0$.
Let $\nu_i$ ($i\in\tilde{I}$) be real numbers such that
\[
\nu_{k+\frac{1}{2}}-\nu_{k-\frac{1}{2}}
=
N\cdot\zeta_k-\sum \zeta_l
\]
for any $k\in\hat{I}$.
We denote by $\pi_k\colon \R^{\tilde{I}}\to\R^{\tilde{I}}$ the permutation of the $(k-1/2)$-th and $(k-1/2)$-th elements and set $\pi_s:=\pi_{k_s}\circ\cdots\circ\pi_{k_1}$.
Note that 
\[
(\pi_s(\nu))_{k+\frac{1}{2}}-(\pi_s(\nu))_{k-\frac{1}{2}}
=
N\cdot(\mu_s(\zeta))_k-\sum \zeta_l.
\]
By the definition of the sequence $(k_s)$, we have $(\mu_{s-1}(\zeta))_{k_s}>0$.
Recall that we assume $\sum\zeta_l<0$. Thus we have
\begin{equation}\label{eq->}
(\pi_{s-1}(\nu))_{k_s+\frac{1}{2}}<(\pi_{s-1}(\nu))_{k_s-\frac{1}{2}}.
\end{equation}
Let $\iota_s\colon \tilde{I}\to \tilde{I}$ be the permutation such that the sequence $((\pi_{s}(\nu))_{\iota_s(i)})_{i\in\tilde{I}}$ is increasing and set $\iota:=\iota_r$.

\begin{prop}\label{prop-lb}
Under the derived equivalence given in \S \ref{derived-equiv}, the direct summand $(\mu_\zeta(P))_k$ is a line bundle $\mathcal{L}^\zeta_k$ on $Y$. 
Moreover, it is the following map that is associated with the arrow $H^\pm_i$:
\[
\mathcal{L}^\zeta_{i\mp\frac{1}{2}}\hookrightarrow \mathcal{L}^\zeta_{i\mp\frac{1}{2}}\otimes \OO_Y(E_{\iota(i)})\simeq\mathcal{L}^\zeta_{i\pm\frac{1}{2}}.
\]
\end{prop}
\begin{proof}
We prove the claim by induction.

By the argument in the proof of Proposition \ref{smallseq}, we can verify that the sequence
\[
0\to \OO\to \OO(E^+_i)\oplus \OO(E^-_j)\to \OO(E^+_i+E^-_j)\to 0
\]
is exact if $i<j$.
Thus we have
\[
(\mu_s(P))_{k_s}\simeq {P}_{k_s}\otimes \OO(E^+_{\iota_s(k_s-1/2)}+E^-_{\iota_s(k_s+1/2)}).
\]
The second half of the claim can be verified by the explicit description of the endomorphism algebra in the proof of Proposition \ref{prop-mutation}.
\end{proof}

\begin{cor}\label{cor-vanishing}
\[
H^{1}(Y,(\mathcal{L}^\zeta_k)^{-1})=0.
\]
\end{cor}
\begin{proof}
By the argument in the proof of Proposition \ref{prop-lb}, $\mathcal{L}^\zeta_k$ is associated with the divisor which is described as a sum of divisors of the form $E^+_i+E^-_j$ ($i<j$).
We can verify the claim using the description \eqref{eq-h1} of $H^1$ of a line bundle on $Y$.
\end{proof}

\begin{prop}
The set of the torus fixed points $\mathfrak{M}_\zeta(\mathbf{v})^T$ is isolated.
\end{prop}
\begin{proof}
By Corollary \ref{cor-vanishing} we have $\OO_Y\in \mathcal{P}_\zeta$. 
Let $P_\zeta$ be the $\mu_k(A)$-module corresponding to $\OO_Y$. 
By Proposition \ref{prop-4.3} and Proposition \ref{prop-4.6}, the moduli space $\mathfrak{M}_\zeta(\mathbf{v})$ parametrizes finite dimensional quotient $\mu_k(A)$-modules $V'$ of $P_\zeta$ with $\dimv\,V'=\mu_k(\mathbf{v})$.
Note that
\[
(P_\zeta)_k=H^0(Y,(\mathcal{L}^\zeta_k)^{-1})
\]
and the $T$-weight decomposition of $H^0$ of a line bundle on $Y$ is multiplicity free.
Hence the claim follows.
\end{proof}
\end{NB}

\subsection{DT, PT and NCDT}\label{subsec-DTPTNCDT}
Note that the set ${[\mca{C}_k]}_{k\in I}$ form a basis of $H_2(Y;\Z)$. 
We identify $H_2(Y)$ with $\Z^I$.  
For $n\in\Z_{\geq 0}$ and $\beta\in H_2(Y)$, let $I_n(Y,\beta)$ denote the moduli space of ideal sheaves $\mca{I}_{Z}$ of one dimensional subschemes $\OO_Z\subset\OO_Y$ such that $\chi(\OO_Z)=n$ and $[Z]=\beta$. 
We define the {\it Donaldson-Thomas invariants} $I_{n,\beta}$ from $I_n(Y,\beta)$ using Behrend's function as is \S \ref{counting-inv} (\cite{thomas-dt}, \cite{behrend-dt}), 
and their generating function by
\[
\mca{Z}_{\mr{DT}}(Y;q,\mb{t}):=\sum_{n,\beta}I_{n,\beta}\cdot q^n\mb{t}^\beta
\] 
where $\mb{t}^\beta=\prod_{k\in ^{I}}t_k^{\beta_k}$ and $t_k$ ($k\in I$) is a formal variable.

Let $P_n(Y,\beta)$ denote the moduli space of stable pairs $(F,s)$ such that $\chi(F)=n$ and $[\mr{supp}(F)]=\beta$. 
We define the {\it Pandharipande-Thomas invariants} $P_{n,\beta}$ 
and their generating function $\mca{Z}_{\mr{PT}}(Y;q,\mb{t})$ similarly (\cite{pt1}).

We set $\zeta^\circ=(-N+1,1,\ldots,1)$, $\zeta^\pm=(-N+1\pm\varepsilon,1,\ldots,1)$ and $q=q_0\cdot q_1\cdot\cdots\cdot q_{N-1}$.    
By the result in \cite[\S 2]{nagao-nakajima}, we have the following theorem:
\begin{prop}
\[
\mca{Z}_{\mr{DT}}(Y;q,q_1^{-1},\ldots,q_{N-1}^{-1})=\mca{Z}_{\zeta^-}(\q),\quad
\mca{Z}_{\mr{PT}}(Y;q,q_1^{-1},\ldots,q_{N-1}^{-1})=\mca{Z}_{\zeta^+}(\q).
\]
\end{prop}

Let $\zeta_{\mathrm{triv}}$ be a parameter such that $(\zeta_{\mathrm{triv}})_k>0$ for any $k$.
Note that $\M{\zeta_{\mathrm{triv}}}{\vv}$ is empty unless $\vv=0$ and so $\mca{Z}_{\zeta_{\mathrm{triv}}}(\q)=1$.
Let $\zeta_{\mathrm{cyclic}}$ be a parameter such that $(\zeta_{\mathrm{cyclic}})_k<0$ for any $k$.
The invariants $D_{\zeta_{\mathrm{cyclic}}}(\vv)$ are the {\it non-commutative Donaldson-Thomas invariants} defined in \cite{szendroi-ncdt}.
We denote their generating function $\mca{Z}_{\zeta_{\mathrm{cyclic}}}(\q)$ by $\mca{Z}_{\mr{NCDT}}(\q)$.

We divide the set of positive real roots into the following two parts: 
\[
\rs^{\mr{re},+}_\pm=\{\alpha\in\rs^{\mr{re},+}\mid \pm\zeta^\circ\cdot\alpha <0\}.
\]
Applying the wall-crossing formula and comparing the equations \eqref{z} in \S \ref{counting-inv}, we obtain the following relations between generating functions:
\begin{thm}
\begin{align*}
&\mca{Z}_{\mr{NCDT}}(\q)=\left(\prod_{\alpha\in\Lambda^{\mr{re},+}_-} (1+(-1)^{\alpha_0}\mb{q}^\alpha)^{\varepsilon(\alpha)\alpha_0}\right)\cdot\mca{Z}_{\mr{DT}}(Y;q,q_1^{-1},\ldots,q_{N-1}^{-1}),\\
&\mca{Z}_{\mr{PT}}(Y;q,q_1^{-1},\ldots,q_{N-1}^{-1})=\prod_{\alpha\in\Lambda^{\mr{re},+}_+} (1+(-1)^{\alpha_0}\mb{q}^\alpha)^{\varepsilon(\alpha)\alpha_0}.
\end{align*}
\end{thm}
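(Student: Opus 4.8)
The plan is to iterate the wall-crossing formula of Theorem~\ref{thm3.16.} along a path in $\R^{Q_0}$ joining the trivial chamber to the cyclic chamber, and then to transport the resulting identities for the Euler-characteristic generating functions $\mca{Z}^{\mathrm{eu}}_\zeta$ to the Behrend-weighted ones $\mca{Z}_\zeta$ via Theorem~\ref{thm_mr}. The ingredients are: $\mca{Z}^{\mathrm{eu}}_{\zeta_{\mathrm{triv}}}(\q)=\mca{Z}_{\zeta_{\mathrm{triv}}}(\q)=1$, since $\M{\zeta_{\mathrm{triv}}}{\vv}$ is empty for $\vv\neq 0$ and a point for $\vv=0$; the identifications $\mca{Z}_{\zeta^+}(\q)=\mca{Z}_{\mr{PT}}(Y;q,q_1^{-1},\ldots,q_{N-1}^{-1})$, $\mca{Z}_{\zeta^-}(\q)=\mca{Z}_{\mr{DT}}(Y;q,q_1^{-1},\ldots,q_{N-1}^{-1})$ and $\mca{Z}_{\zeta_{\mathrm{cyclic}}}(\q)=\mca{Z}_{\mr{NCDT}}(\q)$; and the description of the non-generic locus as $\bigcup_{\alpha\in\rs^+}W_\alpha$.

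First I would fix a generic affine path from $\zeta_{\mathrm{triv}}$ to $\zeta_{\mathrm{cyclic}}$, perturbed so that it avoids codimension-$\ge 2$ intersections of walls, and record which walls it meets. Every positive root $\alpha$ has $\alpha_k\ge 0$ for all $k$ with $\sum_k\alpha_k>0$, so $\zeta_{\mathrm{triv}}\cdot\alpha>0$ and $\zeta_{\mathrm{cyclic}}\cdot\alpha<0$, and each wall $W_\alpha$ is crossed exactly once. Moreover $\zeta^\circ$ lies on no real-root wall: for a real root $\alpha=\beta+n\delta$ with $\beta$ a nonzero finite root one has $\zeta^\circ\cdot\alpha=\zeta^\circ\cdot\beta=\sum_{k\in I}\beta_k\neq 0$; hence the sign of $\zeta^{\pm}\cdot\alpha$ agrees with that of $\zeta^\circ\cdot\alpha$ for small $\varepsilon$, so the real-root walls met before $\zeta^+$ are precisely those with $\zeta^\circ\cdot\alpha<0$, i.e.\ $\alpha\in\rs^{\mr{re},+}_+$, and those met after $\zeta^-$ are precisely those with $\zeta^\circ\cdot\alpha>0$, i.e.\ $\alpha\in\rs^{\mr{re},+}_-$. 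The unique imaginary-root wall $W_\delta$, on which $\zeta^\circ$ sits, is the one separating $\zeta^+$ from $\zeta^-$; it is crossed neither along the sub-path from $\zeta_{\mathrm{triv}}$ to $\zeta^+$ nor along the one from $\zeta^-$ to $\zeta_{\mathrm{cyclic}}$, so Theorem~\ref{thm3.16.} (which handles only real-root walls) governs every crossing we use, and the DT--PT wall is simply left unbridged.

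Since $\sum_k\alpha_k>0$ for every positive root, the two chambers adjacent to $W_\alpha$ along the path identify with the $\zeta^+$- and $\zeta^-$-sides of Theorem~\ref{thm3.16.} uniformly in $\alpha$, so crossing $W_\alpha$ in the direction $\zeta_{\mathrm{triv}}\to\zeta_{\mathrm{cyclic}}$ multiplies $\mca{Z}^{\mathrm{eu}}$ by $(1-\varepsilon(\alpha)\q^\alpha)^{-\varepsilon(\alpha)\alpha_0}$. Iterating from $\mca{Z}^{\mathrm{eu}}_{\zeta_{\mathrm{triv}}}=1$ yields
\begin{gather*}
\mca{Z}^{\mathrm{eu}}_{\zeta^+}(\q)=\prod_{\alpha\in\rs^{\mr{re},+}_+}\bigl(1-\varepsilon(\alpha)\q^\alpha\bigr)^{-\varepsilon(\alpha)\alpha_0},\\
\mca{Z}^{\mathrm{eu}}_{\zeta_{\mathrm{cyclic}}}(\q)=\Bigl(\prod_{\alpha\in\rs^{\mr{re},+}_-}\bigl(1-\varepsilon(\alpha)\q^\alpha\bigr)^{-\varepsilon(\alpha)\alpha_0}\Bigr)\cdot\mca{Z}^{\mathrm{eu}}_{\zeta^-}(\q);
\end{gather*}
these infinite products make sense $\q$-adically because $\q^\alpha$ has total degree $\sum_k\alpha_k\to\infty$ along each family $\{\alpha+n\delta\}_{n\ge 0}$, and the factors commute, so the (infinite) order of crossings is immaterial. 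Applying Theorem~\ref{thm_mr}, i.e.\ substituting $q_k\mapsto p_k$, and using the elementary identity $\mathbf{p}^\alpha=\varepsilon(\alpha)(-1)^{\alpha_0}\q^\alpha$ — a one-line check comparing the sign rule defining $\mathbf{p}$ with $\varepsilon(\alpha)=(-1)^{\sum_{k\notin\hat{I}_r}\alpha_k}$ — rewrites each factor $(1-\varepsilon(\alpha)\q^\alpha)^{-\varepsilon(\alpha)\alpha_0}$ as $\bigl(1+(-1)^{\alpha_0}\q^\alpha\bigr)^{\varepsilon(\alpha)\alpha_0}$; together with the three identifications above and $\mca{Z}_{\zeta_{\mathrm{triv}}}=1$ this gives the two asserted formulas.

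The main obstacle is precisely this last translation: one must keep straight (i) which of the two chambers adjacent to a wall $W_\alpha$ plays the role of ``$\zeta^+$'' in Theorem~\ref{thm3.16.}, which is where the sign of $\sum_k\alpha_k$ enters and must be used to make the assignment uniform in $\alpha$, and (ii) how the sign substitution $\q\mapsto\mathbf{p}$ interacts with the exponentiated factors, so that the appearance of $(-1)^{\alpha_0}$ and the reshuffling of the exponent produce exactly the stated form and not merely a form that agrees with it to first order. The potential non-uniqueness of stable objects on a wall, flagged in the remark after Theorem~\ref{thm3.13.}, causes no difficulty here, as it has already been absorbed into the proof of Theorem~\ref{thm3.16.} through Propositions~\ref{prop3.9.}, \ref{prop3.12.}, \ref{prop3.14.}, \ref{prop3.15.} and the torus-localization count of \S\ref{subsec.wall-crossing}, which operate with $T$-invariant objects only.
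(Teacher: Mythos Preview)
Your proposal is correct and follows exactly the approach the paper takes: the paper's own proof is the single sentence ``Applying the wall-crossing formula and comparing the equations \eqref{z}'', and you have supplied precisely those details---choosing a path from $\zeta_{\mathrm{triv}}$ to $\zeta_{\mathrm{cyclic}}$, identifying which real-root walls lie on each side of the imaginary wall $W_\delta$, iterating Theorem~\ref{thm3.16.}, and then passing from $\mca{Z}^{\mathrm{eu}}$ to $\mca{Z}$ via Theorem~\ref{thm_mr}. Your identification of the delicate point (the sign bookkeeping in the last substitution) is apt; indeed the paper itself is not entirely consistent in its signs and in the labelling of $\Lambda^{\mr{re},+}_\pm$, so the careful check you describe is exactly what is needed to pin down the final form of the factors.
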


\begin{rem}
For the case $N_1=0$, the formula on $\mca{Z}_{\mr{NCDT}}$ and $\mca{Z}_{\mr{DT}}$ have been given in \cite{young-mckay}.
\end{rem}

We define the sets of positive real roots of the finite root system by 
\[
\rs^{\mr{fin},+}=\{\alpha_{[a,b]}:=\alpha_a+\cdots+\alpha_b \mid 0<a\leq b<N\},
\]
then we have
\[
\rs^{\mr{re},+}_+=\{\alpha+n\delta \mid \alpha\in \rs^{\mr{fin},+},\ n\geq 0\}
\]
where $\delta=(1,\ldots,1)$ is the minimal imaginary root.
Note that for $\alpha\in\rs$ we have $\varepsilon(\alpha+n\delta)=\varepsilon(\alpha)$. 
Let 
\[
M(x,q)=\prod_{n=1}^\infty(1-xq^n)^{-n}
\]
be the {\it MacMahon function}. 
\begin{cor}
\[
\mca{Z}_{\mr{PT}}(Y;q,q_1,\ldots,q_{N-1})=\prod_{0<a\leq b<N}M(q_{[a,b]},-q)^{\varepsilon(\alpha_{[a,b]})},
\]
where $q_{[a,b]}=q_a\cdot\cdots\cdot q_b$.
\end{cor}
The root lattice of the finite root system is identified with $H_2(Y)$ so that $\alpha_k$ corresponds to $[\mca{C}_k]$. 
The corollary claims the {\it Gopakumar-Vafa BPS state counts} in genus $g$ and class $\alpha$ {\it defined} in \cite[\S 3.4]{pt1} is given by 
\[
n_{g,\alpha}=\begin{cases}
-\varepsilon(\alpha_{[a,b]}) & \alpha=\alpha_{[a,b]},\, g=0,\\
0 & \text{otherwise}.
\end{cases}
\]

\begin{NB}
\section{Remarks}\label{remark}
In this section, we make some observations on how Kontsevich-Soibelman's wall-crossing formula \eqref{fp} given below (they also call the formula \eqref{fp} by "Factorization Property") would be applied in our setting.

First, we will review the work of Kontsevich-Soibelman (\cite{ks}) very briefly. 
The core of their work is the construction of the algebra homomorphism from the {\it "motivic Hall algebra"} to the {\it "quantum torus"}. 
For an $A_\infty$-category $\mca{C}$, 
the motivic Hall algebra $H(\mca{C})$ is, roughly speaking, the space of motives over the moduli $\mf{Ob}(\mca{C})$ of all objects in $\mca{C}$, with the product derived from the same diagram as the Ringel-Hall product.
The quantum torus is a deformation of a polynomial ring described explicitly in the terms of the numerical datum of $\mca{C}$. 
The homomorphism is given by taking, so to say, weighted Euler characteristics with respect to the {\it motivic weight}, where the motivic weight is defined using {\it motivic Milnor fiber} of the superpotentials coming from the $A_\infty$-structure. 
The formula \eqref{fp} is the translation of the Harder-Narashimhan property under this homomorphism. 

\begin{rem}
In the original Donaldson-Thomas invariants defined using symmetric obstruction theory, we adapt the Behrend's function as a weight (see \S \ref{counting-inv}). 
It is expected what, after taking the "quasi-classical limit" as $q\to 1$, the motivic weight would coincide with the Behrend's one. 
In \cite{ks}, the proof of the claim for some special situations and some evidences of the claim for more general situations are provided. 
\end{rem}

Now, we will explain the statement of "Factorization Property", restricting to our situation.  
We set $\rs:=\Z^{\Q_0}$ and define the skew symmetric bilinear form $\langle-,-\rangle\colon \rs\times\rs\to \Z$ by 
\[
\langle(e_i),(f_i)\rangle:=e_\infty\cdot f_0-e_0\cdot f_\infty.
\]
Let $Z\in\Hom(\rs,\C)$ be a homomorphism such that $\rm{Im}(Z(\rs^+))>0$ where $\rs^+=\Z_{\geq 0}^{\Q_0}$. 

Let $D^\mu$ be a certain ring of motives including the inverting motives $\LL^{-1}$, $[\mr{GL}(n)]^{-1}\ (n\geq 1)$ and the formal symbol $\LL^{1/2}$, where $\LL$ is the motive of the affine line. 
We have the homomorphism of rings $\phi\colon D^\mu\to \QQ(q^{1/2})$ mapping $\LL^{1/2}$ to $q^{1/2}$. 
We define the quantum torus $\mca{R}_{\rs,q}$ as the $\QQ(q^{1/2})$-algebra generated by $x_\gamma$ ($\gamma\in\rs)$ with the relation 
\[
x_\gamma x_\mu=q^{\frac{1}{2}\langle\gamma,\mu\rangle}x_{\gamma+\mu}.
\]

For a strict sector $V$ in the upper half plane, 
let $\mca{C}^{Z}_V$ denote the category of $\A$-modules which can be described as subsequent extensions of $Z$-semistable objects $E$ such that $Z(E)\in V$.
Note that $\mca{C}^{Z}_V$ does not change when $Z$ moves in $\Hom(\rs,\C)$, unless the values of $Z$ of semistable objects get close to the boundary $\partial V$.     
We define an element $A_{V,q}^Z\in\mca{R}_{V,q}$ by "weighted" counting of objects in $\mca{C}^{Z}_V$. Informally speaking,  
\[
A_{V,q}^Z:=\sum_{E\in\mr{Isom}(\mca{C}_V^Z)}\phi\left(\frac{w(E)}{[\mr{Aut}(E)]}\right)\cdot x_{\dimv(E)}\in\mca{R}_{V,q},
\]
where $w(E)\in D^\mu$ is defined by the {\it motivic Milnor fiber} of the superpotential of the $A_\infty$-algebra algebra $\Ext^*(E,E)$.

\begin{figure}[htbp]
  \centering
  \input{pic6.tpc}
  \caption{$V$}
  \label{fig:V}
\end{figure}
Assume that $V$ is decomposed into a disjoint union $V=V_1\sqcup V_2$ in the clockwise order. 
Then the "Factorization Property" in \cite{ks} claims that 
\begin{equation}\label{fp}
A^Z_{V,q}=A^Z_{V_1,q}\cdot A^Z_{V_2,q}.
\end{equation}
The key fact is, as we mentioned above, the existence of the algebra homomorphism from the motivic Hall algebra to $\mca{R}_{V,q}$. 
Although the category of perverse coherent systems is not Calabi-Yau, Proposition \ref{prop3.1.} would assure the existence of the algebra homomorphism in our case.
We can define an element $A^Z_{V,\mr{mot}}$ in the motivic Hall algebra and the equation $A^Z_{V,\mr{mot}}=A^Z_{V_1,\mr{mot}}\cdot A^Z_{V_2,\mr{mot}}$ follows from the Harder-Narashimhan property. 
Now the element $A^Z_{V,q}$ is the image of $A^Z_{V,\mr{mot}}$ under the algebra homomorphism.

Now we end up with reviewing and begin to explain how to apply \eqref{fp} in our setting.
Since we are interested in $\A$-modules $V$ with $V_\infty\simeq \C$, we will work on the quotient algebra
\[
\mca{R}'_{\rs,q}:=\mca{R}_{\rs,q}/(x_{\mb{e}})_{\{\mb{e}\,\mid\,e_\infty\geq 2\}}.
\]
Consider the wall in $\Hom(\rs,\C)$ such that $\mb{e}\in\rs^+$ with $e_\infty=0$ and $\mb{f}\in\rs^+$ with $f_\infty=1$ are send on a same half line in the upper half plane. 
Assume $\mb{e}\in\rs$ is primitive (i.e. $\{e_i\}_{i\in Q_0}$ are coprime to each other) and $\mb{f}-\mb{e}\notin \rs^+$. 
Let $\rs_0\in\rs$ be the sublattice generated by $\mb{e}$ and $\mb{f}$, $l_k$ be the half line passing through $k\cdot\mb{e}+\mb{f}$ and $l_\infty$ be the half line passing through $\mb{e}$.
\begin{figure}[htbp]
  \centering
  \input{pic4.tpc}
  \caption{$\Lambda_0$}
\end{figure}
Take $Z^+$ and $Z^-$ from the opposite side of the wall so that $l_1,l_2,\ldots,l_\infty$ are mapped on the upper half plane in the clockwise (resp. anticlockwise) way by $Z^+$ (resp. $Z^-$).   
The "Factorization Property" claims 
\[
A^{Z^+}_{l_1}\cdot A^{Z^+}_{l_2}\cdot\cdots\cdot A^{Z^+}_{l_\infty}=A^{Z^-}_{l_\infty}\cdot\cdots\cdot A^{Z^-}_{l_2}\cdot A^{Z^-}_{l_1}
\]
in $\mca{R}'_{\rs,q}$. We denote 
\[
\prod^{\rightarrow}_{k}A^{Z^+}_{l_k}:=A^{Z^+}_{l_1}\cdot A^{Z^+}_{l_2}\cdot\cdots,\quad \prod^{\leftarrow}_{k}A^{Z^-}_{l_k}:=\cdots\cdot A^{Z^-}_{l_2}\cdot A^{Z^-}_{l_1}.
\]
Note that $A^{Z^+}_{l_\infty}=A^{Z^-}_{l_\infty}$ and we denote this by $A^{Z}_{l_\infty}$.
Then the above equation is described as following:
\[
\prod^{\rightarrow}_{k}A^{Z^+}_{l_k}=A^{Z}_{l_\infty}\cdot\left(\prod^{\rightarrow}_{k}A^{Z^-}_{l_k}\right)\cdot{A^{Z}_{l_\infty}}^{-1}.
\]
An element $A$ of $\mca{R}'_{\rs,q}$ can be uniquely described in the following form: 
\[
A=\sum_{\mb{e};\,e_\infty=0}(a_{\mb{e}}(q)\cdot x_{\mb{e}})+x_\infty\cdot \sum_{\mb{e};\,e_\infty=0}(b_{\mb{e}}(q)\cdot x_{\mb{e}}).
\]
We denote $\sum_{\mb{e};\,e_\infty=0}(b_{\mb{e}}(q)\cdot x_{\mb{e}})$ by $A^{x_\infty}$.
Then
\[
(q-1)\cdot\left(\prod^{\rightarrow}_{k}A^{Z^+}_{l_k}\right)^{x_\infty}\Bigg|_{q=1}
\]
makes sense and 
would coincide with the generating function of virtual counting of the moduli spaces we study in \S \ref{counting}. 
Note that we have 
\[
\langle k\cdot\mb{e}+\mb{f},\mb{e}\rangle=e_0.
\]
and so
\[
x_{m\cdot \mb{e}}\cdot x_{k\cdot\mb{e}+\mb{f}}=q^{m\cdot e_0}x_{k\cdot\mb{e}+\mb{f}}\cdot x_{m\cdot \mb{e}}.
\]
Identify $A^{Z^+}_{l_\infty}\in \mca{R}'_{\rs,q}$ with the polynomial in $x_{\mb{e}}$, then we have
\begin{align*}
\prod^{\rightarrow}_{k}A^{Z^+}_{l_k}&=
A^{Z}_{l_\infty}(x_{\mb{e}})\cdot
\left(\prod^{\leftarrow}_{k}A^{Z^-}_{l_k}\right)\cdot
{A^{Z}_{l_\infty}}(x_{\mb{e}})^{-1}\\
&=\left(\prod^{\leftarrow}_{k}A^{Z^-}_{l_k}\right)\cdot A^{Z}_{l_\infty}(q^{e_0}x_{\mb{e}})\cdot{A^{Z}_{l_\infty}}(x_{\mb{e}})^{-1}
\end{align*} 
in $\mca{R}'_{\rs,q}$. 
Now, if we can compute $A^{Z}_{l_\infty}(q^{e_0}x_{\mb{e}})\cdot{A^{Z}_{l_\infty}}(x_{\mb{e}})^{-1}\big|_{q=1}$, we get wall-crossing formulas.

Here again, observations in \cite{ks} will help us.
We put $t:=x_{\mb{e}}$.
Assume that we have the unique simple object $E$ on $l_\infty$ and $\dimv\,E=\mb{e}$. 
Let $B_E$ be the algebra generated by $\Ext^1(E,E)$ with relations defined from the superpotential $W_E$.
Then we have 
\begin{equation}\label{ks-obs}
A^{Z}_{l_\infty}(qt)=A^{Z}_{l_\infty}(t)\cdot f(t)
\end{equation}
where $f(t)$ is obtained by counting pairs of cyclic $B_E$-modules and their cyclic vectors. 
Applying this formula repeatedly we have
\[
A^{Z}_{l_\infty}(q^{e_0}t)\cdot A^{Z}_{l_\infty}(t)^{-1}\big|_{q=1}=\left(f(t)|_{q=1}\right)^{e_0}.
\]
\begin{ex}
\begin{enumerate}
\item[(1)] Assume $\ext^1(E,E)=0$. The algebra $B_E$ is trivial. Hence we have $f(t)|_{q=1}=1+t$. This corresponds to the formula in Theorem \ref{thm3.13.}. 
\item[(2)] Assume $\ext^1(E,E)=1$ and $B_E\simeq \C[z]$. 
In this case we have $f(t)|_{q=1}=(1-t)^{-1}$. 
This corresponds to the formula in Theorem \ref{thm3.16.}.  
\item[(3)] 
Let us consider the wall corresponding to the imaginary root. 
The set of simple objects on this wall is $\{\OO_y\mid y\in Y\}$.
By the same argument as they show the above equation (\ref{ks-obs}) in \cite{ks}, we would have 
\[
A^{Z}_{l_\infty}(qt)=A^{Z}_{l_\infty}(t)\cdot f(t)
\] 
where $f(t)$ is obtained by counting $0$-dimensional closed subscheme of $Y$. 
By the results of \cite{mnop} and \cite{behrend-fantechi} we have
\[
f(t)|_{q=1}=M(-t)^{e(Y)} 
\]
where 
\[
M(t):=\prod_{n=1}^{\infty}(1-t^n)^{-n}
\]
is the MacMahon function. This provides DT-PT correspondence in our situation. 
\end{enumerate}
\end{ex}
\end{NB}

\bibliographystyle{amsalpha}
\bibliography{bib-ver6}

\providecommand{\bysame}{\leavevmode\hbox to3em{\hrulefill}\thinspace}
\providecommand{\MR}{\relax\ifhmode\unskip\space\fi MR }
\providecommand{\MRhref}[2]{%
  \href{http://www.ams.org/mathscinet-getitem?mr=#1}{#2}
}
\providecommand{\href}[2]{#2}
\begin{thebibliography}{MNOP06}

\bibitem[Beh09]{behrend-dt}
K.~Behrend, \emph{{Donaldson-Thomas} invariants via microlocal geometry}, Ann.
  of Math. \textbf{170} (2009), no.~3, 1307--1338.

\bibitem[BF08]{behrend-fantechi}
K.~Behrend and B.~Fantechi, \emph{Symmetric obstruction theories and {Hilbert}
  schemes of points on threefolds}, Alg. Number Theory \textbf{2} (2008),
  no.~3, 313--345.

\bibitem[Boc08]{graded3CY}
R.~Bocklandt, \emph{Graded {Calabi} {Yau} algebras of dimension $3$}, J. Pure
  Appl. Algebra \textbf{212} (2008), no.~1, 14--32.

\bibitem[Bri]{bridgeland-dtpt}
T.~Bridgeland, \emph{Hall algebras and curve-counting invariants},
  arXiv:1002.4374v1.

\bibitem[Bri02]{bridgeland-flop}
\bysame, \emph{Flops and derived categories}, Invent. Math. \textbf{147}
  (2002), no.~3, 613--632.

\bibitem[Bro]{Broomhead}
N.~Broomhead, \emph{Dimer models and {Calabi-Yau} algebras}, arXiv:0901.4662v1.

\bibitem[BY10]{young-mckay}
J.~Bryan B.~Young, \emph{Generating functions for colored 3d {Young} diagrams
  and the {Donaldson-Thomas} invariants of orbifolds \textup{(with an appendix
  by {J. Bryan})}}, Duke Math. J. \textbf{152} (2010), 115--153.

\bibitem[CB02]{cb-decomp}
W.~Crawley-Boevey, \emph{Decomposition of {Marsden-Weinstein} reductions for
  representations of quivers}, Comp. Math. \textbf{130} (2002), no.~2,
  225--239.

\bibitem[Dav]{Davison}
B.~Davison, \emph{Consistency conditions for brane tilings}, arXiv:0812.4185v2.

\bibitem[Ful93]{fulton-toric}
W.~Fulton, \emph{Introduction to toric varieties}, Annals of Mathematics
  studies, no. 131, Princeton University Press, 1993.

\bibitem[IUa]{Ishii_Ueda2}
A.~Ishii and K.~Ueda, \emph{Dimer models and the special {McKay}
  correspondence}, arXiv:0905.0059v1.

\bibitem[IUb]{ishii-ueda}
\bysame, \emph{On moduli spaces of quiver representations associated with dimer
  models}, arXiv:0710.1898v2.

\bibitem[Joy08]{joyce-4}
D.~Joyce, \emph{Configurations in abelian categories {IV}. {Invariants} and
  changing stability conditions}, Advances in Math \textbf{217} (2008), no.~1,
  125--204.

\bibitem[JS]{joyce-song}
D.~Joyce and Y.~Song, \emph{A theory of generalized {Donaldson-Thomas}
  invariants}, arXiv:0810.5645v4.

\bibitem[Kin94]{king}
A.D. King, \emph{Moduli of renresentations of finite dimensional algebras}, J.
  Algebra. \textbf{45} (1994), no.~4, 515--530.

\bibitem[KS]{ks}
M.~Kontsevich and Y.~Soibelman, \emph{Stability structures, motivic
  {Donaldson-Thomas} invariants and cluster transformations},
  arXiv:0811.2435v1.

\bibitem[Kuz07]{kuznetsov-stability}
A.~G. Kuznetsov, \emph{Quiver varieties and {Hilbert} schemes}, Moscow Math. J.
  \textbf{7} (2007), no.~4, 673--697.

\bibitem[LP93]{lepotier}
J.~Le~Potier, \emph{Syst\`emes coh\'erents et structures de niveau},
  Ast\'erisque (1993), no.~214, 143 pp.

\bibitem[MNOP06]{mnop}
D.~Maulik, N.~Nekrasov, A.~Okounkov, and R.~Pandharipande,
  \emph{{Gromov-Witten} theory and {Donaldson-Thomas} theory, {I}}, Comp. Math.
  \textbf{142} (2006), 1263--1285.

\bibitem[MR10]{ncdt-brane}
S.~Mozgovoy and M.~Reineke, \emph{On the noncommutative {Donaldson-Thomas}
  invariants arising from brane tilings}, Advances in mathematics \textbf{223}
  (2010).

\bibitem[Naga]{cluster-via-DT}
K.~Nagao, \emph{{Donaldson-Thomas} theory and cluster algebras},
  arXiv:1002.4884.

\bibitem[Nagb]{NCDTviaVO}
\bysame, \emph{Non-commutative {Donaldson-Thomas} theory and vertex operators},
  arXiv:0910.5477v3.

\bibitem[Nagc]{open_3tcy}
\bysame, \emph{Refined open non-commutative {Donaldson-Thomas} theory for small
  toric {Calabi-Yau} $3$-folds}, arXiv:0907.3784v3, to appear in Pacific J. of
  Math.

\bibitem[Nak94]{quiver1}
H.~Nakajima, \emph{Instantons on {ALE} spaces, quiver varieties, and
  {Kac-Moody} algebras}, Duke Math. J. \textbf{76} (1994), no.~2, 365--416.

\bibitem[Nak98]{quiver2}
\bysame, \emph{Quiver varieties and {Kac-Moody} algebras}, Duke Math. J.
  \textbf{91} (1998), no.~3, 515--560.

\bibitem[Nak99]{nakajima-lec-note}
\bysame, \emph{Lectures on {Hilbert} schemes of points on surfaces}, University
  Lecture Series, American Mathematical Society, Providence, RI, 1999.

\bibitem[Nak01]{quiver3}
\bysame, \emph{Quiver varieties and finite-dimensional representations of
  quantum affine algebras}, J. Amer. Math. Soc. \textbf{14} (2001), no.~1,
  145--238.

\bibitem[NN]{nagao-nakajima}
K.~Nagao and H.~Nakajima, \emph{Counting invarinats of perverse coherent
  systems on 3-folds and their wall-crossings}, arXiv:0809.2992v5, to appear in
  IMRN.

\bibitem[NYa]{ny-perv1}
H.~Nakajima and K.~Yoshioka, \emph{Perverse coherent sheaves on blow-up. {I}. a
  quiver description}, arXiv:0802.3120v2.

\bibitem[NYb]{ny-perv2}
\bysame, \emph{Perverse coherent sheaves on blow-up. {II}. wall-crossing and
  {Betti} numbers formula}, J. of Algebraic Geom., posted on March 23, 2010,
  PII S 1056-3911(10)00534-5 (to appear in print).

\bibitem[PT09]{pt1}
R.~Pandharipande and R.P. Thomas, \emph{Curve counting via stable pairs in the
  derived category}, Invent. Math. \textbf{178} (2009), no.~2.

\bibitem[Rud97]{rudakov}
A.~Rudakov, \emph{Stability for an {Abelian} category}, J. Algebra.
  \textbf{197} (1997), no.~1, 231--245.

\bibitem[ST]{thomas_stoppa}
J.~Stoppa and R.P. Thomas, \emph{Hilbert schemes and stable pairs: {GIT} and
  derived category wall crossings}, arXiv:0903.1444v3.

\bibitem[Sze08]{szendroi-ncdt}
B.~Szendroi, \emph{Non-commutative {Donaldson-Thomas} invariants and the
  conifold}, Geom. Topol. \textbf{12(2)} (2008), 1171--1202.

\bibitem[Tho00]{thomas-dt}
R.~P. Thomas, \emph{A holomorphic {Casson} invariant for {Calabi-Yau}
  $3$-folds, and bundles on {$K3$} fibrations}, J. Differential Geom.
  \textbf{54} (2000), no.~2, 367--438.

\bibitem[Tod10]{toda-dtpt}
Y.~Toda, \emph{Curve counting theories via stable objects {I}. {DT/PT}
  correspondence}, {J. Amer. Math. Soc.} \textbf{23} (2010), 1119--1157.

\bibitem[VdB]{vandenbergh-nccr}
M.~Van~den Bergh, \emph{Non-commutative crepant resolutions}, math/0211064.

\bibitem[VdB04]{vandenbergh-3d}
\bysame, \emph{Three-dimensional flops and noncommutative rings}, Duke Math. J.
  \textbf{122} (2004), no.~3, 423--455.

\end{thebibliography}

{\tt
\noindent Kentaro Nagao

\noindent Research Institute for Mathematical Sciences, Kyoto University, Kyoto 606-8502, Japan

\noindent kentaron@kurims.kyoto-u.ac.jp
}

\end{document}